\documentclass[reqno]{amsart}
\usepackage{hyperref}
\usepackage{graphics}
\usepackage{amssymb}
\usepackage{enumerate}
\usepackage{color}
\usepackage[left=3.00cm, right=3.00cm, top=2.00cm, bottom=2.00cm]{geometry}

\allowdisplaybreaks
\begin{document} 

\title[Decay character and semi-linear structurally damped evolution equations]{New definitions of decay indicators and critical exponent for fractional semi-linear structurally damped evolution equations on the Heisenberg group}
\author[P.D. An, C.T. Anh, P.T. Duong]{Phan Duc An, Cung The Anh$^{\natural}$}

\address{Phan Duc An\hfill\break
Department of Mathematics, Hanoi National University of Education \hfill\break
136 Xuan Thuy, Cau Giay, Hanoi, Vietnam}
\email{anan26042001@gmail.com}

\address{Cung The Anh \hfill\break
Department of Mathematics, Hanoi National University of Education \hfill\break
136 Xuan Thuy, Cau Giay, Hanoi, Vietnam}
\email{anhctmath@hnue.edu.vn}

\keywords{Heisenberg group, Decay indicator, Decay character, Semi-linear structurally damped evolution equations, Global existence of small data solution, Blow-up, Critical, exponent}

\begin{abstract}
In this paper, we introduce new definitions for the lower and upper decay indicators, along with the decay character on the Heisenberg group. Based on these definitions, we analyze the decay rates of solutions to the linear fractional diffusion equation and evaluate the decay character within specific function spaces. Leveraging these insights, we further investigate the Cauchy problem for the linear structurally damped evolution equation on the Heisenberg group 
$$
\partial_t^2 u(t, \eta)+\left(-\Delta_{\mathrm{H}}\right)^{\delta_1} u(t, \eta)+\left(-\Delta_{\mathrm{H}}\right)^{\delta_2} \partial_t u(t, \eta)=0, \quad \text{ with } \delta_1 \in \left[0, \tfrac{\delta_2}{2}\right].
$$
Furthermore, we examine the decay rates of solutions in homogeneous fractional Sobolev spaces and their derivatives, which are expressed in terms of the decay character of the initial data $u_0(\eta)=u(0, \eta)$ and $u_1(\eta)=\partial_t u(0, \eta)$, to unify existing results and include new ones not previously discussed. We also focus on the existence and decay rates of global solutions (in time) for the corresponding semi-linear problem with the nonlinear power type term $\left|u(t, \eta)\right|^p$, including the derivation of conditions ensuring the existence of solutions for exponents $p>p_{\mathrm{Fuji}}\left(\frac{Q-2 \omega \delta_2}{\omega \delta_1}\right):= 1+\frac{2 \omega \delta_1}{Q-2 \omega \delta_2}$, with $\omega:=\frac{Q}{Q+\min \left\{ r_\mathrm{H}\left(u_0\right), r_\mathrm{H}\left(u_1\right)-2\delta_2 \right\}+2\delta_2}$, as well as the study of the critical case $p:=p_{\mathrm{Fuji}}\left(\frac{Q-2 \omega \delta_2}{\omega \delta_1}\right)$. Furthermore, we will construct novel test functions to handle the fractional Laplace operators $\left(-\Delta_{\mathrm{H}}\right)^{\delta_1}$ and $\left(-\Delta_{\mathrm{H}}\right)^{\delta_2}$, which are well-known nonlocal operators on the Heisenberg group. Based on this approach, we establish blow-up results for solutions to the semi-linear problem to determine the critical exponent $p:=p_{\mathrm{Fuji}}\left(\tfrac{Q+m\gamma-2 m \delta_2}{m \delta_1}\right):=1+\tfrac{2 m \delta_1}{Q+m\gamma-2 m \delta_2}$ in the case where the initial data belong to the space $\dot{H}_m^{-\gamma}\left(\mathbf{H}_n\right)$, with $m \in (1, 2]$ and $\gamma \in\left[0, Q-\frac{Q}{m}\right)$.
\end{abstract}

\maketitle
\numberwithin{equation}{section}
\newtheorem{theorem}{Theorem}[section]
\newtheorem{remark}{Remark}[section]
\newtheorem{definition}{Definition}[section]
\newtheorem{lemma}[theorem]{Lemma}
\newtheorem{corollary}[theorem]{Corollary}
\newtheorem{proposition}[theorem]{Proposition}
\newtheorem{example}[theorem]{Example}

\section{introduction}
\subsection{Background of the damped wave equations and decay character}
To explore the classical Euclidean framework, consider the semi-linear damped wave equation on $\mathbb{R}^n$ with a power-type nonlinearity
\begin{equation} \label{eq:Matsumura1976}
\begin{cases}
\partial_t^2 u(t, x) -\Delta u(t, x)+\partial_t u(t, x)=\left|u(t, x)\right|^p, & (t, x) \in[0, \infty) \times \mathbb{R}^n, \quad n \in \mathbb{N}^*, \quad p>1, \\ u(0, x)=u_0(x), \quad \partial_t u(0, x)=u_1(x), & x \in \mathbb{R}^n, \quad n \in \mathbb{N}^*,
\end{cases}
\end{equation}
where $\Delta$ denotes the Laplacian operator in $\mathbb{R}^n$. When the initial conditions also lie in the $L^1\left(\mathbb{R}^n\right)$ space, the global existence or finite time blow-up of solutions to equation \eqref{eq:Matsumura1976}, depending on the value of the critical exponent, has been extensively investigated in \cite{Ikehata2005, Matsumura1976, Todorova2001, Zhang2001} and related works. The critical exponent determines the dividing line for the exponent $p$ that governs whether solutions persist globally in time or blow up in finite time, particularly for Sobolev solutions with small initial data. This critical exponent, known as the Fujita exponent, is defined as $p_{\mathrm{Fuji}}(n) := 1 + \frac{2}{n}$ (see \cite{Ikehata2005, Matsumura1976, Todorova2001, Zhang2001} for details). More specifically:
\begin{itemize}
\item For dimensions $n = 1, 2$, Matsumura, in his pioneering work \cite{Matsumura1976}, demonstrated that solutions with small initial data exist globally when $p > p_{\mathrm{Fuji}}(n)$.
\item For all $n \geq 1$, Todorova and Yordanov \cite{Todorova2001} established global existence for $p > p_{\mathrm{Fuji}}(n)$ (assuming the initial data is compactly supported) and showed finite-time blow-up in the subcritical case where $1 < p < p_{\mathrm{Fuji}}(n)$.
\item In the critical case $p = p_{\mathrm{Fuji}}(n)$, Zhang \cite{Zhang2001} proved the occurrence of blow-up.
\end{itemize}

In the following years, researchers have increasingly focused on identifying new critical exponents for classical semi-linear damped wave equations on $\mathbb{R}^n$ in varying contexts. For example, when the Cauchy problem \eqref{eq:Matsumura1976} is considered with initial data additionally belonging to $L^m$ spaces, where $m \in [1,2)$, the critical exponent shifts, leading to a modified Fujita exponent defined by: $p_{\mathrm{Fuji}}\left(\frac{n}{m}\right):=1+\frac{2 m}{n}$.

In contrast to the $L^1\left(\mathbb{R}^n\right)$ case, the presence of $L^m\left(\mathbb{R}^n\right)$-regular initial data guarantees the unique global (in time) solution precisely at the critical point $p_{\mathrm{Fuji}}\left(\frac{n}{m}\right) = 1 + \frac{2m}{n}$. This distinction highlights the fundamental difference between $L^1\left(\mathbb{R}^n\right)$ and $L^m\left(\mathbb{R}^n\right)$ regular data. For a comprehensive analysis of the critical exponent $p_{\mathrm{Fuji}}\left(\frac{n}{m}\right)$ concerning solutions to semi-linear wave equations with $L^m$-regular data, see \cite{Ikeda2019, Ikehata2002, Nakao1993} and related works.

A number of recent publications (see, e.g., \cite{DAbbicco2017, Dao2019}) have explored the most typical scenario of power-type nonlinearities $\left|u(t, x)\right|^p$ in the context of semi-linear structurally damped $\sigma$-evolution equations, focusing in particular on the following Cauchy problem:
\begin{equation} \label{eq:evolution}
\begin{cases}
\partial_t^2 u(t, x)+(-\Delta)^\sigma u(t, x)+ (-\Delta)^\delta \partial_t u(t, x)=\left|u(t, x)\right|^p, & (t, x) \in [0, \infty) \times \mathbb{R}^n, \\ u(0, x)=u_0(x), \quad \partial_t u(0, x)=u_1(x), & x \in \mathbb{R}^n, \quad n \in \mathbb{N}^*,
\end{cases}
\end{equation}
where $\sigma \geq 1$, $\delta \in \left[0, \frac{\sigma}{2}\right]$, and $p>1$. In their work, D'Abbicco and Ebert \cite{DAbbicco2017} addressed a broader class of the semilinear problem \eqref{eq:evolution} involving integer parameters $\sigma \in \mathbb{N} \backslash\{0\}$ and $\delta \in \mathbb{N}$, where $\delta \in \left(0, \frac{\sigma}{2}\right]$. They succeeded in establishing the global (in time) existence of small data solutions provided that $p>p_{\mathrm{Fuji}}\left(\frac{n-2 m \delta}{m \sigma}\right):= 1+\frac{2 m \sigma}{n-2 m \delta}$, under the assumption that the initial data belong to $\left(L^m\left(\mathbb{R}^n\right) \cap H^\sigma\left(\mathbb{R}^n\right)\right) \times\left(L^m\left(\mathbb{R}^n\right) \cap L^2\left(\mathbb{R}^n\right)\right)$, with $m \in(1,2]$. Later, Dao and Reissig \cite{Dao2019} extended this result to cover all fractional values of $\sigma \geq 1$ and $\delta \in[0, \sigma)$. In addition, in the same work \cite{Dao2021}, they also established a blow-up results and identified the corresponding critical exponent in the case where $u_0=0$ and $u_1 \in L^1\left(\mathbb{R}^n\right)$ satisfies $\int_{\mathbb{R}^n} u_1 \mathrm{d} x>\epsilon$, where $\epsilon$ is a suitable nonnegative constant. Moreover, \cite{DAbbicco2017} also provided a nonexistence result in the subcritical range $p<p_{\mathrm{Fuji}}\left(\frac{n-2 m \delta}{m \sigma}\right)$ by applying a modified test function method. This was achieved by selecting $u_0=0$ and taking $u_1 \geq$ $\varepsilon(1+|x|)^{-\mu}$, where the exponent $\mu=\mu(p)$ satisfies $\mu>\frac{n}{m}$ (see also the remark in \cite[Section 2.5]{DAbbicco2017}).

The study of semi-linear damped wave equations has been further developed within the non-Euclidean framework. Over the last few decades, numerous studies have examined nonlinear partial differential equations (PDEs) in non-Euclidean settings. For example, the semi-linear wave equation, both with and without damping, has been analyzed on the Heisenberg group in \cite{Georgiev2020, Nachman1982, Ruzhansky2018}. For graded groups, we refer to recent works in \cite{Georgiev2021, Ruzhansky2021, Ruzhansky2018, Taranto2018}. For wave equations on compact Lie groups, consult \cite{Bhardwaj2023, Dasgupta2023, Garetto2015, Palmieriblowup2021, PalmieriSemilinear2021, Palmieri2022} and for Riemannian symmetric spaces of non-compact type, refer to \cite{Anker2024, Anker2014, Kassymov2024, Zhang2021}.

In particular, the semi-linear damped wave equation on the Heisenberg group $\mathbf{H}_n$ is given by
$$
\begin{cases}
\partial_t^2 u(t, \eta)-\Delta_{\mathrm{H}} u(t, \eta)+\partial_t u(t, \eta)=\left|u(t, \eta)\right|^p, & (t, \eta) \in[0, \infty) \times \mathbf{H}_n, \quad n \in \mathbb{N}^*, \quad p>1, \\ u(0, \eta)=u_0(\eta), \quad \partial_t u(0, \eta)=u_1(\eta), & \eta \in \mathbf{H}_n, \quad n \in \mathbb{N}^*,
\end{cases}
$$
where $\Delta_{\mathrm{H}}$ is the sub-Laplacian on the Heisenberg group $\mathbf{H}_n$. It has been shown in \cite{Georgiev2020} that the critical exponent is determined by
\begin{equation} \label{Fujita}
p_{\mathrm{Fuji}}(Q) := 1+\tfrac{2}{Q},
\end{equation}
where $Q := 2n + 2$ represents the homogeneous dimension of $\mathbf{H}_n$. Interestingly, \eqref{Fujita} is also the Fujita exponent for the semi-linear heat equation
$$
\begin{cases}
\partial_t u(t, \eta)-\Delta_{\mathrm{H}} u(t, \eta) =\left|u(t, \eta)\right|^p, & (t, \eta) \in[0, \infty) \times \mathbf{H}_n, \quad n \in \mathbb{N}^*, \quad p>1, \\ u(0, \eta) =u_0(\eta), & \eta \in \mathbf{H}_n, \quad n \in \mathbb{N}^*.
\end{cases}
$$
This topic has been discussed in \cite{Georgiev2020, Georgiev2021, Ruzhansky2022}. However, in the case of a compact Lie group $\mathbb{G}$, it has been shown in \cite{ Palmieriblowup2021} that $p_{\mathrm{Fuji}}(0) := \infty$ is the critical exponent for the solution to the semi-linear damped wave equation on $\mathbb{G}$. We cite \cite{Ruzhansky2018} for a global existence result with small data in the broader context of graded Lie groups for the semi-linear damped wave equation involving a Rockland operator with an additional mass term.

To examine the decay properties of parabolic conservation laws and the Navier-Stokes equations, Schonbek introduced the Fourier Splitting Method through a series of studies \cite{Schonbek1980, Schonbek1985, Schonbek1986}. Bjorland and Schonbek later advanced this method in \cite{Bjorland2009}, who introduced the idea of a decay character. This character represented as $r^*=r^*\left(u_0\right)$ for an initial function $u_0 \in L^2\left(\mathbb{R}^n\right)$, offers a more accurate decay rate for solutions of the heat equation with the given initial data $u_0$. Further research on decay characters was carried out by Niche and Schonbek \cite{Niche2015} and Brandolese \cite{Brandolese2016}, yielding more refined decay estimates for solutions to dissipative evolution Cauchy problems with only $L^2\left(\mathbb{R}^n\right)$ initial data. Moreover, decay characters can be assessed for various function spaces, and Brandolese \cite{Brandolese2016} established criteria for finite decay characters in the context of Besov spaces.

Recent studies have utilized the concept of decay characters to analyze the semi-linear structurally damped evolution equations
\begin{equation} \label{eq:semi_linear}
\begin{cases}
\partial_t^2 u(t, x)+(-\Delta)^\sigma u(t, x)+ b(t)(-\Delta)^\delta \partial_t u(t, x)=\left|u(t, x)\right|^p, & (t, x) \in [0, \infty) \times \mathbb{R}^n, \\ u(0, x)=u_0(x), \quad \partial_t u(0, x)=u_1(x), & x \in \mathbb{R}^n, \quad n \in \mathbb{N}^*,
\end{cases}
\end{equation}
where $\sigma \geq 1, \delta \in [0, \sigma], p>1$ and $n \in \mathbb{N}^*$, along with the corresponding linear Cauchy problem
\begin{equation} \label{eq:linear}
\begin{cases}
\partial_t^2 u(t, x)+(-\Delta)^\sigma u(t, x)+ b(t)(-\Delta)^\delta \partial_t u(t, x)=0, & (t, x) \in [0, \infty) \times \mathbb{R}^n, \\ u(0, x)=u_0(x), \quad \partial_t u(0, x)=u_1(x), & x \in \mathbb{R}^n, \quad n \in \mathbb{N}^*,
\end{cases}
\end{equation}
Researchers such as Armando and Niche \cite{Niche2022} (for $\sigma=1, \delta=0, b(t)=1$), Anh, Duong, Loc \cite{Loc2023} (for $\sigma \geq 1, \delta \in[0, \sigma], b(t)=1$), and Anh, An, Duong \cite{AAD} (for $\sigma \geq 1, \delta \in\left[0, \frac{\sigma}{2}\right], b(t)$ is effective) have examined the asymptotic behavior of solutions to \eqref{eq:linear} using decay characters and established the global existence of small data solutions for the semi-linear equations \eqref{eq:semi_linear}.
\subsection{Main purpose of this paper}
In this paper, we introduce and develop a new concept of the lower and upper decay indicators, along with the decay character, on the Heisenberg group, a Lie group with a distinct geometric and algebraic structure, widely used in the theory of partial differential equations and harmonic analysis. We also focus on investigating the decay rates of solutions to the linear fractional diffusion equation based on these decay indicators and the decay character. Additionally, we estimate the decay character when the initial conditions are restricted to specific function spaces. Building on these insights, we extend this framework to explore the Cauchy problem for the fractional semi-linear structurally damped evolution equation on the Heisenberg group, expressed as follows:
\begin{equation} \label{eq:1.0}
\begin{cases}
\partial_t^2 u(t, \eta)+\left(-\Delta_{\mathrm{H}}\right)^{\delta_1} u(t, \eta)+\left(-\Delta_{\mathrm{H}}\right)^{\delta_2} \partial_t u(t, \eta)=\left|u(t, \eta)\right|^p, & (t, \eta) \in[0, \infty) \times \mathbf{H}_n, \quad n \in \mathbb{N}^*, \\ u(0, \eta)=u_0(\eta), \quad \partial_t u(0, \eta)=u_1(\eta), & \eta \in \mathbf{H}_n, \quad n \in \mathbb{N}^*,
\end{cases}
\end{equation}
where $\delta_2 \in \left[0, \frac{\delta_1}{2}\right]$ and $p>1$. Accordingly, the corresponding linear Cauchy problem for \eqref{eq:1.0} is given by
\begin{equation} \label{eq:1.1}
\begin{cases}
\partial_t^2 u(t, \eta)+\left(-\Delta_{\mathrm{H}}\right)^{\delta_1}u(t, \eta)+\left(-\Delta_{\mathrm{H}}\right)^{\delta_2}\partial_t u(t, \eta)=0, & (t, \eta) \in[0, \infty) \times \mathbf{H}_n, \quad n \in \mathbb{N}^*, \\ u(0, \eta)=u_0(\eta), \quad \partial_t u(0, \eta)=u_1(\eta), & \eta \in \mathbf{H}_n, \quad n \in \mathbb{N}^*.
\end{cases}
\end{equation}
As far as we are aware, the utilization of the decay character to analyze the subelliptic structurally damped evolution equation \eqref{eq:1.0} on the Heisenberg group, with initial data confined solely to the Sobolev space $H^\alpha\left(\mathbf{H}_n\right)$, where $\alpha \geq 0$ and without additional restrictions on other spaces, has not been explored in existing research, not even for the linear Cauchy problem \eqref{eq:1.1}. This presents an interesting and viable opportunity to employ the decay character as a unified framework for studying essential qualitative properties, including global-in-time well-posedness, blow-up criteria, decay rates, and the asymptotic behavior of solutions to the subelliptic structurally damped evolution equation on $\mathbf{H}_n$. By merely restricting the initial data to the Sobolev space $H^\alpha\left(\mathbf{H}_n\right)$, where $\alpha \geq 0$, without imposing additional constraints on other spaces, this approach unifies the derivation of known results with new findings, eliminating the need for separate analyses based on additional intersections with specific functional spaces.

Another aim of this paper is to leverage the decay character to explore and identify critical exponents associated with the Cauchy problem for the fractional semi-linear structurally damped evolution equation \eqref{eq:1.0}, where the initial data is restricted exclusively to the Sobolev spaces $H^{\delta_1}\left(\mathbf{H}_n\right)$ and $L^2\left(\mathbf{H}_n\right)$. In particular, under additional assumptions on the initial data $\left(u_0, u_1\right) \in H^{\delta_1}\left(\mathbf{H}_n\right) \times L^2\left(\mathbf{H}_n\right)$, with $r_{\mathrm{H}}\left(u_0\right)$ and $r_{\mathrm{H}}\left(u_1\right)$ chosen such that $-\frac{Q}{2} \leq \min \left\{ r_\mathrm{H}\left(u_0\right), r_\mathrm{H}\left(u_1\right)-2\delta_2 \right\} \leq -2\delta_2$ and $P_{r_{\mathrm{H}}}\left(u_0\right)_{+}, P_{r_{\mathrm{H}}}\left(u_1\right)_{+}<\infty$, we derive a new critical exponent for equation \eqref{eq:1.0}, given by
$$
p_{\mathrm{Fuji}}\left(\tfrac{Q-2 \omega \delta_2}{\omega \delta_1}\right):= 1+\tfrac{2 \omega \delta_1}{Q-2 \omega \delta_2}, \quad \text{ with } \omega:=\tfrac{Q}{Q+\min \left\{ r_\mathrm{H}\left(u_0\right), r_\mathrm{H}\left(u_1\right)-2\delta_2 \right\}+2\delta_2}.
$$
This critical exponent is especially important because it also applies to cases where the initial data satisfy
\begin{align*}
&\left(u_0, u_1\right) \in\left(H^{\delta_1}\left(\mathbf{H}_n\right) \cap L^m\left(\mathbf{H}_n\right)\right) \times\left(L^2\left(\mathbf{H}_n\right) \cap L^m\left(\mathbf{H}_n\right)\right), \text{ with } m \in[1,2]; \\
&\left(u_0, u_1\right) \in\left(H^{\delta_1}\left(\mathbf{H}_n\right) \cap \dot{H}^{-\gamma}\left(\mathbf{H}_n\right)\right) \times\left(L^2\left(\mathbf{H}_n\right) \cap \dot{H}^{-\gamma}\left(\mathbf{H}_n\right)\right), \text{ with } \gamma \in\left[0, \tfrac{Q}{2}\right]; \\
&\left(u_0, u_1\right) \in H_{\sigma \psi(t, \cdot)}^{\delta_1}\left(\mathbf{H}_n\right) \times L_{\sigma \psi(t, \cdot)}^2\left(\mathbf{H}_n\right), \text{ with } \sigma>0 \text{ and }
\psi(t, \eta) := \tfrac{|x|^2+|y|^2+4|\tau|}{8(1+t)}; \\
&\left(u_0, u_1\right) \in\left(H^{\delta_1}\left(\mathbf{H}_n\right) \cap \mathcal{Y}^q\left(\mathbf{H}_n\right)\right) \times\left(L^2\left(\mathbf{H}_n\right) \cap \mathcal{Y}^q\left(\mathbf{H}_n\right)\right), \text{ with } q \in \left[0, \tfrac{Q}{2}\right] ; \\
&\left(u_0, u_1\right) \in\left(H^{\delta_1}\left(\mathbf{H}_n\right) \cap \dot{H}_m^{-\gamma}\left(\mathbf{H}_n\right)\right) \times\left(L^2\left(\mathbf{H}_n\right) \cap \dot{H}_m^{-\gamma}\left(\mathbf{H}_n\right)\right), \text{ with } m \in (1, 2] \text{ and } \gamma \in\left[0, Q-\tfrac{Q}{m}\right)
\end{align*}
and this exponent also serves as the generalized critical exponent when the initial conditions belong to other specific function spaces. Moreover, we also investigate the critical case $p:=p_{\mathrm{Fuji}}\left(\tfrac{Q-2 \omega \delta_2}{\omega \delta_1}\right)$ and demonstrate that when $\min \left\{ r_\mathrm{H}\left(u_0\right), r_\mathrm{H}\left(u_1\right)-2\delta_2 \right\}<-2\delta_2$ the critical exponent falls within the range of global existence results. In contrast, when $\min \left\{ r_\mathrm{H}\left(u_0\right), r_\mathrm{H}\left(u_1\right)-2\delta_2 \right\}=-2\delta_2$ the critical exponent corresponds to blow-up phenomena. This means that for $p:=p_{\mathrm{Fuji}}\left(\frac{Q-2 \omega \delta_2}{\omega \delta_1}\right):= 1+\frac{2 \omega \delta_1}{Q-2 \omega \delta_2}$ the critical exponent leads to global existence, whereas for $p:=p_{\mathrm{Fuji}}\left(\frac{Q-2 \delta_2}{\delta_1}\right):= 1+\frac{2 \delta_1}{Q-2 \delta_2}$ the critical exponent results in blow-up. To identify the critical exponent, we develop new test functions designed to effectively handle the fractional Laplace operators $\left(-\Delta_{\mathrm{H}}\right)^{\delta_1}$ and $\left(-\Delta_{\mathrm{H}}\right)^{\delta_2}$, which are prominent nonlocal operators on the Heisenberg group. This strategy enables us to derive blow-up results for the associated semi-linear problem and, in turn, determine the critical threshold $p:=p_{\mathrm{Fuji}}\left(\tfrac{Q+m\gamma-2 m \delta_2}{m \delta_1}\right):=1+\tfrac{2 m \delta_1}{Q+m\gamma-2 m \delta_2}$ in the case where the initial data $\left(u_0, u_1\right) \in\dot{H}_m^{-\gamma}\left(\mathbf{H}_n\right) \times\dot{H}_m^{-\gamma}\left(\mathbf{H}_n\right)$, with $m \in (1, 2]$ and $\gamma \in\left[0, Q-\frac{Q}{m}\right)$.

\subsection{Notations}
Let $P_{r_\mathrm{H}}\left(u_0\right)_{-}=P_{r_\mathrm{H}}^0\left(u_0\right)_{-}, P_{r_\mathrm{H}}\left(u_0\right)_{+}=P_{r_\mathrm{H}}^0\left(u_0\right)_{+}$ be the lower and upper decay indicators of the function $u_0 \in L^2\left(\mathbf{H}_n\right)$. Definitions of the lower and upper decay indicators will be given in details in Definition \ref{definition_3.7}. Roughly speaking, The lower and upper decay indicators $P_{r_\mathrm{H}}\left(u_0\right)_{-}$ and $P_{r_\mathrm{H}}\left(u_0\right)_{+}$ describe a tool for measuring the decay of the function $u_0$ on the Heisenberg group through its Fourier transform. Let $r_\mathrm{H}^*\left(u_0\right)$ be the decay character of the function $u_0$ (see Definition \ref{definition_3.9}). For a given parameter $r_\mathrm{H}\left(u_0\right)\in \left[-\frac{Q}{2}, \infty\right]$ and $\kappa \in \mathbb{R}$, the quantities $\|h\|_{P, \kappa}$ and $\|h\|_{P, 0}$ denote the following norms of a function $h$ in $H^\kappa\left(\mathbf{H}_n\right)$ and $L^2\left(\mathbf{H}_n\right)$, respectively: 
$$ 
\|h\|_{P, \kappa}:=\|h\|_{\dot{H}^\kappa\left(\mathbf{H}_n\right)}+\|h\|_{L^2\left(\mathbf{H}_n\right)}+\left(P_{r_\mathrm{H}\left(h\right)}\left(h\right)\right)^{\frac{1}{2}}, \quad
\|h\|_{P, 0}:=\|h\|_{L^2\left(\mathbf{H}_n\right)}+\left(P_{r_\mathrm{H}\left(h\right)}\left(h\right)\right)^{\frac{1}{2}}.
$$

Throughout the paper, for nonnegative functions $f(t), g(t)$ the notation $f(t) \lesssim g(t)$ is used to denote the inequalities $f(t) \leq C g(t)$ that are satisfied uniformly for all $t>0$, with a positive constant $C$.

\subsection{Structure of the paper} In Section \ref{section_2}, we review some essential concepts related to the Heisenberg group, specifically the definition of the Heisenberg group (Section \ref{section_2.1}), the group Fourier transform on the Heisenberg group (Section \ref{section_2.2}), Hermite functions and their properties (Section \ref{section_2.3}) and some auxiliary inequalities (Section \ref{section_2.4}). In Section \ref{section_3}, we introduce a new definition of the decay character on the Heisenberg group (Section \ref{section_3.2}), investigate the decay characterization of solutions to the linear fractional diffusion equation on this group (Section \ref{section_3.3}) and compute the decay character in several function spaces on the Heisenberg group (Section \ref{section_3.4}). Section \ref{section_4} provides detailed proof of Theorem \ref{theorem:1.2} in turn, to derive linear decay estimates with corresponding rates depending on $r_\mathrm{H}$ for the problem \eqref{eq:1.1}. In Section \ref{section_5}, we prove the existence of global solutions with small initial data for the semi-linear problem \eqref{eq:1.0} as stated in Theorems \ref{theorem:1.3} and \ref{theorem:1.4}. Finally, Section \ref{section_6} presents newly constructed test functions to rigorously establish the blow-up results for \eqref{eq:1.0}.

\section{preliminaries} \label{section_2}
\subsection{The Heisenberg group} \label{section_2.1}
The Heisenberg group is the Lie group $\mathbf{H}_n=\mathbb{R}^{2 n+1}$ equipped with the multiplication rule $(x, y, \tau) \circ\left(x^{\prime}, y^{\prime}, \tau^{\prime}\right)=\left(x+x^{\prime}, y+y^{\prime}, \tau+\tau^{\prime}+\frac{1}{2}\left(x \cdot y^{\prime}-x^{\prime} \cdot y\right)\right)$, where $\cdot$ denotes the standard scalar product in $\mathbb{R}^n$
and dilations $\delta_\gamma(\eta)$ are given by $\gamma \odot \eta := \delta_\gamma(\eta) := \left(\gamma x, \gamma y, \gamma^2 \tau\right)$, respectively. It can be checked that the identity element is the origin 0, the inverse $\eta^{-1}=-\eta$. A system of left-invariant vector fields that span the Lie algebra $\mathfrak{h}_n$ is given by $X_j:= \partial_{x_j}-\tfrac{y_j}{2} \partial_\tau, Y_j:= \partial_{y_j}+\tfrac{x_j}{2} \partial_\tau, T:= \partial_\tau$, where $1 \leq j \leq n$. This system satisfies the commutation relations $\left[X_j, Y_k\right]=\delta_{j k} \partial_\tau \quad \text { for } 1 \leq j, k \leq n$. Therefore, $\mathfrak{h}_n$ admits the stratification $\mathfrak{h}_n=V_1 \oplus V_2$, where $V_1 := \operatorname{span}\left\{X_j, Y_j\right\}_{1 \leq j \leq n}$ and $V_2 := \operatorname{span}\left\{\partial_\tau\right\}$. Hence, $\mathbf{H}_n$ is a 2 step stratified Lie group, whose homogeneous dimension is $Q=2n+2$.

We next recall some related definitions that will be useful for the subsequent sections.
\begin{definition} \cite[Section 2.1]{Palatucci2022}. \label{definition:2.1}
A homogeneous norm on $\mathbf{H}_n$ is a continuous function (with respect to the Euclidean topology) $\mathrm{d}_{\mathrm{\circ}}: \mathbf{H}_n \rightarrow[0,+\infty)$ such that:
\begin{enumerate}[i)]
\item $\mathrm{d}_{\mathrm{\circ}}\left(\delta_\gamma(\eta)\right)=\gamma \mathrm{d}_{\mathrm{\circ}}(\eta)$, for every $\gamma>0$ and every $\eta \in \mathbf{H}_n$.
\item $\mathrm{d}_{\circ}(\eta)=0$ if and only if $\eta=0$.
\end{enumerate}
Moreover, we say that the homogeneous norm $\mathrm{d}_{\mathrm{\circ}}$ is symmetric if $\mathrm{d}_{\mathrm{\circ}}\left(\eta^{-1}\right)=\mathrm{d}_{\mathrm{\circ}}(\eta)$, for any $\eta \in \mathbf{H}_n$.
\end{definition}
Fixed an homogeneous norm $\mathrm{d}_{\mathrm{\circ}}$ on $\mathbf{H}_n$, the function $\Psi$ defined on the set of all pairs of elements of $\mathbf{H}_n$ by $\Psi(\eta, \zeta) := \mathrm{d}_{\circ}\left(\zeta^{-1} \circ \eta\right)$ is a pseudometric on $\mathbf{H}_n$. Homogenous norms are not, in general, proper norms on $\mathbf{H}_n$. However, for any homogeneous norm $\mathrm{d}_{\mathrm{\circ}}$ on $\mathbf{H}_n$ we have that there exists a constant $\Lambda>0$ such that $\Lambda^{-1}|\eta|_{\mathbf{H}_n} \leq \mathrm{d}_{\circ}(\eta) \leq \Lambda|\eta|_{\mathbf{H}_n} \text{ for all } \eta \in \mathbf{H}_n, |\eta|_{\mathbf{H}_n}:= \left(\left(|x|^2+|y|^2\right)^2+\tau^2\right)^{\frac{1}{4}}$. The function $|\cdot|_{\mathbf{H}_n}$ in the display above is the \text{Kor\'anyi} distance in the Heisenberg group, and it is actually a norm; for the proof we refer to \cite{Cygan1981} and to Example 5.1 in \cite{Balogh2017}. For any fixed $\eta_0 \in \mathbf{H}_n$ and $R>0$, we denote by $B_R\left(\eta_0\right)$ the ball with center $\eta_0$ and radius $R$, given by $B_R\left(\eta_0\right) := \left\{\eta \in \mathbf{H}_n:\left|\eta_0^{-1} \circ \eta\right|_{\mathbf{H}_n}<R\right\}$.
\begin{definition} \cite[Section 2.1]{Palatucci2022}.
Let $u \in \mathcal{C}^{\infty}\left(\mathbf{H}_n; \mathbb{R}\right)$. Then, for any $m \in \mathbb{N} \cup\{0\}$ there exists a unique polynomial $P$ being $\Phi_\lambda$-homogeneous of degree at most $m$ such that
$$
\left(X_1, \ldots, X_{2 n}, T\right)^\beta P(0)=\left(X_1, \ldots, X_{2 n}, T\right)^\beta u(0)
$$
for any multi-index $\beta=\left(\beta_1, \ldots, \beta_{2 n+1}\right)$ with $|\beta|_{\mathbf{H}_n}=\beta_1+\cdots+\beta_{2 n}+2 \beta_{2 n+1} \leq m$. We say that $P := P_m(u, 0)(\eta)$ is the MacLaurin polynomial of $\Phi_\lambda$-degree $m$ associated to $u$.
\end{definition}
In the case of the Heisenberg group one can explicitly write the MacLaurin polynomial of $\Phi_\lambda$-degree 2; we have $P_2(u, 0)\left(x_1, \ldots, x_{2 n}, t\right)=u(0)+\nabla_{\mathbf{H}_n} u(0) \cdot z+\partial_t u(0) \cdot t+\frac{1}{2}\left\langle x, D_{\mathbf{H}_n}^{2, *} u(0) \cdot x\right\rangle$, where the subgradient $\nabla_{\mathbf{H}_n} u$ is given by $\nabla_{\mathbf{H}_n} u(\eta) := \left(X_1 u(\eta), \ldots, X_{2 n} u(\eta)\right)$, and $D_{\mathbf{H}_n}^{2, *}$ is the symmetrized horizontal Hessian matrix; that is, $D_{\mathbf{H}_n}^{2, *} u(\eta) := \left(\tfrac{1}{2}\left(X_i X_j u(\eta)+X_j X_i u(\eta)\right)\right)_{i, j=1, \ldots, 2 n}$.
\begin{definition} \cite[Section 2.1]{Palatucci2022}.
Let $u \in \mathcal{C}^{\infty}\left(\mathbf{H}_n ; \mathbb{R}\right), \eta_0, \eta \in \mathbf{H}_n$, and $m \in \mathbb{N} \cup\{0\}$. Let us consider the Maclaurin polynomial $P_m\left(u\left(\eta_0 \circ \cdot\right), 0\right)$ of the function $\eta \longmapsto u\left(\eta_0 \circ \eta\right)$ The polynomial
$$
P_m\left(u, \eta_0\right)(\eta) := P_m\left(u\left(\eta_0 \circ \cdot\right), 0\right)\left(\eta_0^{-1} \circ \eta\right)
$$
is the Taylor polynomial of $\mathbf{H}_n$-degree $m$ centered at $\eta_0$ associated to $u$.
\end{definition}
One can prove the following
\begin{proposition} \label{proposition:2.1}
\cite[Corollary 20.3.5]{Bonfiglioli2007}. For every $u \in \mathcal{C}^{m+1}\left(\mathbf{H}_n ; \mathbb{R}\right), \eta_0, \eta \in$ $\mathbf{H}_n$ and $m \in \mathbb{N} \cup\{0\}$, we have that $u(\eta)=P_m\left(u, \eta_0\right)(\eta)+o\left(\left|\eta_0^{-1} \circ \eta\right|_{\mathbf{H}_n}^{m+1}\right)$.
\end{proposition}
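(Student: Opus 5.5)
The approach is to reduce the statement to a one-variable Taylor expansion along a curve in $\mathbf{H}_n$ that is built from the left-invariant fields. By left translation we may assume $\eta_0=0$. Indeed, put $v(\zeta):=u(\eta_0\circ\zeta)$. Left translations commute with $X_j,Y_j,T$, so $v\in\mathcal{C}^{m+1}$. The definition gives $P_m(u,\eta_0)(\eta)=P_m(v,0)(\eta_0^{-1}\circ\eta)$. It therefore suffices to prove
$$v(\zeta)-P_m(v,0)(\zeta)=o\left(|\zeta|_{\mathbf{H}_n}^{m+1}\right), \qquad \zeta\to 0 .$$
In fact I will aim for the stronger bound $O\left(|\zeta|_{\mathbf{H}_n}^{m+1}\right)$ on compacts, with the $o$ obtained from continuity of the derivatives of order $m+1$.

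\textbf{Step 1: a curve to $\zeta$.} Write $\zeta=\delta_r(\theta)$ with $r=|\zeta|_{\mathbf{H}_n}$ and $|\theta|_{\mathbf{H}_n}=1$. Connect $0$ to $\theta$ by a finite concatenation of integral curves of the fields $X_j$, $Y_j$, $T$. By the Chow/exponential-coordinates argument, $\theta=\exp(a\cdot X+b\cdot Y+cT)$ with $(a,b,c)$ bounded on the unit sphere. Dilating gives $\zeta=\exp(r a\cdot X+r b\cdot Y+r^2 cT)$. Set $Z=r(a\cdot X+b\cdot Y)+r^2 cT$ and $g(s):=v(\exp(sZ))$ for $s\in[0,1]$. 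Then $g\in\mathcal{C}^{m+1}$ and $g^{(k)}(s)=(Z^k v)(\exp(sZ))$.

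\textbf{Step 2: one-variable Taylor expansion.} Apply Taylor's formula to $g$ at $s=0$ with integral remainder. The remainder is $\frac{1}{m!}\int_0^1(1-s)^m (Z^{m+1}v)(\exp(sZ))\,ds$. Expand $Z^{m+1}$ into words in $X_j,Y_j,T$. A word with $i$ horizontal letters and $\ell$ factors $T$ carries the factor $r^{i+2\ell}$, and $i+2\ell\geq m+1$. The words of homogeneous degree exactly $m+1$ give the main contribution $O(r^{m+1})$. After subtracting the same words evaluated at $0$, continuity gives $o(r^{m+1})$. The words of degree greater than $m+1$ are $o(r^{m+1})$ trivially. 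The catch is that words of degree larger than $m+1$ may involve derivatives of order above $m+1$, which are not available. To avoid this, I will reorganise the expansion by homogeneous degree rather than by powers of $s$. That is, I regard $g$ as a function of the two variables $(\sigma_1,\sigma_2)=(rs,r^2s)$ and keep only derivatives of weighted order at most $m+1$.

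\textbf{Step 3: identify the polynomial.} The terms of homogeneous degree at most $m$ in Step 2 are polynomials in $\zeta$ that are $\Phi_\lambda$-homogeneous of degree at most $m$. Their $(X,T)^\beta$-derivatives at $0$ for $|\beta|_{\mathbf{H}_n}\le m$ coincide with those of $v$. By the uniqueness in the MacLaurin-polynomial definition, their sum equals $P_m(v,0)$.

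\textbf{Main obstacle.} The hard part is Step 2 in the regime $T$-degree $\geq 1$. There one needs the derivatives of $v$ of homogeneous order at most $m+1$, which may amount to Euclidean order $m+1$ in $\tau$. If this becomes messy, I would instead cite the stratified mean-value inequality and induct on $m$ as in \cite{Bonfiglioli2007}. The inductive step applies the $m$-th order estimate to $X_jv$, $Y_jv$ and $Tv$ and integrates along the horizontal path of Step 1.
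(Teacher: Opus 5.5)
The paper gives no proof of its own here; it only cites the reference. The gap in your plan is the upgrade from $O$ to $o$, and it cannot be repaired, because the statement as printed (lower-case $o$ together with $P_m$) is false.

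Take $\eta_0=0$ and $u(x,y,\tau)=x_1^{m+1}$. The fields $X_j$ for $j\neq 1$, all $Y_j$, and $T$ annihilate functions of $x_1$ alone, and $X_1$ acts on them as $\partial_{x_1}$. So every $(X_1,\dots,X_{2n},T)^\beta u$ with $|\beta|_{\mathbf{H}_n}\le m$ is either $0$ or a positive power of $x_1$, hence vanishes at $0$, and uniqueness gives $P_m(u,0)=0$. Yet at $\zeta=(\epsilon,0,\dots,0)$ one has $u(\zeta)-P_m(u,0)(\zeta)=\epsilon^{m+1}=|\zeta|_{\mathbf{H}_n}^{m+1}$.

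In your Step 2 the faulty sentence is ``after subtracting the same words evaluated at $0$, continuity gives $o(r^{m+1})$''. The quantity you subtract is $\frac{1}{(m+1)!}\sum c_w(\zeta)(wv)(0)$, summed over the words of length and homogeneous degree $m+1$. It is a homogeneous polynomial of degree $m+1$ in $\zeta$ belonging to $P_{m+1}(v,0)$, not to $P_m(v,0)$. It is generally nonzero and of exact size $r^{m+1}$; for $u=x_1^{m+1}$ it is precisely $x_1^{m+1}$. So it cannot be hidden in an $o(r^{m+1})$ error, and your argument really proves $v-P_{m+1}(v,0)=o(|\zeta|_{\mathbf{H}_n}^{m+1})$.

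What is true for $u\in\mathcal{C}^{m+1}$ is $u(\eta)=P_m(u,\eta_0)(\eta)+O(|\eta_0^{-1}\circ\eta|_{\mathbf{H}_n}^{m+1})$, locally uniformly in $\eta_0$. This is also all that Lemma~\ref{lemma:6.3} needs in order to remove the principal value.

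For that corrected statement, your Steps 1--3 work, and more simply than you expect.

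\textbf{Step 1.} The coordinates of $\mathbf{H}_n$ are exponential coordinates, $\exp(x\cdot X+y\cdot Y+\tau T)=(x,y,\tau)$. So $\exp(sZ)$ is just the segment $s\mapsto(sx,sy,s\tau)$ ending at $\zeta=(x,y,\tau)$, and no Chow argument is needed. Moreover $|x_j|,|y_j|\le|\zeta|_{\mathbf{H}_n}$ and $|\tau|\le|\zeta|_{\mathbf{H}_n}^2$.

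\textbf{Step 2.} The ``catch'' you describe is misdiagnosed. A word of length $k$ in $X_j,Y_j,T$ is a differential operator of Euclidean order at most $k$. So $g^{(m+1)}=Z^{m+1}v$ uses only derivatives supplied by $\mathcal{C}^{m+1}$, and the integral remainder is bounded by a constant times $\sum_{|w|=m+1}|\zeta|_{\mathbf{H}_n}^{\deg w}\sup_K|wv|=O(|\zeta|_{\mathbf{H}_n}^{m+1})$. The only reorganisation needed is to move the words of length $\le m$ but homogeneous degree $>m$ from the polynomial part into the error. No two-variable expansion is required.

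\textbf{Step 3.} This step is only asserted and needs an argument. One clean way: $f:=v-\tilde P$ is $\mathcal{C}^{m+1}$ and $O(|\zeta|_{\mathbf{H}_n}^{m+1})$. Inserting $\zeta=\delta_r(\theta)$ into its Euclidean Taylor expansion at $0$ forces $\partial^\alpha f(0)=0$ whenever the weighted order of $\alpha$ is at most $m$. Since $(X,T)^\beta$ is $\delta_r$-homogeneous of degree $|\beta|_{\mathbf{H}_n}$, its value at $0$ involves only those $\partial^\alpha f(0)$. Hence $(X,T)^\beta f(0)=0$ for $|\beta|_{\mathbf{H}_n}\le m$, and the uniqueness in the definition gives $\tilde P=P_m(v,0)$.
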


\subsection{Group fourier transform, sub-Laplacian and Sobolev spaces on the Heisenberg group} \label{section_2.2}
In this section, we recall the properties of the group Fourier transform on $\mathbf{H}_n$ that we will use to prove Theorem \ref{theorem:1.2}. In the following, we follow the definitions and the notations of \cite[Chapters 1 and 6]{Fischer2016}. For the general definition of group Fourier transform on a locally compact group and its properties, we refer to the classical works \cite{Dixmier1977, Dixmier1981, Taylor1986, Corwin1990, Folland1995, Kirillov2004} and references therein.

Let us recall the following equivalent realization of Schr$\ddot{\text{o}}$dinger representations $\left\{\pi_\lambda\right\}_{\lambda \in \mathbb{R}^*}$ of $\mathbf{H}_n$ on $L^2\left(\mathbb{R}^n\right)$, here $\mathbb{R}^* := \mathbb{R} \backslash\{0\}$. For any $\lambda \in \mathbb{R}^*$ the mapping $\pi_\lambda$ is a strongly continuous unitary representation defined by $\pi_\lambda(x, y, \tau) \varphi(w) := \mathrm{e}^{i \lambda\left(\tau+\frac{1}{2} x \cdot y\right)} \mathrm{e}^{i \operatorname{sign}(\lambda) \sqrt{|\lambda|} y \cdot w} \varphi(w+\sqrt{|\lambda|} x)$ for any $(x, y, \tau) \in \mathbf{H}_n, \varphi \in L^2\left(\mathbb{R}^n\right)$ and $w \in \mathbb{R}^n$.
If $f \in L^1\left(\mathbf{H}_n\right)$, the group Fourier transform of $f$ is the bounded operator on $L^2\left(\mathbb{R}^n\right)$ defined as $\pi_\lambda(f) \equiv \widehat{f}(\lambda) := \int_{\mathbf{H}_n} f(\eta) \pi_\lambda(\eta)^* \mathrm{d} \eta$ for any $\lambda \in \mathbb{R}^*$, that is, $\left(\widehat{f}(\lambda) \varphi_1, \varphi_2\right)_{L^2\left(\mathbb{R}^n\right)}=\int_{\mathbf{H}_n} f(\eta)\left(\pi_\lambda(\eta)^* \varphi_1, \varphi_2\right)_{L^2\left(\mathbb{R}^n\right)} \mathrm{d} \eta$ for any $\varphi_1, \varphi_2 \in L^2\left(\mathbb{R}^n\right)$, where $\pi_\lambda(\eta)^*=\pi_\lambda(\eta)^{-1}$ denotes the adjoint operators of the unitary operator $\pi_\lambda(\eta)$. For $f \in \mathcal{S}\left(\mathbf{H}_n\right)$, the space of all Schwartz class functions on $\mathbf{H}_n$, the Fourier inversion formula takes the form $f(\eta)=\int_{\lambda \in \mathbb{R}^*} \operatorname{Tr}\left[\widehat{f}(\lambda) \pi_\lambda(\eta)\right]|\lambda|^n \mathrm{d} \lambda$, where $\mathrm{Tr}$ is the trace operator. As Schr$\ddot{\text{o}}$dinger representations are unitary, it follows immediately by the definition the inequality
\begin{equation} \label{ineq:2.1}
\|\widehat{f}(\lambda)\|_{\mathcal{L}\left(L^2\left(\mathbb{R}^n\right) \rightarrow L^2\left(\mathbb{R}^n\right)\right)} \leq\|f\|_{L^1\left(\mathbf{H}_n\right)}
\end{equation}
for any $\lambda \in \mathbb{R}^*$.
If $f \in L^2\left(\mathbf{H}_n\right)$, then, the following Plancherel formula holds
\begin{equation} \label{eq:2.2}
\|f\|_{L^2\left(\mathbf{H}_n\right)}^2=c_n \int_{\mathbb{R}^*}\|\widehat{f}(\lambda)\|_{\operatorname{HS}\left[L^2\left(\mathbb{R}^n\right)\right]}^2|\lambda|^n \mathrm{d} \lambda,
\end{equation}
where $c_n :=(2 \pi)^{-(3 n+1)}$ and $\|\widehat{f}(\lambda)\|_{\mathrm{HS}\left[L^2\left(\mathbb{R}^n\right)\right]}$ denotes the Hilbert-Schmidt norm of $\widehat{f}(\lambda)$ (cf. \cite[Proposition 6.2.7]{Fischer2016}). Therefore, for an arbitrary maximal orthonormal system $\left\{\varphi_k\right\}_{k \in \mathbb{N}}$ of $L^2\left(\mathbb{R}^n\right)$ the definition of Hilbert-Schmidt norm
\begin{align*}
\|\widehat{f}(\lambda)\|_{\mathrm{HS}\left[L^2\left(\mathbb{R}^n\right)\right]}^2 := \operatorname{Tr}\left(\left(\pi_\lambda(f)\right)^* \pi_\lambda(f)\right) =\sum_{k \in \mathbb{N}}\|\widehat{f}(\lambda) \varphi_k\|_{L^2\left(\mathbb{R}^n\right)}^2=\sum_{k, \ell \in \mathbb{N}}|\left(\widehat{f}(\lambda) \varphi_k, \varphi_{\ell}\right)_{L^2\left(\mathbb{R}^n\right)}|^2,
\end{align*}
allows us to write \eqref{eq:2.2} in more operative way, as follows:
$$
\|f\|_{L^2\left(\mathbf{H}_n\right)}^2=c_n \int_{\mathbb{R}^*} \sum_{k \in \mathbb{N}}\|\widehat{f}(\lambda) \varphi_k\|_{L^2\left(\mathbb{R}^n\right)}^2|\lambda|^n \mathrm{d} \lambda=c_n \int_{\mathbb{R}^*} \sum_{k, \ell \in \mathbb{N}}|\left(\widehat{f}(\lambda) \varphi_k, \varphi_{\ell}\right)_{L^2\left(\mathbb{R}^n\right)}|^2|\lambda|^n \mathrm{d} \lambda.
$$
The sub-Laplacian on $\mathbf{H}_n$ is defined as 
$$
\Delta_{\mathrm{H}} := \sum_{j=1}^n\left(X_j^2+Y_j^2\right)=\sum_{j=1}^n\left(\partial_{x_j}^2+\partial_{y_j}^2\right)+\tfrac{1}{4} \sum_{j=1}^n\left(x_j^2+y_j^2\right) \partial_\tau^2+\sum_{j=1}^n\left(x_j \partial_{y_j \tau}^2-y_j \partial_{x_j \tau}^2\right) .
$$
For a function $v: \mathbf{H}_n \rightarrow \mathbb{R}$, the horizontal gradient of $v$ is
$$
\nabla_{\mathrm{H}} v :=\left(X_1 v, \cdots, X_n v, Y_1 v, \cdots, Y_n v\right) \equiv \sum_{j=1}^n\left(\left(X_j v\right) X_j+\left(Y_j v\right) Y_j\right),
$$
where each fiber of the horizontal subbundle $\mathrm{H}\mathbf{H}_n=\sqcup_{\eta \in \mathbf{H}_n} \mathrm{H}_\eta \mathbf{H}_n$ can be endowed with a scalar product $\langle\cdot, \cdot\rangle_\eta$ in such a way that $X_1(\eta), \cdots, X_n(\eta), Y_1(\eta), \cdots, Y_n(\eta)$ are orthonormal in $\left(\mathrm{H}_\eta \mathbf{H}_n,\langle\cdot, \cdot\rangle_\eta\right)$ for any $\eta \in \mathbf{H}_n$. Therefore, if $X=\sum_{j=1}^n\left(\alpha_j X_j+\beta_j Y_j\right)+\gamma \partial_\tau$ is a vector field on $\mathbf{H}_n$ with $\alpha_j, \beta_j, \gamma \in \mathcal{C}^1\left(\mathbf{H}_n\right)$ for any $j=1, \cdots, n$, the divergence of $X$ is the function $\operatorname{div} X := \sum_{j=1}^n\left(X_j \alpha_j+Y_j \beta_j\right)+\partial_\tau \gamma$. In particular, the sub-Laplacian may be expressed also as $\Delta_{\mathrm{H}} v=\operatorname{div}\left(\nabla_{\mathrm{H}} v\right)$. For a function $v \in L^2\left(\mathbf{H}_n\right)$ we say that $X_j v, Y_j v \in L_{\mathrm{loc}}^1\left(\mathbf{H}_n\right)$ exist in the sense of distributions, if the integral relations 
$$
\int_{\mathbf{H}_n}\left(X_j v\right)(\eta) \phi(\eta) \mathrm{d} \eta =\int_{\mathbf{H}_n} v(\eta)\left(X_j^* \phi\right)(\eta) \mathrm{d} \eta \quad \text{ and } \quad \int_{\mathbf{H}_n}\left(Y_j v\right)(\eta) \phi(\eta) \mathrm{d} \eta =\int_{\mathbf{H}_n} v(\eta)\left(Y_j^* \phi\right)(\eta) \mathrm{d} \eta
$$
are fulfilled for any $\phi \in \mathcal{C}_0^{\infty}\left(\mathbf{H}_n\right)$, where $X_j^*=-X_j$ and $Y_j^*=-Y_j$ denote the formal adjoint operators of $X_j$ and $Y_j$, respectively.

On the other hand, the Plancherel measure $\mu$ on $\widehat{\mathbf{H}}_n$ is supported on the equivalence classes of $\pi_\lambda, \lambda \in \mathbb{R}^*$ and $\mathrm{d} \mu\left(\pi_\lambda\right)=c_n|\lambda|^n \mathrm{d} \lambda$.
A further property related to the group Fourier transform, that we will apply extensively, is the action of the infinitesimal representation of $\pi_\lambda$ on the generators of the first layer of the Lie algebra $\mathfrak{h}_n$, namely,
\begin{align}
\label{eq:2.4} \mathrm{d} \pi_\lambda\left(X_j\right) =&\sqrt{|\lambda|} \partial_{w_j} \text { for } j=1, \cdots, n, \\
\label{eq:2.5} \mathrm{d} \pi_\lambda\left(Y_j\right) =&i \operatorname{sign}(\lambda) \sqrt{|\lambda|} w_j \text { for } j=1, \cdots, n.
\end{align}

Since the action of $\mathrm{d} \pi_\lambda$ can be extended to the universal enveloping algebra of $\mathfrak{h}_n$, combining \eqref{eq:2.4} and \eqref{eq:2.5}, it follows
\begin{equation} \label{eq:2.6}
\mathrm{d} \pi_\lambda\left(\Delta_{\mathrm{H}}\right)=|\lambda| \sum_{j=1}^n\left(\partial_{w_j}^2-w_j^2\right)=-|\lambda| \mathrm{H}_w,
\end{equation}
where $\mathrm{H}_w :=-\Delta_w+|w|^2$ is the harmonic oscillator on $\mathbb{R}^n$. Thus, the operator-valued symbol $\sigma_{\Delta_{\mathrm{H}}}(\lambda)$ of $\Delta_{\mathrm{H}}$ acting on $L^2\left(\mathbb{R}^n\right)$ takes the form $\sigma_{\Delta_{\mathrm{H}}}(\lambda)=-|\lambda| \mathrm{H}_w$. Furthermore, for $s \in \mathbb{R}$, using the functional calculus, the symbol of $(-\Delta_{\mathrm{H}})^s$ is $|\lambda|^s H^s$, where the notion of $H^s$ is defined in \eqref{eq:2.11}.

The Sobolev spaces $H^s, s \in \mathbb{R}$, associated to the sublaplacian $\Delta_{\mathrm{H}}$, are defined as
$$
H^s\left(\mathbf{H}_n\right) := \left\{f \in \mathcal{D}^{\prime}\left(\mathbf{H}_n\right):(I-\Delta_{\mathrm{H}})^{\frac{s}{2}} f \in L^2\left(\mathbf{H}_n\right)\right\},
$$
with the norm $\|f\|_{H^s\left(\mathbf{H}_n\right)} := \left\|(I-\Delta_{\mathrm{H}})^{\frac{s}{2}} f\right\|_{L^2\left(\mathbf{H}_n\right)}$. Similarly, we denote by $\dot{H}^s\left(\mathbf{H}_n\right)$, the homogeneous Sobolev defined as the space of all $f \in \mathcal{D}^{\prime}\left(\mathbf{H}_n\right)$ such that $(-\Delta_{\mathrm{H}})^{\frac{s}{2}} f \in L^2\left(\mathbf{H}_n\right)$. More generally, we define $\dot{H}_s^p\left(\mathbf{H}_n\right)$ as the homogeneous Sobolev defined as the space of all $f \in \mathcal{D}^{\prime}\left(\mathbf{H}_n\right)$ such that $(-\Delta_{\mathrm{H}})^{\frac{s}{2}} f \in L^p\left(\mathbf{H}_n\right)$ for $s<\frac{Q}{p}$.
\subsection{Hermite functions and their properties} \label{section_2.3}
In the last section, we recalled some properties of Schr$\ddot{\text{o}}$dinger representations. Because of \eqref{eq:2.6} it will be helpful for what follows to recall the definition of the Hermite functions and to prove some elementary properties of them. These functions constitute a complete system of eigenfunctions for the essentially self adjoint operator $\mathrm{H}_w$. Let us begin with the definition of $m$-th Hermite polynomial $H_m(x) := \mathrm{e}^{x^2}\left(\tfrac{\mathrm{d}}{\mathrm{d} x}\right)^m \mathrm{e}^{-x^2}, x \in \mathbb{R}, m \in \mathbb{N}$. It is well-known that Hermite polynomials satisfy the following recurrence properties
\begin{align}
\label{eq:2.7} H_{m+1}(x) =&2 x H_m(x)-2 m H_{m-1}(x), \\
\label{eq:2.8} H_m^{\prime}(x) =&2 m H_{m-1}(x)
\end{align}
for any $m \in \mathbb{N}$ and $x \in \mathbb{R}$. Furthermore, $\int_{\mathbb{R}} H_m(x) H_{\ell}(x) \mathrm{e}^{-x^2} \mathrm{d} x=\sqrt{\pi} 2^m m!\delta_{m, \ell} \quad \text { for any } m, \ell \in \mathbb{N}$, where $\delta_{m, \ell}$ denotes the Kronecker delta. The $m$-th Hermite function is $\psi_m(x) := a_m \mathrm{e}^{-\frac{x^2}{2}} H_m(x)$ for any $m \in \mathbb{N}$ and $x \in \mathbb{R}$, where $a_m :=\left(\sqrt{\pi} 2^m m!\right)^{-1 / 2}$. By using \eqref{eq:2.7} and \eqref{eq:2.8} it follows easily that $\psi_m$ is an eigenfunction for the 1-d harmonic oscillator $H=-\frac{\mathrm{d}^2}{\mathrm{d} x^2}+x^2$ relative to the eigenvalue $2 m+1$ as
\begin{equation} \label{eq:2.9}
-\psi_m^{\prime \prime}(x)+x^2 \psi_m(x)=(2 m+1) \psi_m(x) .
\end{equation}
The multidimensional version of Hermite functions is given by
$$
e_k(w) := \prod_{j=1}^n \psi_{k_j}\left(w_j\right)=\prod_{j=1}^n a_{k_j} H_{k_j}\left(w_j\right) \mathrm{e}^\frac{-w_j^2}{2}
$$
for any multi-index $k=\left(k_j\right)_{1 \leq j \leq n} \in \mathbb{N}^n$ and any $w=\left(w_j\right)_{1 \leq j \leq n} \in \mathbb{R}^n$. As the elements of $\left\{e_k\right\}_{k \in \mathbb{N}^n}$ are functions with separate variables, by \eqref{eq:2.9} we get that
$$
\mathrm{H}_w e_k(w)=\left(-\Delta_w+|w|^2\right) e_k(w)=\left(\sum_{j=1}^n\left(2 k_j+1\right)\right) e_k(w)=\underbrace{(2|k|+n)}_{:= \mu_k} e_k(w)
$$
for any $k \in \mathbb{N}^n$ and any $w \in \mathbb{R}^n$. Given $f \in L^2\left(\mathbb{R}^n\right)$, we have the Hermite expansion
$$
f=\sum_{k \in \mathbb{N}^n}\left(f, e_k\right) e_k=\sum_{m=0}^{\infty} \sum_{|k|=m}\left(f, e_k\right) e_k=\sum_{m=0}^{\infty} P_m f,
$$
where $P_m$ denotes the orthogonal projection of $L^2\left(\mathbb{R}^n\right)$ onto the eigenspace spanned by $\left\{e_k:|k|=m\right\}$. Then, the spectral decomposition of $H$ on $\mathbb{R}^n$ is given by $H f=\sum_{m=0}^{\infty}(2 m+n) P_m f$. Since 0 is not in the spectrum of $H$, for any $s \in \mathbb{R}$, we can define the fractional powers $H^s$ by means of the spectral theorem, namely
\begin{equation} \label{eq:2.11}
H^s f=\sum_{m=0}^{\infty}(2 m+n)^s P_m f.
\end{equation}
Let us recall the following result:
\begin{proposition} \cite[Theorem 2.2.3]{Nicola2010}. The system $\left\{e_k\right\}_{k \in \mathbb{N}^n}$ is an orthonormal basis of $L^2\left(\mathbb{R}^n\right)$.
\end{proposition}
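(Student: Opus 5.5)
We have to show that the multidimensional Hermite functions $\{e_k\}_{k\in\mathbb{N}^n}$ are orthonormal and that their span is dense in $L^2(\mathbb{R}^n)$. The plan is to do the one-dimensional case first and then pass to $n$ dimensions through the tensor-product structure.

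\emph{Orthonormality.} In dimension one, the orthogonality relation for Hermite polynomials recalled above gives
\[
(\psi_m,\psi_\ell)=a_m a_\ell\int_{\mathbb{R}}H_m H_\ell \mathrm{e}^{-x^2}\,\mathrm{d}x=a_m a_\ell\sqrt{\pi}\,2^m m!\,\delta_{m,\ell}=\delta_{m,\ell},
\]
by the choice of $a_m$. (If one prefers not to take that relation as given, it follows from the Rodrigues formula by integrating by parts $m$ times, using $H_\ell^{(m)}=0$ for $\ell<m$ and $H_m^{(m)}=2^m m!$, the latter from \eqref{eq:2.8}.) In dimension $n$ the functions $e_k$ have separated variables, so Fubini gives
\[
(e_k,e_{k'})=\prod_{j=1}^n(\psi_{k_j},\psi_{k'_j})=\delta_{k,k'}.
\]

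\emph{Completeness.} This is the main step. The recurrence \eqref{eq:2.7} shows that $H_m$ has degree exactly $m$ with leading coefficient $2^m$. Hence $\operatorname{span}\{\psi_0,\dots,\psi_M\}=\{p(x)\mathrm{e}^{-x^2/2}: \deg p\le M\}$, and likewise the span of the $e_k$ is $\{p(w)\mathrm{e}^{-|w|^2/2}: p \text{ a polynomial on } \mathbb{R}^n\}$.

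So it suffices to show that if $f\in L^2(\mathbb{R}^n)$ satisfies $\int f(w)w^\beta \mathrm{e}^{-|w|^2/2}\,\mathrm{d}w=0$ for every multi-index $\beta$, then $f=0$. Consider
\[
F(z)=\int_{\mathbb{R}^n} f(w)\,\mathrm{e}^{-|w|^2/2}\,\mathrm{e}^{-iz\cdot w}\,\mathrm{d}w,\qquad z\in\mathbb{C}^n.
\]
Cauchy--Schwarz together with the Gaussian factor shows the integral converges absolutely and locally uniformly. Therefore $F$ is entire and may be differentiated under the integral sign. Its derivatives at $0$ are $(-i)^{|\beta|}\int f w^\beta \mathrm{e}^{-|w|^2/2}\,\mathrm{d}w=0$ for every $\beta$. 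Since $F$ is entire with all Taylor coefficients zero, $F\equiv0$. In particular the Fourier transform of $f\mathrm{e}^{-|w|^2/2}\in L^1\cap L^2$ vanishes. By Fourier injectivity on $L^1$, $f\mathrm{e}^{-|w|^2/2}=0$ almost everywhere, so $f=0$. Orthonormality plus density of the span then gives an orthonormal basis.

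The only delicate point is justifying differentiation under the integral sign and the entire-function argument. For this, use the domination
\[
|w^\beta f(w)\mathrm{e}^{-|w|^2/2}\mathrm{e}^{|\mathrm{Im}\,z||w|}|\le |f(w)|\,g_{\beta,R}(w),
\]
with $g_{\beta,R}\in L^2$ uniformly for $|z|\le R$. An alternative is to expand $\mathrm{e}^{-iz\cdot w}$ in a power series and interchange sum and integral, which is justified by the same bound $\int|f|\mathrm{e}^{-|w|^2/2}\mathrm{e}^{|z||w|}\,\mathrm{d}w<\infty$.
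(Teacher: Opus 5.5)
Your argument is correct and complete. The paper does not prove this statement; it only cites Nicola--Rodino, so your proposal is a self-contained replacement rather than an alternative to an argument in the text.

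What you give is the classical proof. Orthonormality comes from the one-dimensional orthogonality relation and Fubini. For completeness, the triangular structure of the $H_m$ reduces the question to showing that any $f\in L^2(\mathbb{R}^n)$ orthogonal to all $w^\beta\mathrm{e}^{-|w|^2/2}$ vanishes. That follows because $F(z)=\int f(w)\mathrm{e}^{-|w|^2/2}\mathrm{e}^{-iz\cdot w}\,\mathrm{d}w$ is entire with vanishing Taylor coefficients, so the Fourier transform of $f\mathrm{e}^{-|w|^2/2}\in L^1$ is zero, and Fourier injectivity on $L^1$ finishes it.

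Your domination by $|f(w)|\,|w|^{|\beta|}\mathrm{e}^{-|w|^2/2+R|w|}$, which is integrable by Cauchy--Schwarz, covers all the analytic justifications. The series variant is the cleaner of your two options. The bound $\sum_\beta |z^\beta||w^\beta|/\beta!\le \mathrm{e}^{|z||w|}$ lets you interchange sum and integral and read off $F\equiv 0$ directly, without the identity theorem in several variables.

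There is one small inconsistency in your parenthetical derivation of the orthogonality relation, inherited from the paper. The paper's Rodrigues formula $H_m=\mathrm{e}^{x^2}(\tfrac{\mathrm{d}}{\mathrm{d}x})^m\mathrm{e}^{-x^2}$ omits the factor $(-1)^m$. However, \eqref{eq:2.7}--\eqref{eq:2.8} are the recurrences for the standard polynomials $(-1)^m\mathrm{e}^{x^2}(\tfrac{\mathrm{d}}{\mathrm{d}x})^m\mathrm{e}^{-x^2}$. Integrating by parts with the formula as written gives $\int H_mH_\ell\mathrm{e}^{-x^2}\,\mathrm{d}x=(-1)^m\int H_\ell^{(m)}\mathrm{e}^{-x^2}\,\mathrm{d}x$. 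Combined with $H_m^{(m)}=2^m m!$ taken from \eqref{eq:2.8}, this gives a negative value for odd $m$.

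Use one convention consistently; with the literal definition, $H_m^{(m)}=(-2)^m m!$. Either way you get $\sqrt{\pi}\,2^m m!$, as required. The completeness part uses only that $H_m$ has degree exactly $m$, so it is unaffected.
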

\subsection{Some auxiliary inequalities} \label{section_2.4}
Next, we recall the following important inequalities:
\begin{lemma} \label{Hardy_Littlewood}
\cite[Hardy-Littlewood-Sobolev inequality]{Dasgupta2024}. Let $\mathbf{H}_n$ be the Heisenberg group with the homogeneous dimension $Q=2n+2$. Let $a \geq 0$ and $1<p \leq q<\infty$ be such that $\frac{a}{Q}=\frac{1}{p}-\frac{1}{q}$. Then we have the following inequality $\|f\|_{\dot{H}^{q}_{-a}\left(\mathbf{H}_n\right)} \lesssim\|f\|_{L^p\left(\mathbf{H}_n\right)}$.
\end{lemma}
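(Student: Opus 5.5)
This Hardy--Littlewood--Sobolev inequality on $\mathbf{H}_n$ is cited from \cite{Dasgupta2024}, but it can be derived from the Folland--Stein fractional integration theorem. The plan is first to recall what the norm means. By the definition in Section~\ref{section_2.2}, $\|f\|_{\dot{H}^{q}_{-a}(\mathbf{H}_n)}=\|(-\Delta_{\mathrm{H}})^{-\frac{a}{2}} f\|_{L^q(\mathbf{H}_n)}$, and the restriction $-a<\frac{Q}{q}$ holds automatically since $a\ge 0$. The case $a=0$ forces $p=q$ and is trivial, so I assume $0<a<Q$; this range follows from $\frac{a}{Q}=\frac1p-\frac1q<1$. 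The statement is then equivalent to the $L^p\to L^q$ boundedness of the Riesz potential $I_a:=(-\Delta_{\mathrm{H}})^{-a/2}$.

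The first step is to represent $I_a$ as a convolution with a homogeneous kernel. Using subordination to the heat semigroup,
\[
(-\Delta_{\mathrm{H}})^{-\frac a2} f=\frac{1}{\Gamma(a/2)}\int_0^\infty t^{\frac a2-1}\,\mathrm{e}^{t\Delta_{\mathrm{H}}}f\,\mathrm{d}t = f * K_a ,\qquad K_a(\eta):=\frac{1}{\Gamma(a/2)}\int_0^\infty t^{\frac a2-1}h_t(\eta)\,\mathrm{d}t .
\]
Here $h_t$ is the heat kernel of $\Delta_{\mathrm{H}}$. It satisfies the scaling $h_t(\eta)=t^{-Q/2}h_1(t^{-1/2}\odot\eta)$ and the Gaussian bound $h_1(\eta)\lesssim \mathrm{e}^{-c|\eta|_{\mathbf{H}_n}^2}$. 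Changing variables shows that $K_a$ is homogeneous of degree $a-Q$ with respect to the dilations $\delta_\gamma$. It follows that
\[
0\le K_a(\eta)\lesssim |\eta|_{\mathbf{H}_n}^{a-Q}.
\]

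The second step is to bound $|f|*|\cdot|_{\mathbf{H}_n}^{a-Q}$ by Hedberg's argument, using that Haar measure is Lebesgue measure, that $|B_R|=cR^Q$, and that the Korányi balls give a doubling space. Splitting the convolution integral at radius $R$, the near part is controlled by a dyadic decomposition, giving $\lesssim R^{a}\,Mf(\eta)$ with $M$ the Hardy--Littlewood maximal operator on $\mathbf{H}_n$. The far part is controlled by Hölder, giving $\lesssim R^{a-Q/p}\|f\|_{L^p}$; this requires $(a-Q)p'<-Q$, i.e.\ $a<Q/p$, which is equivalent to $q<\infty$. Optimizing in $R$ yields
\[
|I_af(\eta)|\lesssim (Mf(\eta))^{p/q}\,\|f\|_{L^p}^{1-p/q}.
\]
Since $p>1$, the maximal function is $L^p$-bounded on spaces of homogeneous type, so raising to the power $q$ and integrating gives $\|I_af\|_{L^q}\lesssim\|f\|_{L^p}$.

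The main obstacle is the rigorous identification of $(-\Delta_{\mathrm{H}})^{-a/2}$, defined spectrally through the group Fourier transform symbol $|\lambda|^{-a/2}H^{-a/2}$, with convolution by $K_a$. I would first establish this for $f\in\mathcal S(\mathbf{H}_n)$, using the functional calculus together with the heat semigroup, whose symbol is $\mathrm{e}^{-t|\lambda|H}$. I would then extend it by density once the $L^p\to L^q$ bound is in hand. The kernel estimate for $K_a$ relies only on standard heat kernel bounds on stratified groups (Folland--Stein), which I take as known.
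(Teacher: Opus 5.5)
The paper does not prove this lemma. It imports the inequality from the Dasgupta--Kumar--Mondal--Ruzhansky reference, where it goes back to the Folland--Stein fractional integration theorem on homogeneous groups. Your proposal therefore does not reproduce an argument of the paper but supplies one, and it is correct.

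Your subordination step is the standard construction of Riesz potentials on stratified groups. It gives a positive kernel $K_a$, homogeneous of degree $a-Q$, whose $t\to\infty$ end converges exactly when $a<Q$. Where you depart from the classical treatment is the second step. The classical proof places $K_a$ in weak $L^{Q/(Q-a)}$ and uses Marcinkiewicz interpolation. You instead use Hedberg's pointwise bound $|I_af|\lesssim (Mf)^{p/q}\|f\|_{L^p}^{1-p/q}$ together with the maximal theorem on the doubling space $(\mathbf{H}_n,|\cdot|_{\mathbf{H}_n},\mathrm{d}\eta)$. This gives the strong-type bound directly and shows where each hypothesis enters:
\begin{itemize}
\item $a>0$ makes the dyadic sum near the singularity converge;
\item $a<Q/p$, i.e.\ $q<\infty$, makes the tail integrable;
\item $p>1$ is needed for the boundedness of $M$.
\end{itemize}
Your exponent bookkeeping $1-\frac{ap}{Q}=\frac pq$ is right. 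The citation, for its part, buys brevity and leaves the precise meaning of $(-\Delta_{\mathrm{H}})^{-a/2}$ on $L^p$ to the source.

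The identification you flag needs care when $a\ge Q/2$. In that range $K_a$ is comparable to $|\eta|_{\mathbf{H}_n}^{a-Q}$ and so is not square integrable at infinity. Hence for $f\in\mathcal{S}(\mathbf{H}_n)$ with $\int f\neq 0$, the function $(-\Delta_{\mathrm{H}})^{-a/2}f$ is not in $L^2(\mathbf{H}_n)$, and its equality with $f*K_a$ cannot be an identity in the $L^2$ functional calculus. It does hold weakly. For $g\in\mathcal{S}(\mathbf{H}_n)$, the integral $\int_0^\infty \mu^{-a/2}\,\mathrm{d}\langle E_\mu f,g\rangle$ converges absolutely, because $\|E_{[0,\varepsilon]}f\|_{L^2}^2\lesssim \varepsilon^{Q/2}\|f\|_{L^1}^2$ and $a<Q$. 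Fubini then justifies exchanging it with the $t$-integral of the heat semigroup. Alternatively, you can work on the dense class $(-\Delta_{\mathrm{H}})^N\mathcal{S}(\mathbf{H}_n)$, or simply take convolution with $K_a$ as the definition of $(-\Delta_{\mathrm{H}})^{-a/2}$ on $L^p$.

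The paper uses the lemma only in two places:
\begin{itemize}
\item Proposition~\ref{proposition:3.4}, with $q=2$ and $a=Q(\frac1m-\frac12)$;
\item the remark after Theorem~\ref{theorem:1.5}, with $q=m\le 2$ and $a=\gamma<Q-\frac Qm$.
\end{itemize}
In both cases $a<Q/2$, so the direct $L^2$ identification already suffices there.
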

\begin{lemma} \label{Gagliardo}
\cite[Gagliardo-Nirenberg inequality]{Ruzhansky2018}. Let $Q$ be the homogeneous dimension on the Heisenberg group $\mathbf{H}_n$. Assume that $s \in(0,1], 1<r<\frac{Q}{s}, \text { and } 2 \leq q \leq \frac{r Q}{Q-s r}$. Then, we have the following Gagliardo-Nirenberg type inequality:
$$
\|u\|_{L^q\left(\mathbf{H}_n\right)} \lesssim\|u\|_{\dot{H}^{r}_s\left(\mathbf{H}_n\right)}^\theta\|u\|_{L^2\left(\mathbf{H}_n\right)}^{1-\theta} \quad \text{ for } \theta=\tfrac{\frac{1}{2}-\frac{1}{q}}{\frac{s}{Q}+\frac{1}{2}-\frac{1}{r}} \in [0,1], \text{ provided } \tfrac{s}{Q}+\tfrac{1}{2} \neq \tfrac{1}{r}
$$
\end{lemma}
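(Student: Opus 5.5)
The plan is to split the inequality into a homogeneous Sobolev embedding for $\dot{H}^r_s\left(\mathbf{H}_n\right)$ into a Lebesgue space, followed by Hölder interpolation between $L^2\left(\mathbf{H}_n\right)$ and that endpoint space. Throughout, set
$$
q^* := \tfrac{rQ}{Q-sr},
$$
which is finite and larger than $r$ because $1<r<\frac{Q}{s}$. Note also that
$$
\tfrac{1}{2}-\tfrac{1}{q^*}=\tfrac{s}{Q}+\tfrac{1}{2}-\tfrac{1}{r}.
$$
This expression is exactly the denominator of $\theta$, and the proviso $\frac{s}{Q}+\frac{1}{2}\neq\frac{1}{r}$ says precisely that $q^*\neq 2$.

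\emph{Step 1 (Sobolev embedding).} I will first prove $\|u\|_{L^{q^*}\left(\mathbf{H}_n\right)}\lesssim\|u\|_{\dot{H}^{r}_s\left(\mathbf{H}_n\right)}$. Put $f:=(-\Delta_{\mathrm{H}})^{\frac{s}{2}}u\in L^r\left(\mathbf{H}_n\right)$, so that $u=(-\Delta_{\mathrm{H}})^{-\frac{s}{2}}f$. Then I apply the Hardy--Littlewood--Sobolev inequality (Lemma \ref{Hardy_Littlewood}) with $a=s$, $p=r$ and $q=q^*$; the scaling condition $\frac{s}{Q}=\frac{1}{r}-\frac{1}{q^*}$ holds by the choice of $q^*$. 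The lemma gives $\|(-\Delta_{\mathrm{H}})^{-\frac{s}{2}}f\|_{L^{q^*}}\lesssim\|f\|_{L^r}$, which is the claimed embedding.

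Concretely, this uses that $(-\Delta_{\mathrm{H}})^{-\frac{s}{2}}$ is convolution with a Riesz kernel that is homogeneous of degree $s-Q$ with respect to the dilations $\delta_\gamma$ (Folland). The inequality is first established for $u\in\mathcal{S}\left(\mathbf{H}_n\right)$ with suitably vanishing moments, and then extended by density.

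\emph{Step 2 (interpolation).} I then treat $2\le q\le q^*$, so $q^*\ge 2$ and hence $q^*>2$ by the proviso. Define $\vartheta\in[0,1]$ by
$$
\tfrac{1}{q}=\tfrac{1-\vartheta}{2}+\tfrac{\vartheta}{q^*}.
$$
Hölder's inequality (log-convexity of $L^p$ norms) gives
$$
\|u\|_{L^q}\le\|u\|_{L^2}^{1-\vartheta}\|u\|_{L^{q^*}}^{\vartheta}.
$$
Solving for $\vartheta$ gives $\vartheta=\frac{\frac{1}{2}-\frac{1}{q}}{\frac{1}{2}-\frac{1}{q^*}}$. By the identity for $\frac{1}{2}-\frac{1}{q^*}$ above, this is exactly the $\theta$ in the statement, so $\theta\in[0,1]$. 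Inserting Step 1 yields the claim.

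The main obstacle is Step 1. It requires the fractional Riesz-potential representation of $(-\Delta_{\mathrm{H}})^{-s/2}$ via the group Fourier transform and functional calculus, together with the density argument needed to give meaning to $\dot{H}^r_s\left(\mathbf{H}_n\right)$ for $r\neq 2$. I would handle this by citing the homogeneous-kernel theory on stratified groups and working first on a dense class of Schwartz functions. The interpolation in Step 2 is routine.
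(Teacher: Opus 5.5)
Your proof is correct: the identity $\frac12-\frac1{q^*}=\frac sQ+\frac12-\frac1r$ makes the H\"older interpolation exponent coincide with $\theta$. The endpoint bound $\|u\|_{L^{q^*}}\lesssim\|(-\Delta_{\mathrm{H}})^{s/2}u\|_{L^r}$ follows from Lemma~\ref{Hardy_Littlewood} once $u=(-\Delta_{\mathrm{H}})^{-s/2}(-\Delta_{\mathrm{H}})^{s/2}u$ is justified on a dense class, as you note. The paper gives no proof of its own and only quotes the result from \cite{Ruzhansky2018}, where it is proved by this same combination of H\"older interpolation between $L^2$ and $L^{rQ/(Q-sr)}$ and the homogeneous Sobolev embedding on graded groups.
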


\section{decay character on the Heisenberg group} \label{section_3}
\subsection{Definition of the decay character on the Heisenberg group} \label{section_3.2}
Next, we introduce the following new definitions of decay indicators and decay character on the Heisenberg group. Firstly, we introduce the following notions of lower and upper decay indicators.
\begin{definition} \label{definition_3.7}
(The lower and upper decay indicators). Let $u_0 \in L^2\left(\mathbf{H}_n\right)$ and $\beta\geq 0$. We define the lower and upper decay indicator by
\begin{align*}
P_{r_\mathrm{H}}^\beta\left(u_0\right)_{-}=&\liminf _{\rho_\mathrm{H} \rightarrow 0^{+}} \rho_\mathrm{H}^{-2 r_\mathrm{H}-Q} c_n \sum_{k, \ell \in \mathbb{N}^n} \int_{B^*\left(\mu_k^{-1}\rho_\mathrm{H}^2\right)} \mu_k^\beta |\lambda|^{\beta+n} \left|\widehat{u}_0(\lambda)_{k, \ell}\right|^2 \mathrm{d} \lambda, \\
P_{r_\mathrm{H}}^\beta\left(u_0\right)_{+}=&\limsup _{\rho_\mathrm{H} \rightarrow 0^{+}} \rho_\mathrm{H}^{-2r_\mathrm{H}-Q} c_n \sum_{k, \ell \in \mathbb{N}^n} \int_{B^*\left(\mu_k^{-1}\rho_\mathrm{H}^2\right)} \mu_k^\beta |\lambda|^{\beta+n} \left|\widehat{u}_0(\lambda)_{k, \ell}\right|^2 \mathrm{d} \lambda,
\end{align*}
where $c_n := (2 \pi)^{-(3 n+1)}$ and $B^*\left(\mu_k^{-1}\rho_\mathrm{H}^2\right)=\left\{\lambda \in \mathbb{R}^*:|\lambda| \leq \mu_k^{-1}\rho_\mathrm{H}^2(t)\right\}$ is the ball centered at origin with radius $\mu_k^{-1}\rho_\mathrm{H}^2$, with $\mu_k := 2|k|+n$.
\end{definition}
This definition is interesting only for $r_\mathrm{H}\in \left(-\frac{Q}{2}+\beta, \infty\right)$  as the decay indicator is
always zero when $r_\mathrm{H} \leq -\frac{Q}{2}+\beta$. We also note that the lower and upper decay indicators of $u_0 \in L^2\left(\mathbf{H}_n\right)$ always exist in $\overline{\mathbb{R}}$.

Next, we introduce the following new definitions of lower and upper decay indicators of negative order on the Heisenberg group.
\begin{definition}
(The lower and upper decay indicators with negative order). Let $u_0 \in L^2\left(\mathbf{H}_n\right)$ and $\beta\geq 0$. We define the lower and upper decay indicator with negative order by
\begin{align*}
P_{r_\mathrm{H}}^{-\beta}\left(u_0\right)_{-}=&\liminf _{\rho_\mathrm{H} \rightarrow 0^{+}} \rho_\mathrm{H}^{-2 r_\mathrm{H}-Q} c_n \sum_{k, \ell \in \mathbb{N}^n} \int_{B^*\left(\mu_k^{-1}\rho_\mathrm{H}^2\right)} \mu_k^{-\beta} |\lambda|^{-\beta+n} \left|\widehat{u}_0(\lambda)_{k, \ell}\right|^2 \mathrm{d} \lambda, \\
P_{r_\mathrm{H}}^{-\beta}\left(u_0\right)_{+}=&\limsup _{\rho_\mathrm{H} \rightarrow 0^{+}} \rho_\mathrm{H}^{-2r_\mathrm{H}-Q} c_n \sum_{k, \ell \in \mathbb{N}^n} \int_{B^*\left(\mu_k^{-1}\rho_\mathrm{H}^2\right)} \mu_k^{-\beta} |\lambda|^{-\beta+n} \left|\widehat{u}_0(\lambda)_{k, \ell}\right|^2 \mathrm{d} \lambda,
\end{align*}
where $c_n := (2 \pi)^{-(3 n+1)}$ and $B^*\left(\mu_k^{-1}\rho_\mathrm{H}^2\right)=\left\{\lambda \in \mathbb{R}^*:|\lambda| \leq \mu_k^{-1}\rho_\mathrm{H}^2(t)\right\}$ is the ball centered at origin with radius $\mu_k^{-1}\rho_\mathrm{H}^2$, with $\mu_k := 2|k|+n$.
\end{definition}
Next, we define the lower and upper decay character on the Heisenberg group as follows.
\begin{definition} (The lower and upper decay characters).
The lower and upper decay characters of $u_0 \in L^2\left(\mathbf{H}_n\right)$ are respectively defined by
\begin{align*}
& r_{\mathrm{H}, \beta}\left(u_0\right)_{-}=\inf \left\{r_\mathrm{H} \in \mathbb{R}: P_{r_\mathrm{H}}^\beta\left(u_0\right)_{-}>0\right\}, \\
& r_{\mathrm{H}, \beta}\left(u_0\right)_{+}=\sup \left\{r_\mathrm{H} \in \mathbb{R}: P_{r_\mathrm{H}}^\beta\left(u_0\right)_{+}<\infty\right\}.
\end{align*}
\end{definition}
\begin{remark}
\rm{
For any $u_0 \in L^2\left(\mathbf{H}_n\right)$, the upper and lower decay character of $u_0$ are always well defined and admit the inequality $-\frac{Q}{2}+\beta \leq r_{\mathrm{H}, \beta}^*\left(u_0\right)_{+} \leq r_{\mathrm{H}, \beta}^*\left(u_0\right)_{-} \leq+\infty$.
}
\end{remark}
Finally, the decay character on the Heisenberg group is defined by the following manner.
\begin{definition} \label{definition_3.9}
(Decay character). If $u_0 \in L^2\left(\mathbf{H}_n\right)$ is such that there exists $r_{\mathrm{H}, \beta}^* \in \left(-\frac{Q}{2}+\beta, \infty\right)$ such that
$$
r_{\mathrm{H}, \beta}^*=\min \left\{r_\mathrm{H} \in \mathbb{R}: P_{r_\mathrm{H}}^\beta\left(u_0\right)_{-}>0\right\}=\max \left\{r_\mathrm{H} \in \mathbb{R}: P_{r_\mathrm{H}}^\beta\left(u_0\right)_{+}<\infty\right\},
$$
then we say $r_{\mathrm{H}, \beta}^*=r_{\mathrm{H}, \beta}^*\left(u_0\right)$ is the decay character of $u_0$. We also define the decay character of $u_0$ in the two limit situations as follows:
$$
\begin{cases}
r_{\mathrm{H}, \beta}^*\left(u_0\right)=+\infty & \text{ if } r_{\mathrm{H}, \beta}\left(u_0\right)_{+}=r_{\mathrm{H}, \beta}\left(u_0\right)_{-}=+\infty, \\
r_{\mathrm{H}, \beta}^*\left(u_0\right)=-\frac{Q}{2}+\beta & \text{ if } r_{\mathrm{H}, \beta}\left(u_0\right)_{+}=r_{\mathrm{H}, \beta}\left(u_0\right)_{-}=-\frac{Q}{2}+\beta.
\end{cases}
$$
For $\beta=0$, we denote $P_{r_\mathrm{H}}\left(u_0\right)_{-}=P_{r_\mathrm{H}}^0\left(u_0\right)_{-}, P_{r_\mathrm{H}}\left(u_0\right)_{+}=P_{r_\mathrm{H}}^0\left(u_0\right)_{+}, r_{\mathrm{H}, 0}=r_\mathrm{H}$ and $r_{\mathrm{H}, 0}^*=r_\mathrm{H}^*$.
\end{definition}
\subsection{Decay characterization of solutions to the linear fractional diffusion equation on the Heisenberg group} \label{section_3.3}
\begin{theorem} \label{theorem:3.7}
Consider the Cauchy problem
\begin{equation} \label{heat_equation_Heisenberg}
\begin{cases}
\partial_t u(t, \eta)+\left(-\Delta_{\mathrm{H}}\right)^\alpha u(t, \eta) =0, & (t, \eta) \in[s, \infty) \times \mathbf{H}_n, \quad s \geq 0, \quad \alpha > 0, \quad n \in \mathbb{N}^*, \\ u(s, \eta) =u_0(\eta) \in L^2\left(\mathbf{H}_n\right), & \eta \in \mathbf{H}_n, \quad s \geq 0, \quad n \in \mathbb{N}^*,
\end{cases}
\end{equation}
Then
\begin{enumerate}[1)]
\item If $P_{r_\mathrm{H}}\left(u_0\right)_{-}>0$, then the solution $u(t, \eta)$ to \eqref{heat_equation_Heisenberg} satisfies
$$
\|u(t, \eta)\|_{L^2\left(\mathbf{H}_n\right)}^2 \gtrsim \left(1+t-s\right)^{-\frac{2 r_\mathrm{H} +Q}{2\alpha}} .
$$
\item If $P_{r_\mathrm{H}}\left(u_0\right)_{+}<\infty$, then the solution $u(t, \eta)$ to \eqref{heat_equation_Heisenberg} satisfies
$$
\|u(t, \eta)\|_{L^2\left(\mathbf{H}_n\right)}^2 \lesssim \left(1+t-s\right)^{-\frac{2 r_\mathrm{H} +Q}{2\alpha}}.
$$
\end{enumerate}
\end{theorem}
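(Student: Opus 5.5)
The plan is to write the solution on the Fourier side and reduce both estimates to the behaviour of the Plancherel mass of $u_0$ in the low-frequency region $\{\mu_k|\lambda|\le \rho_\mathrm{H}^2\}$, with $\rho_\mathrm{H}$ tied to $t-s$.

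\textbf{Fourier representation.} By \eqref{eq:2.6} and the functional calculus, the symbol of $(-\Delta_{\mathrm{H}})^\alpha$ is $|\lambda|^\alpha \mathrm{H}_w^\alpha$. In the Hermite basis this symbol is diagonal, with eigenvalue $(\mu_k|\lambda|)^\alpha$ on $e_k$. Hence the matrix coefficients of the solution are
$$
\widehat{u}(t)(\lambda)_{k,\ell}=\mathrm{e}^{-(t-s)(\mu_k|\lambda|)^\alpha}\,\widehat{u}_0(\lambda)_{k,\ell},
$$
where the index $k$ is the one on which $\mathrm{H}_w$ acts. The Plancherel formula \eqref{eq:2.2} then gives
$$
\|u(t)\|_{L^2}^2=c_n\sum_{k,\ell}\int_{\mathbb{R}^*}\mathrm{e}^{-2(t-s)(\mu_k|\lambda|)^\alpha}|\widehat{u}_0(\lambda)_{k,\ell}|^2|\lambda|^n\,\mathrm{d}\lambda .
$$
For $t-s\le 1$ both claims are trivial. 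The solution norm is bounded by $\|u_0\|_{L^2}$ and is nonincreasing, and in the lower-bound case $u_0\neq 0$ and the exponential is at least $\mathrm{e}^{-2}$ on a fixed low-frequency set, so a positive lower bound holds there too. From now on take $\tau:=t-s\ge1$ and set $\rho_\mathrm{H}:=\tau^{-1/(2\alpha)}$, so that $\rho_\mathrm{H}^{2\alpha}\tau=1$.

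\textbf{Lower bound.} Restrict the integral to $B^*(\mu_k^{-1}\rho_\mathrm{H}^2)$. On this set $\tau(\mu_k|\lambda|)^\alpha\le \tau\rho_\mathrm{H}^{2\alpha}=1$, so the exponential factor is at least $\mathrm{e}^{-2}$. Therefore
$$
\|u(t)\|^2\ge \mathrm{e}^{-2}\,\rho_\mathrm{H}^{2r_\mathrm{H}+Q}\Big(\rho_\mathrm{H}^{-2r_\mathrm{H}-Q}c_n\sum_{k,\ell}\int_{B^*}\cdots\Big).
$$
Since $P_{r_\mathrm{H}}(u_0)_->0$, the bracket is at least $P_-/2$ for all small $\rho_\mathrm{H}$, that is, for $\tau$ large. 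Using $\rho_\mathrm{H}^{2r_\mathrm{H}+Q}=\tau^{-(2r_\mathrm{H}+Q)/(2\alpha)}$ gives the claim for large $\tau$. On the remaining compact range of $\tau$ I use monotonicity and $u_0\ne0$ to match the constants.

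\textbf{Upper bound.} This is the main step, since a single cutoff does not suffice. I use a Fourier-splitting argument with a dyadic decomposition in the variable $s:=\mu_k|\lambda|$. Define
$$
F(\rho):=c_n\sum_{k,\ell}\int_{B^*(\mu_k^{-1}\rho^2)}|\widehat{u}_0|^2|\lambda|^n\,\mathrm{d}\lambda .
$$
The hypothesis $P_+<\infty$ gives $F(\rho)\le C\rho^{2r_\mathrm{H}+Q}$ for $\rho\le\rho_0$, and trivially $F\le\|u_0\|^2$ for all $\rho$. Split the frequency region into $\{s\le\rho_\mathrm{H}^2\}$ and the shells $\{2^{j-1}\rho_\mathrm{H}^2<s\le 2^{j}\rho_\mathrm{H}^2\}$ for $j\ge1$.
\begin{itemize}
\item On the $j$-th shell the exponential is at most $\mathrm{e}^{-2\cdot 2^{(j-1)\alpha}}$, and the shell mass is at most $F(2^{j/2}\rho_\mathrm{H})$.
\item While $2^{j/2}\rho_\mathrm{H}\le\rho_0$, the shell mass is at most $C2^{j(2r_\mathrm{H}+Q)/2}\rho_\mathrm{H}^{2r_\mathrm{H}+Q}$. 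Because $2r_\mathrm{H}+Q\ge0$, the factor $\mathrm{e}^{-2\cdot2^{(j-1)\alpha}}$ makes the series over $j$ converge to a constant times $\rho_\mathrm{H}^{2r_\mathrm{H}+Q}$.
\item The shells with $2^{j/2}\rho_\mathrm{H}>\rho_0$, together with all remaining high frequencies, satisfy $s\ge\rho_0^2/2$. There the exponential is at most $\mathrm{e}^{-c\tau}$, so their contribution is $\|u_0\|^2\mathrm{e}^{-c\tau}$. This is bounded by $C\tau^{-(2r_\mathrm{H}+Q)/(2\alpha)}$.
\end{itemize}
Summing the three pieces yields $\|u(t)\|^2\lesssim(1+t-s)^{-(2r_\mathrm{H}+Q)/(2\alpha)}$.

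The delicate point is that the sum over $(k,\ell)$ mixes different radii $\mu_k^{-1}\rho^2$. Writing everything in terms of the single variable $s=\mu_k|\lambda|$ resolves this: the decay indicator's balls are exactly the sublevel sets $\{s\le\rho^2\}$, so the dyadic splitting is consistent with the indicator's definition.
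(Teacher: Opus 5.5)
Your lower bound is the paper's argument: restrict to $B^*(\mu_k^{-1}\rho_\mathrm{H}^2)$ with $\rho_\mathrm{H}\sim(1+t-s)^{-\frac{1}{2\alpha}}$, so that the multiplier is bounded below by a constant. Your upper bound, however, takes a genuinely different route.

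The paper uses Schonbek's Fourier splitting. From the energy identity $\tfrac{\mathrm{d}}{\mathrm{d}t}\|u\|_{L^2(\mathbf{H}_n)}^2=-2\|(-\Delta_\mathrm{H})^{\frac{\alpha}{2}}u\|_{L^2(\mathbf{H}_n)}^2$ it derives $\tfrac{\mathrm{d}}{\mathrm{d}t}\|u\|^2+\rho_\mathrm{H}^{2\alpha}(t)\|u\|^2\lesssim\rho_\mathrm{H}^{2\alpha}(t)\,F(\rho_\mathrm{H}(t))$. This step uses the pointwise bound $|\widehat{u}(t,\lambda)_{k,\ell}|\le|\widehat{u}_0(\lambda)_{k,\ell}|$ on the low-frequency ball. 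It then takes the time-dependent radius $\rho_\mathrm{H}^{2\alpha}(t)=m(1+t-s)^{-1}$ with $m>\frac{2r_\mathrm{H}+Q}{2\alpha}$ and integrates against the factor $(1+t-s)^m$.

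You instead work statically with the explicit multiplier $\mathrm{e}^{-2(t-s)(\mu_k|\lambda|)^\alpha}$ and sum over dyadic shells in $\mu_k|\lambda|$. The double-exponential factor $\mathrm{e}^{-2\cdot 2^{(j-1)\alpha}}$ absorbs the geometric growth $2^{j(2r_\mathrm{H}+Q)/2}$ coming from the indicator bound.

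What each approach buys:
\begin{itemize}
\item Your version avoids differentiating $\|u(t)\|^2$, the integrating factor and the auxiliary parameter $m$, and it gives explicit constants.
\item It also applies verbatim to any multiplier, possibly truncated, that is dominated by $\mathrm{e}^{-ct(\mu_k|\lambda|)^{\beta}}$. That is exactly how the result is invoked in the proof of Theorem~\ref{theorem:1.2}.
\item The paper's version needs only an energy inequality plus control of the low-frequency part. This is why it is the standard tool in the decay-character literature, and it survives for dissipative problems with no explicit solution formula.
\end{itemize}

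There are a few small things to tidy.
\begin{itemize}
\item Your shell series converges for every real $r_\mathrm{H}$, so the appeal to $2r_\mathrm{H}+Q\ge 0$ there is unnecessary.
\item For $1\le t-s<\rho_0^{-2\alpha}$ your radius $\rho_\mathrm{H}=(t-s)^{-\frac{1}{2\alpha}}$ exceeds $\rho_0$, so the base region $\{\mu_k|\lambda|\le\rho_\mathrm{H}^2\}$ is not controlled by $P_{r_\mathrm{H}}(u_0)_+$ as written. Either treat this compact range with $\|u(t)\|\le\|u_0\|$, as you already do in the lower bound, or note that since your $F\le\|u_0\|_{L^2(\mathbf{H}_n)}^2$, the bound $F(\rho)\le C\rho^{2r_\mathrm{H}+Q}$ extends from $\rho\le\rho_0$ to all $\rho\le 1$ after enlarging $C$. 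The paper's choice $\rho_\mathrm{H}(t)=m^{\frac{1}{2\alpha}}(1+t-s)^{-\frac{1}{2\alpha}}$ silently needs the same extension.
\item If $P_{r_\mathrm{H}}(u_0)_-=+\infty$, replace $P_{r_\mathrm{H}}(u_0)_-/2$ by any positive constant.
\item Rename your frequency variable $s$, which clashes with the initial time.
\end{itemize}
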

\begin{proof}[Proof of Theorem \ref{theorem:3.7}]
Applying the group Fourier transform to equation \eqref{heat_equation_Heisenberg} yields a Cauchy problem for a parameter-dependent functional differential equation involving $\widehat{u}(t, \lambda)$
$$
\begin{cases}
\partial_t \widehat{u}(t, \lambda)+\sigma_{\left(-\Delta_{\mathrm{H}}\right)^\alpha}(\lambda)\partial_t \widehat{u}(t, \lambda)=0, & (t, \lambda) \in [s, \infty) \times \mathbb{R}^*, \quad s \geq 0, \quad \alpha > 0, \\ \widehat{u}(s, \lambda)=\widehat{u}_0(\lambda), & \lambda \in \mathbb{R}^*, \quad s \geq 0,
\end{cases}
$$
where $\sigma_{\left(-\Delta_{\mathrm{H}}\right)^\alpha}(\lambda)=\left(\mu_k|\lambda|\right)^\alpha$ is the symbol of the fractional sub-Laplacian $\left(-\Delta_{\mathrm{H}}\right)^\alpha$ on the Heisenberg group. We now introduce the following notation:
$$
\widehat{u}(t, \lambda)_{k, \ell} := \left(\widehat{u}(t, \lambda) e_k, e_{\ell}\right)_{L^2\left(\mathbb{R}^n\right)} \text{ for any } k, \ell \in \mathbb{N}^n,
$$
where $\left\{e_k\right\}_{k \in \mathbb{N}^n}$ denotes the system of Hermite functions. Since $\mathrm{H}_w^\alpha e_k=\mu_k^\alpha e_k$, the term $\widehat{u}(t, \lambda)_{k, \ell}$ satisfies an ordinary differential equation in $t$ that depends on the parameters $\lambda \in \mathbb{R}^*$ and $k, \ell \in \mathbb{N}^n$
$$
\begin{cases}
\partial_t \widehat{u}(t, \lambda)_{k, \ell}+\left(\mu_k|\lambda|\right)^\alpha \widehat{u}(t, \lambda)_{k, \ell}=0, & (t, \lambda) \in [s, \infty) \times \mathbb{R}^*, \quad s \geq 0, \quad \alpha > 0, \\
\widehat{u}(s, \lambda)_{k, \ell}=\widehat{u}_0(\lambda)_{k, \ell}, & \lambda \in \mathbb{R}^*, \quad s \geq 0.
\end{cases}
$$
Solving this equation gives the explicit solution
\begin{equation} \label{eq:u_t_u_0}
\widehat{u}(t, \lambda)_{k, \ell}=\mathrm{e}^{-\left(\mu_k|\lambda|\right)^\alpha(t-s)} \widehat{u}_0(\lambda)_{k, \ell}.
\end{equation}
\begin{enumerate}[1)]
\item Let $B^*\left(\mu_k^{-1}\rho_\mathrm{H}^2(t)\right)=\left\{\lambda \in \mathbb{R}^*:|\lambda| \leq \mu_k^{-1}\rho_\mathrm{H}^2(t)\right\}$, for some non-increasing, continuous radius $\mu_k^{-1}\rho_\mathrm{H}^2=\mu_k^{-1}\rho_\mathrm{H}^2(t)$ to be determined later. Assume that $P_{r_\mathrm{H}}\left(u_0\right)_{-}>0$. Then, there exists a constant ${\rho_\mathrm{H}}_0>0$ such that for all $0<\rho_\mathrm{H} \leq {\rho_\mathrm{H}}_0$, the following inequality holds:
\begin{equation} \label{ineq:p_u0_1}
\rho_\mathrm{H}^{2 r_\mathrm{H}+Q}(t) \lesssim c_n \sum_{k, \ell \in \mathbb{N}^n} \int_{B^*\left(\mu_k^{-1}\rho_\mathrm{H}^2(t)\right)} |\lambda|^n \left|\widehat{u}_0(\lambda)_{k, \ell}\right|^2 \mathrm{d} \lambda.
\end{equation}
By applying \eqref{eq:u_t_u_0} and \eqref{ineq:p_u0_1}, we derive the estimate
\begin{align} \label{ineq:lower_bound}
\|u(t, \eta)\|_{L^2\left(\mathbf{H}_n\right)}^2 & \gtrsim c_n \sum_{k, \ell \in \mathbb{N}^n} \int_{B^*\left(\mu_k^{-1}\rho_\mathrm{H}^2(t)\right)} |\lambda|^n \left|\mathrm{e}^{-\left(\mu_k|\lambda|\right)^\alpha(t-s)} \widehat{u}_0(\lambda)_{k, \ell}\right|^2 \mathrm{d} \lambda \notag\\
& \gtrsim c_n \mathrm{e}^{-2\rho_\mathrm{H}^{2\alpha}(t)(t-s)} \sum_{k, \ell \in \mathbb{N}^n} \int_{B^*\left(\mu_k^{-1}\rho_\mathrm{H}^2(t)\right)} |\lambda|^n \left|\widehat{u}_0(\lambda)_{k, \ell}\right|^2 \mathrm{d} \lambda \notag\\
& \gtrsim c_n \mathrm{e}^{-2\rho_\mathrm{H}^{2\alpha}(t)(t-s)} \rho_\mathrm{H}^{2 r_\mathrm{H}+Q}(t).
\end{align}
By choosing $\rho_\mathrm{H}(t)={\rho_\mathrm{H}}_0 \left(1+t-s\right)^{-\frac{1}{2\alpha}}$, it follows that $c_n \mathrm{e}^{-2\rho_\mathrm{H}^{2\alpha}(t)(t-s)} \geq C>0$. Consequently, we establish that
$$
\|u(t, \eta)\|_{L^2\left(\mathbf{H}_n\right)}^2 \gtrsim \left(1+t-s\right)^{-\frac{2 r_\mathrm{H} +Q}{2\alpha}},
$$
which confirms the desired lower bound.
\item The upper bound is established using the Fourier splitting method. From \eqref{heat_equation_Heisenberg}, we derive
\begin{align}
\tfrac{\mathrm{d}}{\mathrm{d} t}\|u(t, \eta)\|_{L^2\left(\mathbf{H}_n\right)}^2 \lesssim & - c_n \sum_{k, \ell \in \mathbb{N}^n} \int_{\mathbb{R}^*} \mu_k^\alpha |\lambda|^{n+\alpha} \left|\widehat{u}(\lambda)_{k, \ell}\right|^2 \mathrm{d} \lambda \notag\\
\label{ineq:upper_bound}\lesssim & - \rho_\mathrm{H}^{2\alpha}(t) c_n \sum_{k, \ell \in \mathbb{N}^n} \int_{{B^*}^{C}\left(\mu_k^{-1}\rho_\mathrm{H}^2(t)\right)} |\lambda|^n \left|\widehat{u}(\lambda)_{k, \ell}\right|^2 \mathrm{d} \lambda,
\end{align}
where $B^*\left(\mu_k^{-1}\rho_\mathrm{H}^2(t)\right)$ is defined as before. Consequently, we obtain
\begin{equation} \label{ineq:bp}
\tfrac{\mathrm{d}}{\mathrm{d} t}\|u(t, \eta)\|_{L^2\left(\mathbf{H}_n\right)}^2+ \rho_\mathrm{H}^{2\alpha}(t) \|u(t, \eta)\|_{L^2\left(\mathbf{H}_n\right)}^2 \lesssim \rho_\mathrm{H}^{2\alpha}(t) c_n \sum_{k, \ell \in \mathbb{N}^n} \int_{B^*\left(\mu_k^{-1}\rho_\mathrm{H}^2(t)\right)} |\lambda|^n \left|\widehat{u}(\lambda)_{k, \ell}\right|^2 \mathrm{d} \lambda.
\end{equation}
Given that $P_{r_\mathrm{H}}\left(u_0\right)_{+}<\infty$, there exists ${\rho_\mathrm{H}}_0>0$ such that for all $0<\rho_\mathrm{H} \leq {\rho_\mathrm{H}}_0$, we have
\begin{equation} \label{ineq:p_u0} c_n \sum_{k, \ell \in \mathbb{N}^n} \int_{B^*\left(\mu_k^{-1}\rho_\mathrm{H}^2(t)\right)} |\lambda|^n \left|\widehat{u}_0(\lambda)_{k, \ell}\right|^2 \mathrm{d} \lambda \lesssim \rho_\mathrm{H}^{2 r_\mathrm{H}+Q}(t).
\end{equation}
Additionally, by \eqref{eq:u_t_u_0}, it holds that
\begin{equation} \label{ineq:u_u0}
c_n \sum_{k, \ell \in \mathbb{N}^n} \int_{B^*\left(\mu_k^{-1}\rho_\mathrm{H}^2(t)\right)} |\lambda|^n \left|\widehat{u}(\lambda)_{k, \ell}\right|^2 \mathrm{d} \lambda \lesssim  c_n \sum_{k, \ell \in \mathbb{N}^n} \int_{B^*\left(\mu_k^{-1}\rho_\mathrm{H}^2(t)\right)} |\lambda|^n \left|\widehat{u}_0(\lambda)_{k, \ell}\right|^2 \mathrm{d} \lambda.
\end{equation}
Then from \eqref{ineq:bp}, \eqref{ineq:p_u0} and \eqref{ineq:u_u0}, we obtain
$$ 
\tfrac{\mathrm{d}}{\mathrm{d} t}\|u(t, \eta)\|_{L^2\left(\mathbf{H}_n\right)}^2+ \rho_\mathrm{H}^{2\alpha}(t) \|u(t, \eta)\|_{L^2\left(\mathbf{H}_n\right)}^2 \lesssim \rho_\mathrm{H}^{2 r_\mathrm{H}+Q+2\alpha}(t).
$$
By setting $\rho_\mathrm{H}(t)=m^\frac{1}{2\alpha} \left(1+t-s\right)^{-\frac{1}{2\alpha}}$, with $m > \frac{2 r_\mathrm{H}+Q}{2\alpha}$ and multiplying both sides by the integrating factor $h(t)=\left(1+t-s\right)^m$, we deduce
$$
\tfrac{\mathrm{d}}{\mathrm{d} t}\left(\left(1+t-s\right)^m \|u(t, \eta)\|_{L^2\left(\mathbf{H}_n\right)}^2\right) \lesssim m^{\frac{2 r_\mathrm{H}+Q+2\alpha}{2\alpha}} \left(1+t-s\right)^{-\frac{2 r_\mathrm{H}+Q+2\alpha}{2\alpha}+m}.
$$
Integrating this inequality from $s$ to $t$ leads to
$$
\|u(t, \eta)\|_{L^2\left(\mathbf{H}_n\right)}^2 \lesssim \left(1+t-s\right)^{-\frac{2 r_\mathrm{H} +Q}{2\alpha}},
$$
which completes the proof of the upper bound.
\end{enumerate}
\end{proof}
The following theorem clearly demonstrates the significance of the concept of decay character.
\begin{theorem} \label{theorem:3.8}
Let $u(t, \eta)$ be a solution to \eqref{heat_equation_Heisenberg}, with the initial datum $u(s, \eta)=u_0(\eta)\in L^2\left(\mathbf{H}_n\right)$ have decay character $r_\mathrm{H}^*\left(u_0\right)=r_\mathrm{H}^*$, where $s\geq 0$. If $-\frac{Q}{2}<r_\mathrm{H}^*<\infty$, then
$$
\left(1+t-s\right)^{-\frac{2 r_\mathrm{H}^* +Q}{2\alpha}} \lesssim \|u(t, \eta)\|_{L^2\left(\mathbf{H}_n\right)}^2 \lesssim \left(1+t-s\right)^{-\frac{2 r_\mathrm{H}^* +Q}{2\alpha}} \quad \text{ with } \alpha >0.
$$
\end{theorem}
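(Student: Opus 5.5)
The plan is to deduce Theorem \ref{theorem:3.8} directly from Theorem \ref{theorem:3.7} by applying both parts with the single value $r_\mathrm{H}=r_\mathrm{H}^*$. By Definition \ref{definition_3.9} with $\beta=0$, the hypothesis $-\frac{Q}{2}<r_\mathrm{H}^*<\infty$ puts us in the non-degenerate case, where
$$
r_\mathrm{H}^*=\min\left\{r_\mathrm{H}\in\mathbb{R}: P_{r_\mathrm{H}}(u_0)_{-}>0\right\}=\max\left\{r_\mathrm{H}\in\mathbb{R}: P_{r_\mathrm{H}}(u_0)_{+}<\infty\right\}.
$$
Because both extrema are \emph{attained}, we get at once $P_{r_\mathrm{H}^*}(u_0)_{-}>0$ and $P_{r_\mathrm{H}^*}(u_0)_{+}<\infty$.

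First, I apply part 1) of Theorem \ref{theorem:3.7} with $r_\mathrm{H}=r_\mathrm{H}^*$ to get the lower bound $\|u(t)\|_{L^2}^2\gtrsim(1+t-s)^{-\frac{2r_\mathrm{H}^*+Q}{2\alpha}}$. Second, I apply part 2) with the same $r_\mathrm{H}$ to get the matching upper bound. Both estimates are uniform in $t\ge s$, so they combine into the two-sided estimate claimed.

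The only point needing care is the case where the minimum and maximum in the definition might not be attained, if one reads them as an infimum and supremum. In that case I would check monotonicity in $r_\mathrm{H}$. For $\rho_\mathrm{H}<1$, the factor $\rho_\mathrm{H}^{-2r_\mathrm{H}-Q}$ increases as $r_\mathrm{H}$ increases. Hence $P_{r_\mathrm{H}}(u_0)_{\pm}$ is nondecreasing in $r_\mathrm{H}$. In particular, $P_r(u_0)_+<\infty$ forces $P_{r'}(u_0)_+=0$ for every $r'<r$, and $P_r(u_0)_->0$ forces $P_{r'}(u_0)_-=\infty$ for every $r'>r$.

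So I would rely on the definition asserting attainment, which is how Definition \ref{definition_3.9} is stated. This is the step I expect to be the only subtle one; the rest is a direct citation of Theorem \ref{theorem:3.7}.
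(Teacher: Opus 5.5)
Your proposal is correct and matches the paper, whose proof just states that Theorem~\ref{theorem:3.8} is a corollary of Theorem~\ref{theorem:3.7}. As you observe, Definition~\ref{definition_3.9} builds attainment into the notion of decay character, which gives $P_{r_\mathrm{H}^*}(u_0)_{-}>0$ and $P_{r_\mathrm{H}^*}(u_0)_{+}<\infty$; parts 1) and 2) of Theorem~\ref{theorem:3.7} then give the two bounds, and your monotonicity digression is correct but unnecessary.
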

\begin{proof}[Proof of Theorem \ref{theorem:3.8}]
This theorem is a corollary of Theorem \ref{theorem:3.7}.
\end{proof}
Next, we obtain a result regarding the relation between $r_{\mathrm{H}, -\beta}\left(u_0\right)$ and $r_\mathrm{H}\left(u_0\right)$ for a function $u_0 \in L^2\left(\mathbf{H}_n\right)$.
\begin{theorem} \label{theorem:3.11}
Let $u_0 \in L^2\left(\mathbf{H}_n\right)$ and $r_\mathrm{H}\left(u_0\right)-\beta>-\frac{Q}{2}$ such that $P_{r_\mathrm{H}}\left(u_0\right)_{+}<\infty$ for some $\beta \geq 0$. Then $v_0 \in L^2\left(\mathbf{H}_n\right)$ and $r_{\mathrm{H}, -\beta}\left(u_0\right)=r_\mathrm{H}\left(v_0\right)=r_\mathrm{H}\left(u_0\right)-\beta$, where $\widehat{v}_0(\lambda)_{k, \ell}=\left(\mu_k|\lambda|\right)^{-\frac{\beta}{2}} \widehat{u}_0(\lambda)_{k, \ell}$.
\end{theorem}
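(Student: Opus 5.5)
The plan is to compare the indicator integrals of $v_0$ on the balls $B^*(\mu_k^{-1}\rho_\mathrm{H}^2)$ with those of $u_0$. The key fact is that on the ball $\mu_k|\lambda|\le\rho_\mathrm{H}^2$, so the weight $(\mu_k|\lambda|)^{-\beta}$ is \emph{at least} $\rho_\mathrm{H}^{-2\beta}$ pointwise. By definition,
$$
c_n \sum_{k,\ell}\int_{B^*(\mu_k^{-1}\rho_\mathrm{H}^2)} |\lambda|^n|\widehat v_0(\lambda)_{k,\ell}|^2\,\mathrm{d}\lambda
= c_n \sum_{k,\ell}\int_{B^*(\mu_k^{-1}\rho_\mathrm{H}^2)} \mu_k^{-\beta}|\lambda|^{n-\beta}|\widehat u_0(\lambda)_{k,\ell}|^2\,\mathrm{d}\lambda .
$$
Hence the $\beta=0$ indicators of $v_0$ coincide with the negative-order indicators of $u_0$:
$$
P^0_{r}(v_0)_\pm=P^{-\beta}_{r}(u_0)_\pm \quad\text{for every } r .
$$
This gives $r_{\mathrm H,-\beta}(u_0)=r_\mathrm H(v_0)$ at once. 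What remains is to prove $v_0\in L^2(\mathbf H_n)$ and $r_\mathrm H(v_0)=r_\mathrm H(u_0)-\beta$.

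\textbf{Dyadic decomposition (upper bound and $L^2$).} I will write
$$
E(\rho):=c_n\sum_{k,\ell}\int_{B^*(\mu_k^{-1}\rho^2)}|\lambda|^n|\widehat u_0(\lambda)_{k,\ell}|^2\,\mathrm{d}\lambda .
$$
The hypothesis $P_{r_\mathrm H}(u_0)_+<\infty$ gives $E(\rho)\lesssim\rho^{2r_\mathrm H+Q}$ for $\rho\le\rho_0$. I split the ball into dyadic shells $2^{-j-1}\rho^2<\mu_k|\lambda|\le 2^{-j}\rho^2$, $j\ge0$. On the $j$-th shell the weight $(\mu_k|\lambda|)^{-\beta}$ is at most $(2^{-j-1}\rho^2)^{-\beta}$. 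This yields
$$
\sum_{k,\ell}\int_{B^*(\mu_k^{-1}\rho^2)}(\mu_k|\lambda|)^{-\beta}|\lambda|^n|\widehat u_0|^2
\lesssim \sum_{j\ge0}(2^{-j}\rho^2)^{-\beta}\,E(2^{-j/2}\rho)
\lesssim \rho^{2(r_\mathrm H-\beta)+Q}\sum_{j\ge0}2^{-j(r_\mathrm H-\beta+Q/2)} .
$$
The geometric series converges precisely because $r_\mathrm H-\beta>-\frac Q2$. Thus $P^0_{r_\mathrm H-\beta}(v_0)_+<\infty$. For $L^2$, I use the same bound on $\mu_k|\lambda|\le\rho_0^2$. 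On the complement the weight is bounded by $\rho_0^{-2\beta}$, so that part is controlled by $\|u_0\|_{L^2}^2$ through Plancherel \eqref{eq:2.2}. Together these show $v_0\in L^2(\mathbf H_n)$.

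\textbf{Identification of the characters (main obstacle).} I need the exponents to match, and here I interpret $r_\mathrm H(u_0)$ as the decay character $r_\mathrm H^*$ (with $P_{r_\mathrm H}(u_0)_->0$ as well). The dyadic argument above works for every $r<r_\mathrm H(u_0)$ with $r-\beta>-Q/2$. It gives $P_{r-\beta}(v_0)_+<\infty$, hence $r_\mathrm H(v_0)_+\ge r_\mathrm H(u_0)-\beta$.

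For the reverse inequality I run the same argument in the opposite direction. Since $\widehat u_0=(\mu_k|\lambda|)^{\beta/2}\widehat v_0$ and $(\mu_k|\lambda|)^{\beta}\le\rho^{2\beta}$ on the ball, I get directly
$$
E(\rho)\le\rho^{2\beta}\,E_{v_0}(\rho).
$$
Two consequences follow. First, $P_r(u_0)_->0$ forces $P_{r-\beta}(v_0)_->0$. Second, $P_{r-\beta}(v_0)_+<\infty$ forces $P_r(u_0)_+<\infty$. The second gives $r_\mathrm H(v_0)_+\le r_\mathrm H(u_0)_+ -\beta$. Combined with the first bound, $r_\mathrm H(v_0)_+=r_\mathrm H(u_0)-\beta$. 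The first consequence gives $r_\mathrm H(v_0)_-\le r_\mathrm H(u_0)_- -\beta = r_\mathrm H(u_0)-\beta$. Since always $r_+\le r_-$, the lower and upper characters of $v_0$ coincide, so $r_\mathrm H(v_0)=r_\mathrm H(u_0)-\beta$. Finally, the identity $P^0_r(v_0)_\pm=P^{-\beta}_r(u_0)_\pm$ from the first step transfers this to $r_{\mathrm H,-\beta}(u_0)$.

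The delicate points are the convergence of the geometric series, which is where $r_\mathrm H-\beta>-Q/2$ is used, and keeping the interchange of sum and integral justified; Tonelli handles the latter since all terms are nonnegative.
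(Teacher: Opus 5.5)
Your proposal is correct and follows essentially the same route as the paper. The lower side uses the pointwise bound $(\mu_k|\lambda|)^{-\beta}\ge\rho_\mathrm{H}^{-2\beta}$ on $B^*\left(\mu_k^{-1}\rho_\mathrm{H}^2\right)$, the upper side uses dyadic shells in $\mu_k|\lambda|$ with a geometric series that converges exactly because $r_\mathrm{H}(u_0)-\beta>-\frac{Q}{2}$, and $v_0\in L^2\left(\mathbf{H}_n\right)$ comes from a small/large frequency split with Plancherel. Your bookkeeping is in fact cleaner than the paper's: you match shell $j$ to radius $2^{-j/2}\rho_\mathrm{H}$, use a uniform bound instead of an arbitrary $\epsilon$, and keep the $\liminf$ and $\limsup$ comparisons properly separate.

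The extra hypothesis $P_{r_\mathrm{H}}(u_0)_->0$ is needed only for your stronger decay-character reading. In the paper $r_\mathrm{H}(u_0)$ is just a parameter with finite upper indicator. What is used later, in the proof of Theorem~\ref{theorem:1.2}, is precisely $v_0\in L^2\left(\mathbf{H}_n\right)$ together with $P_{r_\mathrm{H}(u_0)-\beta}(v_0)_+<\infty$, and your dyadic step already delivers this.
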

\begin{proof}[Proof of Theorem \ref{theorem:3.11}]
Let $r_\mathrm{H}=r_\mathrm{H}\left(u_0\right)=q+\beta$, for some $q>-\frac{Q}{2}$. We can establish that 
\begin{align*}
&\rho_\mathrm{H}^{-2 q-Q} c_n \sum_{k, \ell \in \mathbb{N}^n} \int_{B^*\left(\mu_k^{-1}\rho_\mathrm{H}^2\right)} \mu_k^{-\beta} |\lambda|^{-\beta+n} \left|\widehat{u}_0(\lambda)_{k, \ell}\right|^2 \mathrm{d} \lambda \\
\geq & \rho_\mathrm{H}^{-2 r_\mathrm{H}-Q} c_n \sum_{k, \ell \in \mathbb{N}^n} \int_{B^*\left(\mu_k^{-1}\rho_\mathrm{H}^2\right)} |\lambda|^n \left|\widehat{u}_0(\lambda)_{k, \ell}\right|^2 \mathrm{d} \lambda.
\end{align*}
Taking the limit as $\rho_\mathrm{H} \rightarrow 0^{+}$, we obtain $0 < P_{r_\mathrm{H}}\left(u_0\right)_{+} \leq P_q^{-\beta}\left(u_0\right)_{-}\leq P_q^{-\beta}\left(u_0\right)_{+}\leq\infty$. We will show that $P_q^{-\beta}\left(u_0\right)_{+}<\infty$. Indeed, define $A_p\left(\mu_k^{-1}\rho_\mathrm{H}^2\right)=\left\{\lambda: \tfrac{\mu_k^{-1}\rho_\mathrm{H}^2}{2^{p+1}} \leq |\lambda| \leq \tfrac{\mu_k^{-1}\rho_\mathrm{H}^2}{2^p}\right\}$. Observe that $A_p\left(\mu_k^{-1}\rho_\mathrm{H}^2\right) \subset B^*\left(\frac{\mu_k^{-1}\rho_\mathrm{H}^2}{2^p}\right)$. By selecting $\rho_\mathrm{H} \leq {\rho_\mathrm{H}}_0$, we have $\frac{\rho_\mathrm{H}}{2^p} \leq \rho_\mathrm{H} \leq {\rho_\mathrm{H}}_0$, which implies
\begin{align} \label{ineq:A_rho}
& \left(\tfrac{\rho_\mathrm{H}}{2^p}\right)^{-2 r_\mathrm{H}-Q} c_n \sum_{k, \ell \in \mathbb{N}^n} \int_{A_p\left(\mu_k^{-1}\rho_\mathrm{H}^2\right)} \mu_k^{-\beta} |\lambda|^{-\beta+n} \left|\widehat{u}_0(\lambda)_{k, \ell}\right|^2 \mathrm{d} \lambda \notag\\
\leq & \left(\tfrac{\rho_\mathrm{H}}{2^p}\right)^{-2 r_\mathrm{H}-Q}\left(\tfrac{\rho_\mathrm{H}}{2^{p+1}}\right)^{-2\beta} c_n \sum_{k, \ell \in \mathbb{N}^n} \int_{A_p\left(\mu_k^{-1}\rho_\mathrm{H}^2\right)} |\lambda|^n \left|\widehat{u}_0(\lambda)_{k, \ell}\right|^2 \mathrm{d} \lambda \notag\\
\leq & \left(\tfrac{\rho_\mathrm{H}}{2^{p+1}}\right)^{-2\beta} \left(\tfrac{\rho_\mathrm{H}}{2^p}\right)^{-2 r_\mathrm{H}-Q} c_n \sum_{k, \ell \in \mathbb{N}^n} \int_{B^*\left(\frac{\mu_k^{-1}\rho_\mathrm{H}^2}{2^p}\right)} |\lambda|^n \left|\widehat{u}_0(\lambda)_{k, \ell}\right|^2 \mathrm{d} \lambda<\epsilon \left(\tfrac{\rho_\mathrm{H}}{2^{p+1}}\right)^{-2\beta}.
\end{align}
This leads to
\begin{equation} \label{ineq:varepsilon}
\left(\tfrac{\rho_\mathrm{H}}{2^{p+1}}\right)^{2\beta} \left(\tfrac{\rho_\mathrm{H}}{2^p}\right)^{-2 r_\mathrm{H}-Q} c_n \sum_{k, \ell \in \mathbb{N}^n} \int_{A_p\left(\mu_k^{-1}\rho_\mathrm{H}^2\right)} \mu_k^{-\beta} |\lambda|^{-\beta+n} \left|\widehat{u}_0(\lambda)_{k, \ell}\right|^2 \mathrm{d} \lambda <\epsilon.
\end{equation}
Since $r_\mathrm{H}=q+\beta$, the left hand side of \eqref{ineq:varepsilon} can be rewritten as
\begin{align*}
&\left(\tfrac{\rho_\mathrm{H}}{2^{p+1}}\right)^{2\beta} \left(\tfrac{\rho_\mathrm{H}}{2^p}\right)^{-2 r_\mathrm{H}-Q} c_n \sum_{k, \ell \in \mathbb{N}^n} \int_{A_p\left(\mu_k^{-1}\rho_\mathrm{H}^2\right)} \mu_k^{-\beta} |\lambda|^{-\beta+n} \left|\widehat{u}_0(\lambda)_{k, \ell}\right|^2 \mathrm{d} \lambda \\
=& 2^{p(2 q+Q)} 2^{-2\beta} \rho_\mathrm{H}^{-2 q-Q} c_n \sum_{k, \ell \in \mathbb{N}^n} \int_{A_p\left(\mu_k^{-1}\rho_\mathrm{H}^2\right)} \mu_k^{-\beta} |\lambda|^{-\beta+n} \left|\widehat{u}_0(\lambda)_{k, \ell}\right|^2 \mathrm{d} \lambda.
\end{align*}
Combining this with \eqref{ineq:A_rho}, we obtain
$$
\rho_\mathrm{H}^{-2 q-Q} c_n \sum_{k, \ell \in \mathbb{N}^n} \int_{A_p\left(\mu_k^{-1}\rho_\mathrm{H}^2\right)} \mu_k^{-\beta} |\lambda|^{-\beta+n} \left|\widehat{u}_0(\lambda)_{k, \ell}\right|^2 \mathrm{d} \lambda \leq \epsilon 2^{2\beta} 2^{-p(2 q+Q)}.
$$
Since $\bigcup_{p=0}^{\infty} A_p\left(\mu_k^{-1}\rho_\mathrm{H}^2\right)=B^*\left(\mu_k^{-1}\rho_\mathrm{H}^2\right)$, summing over $p$ gives
$$
\rho_\mathrm{H}^{-2 q-Q} c_n \sum_{k, \ell \in \mathbb{N}^n} \int_{B^*\left(\mu_k^{-1}\rho_\mathrm{H}^2\right)} \mu_k^{-\beta} |\lambda|^{-\beta+n} \left|\widehat{u}_0(\lambda)_{k, \ell}\right|^2 \mathrm{d} \lambda \leq \epsilon 2^{2\beta} \sum_{p=0}^{\infty} 2^{-p(2 q+Q)}.
$$
Therefore,
\begin{equation} \label{ineq:B*}
\rho_\mathrm{H}^{-2 q-Q} c_n \sum_{k, \ell \in \mathbb{N}^n} \int_{B^*\left(\mu_k^{-1}\rho_\mathrm{H}^2\right)} \mu_k^{-\beta} |\lambda|^{-\beta+n} \left|\widehat{u}_0(\lambda)_{k, \ell}\right|^2 \mathrm{d} \lambda \leq \epsilon 2^{2\beta} \tfrac{2^{2q+Q}}{2^{2q+Q}-1}<\infty.
\end{equation}
Since $\epsilon>0$ was arbitrary, it follows that
$$
P_q^{-\beta}\left(u_0\right)_{+}=\limsup_{\rho_\mathrm{H} \rightarrow 0^{+}} \rho_\mathrm{H}^{-2 q-Q} c_n \sum_{k, \ell \in \mathbb{N}^n} \int_{B^*\left(\mu_k^{-1}\rho_\mathrm{H}^2\right)} |\lambda|^n \left|\widehat{v}_0(\lambda)_{k, \ell}\right|^2 \mathrm{d} \lambda<\infty.
$$
Hence, by the Definition \ref{definition_3.7}, we obtain $r_\mathrm{H}=r_\mathrm{H}\left(u_0\right)=q+\beta=r_\mathrm{H}\left(v_0\right)+\beta$. On the other hand, we have
\begin{align} \label{ineq:B*_C}
&c_n \sum_{k, \ell \in \mathbb{N}^n} \int_{{B^*}^{C}\left(\mu_k^{-1}\rho_\mathrm{H}^2\right)} \mu_k^{-\beta} |\lambda|^{-\beta+n} \left|\widehat{u}_0(\lambda)_{k, \ell}\right|^2 \mathrm{d} \lambda \lesssim c_n \sum_{k, \ell \in \mathbb{N}^n} \int_{{B^*}^{C}\left(\mu_k^{-1}\rho_\mathrm{H}^2\right)} |\lambda|^n \left|\widehat{u}_0(\lambda)_{k, \ell}\right|^2 \mathrm{d} \lambda \notag\\
\lesssim & c_n \int_{\mathbb{R}^*}\left\|\widehat{u}_0(t, \lambda)\right\|_{\mathrm{HS}\left[L^2\left(\mathbb{R}^n\right)\right]}^2|\lambda|^n \mathrm{d} \lambda = \left\|u_0(t, \cdot)\right\|_{L^2\left(\mathbf{H}_n\right)}^2<\infty.
\end{align}
By using Plancherel formula and $\left\{e_k\right\}_{k \in \mathbb{N}^n}$ as orthonormal basis of $L^2\left(\mathbb{R}^n\right)$, we have
\begin{align} \label{B*_B*_C}
\left\|v_0(t, \cdot)\right\|_{L^2\left(\mathbf{H}_n\right)}^2 = & c_n \int_{\mathbb{R}^*}\left\|\widehat{v}_0(t, \lambda)\right\|_{\mathrm{HS}\left[L^2\left(\mathbb{R}^n\right)\right]}^2|\lambda|^n \mathrm{d} \lambda \notag\\
=& c_n \sum_{k, \ell \in \mathbb{N}^n}\left(\int_{B^*\left(\mu_k^{-1}\rho_\mathrm{H}^2\right)}+\int_{{B^*}^{C}\left(\mu_k^{-1}\rho_\mathrm{H}^2\right)}\right)\left|\widehat{v}_0(t, \lambda)_{k, \ell}\right|^2|\lambda|^n \mathrm{d} \lambda \notag\\
=& c_n \sum_{k, \ell \in \mathbb{N}^n}\left(\int_{B^*\left(\mu_k^{-1}\rho_\mathrm{H}^2\right)}+\int_{{B^*}^{C}\left(\mu_k^{-1}\rho_\mathrm{H}^2\right)}\right)\left(\mu_k|\lambda|\right)^{-\beta} \left|\widehat{u}_0(t, \lambda)_{k, \ell}\right|^2|\lambda|^n \mathrm{d} \lambda.
\end{align}
From \eqref{ineq:B*}, \eqref{ineq:B*_C} and \eqref{B*_B*_C}, we obtain $v_0 \in L^2\left(\mathbf{H}_n\right)$.
\end{proof}

\subsection{Decay characters in some function spaces on the Heisenberg group} \label{section_3.4}
\begin{proposition} \label{proposition:3.2}
If $\left(u_0, u_1\right) \in\left(H^{\delta_1}\left(\mathbf{H}_n\right) \cap L^1\left(\mathbf{H}_n\right)\right) \times\left(L^2\left(\mathbf{H}_n\right) \cap L^1\left(\mathbf{H}_n\right)\right)$, then $P_{r_\mathrm{H}}\left(u_0\right)_{+}, P_{r_\mathrm{H}}\left(u_1\right)_{+}<\infty$ for $r_\mathrm{H}\left(u_0\right) = \delta_1$ and $r_\mathrm{H}\left(u_1\right) = 0$. Moreover, we also have $r_\mathrm{H}^*\left(u_0\right) \geq \delta_1$ and $r_\mathrm{H}^*\left(u_1\right) \geq 0$.  
\end{proposition}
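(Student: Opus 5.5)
The plan is to reduce everything to one low-frequency estimate coming from the $L^1$ bound \eqref{ineq:2.1} on the group Fourier transform, and then to read off the exponents. Let $f\in L^1(\mathbf{H}_n)\cap L^2(\mathbf{H}_n)$. For every $\lambda\in\mathbb{R}^*$ and $k\in\mathbb{N}^n$, Parseval in the Hermite basis $\{e_\ell\}$ together with \eqref{ineq:2.1} gives
$$
\sum_{\ell\in\mathbb{N}^n}\bigl|\widehat f(\lambda)_{k,\ell}\bigr|^2=\bigl\|\widehat f(\lambda)e_k\bigr\|_{L^2(\mathbb{R}^n)}^2\le \|f\|_{L^1(\mathbf{H}_n)}^2 .
$$
I will insert this into the quantity of Definition \ref{definition_3.7} and integrate in $\lambda$ first:
$$
c_n\sum_{k,\ell\in\mathbb{N}^n}\int_{B^*(\mu_k^{-1}\rho_\mathrm{H}^2)}|\lambda|^n\bigl|\widehat f(\lambda)_{k,\ell}\bigr|^2\,\mathrm{d}\lambda\le c_n\|f\|_{L^1}^2\sum_{k\in\mathbb{N}^n}\int_{|\lambda|\le \mu_k^{-1}\rho_\mathrm{H}^2}|\lambda|^n\,\mathrm{d}\lambda=\tfrac{2c_n}{n+1}\|f\|_{L^1}^2\,\rho_\mathrm{H}^{2n+2}\sum_{k\in\mathbb{N}^n}\mu_k^{-(n+1)} .
$$
Since $2n+2=Q$, the power of $\rho_\mathrm{H}$ is exactly $\rho_\mathrm{H}^{Q}$.

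The key step is to show that the lattice sum converges. The number of $k\in\mathbb{N}^n$ with $|k|=m$ is $\binom{m+n-1}{n-1}\lesssim (1+m)^{n-1}$. Hence
$$
\sum_{k\in\mathbb{N}^n}(2|k|+n)^{-(n+1)}\lesssim\sum_{m\ge0}(1+m)^{n-1}(2m+n)^{-(n+1)}\lesssim\sum_{m\ge0}(1+m)^{-2}<\infty .
$$
This uses the full homogeneous dimension $Q=2n+2$ and not merely $2n+1$; this is why the scaling is $\rho_\mathrm{H}^Q$. Multiplying by $\rho_\mathrm{H}^{-2r_\mathrm{H}-Q}$ with $r_\mathrm{H}=0$ and taking $\limsup_{\rho_\mathrm{H}\to0^+}$ gives $P_0(f)_+\lesssim\|f\|_{L^1}^2<\infty$. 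Applied to $f=u_1$, this gives $P_{r_\mathrm{H}}(u_1)_+<\infty$ with $r_\mathrm{H}(u_1)=0$. Because finiteness of $P_r(\cdot)_+$ persists for all smaller $r$, the definition of $r_{\mathrm{H}}(\cdot)_+$ as a supremum yields $r_\mathrm{H}^*(u_1)\ge 0$.

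For $u_0$ the same computation gives $P_0(u_0)_+<\infty$. The exponent $\delta_1$ must come from the extra regularity $u_0\in H^{\delta_1}$, that is, from measuring $(-\Delta_\mathrm{H})^{\delta_1/2}u_0$ at low frequency. I would run the estimate above with the weight $(\mu_k|\lambda|)^{\delta_1}$, which is the $\beta=\delta_1$ indicator $P^{\delta_1}_{r_\mathrm{H}}$ relevant for the $\dot H^{\delta_1}$ part of $\|u_0\|_{P,\delta_1}$. On $B^*(\mu_k^{-1}\rho_\mathrm{H}^2)$ one has $(\mu_k|\lambda|)^{\delta_1}\le\rho_\mathrm{H}^{2\delta_1}$, so the bound becomes $\lesssim\|u_0\|_{L^1}^2\rho_\mathrm{H}^{Q+2\delta_1}$. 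This gives finiteness at $r_\mathrm{H}=\delta_1$, and hence $r_\mathrm{H}^*(u_0)\ge\delta_1$ in that sense, consistent with Theorem \ref{theorem:3.11}, which shifts the index by $\beta$ under multiplication by $(\mu_k|\lambda|)^{\mp\beta/2}$.

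The main obstacle is this identification of the $u_0$ index. The bare $L^2$ indicator of an $L^1$ function scales like $\rho_\mathrm{H}^Q$ only, so it yields $r=0$, not $\delta_1$. The $\delta_1$ shift has to be carried by the $(-\Delta_\mathrm{H})^{\delta_1/2}$ weight, that is, by the $\beta=\delta_1$ version of Definition \ref{definition_3.7}. I would state this convention explicitly when writing the proof. The remaining steps are the two displayed estimates and the counting argument.
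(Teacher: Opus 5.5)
Your proposal is correct and follows the paper's proof essentially step for step. The common steps are: Hermite--Parseval combined with $\|\widehat f(\lambda)\|_{\mathcal{L}(L^2(\mathbb{R}^n)\rightarrow L^2(\mathbb{R}^n))}\le\|f\|_{L^1(\mathbf{H}_n)}$; integration of $|\lambda|^n$ over $|\lambda|\le\mu_k^{-1}\rho_\mathrm{H}^2$ to get $\rho_\mathrm{H}^{Q}\mu_k^{-(n+1)}$; and convergence of $\sum_k\mu_k^{-(n+1)}$, which you justify by counting multi-indices while the paper only asserts it. Your treatment of $u_0$ also matches the paper: its computation likewise inserts the weight $\mu_k^{\delta_1}|\lambda|^{\delta_1}\le\rho_\mathrm{H}^{2\delta_1}$, so it tacitly uses the $\beta=\delta_1$ indicator, and your explicit statement of that convention correctly clarifies what the proposition actually asserts.
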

\begin{proof}[Proof of Proposition \ref{proposition:3.2}]
Let $\left(u_0, u_1\right) \in\left(H^{\delta_1}\left(\mathbf{H}_n\right) \cap L^1\left(\mathbf{H}_n\right)\right) \times\left(L^2\left(\mathbf{H}_n\right) \cap L^1\left(\mathbf{H}_n\right)\right)$. Then, we have that
\begin{align*}
&\rho_\mathrm{H}^{-2 r_\mathrm{H}\left(u_0\right)-Q} c_n \sum_{k, \ell \in \mathbb{N}^n} \int_{B^*\left(\mu_k^{-1}\rho_\mathrm{H}^2\right)} \mu_k^{\delta_1} |\lambda|^{n+\delta_1} \left|\widehat{u}_0(\lambda)_{k, \ell}\right|^2 \mathrm{d} \lambda \\
\lesssim & \rho_\mathrm{H}^{-2 r_\mathrm{H}\left(u_0\right)-Q+2\delta_1} c_n \sum_{k \in \mathbb{N}^n} \int_{B^*\left(\mu_k^{-1}\rho_\mathrm{H}^2\right)} |\lambda|^n \sum_{\ell \in \mathbb{N}^n}|\left(\widehat{u}_0(\lambda) e_k, e_{\ell}\right)_{L^2\left(\mathbb{R}^n\right)}|^2 \mathrm{d} \lambda \\
= & \rho_\mathrm{H}^{-2 r_\mathrm{H}\left(u_0\right)-Q+2\delta_1} c_n \sum_{k \in \mathbb{N}^n} \int_{B^*\left(\mu_k^{-1}\rho_\mathrm{H}^2\right)} |\lambda|^n \left\|\widehat{u}_0(\lambda) e_k\right\|_{L^2\left(\mathbb{R}^n\right)}^2 \mathrm{d} \lambda \\
\lesssim & \rho_\mathrm{H}^{-2 r_\mathrm{H}\left(u_0\right)-Q+2\delta_1} c_n \sum_{k \in \mathbb{N}^n} \int_0^{\mu_k^{-1}\rho_\mathrm{H}^2} \lambda^{n} \mathrm{d} \lambda \left\|u_0\right\|_{L^1\left(\mathbf{H}_n\right)}^2 \\
\lesssim & \rho_\mathrm{H}^{-2 r_\mathrm{H}\left(u_0\right)-Q+2n+2+2\delta_1} c_n \sum_{k \in \mathbb{N}^n} \mu_k^{-(n+1)} \lesssim \rho_\mathrm{H}^{-2 r_\mathrm{H}\left(u_0\right)+2\delta_1},
\end{align*}
where in the third step we employed Parseval's identity $\left\|\widehat{u}_0(\lambda) e_k\right\|_{L^2\left(\mathbb{R}^n\right)}^2=\sum_{\ell \in \mathbb{N}^n}|\left(\widehat{u}_0(\lambda) e_k, e_{\ell}\right)_{L^2\left(\mathbb{R}^n\right)}|^2$, we used \eqref{ineq:2.1} and $\left\|e_k\right\|_{L^2\left(\mathbb{R}^n\right)}=1$ to get the inequality
$$
\left\|\widehat{u}_0(\lambda) e_k\right\|_{L^2\left(\mathbb{R}^n\right)} \leq\left\|\widehat{u}_0(\lambda)\right\|_{\mathcal{L}\left(L^2\left(\mathbb{R}^n\right) \rightarrow L^2\left(\mathbb{R}^n\right)\right)}\left\|e_k\right\|_{L^2\left(\mathbb{R}^n\right)} \leq\left\|u_0\right\|_{L^1\left(\mathbf{H}_n\right)}
$$
and in the last estimate we used that the series $c_n \sum_{k \in \mathbb{N}^n} \mu_k^{-(n+1)}=c_n \sum_{k \in \mathbb{N}^n}(2|k|+n)^{-(n+1)}$ is convergent. Therefore, if $u_0 \in H^{\delta_1}\left(\mathbf{H}_n\right) \cap L^1\left(\mathbf{H}_n\right)$, then $P_{r_\mathrm{H}}\left(u_0\right)_{+}<\infty$ for $r_\mathrm{H}\left(u_0\right) = \delta_1$. Similarly, if $u_1 \in L^2\left(\mathbf{H}_n\right) \cap L^1\left(\mathbf{H}_n\right)$, then $P_{r_\mathrm{H}}\left(u_1\right)_{+}<\infty$ for $r_\mathrm{H}\left(u_1\right) = 0$. From the definition of decay character, we obtain that $r_\mathrm{H}^*\left(u_0\right) \geq \delta_1$ and $r_\mathrm{H}^*\left(u_1\right) \geq 0$.
\end{proof}
\begin{remark}
\rm{
If $\left(u_0, u_1\right) \in\left(H^{\delta_1}\left(\mathbf{H}_n\right) \cap L^1\left(\mathbf{H}_n\right)\right) \times\left(L^2\left(\mathbf{H}_n\right) \cap L^1\left(\mathbf{H}_n\right)\right)$ satisfying the following condition:

$\mathbf{(A)}$ : For some $\varepsilon>0, \exists$ constants $C_i>0, i=1, 2, 3, 4$ such that $C_1 \leq \left\|\widehat{u}_0(\lambda) e_k\right\|_{L^2\left(\mathbb{R}^n\right)} \leq C_2$ and $C_3 \leq \left\|\widehat{u}_1(\lambda) e_k\right\|_{L^2\left(\mathbb{R}^n\right)} \leq C_4$ for all $|\lambda| \leq \varepsilon$, then $r_\mathrm{H}^*\left(u_0\right) = \delta_1$ and $r_\mathrm{H}^*\left(u_1\right) = 0$.

In particular, if $\left(u_0, u_1\right) \in\left(H^{\delta_1}\left(\mathbf{H}_n\right) \cap L^1\left(\mathbf{H}_n\right)\right) \times\left(L^2\left(\mathbf{H}_n\right) \cap L^1\left(\mathbf{H}_n\right)\right)$ has a non-zero mean, i.e.,
\begin{align*}
\left\|\widehat{u}_0(0) e_k\right\|_{L^2\left(\mathbb{R}^n\right)}=&\int_{\mathbf{H}_n} u_0(\eta) \mathrm{d} \eta \left\|e_k\right\|_{L^2\left(\mathbb{R}^n\right)} =\int_{\mathbf{H}_n} u_0(\eta) \mathrm{d} \eta \neq 0, \\
\left\|\widehat{u}_1(0) e_k\right\|_{L^2\left(\mathbb{R}^n\right)}=&\int_{\mathbf{H}_n} u_1(\eta) \mathrm{d} \eta \left\|e_k\right\|_{L^2\left(\mathbb{R}^n\right)}=\int_{\mathbf{H}_n} u_1(\eta) \mathrm{d} \eta \neq 0,
\end{align*}
then, naturally, we also have $r_\mathrm{H}^*\left(u_0\right) = \delta_1$ and $r_\mathrm{H}^*\left(u_1\right) = 0$, since condition $\mathbf{(A)}$ is satisfied for such $u_0$ and $u_1$.
}
\end{remark}
\begin{proposition} \label{proposition:3.3}
We define $\dot{H}^s\left(\mathbf{H}_n\right):=\left\{f \in \mathcal{D}^{\prime}\left(\mathbf{H}_n\right):\left(-\Delta_{\mathrm{H}}\right)^\frac{s}{2} f \in L^2\left(\mathbf{H}_n\right)\right\}$, with $s \in \mathbb{R}$, where $\mathcal{D}^{\prime}\left(\mathbf{H}_n\right)$ is the space of tempered distributions on the Heisenberg group and $\dot{H}^s\left(\mathbf{H}_n\right)$-norm is given by $\|f\|_{\dot{H}^s\left(\mathbf{H}_n\right)}:=\left\|\left(-\Delta_{\mathrm{H}}\right)^\frac{s}{2} f\right\|_{L^2\left(\mathbf{H}_n\right)}$. If $\left(u_0, u_1\right) \in\left(H^{\delta_1}\left(\mathbf{H}_n\right) \cap \dot{H}^{-\gamma}\left(\mathbf{H}_n\right)\right) \times\left(L^2\left(\mathbf{H}_n\right) \cap \dot{H}^{-\gamma}\left(\mathbf{H}_n\right)\right)$, with $\gamma \geq 0$, then $P_{r_\mathrm{H}}\left(u_0\right)_{+}, P_{r_\mathrm{H}}\left(u_1\right)_{+}<\infty$ for $r_\mathrm{H}\left(u_0\right) = \gamma-\frac{Q}{2}+\delta_1$ and $r_\mathrm{H}\left(u_1\right) = \gamma-\frac{Q}{2}$. Moreover, we also have $r_\mathrm{H}^*\left(u_0\right) \geq \gamma-\frac{Q}{2}+\delta_1$ and $r_\mathrm{H}^*\left(u_1\right) \geq \gamma-\frac{Q}{2}$.
\end{proposition}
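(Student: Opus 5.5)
I will mirror the proof of Proposition~\ref{proposition:3.2}, with the $L^1$ bound on $\widehat{u}_0(\lambda)$ replaced by the $\dot{H}^{-\gamma}$ norm through Plancherel. As in that proof, the quantity attached to $u_0$ carries the weight $\mu_k^{\delta_1}|\lambda|^{\delta_1}$; this is the source of the shift $r_\mathrm{H}(u_0)=r_\mathrm{H}(u_1)+\delta_1$. The idea is that on $B^*(\mu_k^{-1}\rho_\mathrm{H}^2)$ the factor $\mu_k|\lambda|$ is at most $\rho_\mathrm{H}^2$. Hence any positive power of $\mu_k|\lambda|$ is bounded by the corresponding power of $\rho_\mathrm{H}$. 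We insert the factor $(\mu_k|\lambda|)^{-\gamma}(\mu_k|\lambda|)^{\gamma}$ and bound $(\mu_k|\lambda|)^{\gamma}\le \rho_\mathrm{H}^{2\gamma}$ there, since $\gamma\ge 0$.

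\textbf{Step 1.} For $u_0$ I estimate
\begin{align*}
&\rho_\mathrm{H}^{-2r_\mathrm{H}(u_0)-Q} c_n\sum_{k,\ell\in\mathbb{N}^n}\int_{B^*(\mu_k^{-1}\rho_\mathrm{H}^2)}\mu_k^{\delta_1}|\lambda|^{n+\delta_1}\left|\widehat{u}_0(\lambda)_{k,\ell}\right|^2\mathrm{d}\lambda\\
\le\;&\rho_\mathrm{H}^{-2r_\mathrm{H}(u_0)-Q+2\delta_1+2\gamma} c_n\sum_{k,\ell\in\mathbb{N}^n}\int_{\mathbb{R}^*}(\mu_k|\lambda|)^{-\gamma}|\lambda|^{n}\left|\widehat{u}_0(\lambda)_{k,\ell}\right|^2\mathrm{d}\lambda .
\end{align*}

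\textbf{Step 2.} The remaining sum is identified with $\|u_0\|_{\dot{H}^{-\gamma}(\mathbf{H}_n)}^2$. The symbol of $(-\Delta_{\mathrm{H}})^{-\gamma/2}$ is $|\lambda|^{-\gamma/2}\mathrm{H}_w^{-\gamma/2}$, which is diagonal in the Hermite basis. Therefore $\big(\widehat{(-\Delta_{\mathrm{H}})^{-\gamma/2}u_0}(\lambda)\big)_{k,\ell}$ equals $(\mu_k|\lambda|)^{-\gamma/2}\widehat{u}_0(\lambda)_{k,\ell}$. The index convention (whether it is $\mu_k$ or $\mu_\ell$ that multiplies) has to match the convention implicit in the paper; I will fix it exactly as in the proof of Theorem~\ref{theorem:3.7}. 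Plancherel \eqref{eq:2.2}, written in the Hermite basis, then gives that this quantity is exactly $\|u_0\|^2_{\dot{H}^{-\gamma}}$.

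\textbf{Step 3.} Combining the two steps, the bound is $\lesssim \rho_\mathrm{H}^{-2r_\mathrm{H}(u_0)-Q+2\delta_1+2\gamma}\,\|u_0\|^2_{\dot{H}^{-\gamma}}$. The exponent of $\rho_\mathrm{H}$ vanishes precisely when $r_\mathrm{H}(u_0)=\gamma-\frac{Q}{2}+\delta_1$. For this value the expression stays bounded as $\rho_\mathrm{H}\to 0^+$, so $P_{r_\mathrm{H}}(u_0)_+<\infty$.

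\textbf{Step 4.} The same argument without the weight $\mu_k^{\delta_1}|\lambda|^{\delta_1}$ handles $u_1$. It gives $P_{r_\mathrm{H}}(u_1)_+<\infty$ for $r_\mathrm{H}(u_1)=\gamma-\frac{Q}{2}$.

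\textbf{Step 5.} The inequalities for the decay character follow from its definition. Indeed, $r_\mathrm{H}^*$ is the maximum of the set of $r_\mathrm{H}$ with $P_{r_\mathrm{H}}(\cdot)_+<\infty$, which is an interval extending downward. Since the values found in Steps 3 and 4 belong to that set, $r_\mathrm{H}^*(u_0)\ge \gamma-\frac{Q}{2}+\delta_1$ and $r_\mathrm{H}^*(u_1)\ge\gamma-\frac{Q}{2}$, exactly as in Proposition~\ref{proposition:3.2}.

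\textbf{Main obstacle.} The only real obstacle is bookkeeping: making sure the spectral identification in Step~2 is consistent with the paper's indexing, in particular that the multiplier attaches to $\mu_k$ and not $\mu_\ell$. Once this is settled, the estimate is a one-line pointwise bound followed by Plancherel.
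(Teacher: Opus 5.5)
Your proposal is the paper's proof essentially line for line. On $B^*(\mu_k^{-1}\rho_{\mathrm{H}}^2)$ you bound $(\mu_k|\lambda|)^{\delta_1+\gamma}\le\rho_{\mathrm{H}}^{2\delta_1+2\gamma}$, identify the rest with $\|u_0\|_{\dot{H}^{-\gamma}(\mathbf{H}_n)}^2$ by Plancherel in the Hermite basis (with the paper's $\mu_k$ convention), read off the $r_{\mathrm{H}}$ that kills the power of $\rho_{\mathrm{H}}$, and conclude for $r_{\mathrm{H}}^*$ from the downward-closed set.

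One caveat is shared with the paper. Because of the weight $\mu_k^{\delta_1}|\lambda|^{\delta_1}$, you are really bounding $P^{\delta_1}_{r_{\mathrm{H}}}(u_0)_{+}$, i.e.\ the indicator of $(-\Delta_{\mathrm{H}})^{\delta_1/2}u_0$. For the unweighted $P_{r_{\mathrm{H}}}(u_0)_{+}=P^0_{r_{\mathrm{H}}}(u_0)_{+}$ the same argument only gives $r_{\mathrm{H}}(u_0)=\gamma-\frac{Q}{2}$, since membership in $H^{\delta_1}$ carries no information at low frequencies.
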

\begin{proof}[Proof of Proposition \ref{proposition:3.3}]
Let $\left(u_0, u_1\right) \in\left(H^{\delta_1}\left(\mathbf{H}_n\right) \cap \dot{H}^{-\gamma}\left(\mathbf{H}_n\right)\right) \times\left(L^2\left(\mathbf{H}_n\right) \cap \dot{H}^{-\gamma}\left(\mathbf{H}_n\right)\right)$, with $\gamma \geq 0$. Then, we have that
\begin{align*}
&\rho_\mathrm{H}^{-2 r_\mathrm{H}\left(u_0\right)-Q} c_n \sum_{k, \ell \in \mathbb{N}^n} \int_{B^*\left(\mu_k^{-1}\rho_\mathrm{H}^2\right)} \mu_k^{\delta_1} |\lambda|^{n+\delta_1} \left|\widehat{u}_0(\lambda)_{k, \ell}\right|^2 \mathrm{d} \lambda \\
\lesssim & \rho_\mathrm{H}^{-2 r_\mathrm{H}\left(u_0\right)-Q+2\gamma+2\delta_1} c_n \sum_{k, \ell \in \mathbb{N}^n} \int_{B^*\left(\mu_k^{-1}\rho_\mathrm{H}^2\right)} \mu_k^{-\gamma} |\lambda|^{n-\gamma} \left|\widehat{u}_0(\lambda)_{k, \ell}\right|^2 \mathrm{d} \lambda \\
\lesssim & \rho_\mathrm{H}^{-2 r_\mathrm{H}\left(u_0\right)-Q+2\gamma+2\delta_1} \left\|u_0\right\|_{\dot{H}^{-\gamma}\left(\mathbf{H}_n\right)}^2 \lesssim \rho_\mathrm{H}^{-2 r_\mathrm{H}\left(u_0\right)-Q+2\gamma+2\delta_1}.
\end{align*}
Therefore, if $u_0 \in H^{\delta_1}\left(\mathbf{H}_n\right) \cap \dot{H}^{-\gamma}\left(\mathbf{H}_n\right)$, then $P_{r_\mathrm{H}}\left(u_0\right)_{+}<\infty$ for $r_\mathrm{H}\left(u_0\right) = \gamma-\frac{Q}{2}+\delta_1$. Similarly, if $u_1 \in L^2\left(\mathbf{H}_n\right) \cap \dot{H}^{-\gamma}\left(\mathbf{H}_n\right)$, then $P_{r_\mathrm{H}}\left(u_1\right)_{+}<\infty$ for $r_\mathrm{H}\left(u_1\right) = \gamma-\frac{Q}{2}$. From the definition of decay character, we obtain that $r_\mathrm{H}^*\left(u_0\right) \geq \gamma-\frac{Q}{2}+\delta_1$ and $r_\mathrm{H}^*\left(u_1\right) \geq \gamma-\frac{Q}{2}$.
\end{proof}
\begin{proposition} \label{proposition:3.4}
If $\left(u_0, u_1\right) \in \left(H^{\delta_1}\left(\mathbf{H}_n\right) \cap L^m\left(\mathbf{H}_n\right)\right) \times \left(L^2\left(\mathbf{H}_n\right) \cap L^m\left(\mathbf{H}_n\right)\right)$, with $m \in (1,2]$, then $P_{r_\mathrm{H}}\left(u_0\right)_{+}, P_{r_\mathrm{H}}\left(u_1\right)_{+}<\infty$ for $r_\mathrm{H}\left(u_0\right) = -Q\left(1-\frac{1}{m}\right)+\delta_1$ and $r_\mathrm{H}\left(u_1\right) = -Q\left(1-\frac{1}{m}\right)$. Moreover, we also have $r_\mathrm{H}^*\left(u_0\right) \geq -Q\left(1-\frac{1}{m}\right)+\delta_1$ and $r_\mathrm{H}^*\left(u_1\right) \geq -Q\left(1-\frac{1}{m}\right)$.
\end{proposition}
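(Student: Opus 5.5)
We reduce this to Proposition \ref{proposition:3.3} via the Hardy--Littlewood--Sobolev inequality (Lemma \ref{Hardy_Littlewood}), which embeds $L^m$ into a negative-order homogeneous Sobolev space. The target value of $\gamma$ is read off from the desired decay indicator: we need $\gamma-\frac{Q}{2}=-Q\left(1-\frac{1}{m}\right)$, i.e. $\gamma=Q\left(\frac{1}{m}-\frac12\right)\in\left[0,\frac{Q}{2}\right)$ for $m\in(1,2]$.

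\textbf{Step 1 (embedding).} Apply Lemma \ref{Hardy_Littlewood} with $p=m$, $q=2$ and $a=\gamma=Q\left(\frac1m-\frac12\right)$, so that $\frac{a}{Q}=\frac1m-\frac12$. For $m\in(1,2)$ this gives
$$
\|u_j\|_{\dot H^{-\gamma}\left(\mathbf{H}_n\right)}\lesssim \|u_j\|_{L^m\left(\mathbf{H}_n\right)}, \quad j=0,1 .
$$
The endpoint $m=2$ is trivial, since then $\gamma=0$ and $\dot H^{0}=L^2$. The lemma requires $1<p\le q<\infty$, which holds here.

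\textbf{Step 2 (apply the previous proposition).} By Step 1, $u_0\in H^{\delta_1}\cap\dot H^{-\gamma}$ and $u_1\in L^2\cap\dot H^{-\gamma}$. Proposition \ref{proposition:3.3} then yields $P_{r_\mathrm{H}}(u_0)_+<\infty$ for $r_\mathrm{H}(u_0)=\gamma-\frac Q2+\delta_1=-Q\left(1-\frac1m\right)+\delta_1$, and $P_{r_\mathrm{H}}(u_1)_+<\infty$ for $r_\mathrm{H}(u_1)=-Q\left(1-\frac1m\right)$. The lower bounds on $r^*_\mathrm{H}$ follow from the definition of the decay character, since $r^*_\mathrm{H}$ is the supremum of those $r_\mathrm{H}$ with $P_{r_\mathrm{H}}(\cdot)_+<\infty$.

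\textbf{Direct check of the key estimate.} To keep the argument self-contained at the level of the indicator, we can redo the estimate of Proposition \ref{proposition:3.3} directly. On $B^*\left(\mu_k^{-1}\rho_\mathrm{H}^2\right)$ we have $(\mu_k|\lambda|)^{\delta_1}\le \rho_\mathrm{H}^{2\delta_1}$ and $1\le \rho_\mathrm{H}^{2\gamma}(\mu_k|\lambda|)^{-\gamma}$. Hence the localized sum is bounded by
$$
\rho_\mathrm{H}^{2\gamma+2\delta_1}\|u_0\|^2_{\dot H^{-\gamma}}\lesssim \rho_\mathrm{H}^{2\gamma+2\delta_1}\|u_0\|^2_{L^m},
$$
and multiplying by $\rho_\mathrm{H}^{-2r_\mathrm{H}-Q}$ gives a bounded quantity exactly when $r_\mathrm{H}(u_0)$ has the stated value.

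\textbf{Main obstacle.} The delicate point is the correct use of Lemma \ref{Hardy_Littlewood}. Its notation $\|f\|_{\dot H^q_{-a}}$ must be interpreted as $\|(-\Delta_\mathrm{H})^{-a/2}f\|_{L^q}$, which with $q=2$ is $\|f\|_{\dot H^{-a}}$. One also needs to check that the Plancherel-side expression $c_n\sum_{k,\ell}\int\mu_k^{-\gamma}|\lambda|^{n-\gamma}|\widehat u_0(\lambda)_{k,\ell}|^2\,\mathrm d\lambda$ equals $\|u_0\|^2_{\dot H^{-\gamma}}$. This follows because the symbol of $(-\Delta_\mathrm{H})^{-\gamma/2}$ acts diagonally on the Hermite basis with eigenvalue $(\mu_k|\lambda|)^{-\gamma/2}$.
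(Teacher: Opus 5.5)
Your proposal is the paper's proof: fix $\gamma=Q\left(\frac1m-\frac12\right)$, use Lemma~\ref{Hardy_Littlewood} to get $\|u_j\|_{\dot H^{-\gamma}(\mathbf{H}_n)}\lesssim\|u_j\|_{L^m(\mathbf{H}_n)}$, and invoke Proposition~\ref{proposition:3.3}; your separate treatment of $m=2$ (i.e.\ $\gamma=0$) also covers an endpoint that the paper's choice $\gamma\in\left(0,\frac Q2\right)$ misses. One caveat is inherited from the paper: the extra $+\delta_1$ in $r_\mathrm{H}(u_0)$ comes only from the weight $(\mu_k|\lambda|)^{\delta_1}$ in your ``localized sum'', so what you (and Proposition~\ref{proposition:3.3}) actually bound is the order-$\delta_1$ indicator $P^{\delta_1}_{r_\mathrm{H}}(u_0)_+$ of Definition~\ref{definition_3.7}, whereas for the unweighted $P_{r_\mathrm{H}}=P^{0}_{r_\mathrm{H}}$ the same estimate gives only $r_\mathrm{H}(u_0)=-Q\left(1-\frac1m\right)$; indeed the stronger value genuinely fails, e.g.\ for a Schwartz $u_0$ with $\int_{\mathbf{H}_n}u_0\,\mathrm{d}\eta\neq0$ whenever $\delta_1>Q\left(1-\frac1m\right)$.
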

\begin{proof}[Proof of Proposition \ref{proposition:3.4}]
Let $\left(u_0, u_1\right) \in\left(H^{\delta_1}\left(\mathbf{H}_n\right) \cap L^m\left(\mathbf{H}_n\right)\right) \times\left(L^2\left(\mathbf{H}_n\right) \cap L^m\left(\mathbf{H}_n\right)\right)$, with $m \in (1, 2]$. We fix $\gamma \in \left(0, \frac{Q}{2}\right)$ such that $\frac{1}{m}=\frac{1}{2}+\frac{\gamma}{Q}, \text { i.e. } \gamma=Q\left(\frac{1}{m}-\frac{1}{2}\right)$, hence, by Hardy-Littlewood-Sobolev inequality on the Heisenberg group (see Lemma \ref{Hardy_Littlewood}), we get $\|u_1\|_{\dot{H}^{-\gamma}\left(\mathbf{H}_n\right)} \leq C\|u_1\|_{L^m\left(\mathbf{H}_n\right)}$. From the above estimate and Proposition \ref{proposition:3.3}, we see that if $u_1 \in L^2\left(\mathbf{H}_n\right) \cap L^m\left(\mathbf{H}_n\right)$, then $P_{r_\mathrm{H}}\left(u_1\right)_{+}<\infty$ for $r_\mathrm{H}\left(u_1\right) = -Q\left(1-\frac{1}{m}\right)$. Similarly, if $u_0 \in H^{\delta_1}\left(\mathbf{H}_n\right) \cap L^m\left(\mathbf{H}_n\right)$, then $P_{r_\mathrm{H}}\left(u_0\right)_{+}<\infty$ for $r_\mathrm{H}\left(u_0\right) = -Q\left(1-\frac{1}{m}\right)+\delta_1$. From the definition of decay character, we obtain that $r_\mathrm{H}^*\left(u_0\right) \geq -Q\left(1-\frac{1}{m}\right)+\delta_1$ and $r_\mathrm{H}^*\left(u_1\right) \geq -Q\left(1-\frac{1}{m}\right)$.
\end{proof}
\begin{proposition} \label{proposition:3.5}
Let $\left(u_0, u_1\right) \in H_{\sigma \psi(t, \cdot)}^{\delta_1}\left(\mathbf{H}_n\right) \times L_{\sigma \psi(t,)}^2\left(\mathbf{H}_n\right)$, with $
\psi(t, \eta) := \frac{|x|^2+|y|^2+4|\tau|}{8(1+t)}$ for any $\eta=(x, y, \tau) \in \mathbf{H}_n$. Let $\sigma>0$ and $t \geq 0$. Similarly to the Euclidean case considered in \cite{Todorova2001} and \cite{Ikehata2005}, we define the Sobolev spaces $L^2\left(\mathbf{H}_n\right)$ and $H^{\delta_1}\left(\mathbf{H}_n\right)$ with exponential weight $\mathrm{e}^{\sigma \psi(t, \cdot)}$
\begin{align*}
L_{\sigma \psi(t,)}^2\left(\mathbf{H}_n\right):= &\left\{f \in L^2\left(\mathbf{H}_n\right):\|\mathrm{e}^{\sigma \psi(t, \cdot)} f\|_{L^2\left(\mathbf{H}_n\right)}<\infty\right\}, \\
H_{\sigma \psi(t, \cdot)}^{\delta_1}\left(\mathbf{H}_n\right):= &\left\{f \in H^{\delta_1}\left(\mathbf{H}_n\right):\|\mathrm{e}^{\sigma \psi(t, \cdot)} f\|_{L^2\left(\mathbf{H}_n\right)}+\|\mathrm{e}^{\sigma \psi(t, \cdot)} \left(-\Delta_{\mathrm{H}}\right)^\frac{\delta_1}{2} f\|_{L^2\left(\mathbf{H}_n\right)}<\infty\right\},
\end{align*}
endowed with the norms
\begin{align*}
\|f\|_{L_{\sigma \psi(t, \cdot)}^2\left(\mathbf{H}_n\right)}:= &\|\mathrm{e}^{\sigma \psi(t, \cdot)} f\|_{L^2\left(\mathbf{H}_n\right)}, \\
\|f\|_{H_{\sigma \psi(t, \cdot)}^{\delta_1}\left(\mathbf{H}_n\right)} := &\|\mathrm{e}^{\sigma \psi(t, \cdot)} f\|_{L^2\left(\mathbf{H}_n\right)}+\|\mathrm{e}^{\sigma \psi(t, \cdot)} \left(-\Delta_{\mathrm{H}}\right)^\frac{\delta_1}{2} f\|_{L^2\left(\mathbf{H}_n\right)}.
\end{align*}
If $\left(u_0, u_1\right) \in H_{\sigma \psi(t, \cdot)}^1\left(\mathbf{H}_n\right) \times L_{\sigma \psi(t,)}^2\left(\mathbf{H}_n\right)$, then $P_{r_\mathrm{H}}\left(u_0\right)_{+}, P_{r_\mathrm{H}}\left(u_1\right)_{+}<\infty$ for $r_\mathrm{H}\left(u_0\right) = \delta_1$ and $r_\mathrm{H}\left(u_1\right) = 0$. Moreover, we also have $r_\mathrm{H}^*\left(u_0\right) \geq \delta_1$ and $r_\mathrm{H}^*\left(u_1\right) \geq 0$.
\end{proposition}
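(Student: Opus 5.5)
The strategy is to reduce Proposition \ref{proposition:3.5} to Proposition \ref{proposition:3.2}. I will show that the exponentially weighted spaces embed continuously into $L^1\left(\mathbf{H}_n\right)$, so that
$$
\left(u_0, u_1\right) \in\left(H^{\delta_1}\left(\mathbf{H}_n\right) \cap L^1\left(\mathbf{H}_n\right)\right) \times\left(L^2\left(\mathbf{H}_n\right) \cap L^1\left(\mathbf{H}_n\right)\right).
$$
Proposition \ref{proposition:3.2} then gives $P_{r_\mathrm{H}}\left(u_0\right)_{+}, P_{r_\mathrm{H}}\left(u_1\right)_{+}<\infty$ with $r_\mathrm{H}\left(u_0\right)=\delta_1$ and $r_\mathrm{H}\left(u_1\right)=0$. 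The lower bounds $r_\mathrm{H}^*\left(u_0\right)\geq \delta_1$ and $r_\mathrm{H}^*\left(u_1\right)\geq 0$ follow from the definition of the decay character as the maximal $r_\mathrm{H}$ with finite upper indicator.

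Membership in $H^{\delta_1}\left(\mathbf{H}_n\right)$ and $L^2\left(\mathbf{H}_n\right)$ is built into the definitions of the weighted spaces, since $\mathrm{e}^{\sigma\psi}\geq 1$. I read the statement's $H^1_{\sigma\psi(t,\cdot)}$ as a typo for $H^{\delta_1}_{\sigma\psi(t,\cdot)}$.

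The key step is the $L^1$ bound. For fixed $t\geq 0$ and any $f\in L^2_{\sigma\psi(t,\cdot)}\left(\mathbf{H}_n\right)$, write $f=\mathrm{e}^{-\sigma\psi}\,\mathrm{e}^{\sigma\psi}f$ and apply Cauchy--Schwarz:
$$
\|f\|_{L^1\left(\mathbf{H}_n\right)}\leq \left\|\mathrm{e}^{-\sigma\psi(t,\cdot)}\right\|_{L^2\left(\mathbf{H}_n\right)}\|f\|_{L^2_{\sigma\psi(t,\cdot)}\left(\mathbf{H}_n\right)}.
$$
The weight factorises:
$$
\mathrm{e}^{-2\sigma\psi(t,\eta)}=\mathrm{e}^{-\frac{\sigma(|x|^2+|y|^2)}{4(1+t)}}\,\mathrm{e}^{-\frac{\sigma|\tau|}{1+t}}.
$$
Its integral over $\mathbb{R}^{2n+1}$ is therefore a Gaussian integral in $(x,y)$ times an exponential integral in $\tau$. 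This gives
$$
\left\|\mathrm{e}^{-\sigma\psi(t,\cdot)}\right\|_{L^2}^2=C_{n,\sigma}(1+t)^{n+1}=C_{n,\sigma}(1+t)^{Q/2},
$$
which is finite for each fixed $t$. Applying this to $u_1$, and to $u_0$ through the first part of the $H^{\delta_1}_{\sigma\psi}$-norm, puts both data in $L^1\left(\mathbf{H}_n\right)$.

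There is no real obstacle here. The only point needing care is that the weighted norm depends on $t$, while the decay indicators are properties of the data alone. I will fix any $t\geq 0$, for instance $t=0$, at which the data lie in the weighted spaces; the resulting $L^1$ bound is independent of which admissible $t$ is used, so membership in $L^1$ holds regardless. For an explicit constant, I can track $\|u_j\|_{L^1}^2$ through the chain of estimates in the proof of Proposition \ref{proposition:3.2}. There it enters only via $\|\widehat{u}_j(\lambda)e_k\|_{L^2(\mathbb{R}^n)}\leq\|u_j\|_{L^1}$ and the convergent series $\sum_k\mu_k^{-(n+1)}$.
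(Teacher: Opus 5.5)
Your proposal is correct relative to the paper and takes the same route. The paper also reduces to Proposition~\ref{proposition:3.2} through the embedding $L^2_{\sigma\psi(t,\cdot)}(\mathbf{H}_n)\hookrightarrow L^1(\mathbf{H}_n)\cap L^2(\mathbf{H}_n)$, but only asserts that embedding, whereas you prove it via Cauchy--Schwarz and $\|\mathrm{e}^{-\sigma\psi(t,\cdot)}\|_{L^2(\mathbf{H}_n)}^2=C_{n,\sigma}(1+t)^{Q/2}$.

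Be aware that your $u_0$ conclusion is only as good as Proposition~\ref{proposition:3.2}, whose proof bounds the indicator with weight $\mu_k^{\delta_1}|\lambda|^{\delta_1}$ (i.e.\ $\beta=\delta_1$) rather than $P_{r_\mathrm{H}}=P^0_{r_\mathrm{H}}$. For $P^0$, an $L^1$ bound only yields $r_\mathrm{H}(u_0)=0$, and in fact $P^0_{\delta_1}(u_0)_+=\infty$ whenever $\int_{\mathbf{H}_n}u_0\,\mathrm{d}\eta\neq0$, since $\|\widehat{u}_0(\lambda)e_0\|_{L^2(\mathbb{R}^n)}\to|\int_{\mathbf{H}_n}u_0\,\mathrm{d}\eta|$ as $\lambda\to0$.
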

\begin{proof}[Proof of Proposition \ref{proposition:3.5}]
Let us point out that the requirement $\left(u_0, u_1\right) \in H_{\sigma \psi(t, \cdot)}^{\delta_1}\left(\mathbf{H}_n\right) \times L_{\sigma \psi(t,)}^2\left(\mathbf{H}_n\right)$ is stronger than the assumption $\left(u_0, u_1\right) \in\left(H^{\delta_1}\left(\mathbf{H}_n\right) \cap L^1\left(\mathbf{H}_n\right)\right) \times\left(L^2\left(\mathbf{H}_n\right) \cap L^1\left(\mathbf{H}_n\right)\right)$. Indeed, the embedding
$$
L_{\sigma \psi(t, \cdot)}^2\left(\mathbf{H}_n\right) \hookrightarrow L^1\left(\mathbf{H}_n\right) \cap L^2\left(\mathbf{H}_n\right)
$$
holds for any $\sigma>0$ and $t \geq 0$. Furthermore, by H$\ddot{\text{o}}$lder's interpolation inequality we have also the embedding of $L_{\sigma \psi(t, \cdot)}^2\left(\mathbf{H}_n\right)$ in each $L^\omega\left(\mathbf{H}_n\right)$ for any $\omega \in[1,2]$, where the embedding constant depends on $t$, clearly. Therefore, if $\left(u_0, u_1\right) \in H_{\sigma \psi(t, \cdot)}^{\delta_1}\left(\mathbf{H}_n\right) \times L_{\sigma \psi(t,)}^2\left(\mathbf{H}_n\right)$, then $P_{r_\mathrm{H}}\left(u_0\right)_{+}, P_{r_\mathrm{H}}\left(u_1\right)_{+}<\infty$ for $r_\mathrm{H}\left(u_0\right) = \delta_1$ and $r_\mathrm{H}\left(u_1\right) = 0$. From the definition of decay character, we obtain that $r_\mathrm{H}^*\left(u_0\right) \geq \delta_1$ and $r_\mathrm{H}^*\left(u_1\right) \geq 0$.
\end{proof}
\begin{proposition} \label{proposition:3.6}
Let $q \in \mathbb{R}, k \in \mathbb{N}^n$ and let $e_k \in L^2\left(\mathbb{R}^n\right)$ be the $k$-th Hermite function. We define
$$
\mathcal{Y}^q\left(\mathbf{H}_n\right) := \left\{f\in \mathcal{D}^{\prime}\left(\mathbf{H}_n\right):\widehat{f}(\lambda) \in L_{\mathrm{loc}}^2\left(\mathbb{R}^*, \mathrm{HS}\left(L^2\left(\mathbb{R}^n\right)\right)\right) \text{ and } \underset{\lambda \in \mathbb{R}^*}{\sup}\left\{|\lambda|^q\|\widehat{f}(\lambda) e_k\|_{L^2\left(\mathbb{R}^n\right)}\right\}<\infty \right\},
$$
where $\mathcal{D}^{\prime}\left(\mathbf{H}_n\right)$ is the space of tempered distributions and $\mathcal{Y}^q\left(\mathbf{H}_n\right)$-norm is given by
$$
\|f\|_{\mathcal{Y}^q\left(\mathbf{H}_n\right)} := \underset{\lambda \in \mathbb{R}^*}{\sup}\left\{|\lambda|^q\|\widehat{f}(\lambda) e_k\|_{L^2\left(\mathbb{R}^n\right)}\right\}.
$$
If $\left(u_0, u_1\right) \in\left(H^{\delta_1}\left(\mathbf{H}_n\right) \cap \mathcal{Y}^q\left(\mathbf{H}_n\right)\right) \times\left(L^2\left(\mathbf{H}_n\right) \cap \mathcal{Y}^q\left(\mathbf{H}_n\right)\right)$, with $q \leq \frac{Q}{2}$, then $P_{r_\mathrm{H}}\left(u_0\right)_{+}, P_{r_\mathrm{H}}\left(u_1\right)_{+}<\infty$ for $r_\mathrm{H}\left(u_0\right) = -q+\delta_1$ and $r_\mathrm{H}\left(u_1\right) = -q$. Moreover, we also have $r_\mathrm{H}^*\left(u_0\right) \geq -q+\delta_1$ and $r_\mathrm{H}^*\left(u_1\right) \geq -q$.
\end{proposition}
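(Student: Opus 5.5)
The plan is to follow the template of Proposition \ref{proposition:3.2}: bound the quantity appearing in the upper decay indicator directly through the group Fourier transform, now using the $\mathcal{Y}^q$ bound on $\|\widehat{u}_j(\lambda)e_k\|_{L^2(\mathbb{R}^n)}$ where Proposition \ref{proposition:3.2} used the $L^1$ bound \eqref{ineq:2.1}. Consistent with that proof (and with Definition \ref{definition_3.7}), I treat $u_0$ through the weighted quantity with weight $\mu_k^{\delta_1}|\lambda|^{\delta_1}$, which matches the shift by $\delta_1$ in $r_\mathrm{H}(u_0)$. For $u_1$ the weight is simply $|\lambda|^n$.

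For $u_0$, I start from
$$
\rho_\mathrm{H}^{-2r_\mathrm{H}(u_0)-Q}c_n\sum_{k,\ell\in\mathbb{N}^n}\int_{B^*(\mu_k^{-1}\rho_\mathrm{H}^2)}\mu_k^{\delta_1}|\lambda|^{n+\delta_1}\left|\widehat{u}_0(\lambda)_{k,\ell}\right|^2\mathrm{d}\lambda .
$$
On $B^*(\mu_k^{-1}\rho_\mathrm{H}^2)$ we have $(\mu_k|\lambda|)^{\delta_1}\le\rho_\mathrm{H}^{2\delta_1}$, so this factor can be pulled out. Summing over $\ell$ by Parseval gives $\|\widehat{u}_0(\lambda)e_k\|_{L^2(\mathbb{R}^n)}^2$. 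The $\mathcal{Y}^q$ assumption bounds this by $|\lambda|^{-2q}\|u_0\|_{\mathcal{Y}^q}^2$, where I read the supremum as uniform in $k$, as the norm implicitly requires. What remains is
$$
\rho_\mathrm{H}^{-2r_\mathrm{H}(u_0)-Q+2\delta_1}\|u_0\|_{\mathcal{Y}^q}^2\sum_{k}\int_0^{\mu_k^{-1}\rho_\mathrm{H}^2}\lambda^{n-2q}\,\mathrm{d}\lambda .
$$

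Next I evaluate the radial integral. For $n-2q>-1$ it equals $C(\mu_k^{-1}\rho_\mathrm{H}^2)^{n+1-2q}$. This produces $\rho_\mathrm{H}^{2(n+1)-4q}=\rho_\mathrm{H}^{Q-4q}$ times $\sum_k\mu_k^{-(n+1-2q)}$.

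The bookkeeping of exponents is where the stated value $r_\mathrm{H}=-q+\delta_1$ must come out, and I expect this to be the main obstacle. The total exponent is $-2r_\mathrm{H}(u_0)+2\delta_1-4q$, not the $-2r_\mathrm{H}(u_0)+2\delta_1-2q$ that $r_\mathrm{H}=-q+\delta_1$ would require. Two further issues arise. The $k$-sum $\sum_k(2|k|+n)^{-(n+1-2q)}$ converges only when $n+1-2q>n$, i.e.\ $q<1/2$. Local integrability at $\lambda=0$ needs $q<(n+1)/2=Q/4$, not $Q/2$.

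So I would first try to reconcile the normalisation. The intended reading is probably that the $\mathcal{Y}^q$ weight measures decay in the homogeneous variable, so that $|\lambda|^{2q}$ in the Plancherel density corresponds to $\rho_\mathrm{H}^{2q}$. Equivalently, $q$ should be matched with $\rho_\mathrm{H}^2\sim\mu_k|\lambda|$ and paired with $\mu_k^{q}$ as well, i.e.\ one uses $(\mu_k|\lambda|)^{q/2}$. With that convention the bound becomes $(\mu_k|\lambda|)^{-q}$ and the integral gives $\rho_\mathrm{H}^{Q-2q}\sum_k\mu_k^{-(n+1)}$. This sum converges exactly as in Proposition \ref{proposition:3.2}, with the condition $q<Q/2$ for integrability, and the total exponent $-2r_\mathrm{H}(u_0)+2\delta_1-2q$ vanishes for $r_\mathrm{H}(u_0)=-q+\delta_1$.

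Granting this, the quantity is $\lesssim 1$ uniformly for small $\rho_\mathrm{H}$, so $P_{r_\mathrm{H}}(u_0)_+<\infty$. The same computation without the $\delta_1$ factor gives $P_{r_\mathrm{H}}(u_1)_+<\infty$ for $r_\mathrm{H}(u_1)=-q$. The inequalities $r^*_\mathrm{H}(u_0)\ge-q+\delta_1$ and $r^*_\mathrm{H}(u_1)\ge-q$ then follow from the definition of the upper decay character as a supremum, exactly as at the end of Proposition \ref{proposition:3.2}. The endpoint $q=Q/2$ (logarithmic divergence) is the trivial case $r_\mathrm{H}=-Q/2$, handled by the convention in Definition \ref{definition_3.9}.
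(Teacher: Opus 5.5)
You carry out the computation correctly under the literal definition, and the three obstructions you list are real. The paper uses exactly your template: pull out $(\mu_k|\lambda|)^{\delta_1}\le\rho_\mathrm{H}^{2\delta_1}$, apply Parseval in $\ell$, use the $\mathcal{Y}^q$ bound, integrate in $\lambda$, sum in $k$. It reaches $r_\mathrm{H}(u_0)=-q+\delta_1$ only through two slips, at precisely the places you flagged.
\begin{itemize}
\item It writes $|\lambda|^{n}\|\widehat{u}_0(\lambda)e_k\|^2=|\lambda|^{n-q}\,|\lambda|^{q}\|\widehat{u}_0(\lambda)e_k\|^2$ and bounds $|\lambda|^{q}\|\widehat{u}_0(\lambda)e_k\|^2$ by $\|u_0\|^2_{\mathcal{Y}^q(\mathbf{H}_n)}$. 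That is valid only if the weight in the norm were $|\lambda|^{q/2}$.
\item It then asserts that $\sum_{k}(2|k|+n)^{-(\frac{Q}{2}-q)}$ converges for all $q\le\frac{Q}{2}$. Since $\#\{k:|k|=m\}\simeq m^{n-1}$, it converges only for $q<1$.
\end{itemize}

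Your proposal does not prove the proposition as stated: ``granting'' a different normalization of $\mathcal{Y}^q$ changes the hypothesis. However, no argument could avoid this, because the stated proposition is false for $q>0$. For $0<q<\frac{Q}{4}$, define $\widehat{f}(\lambda)\varphi=|\lambda|^{-q}(\varphi,e_0)_{L^2(\mathbb{R}^n)}e_0$ for $0<|\lambda|\le 1$, and $\widehat{f}(\lambda)=0$ otherwise.
\begin{itemize}
\item Then $f\in H^{\delta_1}(\mathbf{H}_n)$ and $\sup_{\lambda,k}|\lambda|^q\|\widehat{f}(\lambda)e_k\|_{L^2(\mathbb{R}^n)}=1$.
\item The low-frequency mass is $c_n\int_{|\lambda|\le\rho_\mathrm{H}^2/n}|\lambda|^{n-2q}\,\mathrm{d}\lambda\simeq\rho_\mathrm{H}^{Q-4q}$.
\item Hence $\rho_\mathrm{H}^{2q-Q}\rho_\mathrm{H}^{Q-4q}=\rho_\mathrm{H}^{-2q}\to\infty$, so $P_{-q}(f)_+=\infty$. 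The $\delta_1$-weighted quantity for $u_0$ blows up at the same rate.
\end{itemize}
You should therefore say explicitly that you are proving a corrected proposition, not reading the given one.

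As a correction, your weight $(\mu_k|\lambda|)^{q/2}$ is the right choice. It is the analogue of the Euclidean $\sup_\xi|\xi|^q|\widehat{f}(\xi)|$ under $\rho_\mathrm{H}^2\leftrightarrow\mu_k|\lambda|$. With it the argument closes for all $q<\frac{Q}{2}$, including $q\le 0$, using the same convergent series $\sum_k\mu_k^{-(n+1)}$ as in Proposition~\ref{proposition:3.2}. The paper's implicit weight $|\lambda|^{q/2}$ only makes the argument work for $q<1$.

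One small fix concerns the endpoint $q=\frac{Q}{2}$. It has nothing to do with the convention in Definition~\ref{definition_3.9}, which concerns the value of $r^*_\mathrm{H}$, not finiteness of $P$. It is trivial because $\rho_\mathrm{H}^{-2r_\mathrm{H}-Q}=1$ at $r_\mathrm{H}=-\frac{Q}{2}$, so $P_{-Q/2}(u_1)_+\le\|u_1\|^2_{L^2(\mathbf{H}_n)}$. The same holds for $u_0$ after pulling out $\rho_\mathrm{H}^{2\delta_1}$.
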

\begin{proof}[Proof of Proposition \ref{proposition:3.6}]
Let $\left(u_0, u_1\right) \in\left(H^{\delta_1}\left(\mathbf{H}_n\right) \cap \mathcal{Y}^q\left(\mathbf{H}_n\right)\right) \times\left(L^2\left(\mathbf{H}_n\right) \cap \mathcal{Y}^q\left(\mathbf{H}_n\right)\right)$, with $q \leq \frac{Q}{2}$. Then we have
\begin{align*}
&\rho_\mathrm{H}^{-2 r_\mathrm{H}\left(u_0\right)-Q} c_n \sum_{k, \ell \in \mathbb{N}^n} \int_{B^*\left(\mu_k^{-1}\rho_\mathrm{H}^2\right)} \mu_k |\lambda|^{n+\delta_1} \left|\widehat{u}_0(\lambda)_{k, \ell}\right|^2 \mathrm{d} \lambda \\
\lesssim & \rho_\mathrm{H}^{-2 r_\mathrm{H}\left(u_0\right)-Q+2\delta_1} c_n \sum_{k \in \mathbb{N}^n} \int_{B^*\left(\mu_k^{-1}\rho_\mathrm{H}^2\right)} |\lambda|^n \sum_{\ell \in \mathbb{N}^n}|\left(\widehat{u}_0(\lambda) e_k, e_{\ell}\right)_{L^2\left(\mathbb{R}^n\right)}|^2 \mathrm{d} \lambda \\
= & \rho_\mathrm{H}^{-2 r_\mathrm{H}\left(u_0\right)-Q+2\delta_1} c_n \sum_{k \in \mathbb{N}^n} \int_{B^*\left(\mu_k^{-1}\rho_\mathrm{H}^2\right)} |\lambda|^{n-q} |\lambda|^{q} \left\|\widehat{u}_0(\lambda) e_k\right\|_{L^2\left(\mathbb{R}^n\right)}^2 \mathrm{d} \lambda \\
\lesssim & \rho_\mathrm{H}^{-2 r_\mathrm{H}\left(u_0\right)-Q+2\delta_1} c_n \sum_{k \in \mathbb{N}^n} \int_0^{\mu_k^{-1}\rho_\mathrm{H}^2} \lambda^{n-q} \mathrm{d} \lambda \left\|u_0\right\|_{\mathcal{Y}^q\left(\mathbf{H}_n\right)}^2 \\
\lesssim & \rho_\mathrm{H}^{-2 r_\mathrm{H}\left(u_0\right)-Q+2n-2q+2+2\delta_1} c_n \sum_{k \in \mathbb{N}^n} \mu_k^{-(n-q+1)} \lesssim \rho_\mathrm{H}^{-2 r_\mathrm{H}\left(u_0\right)-2q+2\delta_1},
\end{align*}
where in the third step we employed Parseval's identity $\left\|\widehat{u}_0(\lambda) e_k\right\|_{L^2\left(\mathbb{R}^n\right)}^2=\sum_{\ell \in \mathbb{N}^n}|\left(\widehat{u}_0(\lambda) e_k, e_{\ell}\right)_{L^2\left(\mathbb{R}^n\right)}|^2$ and in the last estimate we used that the series $c_n \sum_{k \in \mathbb{N}^n} \mu_k^{-(n-q+1)}=c_n \sum_{k \in \mathbb{N}^n}(2|k|+n)^{-\left(\frac{Q}{2}-q \right)}$ is convergent. Therefore, if $u_0 \in H^{\delta_1}\left(\mathbf{H}_n\right) \cap \mathcal{Y}^q\left(\mathbf{H}_n\right)$, then $P_{r_\mathrm{H}}\left(u_0\right)_{+}<\infty$ for $r_\mathrm{H}\left(u_0\right) = -q+\delta_1$. Similarly, if $u_1 \in L^2\left(\mathbf{H}_n\right) \cap \mathcal{Y}^q\left(\mathbf{H}_n\right)$, then $P_{r_\mathrm{H}}\left(u_1\right)_{+}<\infty$ for $r_\mathrm{H}\left(u_1\right) = -q$. From the definition of decay character, we obtain that $r_\mathrm{H}^*\left(u_0\right) \geq -q+\delta_1$ and $r_\mathrm{H}^*\left(u_1\right) \geq -q$.
\end{proof}
\begin{proposition} \label{proposition:3.7}
We define $\dot{H}_q^s\left(\mathbf{H}_n\right):=\left\{f \in \mathcal{D}^{\prime}\left(\mathbf{H}_n\right):\left(-\Delta_{\mathrm{H}}\right)^\frac{s}{2} f \in L^q\left(\mathbf{H}_n\right)\right\}, q \geq 1, s \in \mathbb{R}$, where $\mathcal{D}^{\prime}\left(\mathbf{H}_n\right)$ is the space of tempered distributions on the Heisenberg group and $\dot{H}_q^s\left(\mathbf{H}_n\right)$-norm is given by $\|f\|_{\dot{H}^s\left(\mathbf{H}_n\right)}:=\left\|\left(-\Delta_{\mathrm{H}}\right)^\frac{s}{2} f\right\|_{L^q\left(\mathbf{H}_n\right)}$. If $\left(u_0, u_1\right) \in\left(H^{\delta_1}\left(\mathbf{H}_n\right) \cap \dot{H}_m^{-\gamma}\left(\mathbf{H}_n\right)\right) \times\left(L^2\left(\mathbf{H}_n\right) \cap \dot{H}_m^{-\gamma}\left(\mathbf{H}_n\right)\right)$, with $m \in (1, 2], \gamma \geq 0$, then $P_{r_\mathrm{H}}\left(u_0\right)_{+}, P_{r_\mathrm{H}}\left(u_1\right)_{+}<\infty$ for $r_\mathrm{H}\left(u_0\right) = \gamma-\frac{Q(m-1)}{m}+\delta_1$ and $r_\mathrm{H}\left(u_1\right) = \gamma-\frac{Q(m-1)}{m}$. Moreover, we also have $r_\mathrm{H}^*\left(u_0\right) \geq \gamma-\frac{Q(m-1)}{m}+\delta_1$ and $r_\mathrm{H}^*\left(u_1\right) \geq \gamma-\frac{Q(m-1)}{m}$.
\end{proposition}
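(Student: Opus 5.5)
The plan is to reduce Proposition \ref{proposition:3.7} to Proposition \ref{proposition:3.3}, in the same way that Proposition \ref{proposition:3.4} was reduced to it, using the Hardy--Littlewood--Sobolev inequality of Lemma \ref{Hardy_Littlewood}.

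Set $g:=\left(-\Delta_{\mathrm{H}}\right)^{-\frac{\gamma}{2}}u_1$. The hypothesis $u_1\in \dot{H}_m^{-\gamma}\left(\mathbf{H}_n\right)$ says exactly that $g\in L^m\left(\mathbf{H}_n\right)$. Take $a:=Q\left(\frac1m-\frac12\right)\ge 0$, so that $\frac{a}{Q}=\frac1m-\frac12$. For $m\in(1,2)$, Lemma \ref{Hardy_Littlewood} with $p=m$ and $q=2$ gives
$$
\|g\|_{\dot{H}^{-a}\left(\mathbf{H}_n\right)}\lesssim \|g\|_{L^m\left(\mathbf{H}_n\right)}.
$$
When $m=2$ we have $a=0$ and this is trivial. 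Since the fractional powers of $-\Delta_{\mathrm{H}}$ compose through the functional calculus (their symbols are $(\mu_k|\lambda|)^s$), we get
$$
\|u_1\|_{\dot{H}^{-(\gamma+a)}\left(\mathbf{H}_n\right)}=\|g\|_{\dot{H}^{-a}\left(\mathbf{H}_n\right)}\lesssim \|u_1\|_{\dot{H}_m^{-\gamma}\left(\mathbf{H}_n\right)}.
$$
Hence $u_1\in L^2\left(\mathbf{H}_n\right)\cap \dot{H}^{-\tilde\gamma}\left(\mathbf{H}_n\right)$ with $\tilde\gamma:=\gamma+Q\left(\frac1m-\frac12\right)$. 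The same argument applies to $u_0$, giving $u_0\in H^{\delta_1}\left(\mathbf{H}_n\right)\cap\dot{H}^{-\tilde\gamma}\left(\mathbf{H}_n\right)$.

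Next I apply Proposition \ref{proposition:3.3} with $\tilde\gamma$ in place of $\gamma$. This yields $P_{r_\mathrm{H}}(u_1)_+<\infty$ for
$$
r_\mathrm{H}(u_1)=\tilde\gamma-\tfrac Q2=\gamma+\tfrac Qm-Q=\gamma-\tfrac{Q(m-1)}{m},
$$
and likewise $r_\mathrm{H}(u_0)=\gamma-\frac{Q(m-1)}{m}+\delta_1$. The lower bounds on $r_\mathrm{H}^*$ then follow from the definition of the decay character: $r_{\mathrm{H}}(\cdot)_+$ is a supremum over the set where $P_{r_\mathrm{H}}(\cdot)_+<\infty$, and that set contains the values just computed.

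The main obstacle is the applicability of Lemma \ref{Hardy_Littlewood}, which requires $1<m\le 2<\infty$ together with the implicit condition that the negative-order operator $\left(-\Delta_{\mathrm{H}}\right)^{-\gamma/2}$ acts on $L^m$ in a meaningful way. The latter is what forces $\gamma<Q-\frac Qm$ in the main results. I would note that for larger $\gamma$ one should interpret $\dot{H}_m^{-\gamma}$ through its symbol, and that the identity $\left(-\Delta_{\mathrm{H}}\right)^{-\frac{a}{2}}\left(-\Delta_{\mathrm{H}}\right)^{-\frac{\gamma}{2}}=\left(-\Delta_{\mathrm{H}}\right)^{-\frac{a+\gamma}{2}}$ holds on the Fourier side, which justifies the composition step.
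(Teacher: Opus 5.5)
Your reduction is correct within the paper's framework, but it takes a different route from the paper's proof.

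\textbf{The paper's route.} The paper proves Proposition \ref{proposition:3.7} directly on the Fourier side:
\begin{itemize}
\item on $B^*(\mu_k^{-1}\rho_{\mathrm H}^2)$ it inserts $1\le \rho_{\mathrm H}^{2\gamma}(\mu_k|\lambda|)^{-\gamma}$;
\item it applies H\"older in $(k,\lambda)$ with exponents $\frac{m'}{2}$ and $(1-\frac{2}{m'})^{-1}$;
\item it bounds the first factor by the Hausdorff--Young inequality for $(-\Delta_{\mathrm H})^{-\gamma/2}u_0\in L^m(\mathbf H_n)$;
\item it obtains the gain $\rho_{\mathrm H}^{Q(\frac2m-1)}$ as the Plancherel measure of the low-frequency set $\{\mu_k|\lambda|\le\rho_{\mathrm H}^2\}$, raised to the power $1-\frac{2}{m'}$.
\end{itemize}

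\textbf{Your route.} You transfer the $L^m$ information to the physical side with Lemma \ref{Hardy_Littlewood}, via the embedding $\dot H^{-\gamma}_m(\mathbf H_n)\hookrightarrow \dot H^{-\gamma-Q(\frac1m-\frac12)}(\mathbf H_n)$, and then quote Proposition \ref{proposition:3.3}. This is exactly the mechanism of Proposition \ref{proposition:3.4}, which is your argument at $\gamma=0$, and the exponents agree.

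\textbf{What each buys.} Your route is shorter and modular. It also avoids the group Hausdorff--Young inequality. That inequality is a Schatten-class statement, and the paper uses it in the column form $\sum_k\|\widehat f(\lambda)e_k\|^{m'}$ without spelling out the reduction (the reduction is true for $m'\ge 2$, by Jensen). The paper's route is self-contained on the Fourier side and shows where the exponent $Q(\frac2m-1)$ comes from. It also survives at the endpoint $m=1$, with \eqref{ineq:2.1} replacing Hausdorff--Young as in Proposition \ref{proposition:3.2}, whereas HLS fails there.

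\textbf{Two minor points.}
\begin{itemize}
\item Your closing caveat is not needed. HLS is applied to $g$ with $a=Q(\frac1m-\frac12)\in[0,\frac Q2)$, independently of $\gamma$, so your argument covers every $\gamma\ge 0$ as stated. The restriction $\gamma<Q-\frac Qm$ elsewhere in the paper has other origins, e.g.\ the requirement $\frac{mQ}{Q+m\gamma}>1$ in the blow-up part.
\item In both arguments, the extra $+\delta_1$ for $u_0$ comes only from the weight $(\mu_k|\lambda|)^{\delta_1}\le\rho_{\mathrm H}^{2\delta_1}$. This weight is inserted in the proof of Proposition \ref{proposition:3.3}, and directly in the paper's proof of the present statement. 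So what is really controlled is the weighted indicator $P^{\delta_1}_{r_{\mathrm H}}(u_0)_+$, not $P_{r_{\mathrm H}}(u_0)_+=P^0_{r_{\mathrm H}}(u_0)_+$; the assumption $u_0\in H^{\delta_1}$ gives no extra low-frequency vanishing for the latter. Your reduction inherits this unchanged, so it proves exactly what the paper's argument proves.
\end{itemize}
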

\begin{proof}[Proof of Proposition \ref{proposition:3.7}]
Let $\left(u_0, u_1\right) \in\left(H^{\delta_1}\left(\mathbf{H}_n\right) \cap \dot{H}_m^{-\gamma}\left(\mathbf{H}_n\right)\right) \times\left(L^2\left(\mathbf{H}_n\right) \cap \dot{H}_m^{-\gamma}\left(\mathbf{H}_n\right)\right)$, with $m \in (1, 2]$ and $\gamma \geq 0$. Then, we have
\begin{align*}
&\rho_\mathrm{H}^{-2 r_\mathrm{H}\left(u_0\right)-Q} c_n \sum_{k, \ell \in \mathbb{N}^n} \int_{B^*\left(\mu_k^{-1}\rho_\mathrm{H}^2\right)} \mu_k^{\delta_1} |\lambda|^{n+\delta_1} \left|\widehat{u}_0(\lambda)_{k, \ell}\right|^2 \mathrm{d} \lambda \\
\lesssim & \rho_\mathrm{H}^{-2 r_\mathrm{H}\left(u_0\right)-Q+2\gamma+2\delta_1} c_n \sum_{k \in \mathbb{N}^n} \int_{B^*\left(\mu_k^{-1}\rho_\mathrm{H}^2\right)} \mu_k^{-\gamma} |\lambda|^{n-\gamma} \sum_{\ell \in \mathbb{N}^n}|\left(\widehat{u}_0(\lambda) e_k, e_{\ell}\right)_{L^2\left(\mathbb{R}^n\right)}|^2 \mathrm{d} \lambda \\
=& \rho_\mathrm{H}^{-2 r_\mathrm{H}\left(u_0\right)-Q+2\gamma+2\delta_1} c_n \sum_{k \in \mathbb{N}^n} \int_{B^*\left(\mu_k^{-1}\rho_\mathrm{H}^2\right)} \mu_k^{-\gamma} |\lambda|^{n-\gamma} \left\|\widehat{u}_0(\lambda) e_k\right\|_{L^2\left(\mathbb{R}^n\right)}^2 \mathrm{d} \lambda,
\end{align*}
where in the last step we employed Parseval's identity $\left\|\widehat{u}_0(\lambda) e_k\right\|_{L^2\left(\mathbb{R}^n\right)}^2=\sum_{\ell \in \mathbb{N}^n}|\left(\widehat{u}_0(\lambda) e_k, e_{\ell}\right)_{L^2\left(\mathbb{R}^n\right)}|^2$. Applying H\"{o}lder's inequality with $\frac{1}{m}+\frac{1}{m^{\prime}}=1$, we obtain
\begin{align*}
&\rho_\mathrm{H}^{-2 r_\mathrm{H}\left(u_0\right)-Q+2\gamma+2\delta_1} c_n \sum_{k \in \mathbb{N}^n} \int_{B^*\left(\mu_k^{-1}\rho_\mathrm{H}^2\right)} \mu_k^{-\gamma} |\lambda|^{n-\gamma} \left\|\widehat{u}_0(\lambda) e_k\right\|_{L^2\left(\mathbb{R}^n\right)}^2 \mathrm{d} \lambda \\
\lesssim & \rho_\mathrm{H}^{-2 r_\mathrm{H}\left(u_0\right)-Q+2\gamma+2\delta_1} \left(c_n\sum_{k \in \mathbb{N}^n}\int_{B^*\left(\mu_k^{-1}\rho_\mathrm{H}^2\right)} \left(\mu_k|\lambda|\right)^{-\frac{m^{\prime}\gamma}{2}} \left\|\widehat{u}_0(\lambda) e_k\right\|_{L^2\left(\mathbb{R}^n\right)}^{m^{\prime}} |\lambda|^n \mathrm{d} \lambda\right)^\frac{2}{m^{\prime}} \\
&\times \left(c_n\sum_{k \in \mathbb{N}^n}\int_{B^*\left(\mu_k^{-1}\rho_\mathrm{H}^2\right)} |\lambda|^n \mathrm{d} \lambda\right)^{1-\frac{2}{m^{\prime}}} \lesssim \rho_\mathrm{H}^{-2 r_\mathrm{H}\left(u_0\right)-Q+2\gamma+2\delta_1} \left(c_n \sum_{k \in \mathbb{N}^n}\int_0^{\mu_k^{-1}\rho_\mathrm{H}^2} \lambda^{n} \mathrm{d} \lambda\right)^{1-\frac{2}{m^{\prime}}} \\
\lesssim & \rho_\mathrm{H}^{-2 r_\mathrm{H}\left(u_0\right)-Q+2\gamma+2\delta_1} \left(\rho_\mathrm{H}^{2n+2} c_n \sum_{k \in \mathbb{N}^n} \mu_k^{-(n+1)}\right)^{1-\frac{2}{m^{\prime}}} \lesssim \rho_\mathrm{H}^{-2 r_\mathrm{H}\left(u_0\right)+2\gamma-\frac{2Q(m-1)}{m}+2\delta_1},
\end{align*}
where in the third step we used $u_0\in \dot{H}_m^{-\gamma}\left(\mathbf{H}_n\right)$ and the Hausdorff-Young inequality (see \cite{Fischer2016}) to get the inequality $c_n\sum_{k \in \mathbb{N}^n}\int_{B^*\left(\mu_k^{-1}\rho_\mathrm{H}^2\right)} \left(\mu_k|\lambda|\right)^{-\frac{m^{\prime}\gamma}{2}} \left\|\widehat{u}_0(\lambda) e_k\right\|_{L^2\left(\mathbb{R}^n\right)}^{m^{\prime}} |\lambda|^n \mathrm{d} \lambda < \infty$ and in the last estimate, we used that the series $\left(c_n \sum_{k \in \mathbb{N}^n} \mu_k^{-(n+1)}\right)^{1-\frac{2}{m^{\prime}}}=\left(c_n \sum_{k \in \mathbb{N}^n}(2|k|+n)^{-(n+1)}\right)^{1-\frac{2}{m^{\prime}}}$ is convergent. Therefore, if $u_0 \in H^{\delta_1}\left(\mathbf{H}_n\right) \cap \dot{H}_m^{-\gamma}\left(\mathbf{H}_n\right)$, then $P_{r_\mathrm{H}}\left(u_0\right)_{+}<\infty$ for $r_\mathrm{H}\left(u_0\right) = \gamma-\frac{Q(m-1)}{m}+\delta_1$. Similarly, if $u_1 \in L^2\left(\mathbf{H}_n\right) \cap \dot{H}_m^{-\gamma}\left(\mathbf{H}_n\right)$, then $P_{r_\mathrm{H}}\left(u_1\right)_{+}<\infty$ for $r_\mathrm{H}\left(u_1\right) = \gamma-\frac{Q(m-1)}{m}$. From the definition of decay character, we obtain that $r_\mathrm{H}^*\left(u_0\right) \geq \gamma-\frac{Q(m-1)}{m}+\delta_1$ and $r_\mathrm{H}^*\left(u_1\right) \geq \gamma-\frac{Q(m-1)}{m}$.
\end{proof}

\section{decay estimates of the solution of the linear Cauchy problems} \label{section_4}
In this section, we establish the decay rates of solutions for the linear Cauchy problem \eqref{eq:1.1} that can be expressed in terms of the upper decay indicators on the Heisenberg group of initial data $\left(u_0, u_1\right)$.
\begin{theorem} \label{theorem:1.2}
Let $Q=2n+2$ be the homogeneous dimension on the Heisenberg group. Assume that $\left(u_0, u_1\right) \in H^{\delta_1}\left(\mathbf{H}_n\right) \times L^2\left(\mathbf{H}_n\right)$ such that $P_{r_\mathrm{H}}\left(u_0\right)_{+}, P_{r_\mathrm{H}}\left(u_1\right)_{+}<\infty$ for some $r_\mathrm{H}\left(u_0\right), r_\mathrm{H}\left(u_1\right)$ satisfying $-\frac{Q}{2}\leq r_\mathrm{H}\left(u_0\right), r_\mathrm{H}\left(u_1\right)-2\delta_2$. Then there exists a solution $u \in \mathcal{C}\left([0, \infty), H^{\delta_1}\left(\mathbf{H}_n\right) \right) \cap \mathcal{C}^1\left([0, \infty),  L^2\left(\mathbf{H}_n\right) \right)$ to \eqref{eq:1.1} 
satisfying the following decay estimates for all $\alpha \in \left[0, \delta_1\right]$:
\begin{align}
\label{ineq:1.3} \left\| u(t, \cdot)\right\|_{\dot{H}^\alpha \left(\mathbf{H}_n\right)} \lesssim & (1+t)^{-\frac{2 r_\mathrm{H}\left(u_0\right) +Q+2\alpha}{4\delta_1-4\delta_2}} \|u_0\|_{P, \alpha} + (1+t)^{-\frac{2 r_\mathrm{H}\left(u_1\right) +Q+2\alpha-4\delta_2}{4\delta_1-4\delta_2}} \|u_1\|_{P, 0}, \\
\label{ineq:1.4} \left\|\partial_t u(t, \cdot)\right\|_{L^2\left(\mathbf{H}_n\right)} \lesssim & (1+t)^{-\frac{2 r_\mathrm{H}\left(u_0\right) +Q+4\delta_1-4\delta_2}{4\delta_1-4\delta_2}} \|u_0\|_{P, \delta_1} + (1+t)^{-\frac{2 r_\mathrm{H}\left(u_1\right) +Q+4\delta_1-8\delta_2}{4\delta_1-4\delta_2}} \|u_1\|_{P, 0}.
\end{align}
\end{theorem}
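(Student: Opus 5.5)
Following the proof of Theorem \ref{theorem:3.7}, I apply the group Fourier transform to \eqref{eq:1.1} and take Hermite matrix coefficients $\widehat{u}(t,\lambda)_{k,\ell}=(\widehat{u}(t,\lambda)e_k,e_\ell)$. Since the symbol of $(-\Delta_{\mathrm{H}})^{\delta}$ acts on $e_k$ as multiplication by $(\mu_k|\lambda|)^{\delta}$, each coefficient solves the scalar ODE
$$
\partial_t^2 \widehat{u}_{k,\ell}+(\mu_k|\lambda|)^{\delta_2}\partial_t\widehat{u}_{k,\ell}+(\mu_k|\lambda|)^{\delta_1}\widehat{u}_{k,\ell}=0 .
$$
With $\xi:=\mu_k|\lambda|$, the characteristic roots are $\lambda_\pm(\xi)=\tfrac12\bigl(-\xi^{\delta_2}\pm\sqrt{\xi^{2\delta_2}-4\xi^{\delta_1}}\bigr)$. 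I write $\widehat{u}_{k,\ell}=\widehat{K}_0(t,\xi)\widehat{u}_0(\lambda)_{k,\ell}+\widehat{K}_1(t,\xi)\widehat{u}_1(\lambda)_{k,\ell}$ and estimate the multipliers in three zones: small frequencies $\xi\le\varepsilon$, a middle zone $\varepsilon\le\xi\le N$, and large frequencies $\xi\ge N$. Existence and the regularity $u\in\mathcal{C}([0,\infty),H^{\delta_1})\cap\mathcal{C}^1([0,\infty),L^2)$ follow from this explicit representation, Plancherel \eqref{eq:2.2}, and dominated convergence.

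\textbf{Multiplier bounds.} Because $\delta_2\le\delta_1/2$, for small $\xi$ we have $\xi^{2\delta_2}\ge\xi^{\delta_1}$. If $\delta_2<\delta_1/2$ the roots are real and separated as $\xi\to0$: $\lambda_+\sim-\xi^{\delta_1-\delta_2}$ and $\lambda_-\sim-\xi^{\delta_2}$. If $\delta_2=\delta_1/2$, the roots are complex with real part $-\tfrac12\xi^{\delta_2}\sim-\xi^{\delta_1-\delta_2}$. In both cases I expect, for $\xi\le\varepsilon$,
$$
|\widehat{K}_0|\lesssim e^{-c\xi^{\delta_1-\delta_2}t},\quad |\widehat{K}_1|\lesssim \xi^{-\delta_2}e^{-c\xi^{\delta_1-\delta_2}t},\quad |\partial_t\widehat{K}_0|\lesssim \xi^{\delta_1-\delta_2}e^{-c\xi^{\delta_1-\delta_2}t},\quad |\partial_t\widehat{K}_1|\lesssim e^{-c\xi^{\delta_1-\delta_2}t}.
$$
This uses $|\widehat{K}_1|=|e^{\lambda_+t}-e^{\lambda_-t}|/|\lambda_+-\lambda_-|$, with $|\lambda_+-\lambda_-|\sim\xi^{\delta_2}$. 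The $\partial_t\widehat{K}_0$ bound needs the identity $\partial_t\widehat K_0=-\xi^{\delta_1}\widehat K_1$. In the middle and high zones the roots have real parts bounded above by $-c<0$ (for large $\xi$ this holds when $\delta_2>0$; the case $\delta_2=0$ has constant damping), so I get exponential decay $e^{-ct}$ with the polynomial weights $\xi^{\alpha/2}$ absorbed by $\|u_0\|_{\dot H^\alpha}$ and $\|u_1\|_{L^2}$ (using $\alpha\le\delta_1$). The distinct case $\delta_2=\delta_1/2$ near the boundary needs separate care, but only the real-part bound is used.

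\textbf{Low-frequency integral.} This is the main step. Write $\|u(t)\|_{\dot H^\alpha}^2=c_n\sum_{k,\ell}\int\xi^{\alpha}|\widehat{u}_{k,\ell}|^2|\lambda|^n\,\mathrm{d}\lambda$. On $\{\xi\le\varepsilon\}$ I decompose dyadically in the variable $\rho_{\mathrm H}^2$, using the balls $B^*(\mu_k^{-1}\rho_{\mathrm H}^2)$ of Definition \ref{definition_3.7}, with annuli $A_p$ as in the proof of Theorem \ref{theorem:3.11}. The hypothesis $P_{r_{\mathrm H}}(u_0)_+<\infty$, together with $u_0\in L^2$ to cover non-small $\rho_{\mathrm H}$, gives the bound $\sum_{k,\ell}\int_{B^*(\mu_k^{-1}\rho^2)}|\widehat u_0|^2|\lambda|^n\lesssim\rho^{2r_{\mathrm H}(u_0)+Q}\,\|u_0\|_{P,\cdot}^2$ for $\rho^2\le\varepsilon$.

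On the annulus where $\xi\sim\rho^2$, the weight $\xi^{\alpha}e^{-2c\xi^{\delta_1-\delta_2}t}$ is at most $\rho^{2\alpha}e^{-c'\rho^{2(\delta_1-\delta_2)}t}$. Summing over dyadic $\rho$ gives
$$
\sum_p \rho_p^{2r_{\mathrm H}(u_0)+Q+2\alpha}e^{-c'\rho_p^{2(\delta_1-\delta_2)}t}\lesssim (1+t)^{-\frac{2r_{\mathrm H}(u_0)+Q+2\alpha}{2(\delta_1-\delta_2)}} .
$$
This requires the exponent $2r_{\mathrm H}+Q+2\alpha\ge0$, which is where $r_{\mathrm H}\ge-Q/2$ enters; in the borderline case I would accept a harmless logarithm or use $\alpha>0$. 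Taking square roots gives the first term of \eqref{ineq:1.3}.

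For $u_1$ the extra factor $\xi^{-2\delta_2}$ shifts the exponent to $2r_{\mathrm H}(u_1)+Q+2\alpha-4\delta_2$, which is nonnegative precisely by the assumption $r_{\mathrm H}(u_1)-2\delta_2\ge-Q/2$. For \eqref{ineq:1.4}, the weights $\xi^{2(\delta_1-\delta_2)}$ (for $u_0$) and $1$ (for $u_1$) give the exponents $2r_{\mathrm H}(u_0)+Q+4\delta_1-4\delta_2$ and $2r_{\mathrm H}(u_1)+Q+4\delta_1-8\delta_2$, after writing $1=\xi^{-2\delta_2}\xi^{2\delta_2}$ and using $\xi^{2\delta_2}\lesssim\xi^{2(\delta_1-\delta_2)}$ for small $\xi$.

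The main obstacle I expect is making the dyadic summation rigorous. The sets $B^*(\mu_k^{-1}\rho^2)$ depend on $k$, so the decomposition must be done in $\xi=\mu_k|\lambda|$ uniformly in $k$, matching exactly the structure of Definition \ref{definition_3.7}.
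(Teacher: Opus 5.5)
The proposal has a genuine gap: the low-frequency argument for the $u_1$-term in \eqref{ineq:1.4} fails whenever $\delta_2<\delta_1/2$, in particular for $\delta_2=0$.

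\textbf{Where it fails.} On $\{\xi\le\varepsilon\}$ you keep only $|\partial_t\widehat K_1|\lesssim e^{-c\xi^{\delta_1-\delta_2}t}$. Your dyadic sum then gives only $(1+t)^{-\frac{2r_{\mathrm H}(u_1)+Q}{4\delta_1-4\delta_2}}$. The step meant to recover the claimed exponent, ``$\xi^{2\delta_2}\lesssim\xi^{2(\delta_1-\delta_2)}$ for small $\xi$'', runs the wrong way. Since $2\delta_2\le\delta_1\le 2(\delta_1-\delta_2)$, for $\xi<1$ one has $\xi^{2(\delta_1-\delta_2)}\le\xi^{2\delta_2}$, with equal exponents only when $\delta_1=2\delta_2$. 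You therefore lose a factor $(1+t)^{-\frac{\delta_1-2\delta_2}{\delta_1-\delta_2}}$. For $\delta_1=1$, $\delta_2=0$ this is exactly the extra $(1+t)^{-1}$ that $\partial_t u$ gains over $u$ for the damped wave equation, which no bound of the form $e^{-c\xi t}$ can detect.

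\textbf{The missing idea.} Keep the two modes of $\partial_t\widehat K_1=\frac{\lambda_+e^{\lambda_+t}-\lambda_-e^{\lambda_-t}}{\lambda_+-\lambda_-}$ separate.
The slow mode carries the small coefficient $\lambda_+/(\lambda_+-\lambda_-)\sim\xi^{\delta_1-2\delta_2}$, and the fast mode decays like $e^{-c\xi^{\delta_2}t}$. This gives $|\partial_t\widehat K_1|\lesssim\xi^{\delta_1-2\delta_2}e^{-c\xi^{\delta_1-\delta_2}t}+e^{-c\xi^{\delta_2}t}$, which is the paper's \eqref{ineq:4.10}.
\begin{itemize}
\item The first piece produces exactly the exponent $2r_{\mathrm H}(u_1)+Q+4\delta_1-8\delta_2$.
\item The second contributes $(1+t)^{-\frac{2r_{\mathrm H}(u_1)+Q}{2\delta_2}}$ to the squared norm (or $e^{-ct}$ if $\delta_2=0$). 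This is dominated by the target rate precisely because $2r_{\mathrm H}(u_1)+Q\ge 4\delta_2$.
\end{itemize}
Your crude bound is enough only in the case $\delta_2=\delta_1/2$.

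\textbf{The rest of the plan.} Apart from this, the plan is sound and differs from the paper mainly at low frequencies. The paper does three separate things there:
\begin{itemize}
\item it pulls out powers of $(1+t)$ via \eqref{ineq:gamma_beta};
\item it controls the remaining heat-type integral by the Fourier-splitting Theorem~\ref{theorem:3.7};
\item it handles the negative weight $\xi^{\alpha-2\delta_2}$ for $\alpha<2\delta_2$ through Theorem~\ref{theorem:3.11}.
\end{itemize}
Your dyadic summation against the upper decay indicator does all of this in one step and needs neither auxiliary theorem.

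The obstacle you anticipate is not one. $B^*(\mu_k^{-1}\rho^2)=\{\lambda:\mu_k|\lambda|\le\rho^2\}$ is already a sublevel set of $\xi$, so the annuli are uniform in $k$ automatically.

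\textbf{The borderline case.}
\begin{itemize}
\item For the $u_0$-term at $\alpha=0$ you need no logarithm: the weight is bounded, so use $\|u_0\|_{L^2}$.
\item For the $u_1$-term with $r_{\mathrm H}(u_1)-2\delta_2=-Q/2$, $\alpha=0$ and $0<\delta_2<\delta_1/2$, a loss really is unavoidable. Take data whose Fourier mass near $\xi=0$ is $\sim\xi^{2\delta_2-1}\,d\xi$. They satisfy the hypothesis, yet give $\|u(t)\|_{L^2}^2\gtrsim\log t$ from the range $t^{-1/\delta_2}\ll\xi\ll t^{-1/(\delta_1-\delta_2)}$, where $|\widehat K_1|\approx\xi^{-\delta_2}$.
\end{itemize}
The paper's own proof tacitly uses the strict inequality $r_{\mathrm H}(u_1)-2\delta_2>-Q/2$ at that point.
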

Consider $u$ as a solution to equation \eqref{eq:1.1}. By applying the group Fourier transform to equation \eqref{eq:1.1}, we obtain a Cauchy problem related to a parameter-dependent functional differential equation for $\widehat{u}(t, \lambda)$
$$
\begin{cases}
\partial_t^2 \widehat{u}(t, \lambda)+\sigma_{\left(-\Delta_{\mathrm{H}}\right)^{\delta_2}}(\lambda)\partial_t \widehat{u}(t, \lambda)+\sigma_{\left(-\Delta_{\mathrm{H}}\right)^{\delta_1}}(\lambda) \widehat{u}(t, \lambda)=0, & \lambda \in \mathbb{R}^*, t>0, \\ \widehat{u}(0, \lambda)=\widehat{u}_0(\lambda), \quad \partial_t \widehat{u}(0, \lambda)=\widehat{u}_1(\lambda), & \lambda \in \mathbb{R}^*,\end{cases}
$$
where $\sigma_{\left(-\Delta_{\mathrm{H}}\right)^{\delta_i}}(\lambda)=\left(\mu_k|\lambda|\right)^{\delta_i}$ is the symbol of the fractional sub-Laplacian $\left(-\Delta_{\mathrm{H}}\right)^{\delta_i}$ for any $i=0, 1$ on the Heisenberg group. We now introduce the following notation:
$$
\widehat{u}(t, \lambda)_{k, \ell} :=\left(\widehat{u}(t, \lambda) e_k, e_{\ell}\right)_{L^2\left(\mathbb{R}^n\right)} \text { for any } k, \ell \in \mathbb{N}^n,
$$
where $\left\{e_k\right\}_{k \in \mathbb{N}^n}$ is the system of Hermite functions. Since $\mathrm{H}_w^{\delta_i} e_k=\mu_k^{\delta_i} e_k$ for any $i=0, 1$, it follows that $\widehat{u}(t, \lambda)_{k, \ell}$ solves an ordinary differential equation with respect to the variable $t$, depending on parameters $\lambda \in \mathbb{R}^*$ and $k, \ell \in \mathbb{N}^n$
$$
\begin{cases}
\partial_t^2 \widehat{u}(t, \lambda)_{k, \ell}+\left(\mu_k|\lambda|\right)^{\delta_2} \partial_t \widehat{u}(t, \lambda)_{k, \ell}+\left(\mu_k|\lambda|\right)^{\delta_1} \widehat{u}(t, \lambda)_{k, \ell}=0, & \lambda \in \mathbb{R}^*, t>0, \\
\widehat{u}(0, \lambda)_{k, \ell}=\widehat{u}_0(\lambda)_{k, \ell}, \quad \partial_t \widehat{u}(0, \lambda)_{k, \ell}=\widehat{u}_1(\lambda)_{k, \ell}, & \lambda \in \mathbb{R}^*,
\end{cases}
$$
where $\widehat{u}_h(\lambda)_{k, \ell} :=\left(\widehat{u}_h(\lambda) e_k, e_{\ell}\right)_{L^2\left(\mathbb{R}^n\right)}$ for any $h=0,1$ and any $k, \ell \in \mathbb{N}^n$. The roots of the characteristic equation $\tau^2+\left(\mu_k|\lambda|\right)^{\delta_2}\tau+\left(\mu_k|\lambda|\right)^{\delta_1}=0$ are
$$
\tau_{ \pm}= \begin{cases}-\frac{\left(\mu_k|\lambda|\right)^{\delta_2}}{2} \pm i \sqrt{\left(\mu_k|\lambda|\right)^{\delta_1}-\frac{\left(\mu_k|\lambda|\right)^{2 \delta_2}}{4}} & \text { if } 4 \left(\mu_k|\lambda|\right)^{\delta_1 - 2 \delta_2}>1, \\
-\frac{\left(\mu_k|\lambda|\right)^{\delta_2}}{2} & \text { if } 4 \left(\mu_k|\lambda|\right)^{\delta_1 - 2 \delta_2}=1, \\
-\frac{\left(\mu_k|\lambda|\right)^{\delta_2}}{2} \pm \sqrt{\frac{\left(\mu_k|\lambda|\right)^{2 \delta_2}}{4}-\left(\mu_k|\lambda|\right)^{\delta_1}} & \text { if } 4 \left(\mu_k|\lambda|\right)^{\delta_1 - 2 \delta_2}<1.
\end{cases}
$$
Elementary computations yield the following representation formula:
\begin{align} \label{eq:4.4}
\widehat{u}(t, \lambda)_{k, \ell}= & \widehat{u}_0(\lambda)_{k, \ell} F(t, \lambda, k)+\left(\tfrac{\left(\mu_k|\lambda|\right)^{\delta_2}}{2} \widehat{u}_0(\lambda)_{k, \ell}+\widehat{u}_1(\lambda)_{k, \ell}\right) G(t, \lambda, k) \notag\\
=& \left(F(t, \lambda, k)+\tfrac{\left(\mu_k|\lambda|\right)^{\delta_2}}{2}G(t, \lambda, k)\right)\widehat{u}_0(\lambda)_{k, \ell}+G(t, \lambda, k) \widehat{u}_1(\lambda)_{k, \ell},
\end{align}
where $F$ satisfies
\begin{equation} \label{eq:F}
\begin{cases}
\partial_t^2 F(t, \lambda, k)+\left(\mu_k|\lambda|\right)^{\delta_2} \partial_t F(t, \lambda, k)+\left(\mu_k|\lambda|\right)^{\delta_1} F(t, \lambda, k)=0, \\
F(0, \lambda, k)=1, \quad \partial_t F(0, \lambda, k)=-\frac{\left(\mu_k|\lambda|\right)^{\delta_2}}{2},
\end{cases}
\end{equation}
and $G$ satisfies
\begin{equation} \label{eq:G}
\begin{cases}
\partial_t^2 G(t, \lambda, k)+\left(\mu_k|\lambda|\right)^{\delta_2} \partial_t G(t, \lambda, k)+\left(\mu_k|\lambda|\right)^{\delta_1} G(t, \lambda, k)=0, \\
G(0, \lambda, k)=0, \quad \partial_t G(0, \lambda, k)=1.
\end{cases}
\end{equation}
Problems \eqref{eq:F} and \eqref{eq:G} can be solved explicitly to find $F, G$ in each of the following cases.
\begin{itemize}
\item \textbf{Case A:} $2 \delta_2<\delta_1$
\end{itemize}
\begin{align*}
F(t, \lambda, k):=&
\begin{cases}\mathrm{e}^{-\frac{\left(\mu_k|\lambda|\right)^{\delta_2}}{2} t} \cosh \left(\frac{\left(\mu_k|\lambda|\right)^{\delta_2} \sqrt{1-4\left(\mu_k|\lambda|\right)^{\delta_1-2 \delta_2}}}{2} t\right), & \mu_k|\lambda| \leq 2^{-\left(\frac{\delta_1}{2}- \delta_2\right)^{-1}}, \\
\mathrm{e}^{-\frac{\left(\mu_k|\lambda|\right)^{\delta_2}}{2} t} \cos \left(\frac{\left(\mu_k|\lambda|\right)^{\delta_2} \sqrt{4\left(\mu_k|\lambda|\right)^{\delta_1-2\delta_2}-1}}{2} t\right), & \mu_k|\lambda|>2^{-\left(\frac{\delta_1}{2}- \delta_2\right)^{-1}},
\end{cases} \\
G(t, \lambda, k):=&
\begin{cases}
\frac{2 \mathrm{e}^{-\frac{\left(\mu_k|\lambda|\right)^{\delta_2}}{2}t}}{\left(\mu_k|\lambda|\right)^{\delta_2} \sqrt{1-4\left(\mu_k|\lambda|\right)^{\delta_1-2\delta_2}}} \sinh \left(\frac{\left(\mu_k|\lambda|\right)^{\delta_2} \sqrt{1-4\left(\mu_k|\lambda|\right)^{\delta_1-2\delta_2}}}{2} t\right), & \mu_k|\lambda| \leq 2^{-\left(\frac{\delta_1}{2}- \delta_2\right)^{-1}}, \\
\frac{2 \mathrm{e}^{-\frac{\left(\mu_k|\lambda|\right)^{\delta_2}}{2}t}}{\left(\mu_k|\lambda|\right)^{\delta_2} \sqrt{4\left(\mu_k|\lambda|\right)^{\delta_1-2\delta_2}-1}} \sin \left(\frac{\left(\mu_k|\lambda|\right)^{\delta_2} \sqrt{4\left(\mu_k|\lambda|\right)^{\delta_1-2\delta_2}-1}}{2} t\right), & \mu_k|\lambda|>2^{-\left(\frac{\delta_1}{2}- \delta_2\right)^{-1}}.
\end{cases}
\end{align*}
\begin{itemize}
\item \textbf{Case B:} $2 \delta_2=\delta_1$
\end{itemize}
\begin{align*}
F(t, \lambda, k):=&\mathrm{e}^{-\frac{\left(\mu_k|\lambda|\right)^\frac{\delta_1}{2}}{2} t} \cos \left(\tfrac{\left(\mu_k|\lambda|\right)^\frac{\delta_1}{2} \sqrt{3}}{2} t\right), & \forall \lambda \in \mathbb{R}^*, \\
G(t, \lambda, k):=&\tfrac{2 \mathrm{e}^{-\frac{\left(\mu_k|\lambda|\right)^\frac{\delta_1}{2}}{2} t}}{\left(\mu_k|\lambda|\right)^{\frac{\delta_1}{2}} \sqrt{3}} \sin \left(\tfrac{\left(\mu_k|\lambda|\right)^{\frac{\delta_1}{2}} \sqrt{3}}{2} t\right), & \forall \lambda \in \mathbb{R}^*.
\end{align*}
To show the proof of Theorem \ref{theorem:1.2}, first we notice that, with $c>0, \mu_k|\lambda|<\varepsilon$, we have
\begin{equation} \label{ineq:gamma_beta}
\left| \left(\mu_k|\lambda|\right)^\gamma \mathrm{e}^{-c t \left(\mu_k|\lambda|\right)^\beta} \right| \lesssim (1+t)^{-\frac{\gamma}{\beta}} \quad \text { for } \gamma \geq 0 \text{ and } \beta > 0.
\end{equation}
\begin{proof}[Proof of Theorem \ref{theorem:1.2}] Our analysis below relies on the notion of decay characters of the initial data. We consider separately the cases: $\delta_2 \in\left(0, \frac{\delta_1}{2}\right)$, $\delta_2=0$ and $\delta_2=\frac{\delta_1}{2}$.
\begin{itemize}
\item \textbf{Case 1:} $\mathbf{\delta_2 \in\left(0, \frac{\delta_1}{2}\right)}$.
\end{itemize}

Estimating for $\left\|\partial_t u(t, \cdot)\right\|_{L^2\left(\mathbf{H}_n\right)}$. By using Plancherel formula and $\left\{e_k\right\}_{k \in \mathbb{N}^n}$ as orthonormal basis of $L^2\left(\mathbb{R}^n\right)$, we have
\begin{align} \label{eq:4.5}
&\left\|\partial_t u(t, \cdot)\right\|_{L^2\left(\mathbf{H}_n\right)}^2 =c_n \int_{\mathbb{R}^*}\left\|\partial_t \widehat{u}(t, \lambda)\right\|_{\mathrm{HS}\left[L^2\left(\mathbb{R}^n\right)\right]}^2|\lambda|^n \mathrm{d} \lambda \notag\\
=& c_n \sum_{k, \ell \in \mathbb{N}^n}\left(\int_{0<\mu_k|\lambda|<\varepsilon}+\int_{\varepsilon \leq \mu_k|\lambda| \leq 2^{-\left(\frac{\delta_1}{2}-\delta_2\right)^{-1}}}+\int_{\mu_k|\lambda|>2^{-\left(\frac{\delta_1}{2}-\delta_2\right)^{-1}}}\right)\left|\partial_t \widehat{u}(t, \lambda)_{k, \ell}\right|^2|\lambda|^n \mathrm{d} \lambda \notag\\
:= & J_1+J_2+J_3,
\end{align}
where $0<\varepsilon < 2^{-\left(\frac{\delta_1}{2}-\delta_2\right)^{-1}}$. We begin with $J_1$ first. Observe that
\begin{equation} \label{ineq:4.6}
-4 y \leq-1+\sqrt{1-4 y} \leq-2 y \quad \text { for any } y \in\left[0, \tfrac{1}{4}\right],
\end{equation}
therefore, for $\mu_k|\lambda| \leq 2^{-\left(\frac{\delta_1}{2}-\delta_2 \right)^{-1}}$
\begin{align} \label{ineq:4.7}
&\mathrm{e}^{\frac{-\left(\mu_k|\lambda|\right)^{\delta_2}}{2} t} \cosh \left(\tfrac{\left(\mu_k|\lambda|\right)^{\delta_2} \sqrt{1-4\left(\mu_k|\lambda|\right)^{\delta_1-2\delta_2}}}{2} t\right) \notag\\
\leq & \mathrm{e}^{\frac{-\left(\mu_k|\lambda|\right)^{\delta_2}}{2} t} \mathrm{e}^{\frac{\left(\mu_k|\lambda|\right)^{\delta_2} \sqrt{1-4 \left(\mu_k|\lambda|\right)^{\delta_1-2\delta_2}}}{2} t} \leq \mathrm{e}^{-C\left(\mu_k|\lambda|\right)^{\delta_1-\delta_2} t}
\end{align}
and
\begin{align} \label{ineq:4.8}
&\mathrm{e}^{\frac{-\left(\mu_k|\lambda|\right)^{\delta_2}}{2} t} \sinh \left(\tfrac{\left(\mu_k|\lambda|\right)^{\delta_2} \sqrt{1-4\left(\mu_k|\lambda|\right)^{\delta_1-2\delta_2}}}{2} t\right) \notag\\
\leq & \mathrm{e}^{\frac{-\left(\mu_k|\lambda|\right)^{\delta_2}}{2} t} \mathrm{e}^{\frac{\left(\mu_k|\lambda|\right)^{\delta_2} \sqrt{1-4\left(\mu_k|\lambda|\right)^{\delta_1-2\delta_2}}}{2} t} \leq \mathrm{e}^{-C\left(\mu_k|\lambda|\right)^{\delta_1-\delta_2} t},
\end{align}
where $C$ is a positive constant and we used $\cosh y \leq \mathrm{e}^y$ and $\sinh y \leq \mathrm{e}^y, \forall y \geq 0$. Choose $\varepsilon < 2^{-\left(\frac{\delta_1}{2}-\delta_2\right)^{-1}}$ small enough such that $1-4\left(\mu_k|\lambda|\right)^{\delta_1-2\delta_2}>\frac{1}{2}, \forall \mu_k|\lambda| < \varepsilon$. Then for $\mu_k|\lambda|<\varepsilon$
\begin{align} \label{ineq:4.9}
& \left|\partial_t\left(F(t, \lambda, k)+\tfrac{\left(\mu_k|\lambda|\right)^{\delta_2}}{2} G(t, \lambda, k)\right)\right| \lesssim \left(\mu_k|\lambda|\right)^{\delta_2} \mathrm{e}^{-\frac{\left(\mu_k|\lambda|\right)^{\delta_2}}{2} t} \sinh \left(\tfrac{\left(\mu_k|\lambda|\right)^{\delta_2} \sqrt{1-4\left(\mu_k|\lambda|\right)^{\delta_1-2\delta_2}}}{2} t\right) \notag\\
& \times \left|\sqrt{1-4\left(\mu_k|\lambda|\right)^{\delta_1-2\delta_2}}-\tfrac{1}{\sqrt{1-4\left(\mu_k|\lambda|\right)^{\delta_1-2\delta_2}}}\right|
\lesssim \left(\mu_k|\lambda|\right)^{\delta_1-\delta_2} \mathrm{e}^{-C\left(\mu_k|\lambda|\right)^{\delta_1-\delta_2} t}
\end{align}
and
\begin{align} \label{ineq:4.10}
\left|\partial_tG(t, \lambda, k)\right| \lesssim &\left|\mathrm{e}^{-\frac{\left(\mu_k|\lambda|\right)^{\delta_2}}{2} t} \cosh \left(\tfrac{\left(\mu_k|\lambda|\right)^{\delta_2} \sqrt{1-4\left(\mu_k|\lambda|\right)^{\delta_1-2\delta_2}}}{2} t\right) \right. \notag\\
& \left. -\tfrac{\mathrm{e}^{-\frac{\left(\mu_k|\lambda|\right)^{\delta_2}}{2} t}}{\sqrt{1-4\left(\mu_k|\lambda|\right)^{\delta_1-2\delta_2}}} \sinh \left(\tfrac{\left(\mu_k|\lambda|\right)^{\delta_2} \sqrt{1-4\left(\mu_k|\lambda|\right)^{\delta_1-2\delta_2}}}{2} t\right)\right| \notag\\
\lesssim & \left|\left(1-\tfrac{1}{\sqrt{1-4\left(\mu_k|\lambda|\right)^{\delta_1-2\delta_2}}}\right) \mathrm{e}^{\frac{\left(\mu_k|\lambda|\right)^{\delta_2} t}{2}\left(-1+\sqrt{1-4\left(\mu_k|\lambda|\right)^{\delta_1-2\delta_2}}\right)} \right. \notag\\
& \left. +\left(1+\tfrac{1}{\sqrt{1-4\left(\mu_k|\lambda|\right)^{\delta_1-2\delta_2}}}\right) \mathrm{e}^{\frac{\left(\mu_k|\lambda|\right)^{\delta_2} t}{2}}\left(-1-\sqrt{1-4\left(\mu_k|\lambda|\right)^{\delta_1-2\delta_2}}\right)\right| \notag\\
\lesssim & \left(\mu_k|\lambda|\right)^{\delta_1-2\delta_2} \mathrm{e}^{-C\left(\mu_k|\lambda|\right)^{\delta_1-\delta_2} t}+\mathrm{e}^{-C\left(\mu_k|\lambda|\right)^{\delta_2} t}.
\end{align}
By \eqref{eq:4.4}, \eqref{ineq:4.9} and \eqref{ineq:4.10}, we obtain
\begin{align} \label{ineq:4.11}
\left|\partial_t \widehat{u}(t, \lambda)_{k, \ell}\right| \lesssim & \left(\mu_k|\lambda|\right)^{\delta_1-\delta_2} \mathrm{e}^{-C\left(\mu_k|\lambda|\right)^{\delta_1-\delta_2} t}\left|\widehat{u}_0(\lambda)_{k, \ell}\right| \notag\\
& + \left(\left(\mu_k|\lambda|\right)^{\delta_1-2\delta_2} \mathrm{e}^{-C\left(\mu_k|\lambda|\right)^{\delta_1-\delta_2} t}+\mathrm{e}^{-C\left(\mu_k|\lambda|\right)^{\delta_2} t}\right)\left|\widehat{u}_1(\lambda)_{k, \ell}\right|
\end{align}
for any $\mu_k|\lambda|<\varepsilon$. From \eqref{ineq:4.11}, by \eqref{ineq:gamma_beta} and Theorem \ref{theorem:3.7}, it results
\begin{align} \label{ineq:4.12}
J_1= & c_n \sum_{k, \ell \in \mathbb{N}^n} \int_{0<\mu_k|\lambda|<\varepsilon}\left|\partial_t \widehat{u}(t, \lambda)_{k, \ell}\right|^2|\lambda|^n \mathrm{d} \lambda \notag\\
\lesssim & c_n \sum_{k, \ell \in \mathbb{N}^n} \int_{0<\mu_k|\lambda|<\varepsilon} \left(\mu_k|\lambda|\right)^{2\delta_1-2\delta_2}\mathrm{e}^{-2C\left(\mu_k|\lambda|\right)^{\delta_1-\delta_2} t}\left|\widehat{u}_0(\lambda)_{k, \ell}\right|^2|\lambda|^n \mathrm{d} \lambda \notag\\
& + c_n \sum_{k, \ell \in \mathbb{N}^n} \int_{0<\mu_k|\lambda|<\varepsilon} \left(\mu_k|\lambda|\right)^{2\delta_1-4\delta_2} \mathrm{e}^{-2C\left(\mu_k|\lambda|\right)^{\delta_1-\delta_2} t}\left|\widehat{u}_1(\lambda)_{k, \ell}\right|^2|\lambda|^n \mathrm{d} \lambda \notag\\
& + c_n \sum_{k, \ell \in \mathbb{N}^n} \int_{0<\mu_k|\lambda|<\varepsilon} \mathrm{e}^{-2C\left(\mu_k|\lambda|\right)^{\delta_2} t}\left|\widehat{u}_1(\lambda)_{k, \ell}\right|^2|\lambda|^n \mathrm{d} \lambda \notag\\
\lesssim & (1+t)^{-2} c_n \sum_{k, \ell \in \mathbb{N}^n} \int_{0<\mu_k|\lambda|<\varepsilon} \mathrm{e}^{-C\left(\mu_k|\lambda|\right)^{\delta_1-\delta_2} t}\left|\widehat{u}_0(\lambda)_{k, \ell}\right|^2 |\lambda|^{
n} \mathrm{d} \lambda \notag\\
& + (1+t)^{-\frac{2\delta_1-4\delta_2}{\delta_1-\delta_2}} c_n \sum_{k, \ell \in \mathbb{N}^n} \int_{0<\mu_k|\lambda|<\varepsilon} \mathrm{e}^{-C\left(\mu_k|\lambda|\right)^{\delta_1-\delta_2} t} \left|\widehat{u}_1(\lambda)_{k, \ell}\right|^2 |\lambda|^n \mathrm{d} \lambda \notag\\
& + c_n \sum_{k, \ell \in \mathbb{N}^n} \int_{0<\mu_k|\lambda|<\varepsilon} \mathrm{e}^{-2C\left(\mu_k|\lambda|\right)^{\delta_2} t}\left|\widehat{u}_1(\lambda)_{k, \ell}\right|^2|\lambda|^n \mathrm{d} \lambda \notag\\
\lesssim & (1+t)^{-\frac{2 r_\mathrm{H}\left(u_0\right) +Q+4\delta_1-4\delta_2}{2\delta_1-2\delta_2}} \|u_0\|_{P, 0}^2 + (1+t)^{-\frac{2 r_\mathrm{H}\left(u_1\right) +Q+4\delta_1-8\delta_2}{2\delta_1-2\delta_2}} \|u_1\|_{P, 0}^2 \notag\\
&+ (1+t)^{-\frac{2 r_\mathrm{H}\left(u_1\right) +Q}{2\delta_2}} \|u_1\|_{P, 0}^2 \\
\lesssim & (1+t)^{-\frac{2 r_\mathrm{H}\left(u_0\right) +Q+4\delta_1-4\delta_2}{2\delta_1-2\delta_2}} \|u_0\|_{P, 0}^2 + (1+t)^{-\frac{2 r_\mathrm{H}\left(u_1\right) +Q+4\delta_1-8\delta_2}{2\delta_1-2\delta_2}} \|u_1\|_{P, 0}^2, 
\end{align}
where we used the assumption $-\frac{Q}{2}<r^*\left(u_1\right)-2\delta_2$. We estimate now the other term
$$
J_2=c_n \sum_{k, \ell \in \mathbb{N}^n} \int_{\varepsilon \leq \mu_k|\lambda| \leq 2^{-\left(\frac{\delta_1}{2}-\delta_2\right)^{-1}}} \left|\partial_t \widehat{u}(t, \lambda)_{k, \ell}\right|^2|\lambda|^n \mathrm{d} \lambda.
$$
For $\varepsilon \leq \mu_k|\lambda| \leq 2^{-\left(\frac{\delta_1}{2}-\delta_2\right)^{-1}}$, using $\frac{\sinh y}{y} \leq \cosh y \leq \mathrm{e}^y$ for $y>0$ and \eqref{ineq:4.6}, we see that
\begin{align} \label{ineq:4.13}
&\left|\partial_t\left(F(t, \lambda, k)+\tfrac{\left(\mu_k|\lambda|\right)^{\delta_2}}{2} G(t, \lambda, k)\right)\right| \lesssim \left(\mu_k|\lambda|\right)^{\delta_2} \mathrm{e}^{-\frac{\left(\mu_k|\lambda|\right)^{\delta_2}}{2} t} \sinh \left(\tfrac{\left(\mu_k|\lambda|\right)^{\delta_2} \sqrt{1-4\left(\mu_k|\lambda|\right)^{\delta_1-2\delta_2}}}{2} t\right) \notag\\
&\times \left|\sqrt{1-4\left(\mu_k|\lambda|\right)^{\delta_1-2\delta_2}}-\tfrac{1}{\sqrt{1-4\left(\mu_k|\lambda|\right)^{\delta_1-2\delta_2}}}\right| = 4\left(\mu_k|\lambda|\right)^{\delta_1} \mathrm{e}^{-\frac{\left(\mu_k|\lambda|\right)^{\delta_2}}{2} t} \notag\\
&\times \tfrac{\sinh \left(\frac{\left(\mu_k|\lambda|\right)^{\delta_2} \sqrt{1-4\left(\mu_k|\lambda|\right)^{\delta_1-2\delta_2}}}{2} t\right)}{\left(\mu_k|\lambda|\right)^{\delta_2} \sqrt{1-4\left(\mu_k|\lambda|\right)^{\delta_1-2\delta_2}}} \lesssim \left(\mu_k|\lambda|\right)^{\delta_1} t \mathrm{e}^{-\left(\mu_k|\lambda|\right)^{\delta_1-\delta_2} t} \lesssim \mathrm{e}^{-C t}
\end{align}
and
\begin{align} \label{ineq:4.14}
\left|\partial_tG(t, \lambda, k)\right| \lesssim & \left|\mathrm{e}^{-\frac{\left(\mu_k|\lambda|\right)^{\delta_2}}{2} t} \cosh \left(\tfrac{\left(\mu_k|\lambda|\right)^{\delta_2} \sqrt{1-4\left(\mu_k|\lambda|\right)^{\delta_1-2\delta_2}}}{2} t\right) \right. \notag\\
& \left. +\tfrac{2 \mathrm{e}^{-\frac{\left(\mu_k|\lambda|\right)^{\delta_2}}{2} t}}{\left(\mu_k|\lambda|\right)^{\delta_2} \sqrt{1-4\left(\mu_k|\lambda|\right)^{\delta_1-2\delta_2}}}\left(-\tfrac{\left(\mu_k|\lambda|\right)^{\delta_2}}{2}\right) \sinh \left(\tfrac{\left(\mu_k|\lambda|\right)^{\delta_2} \sqrt{1-4\left(\mu_k|\lambda|\right)^{\delta_1-2\delta_2}}}{2} t\right)\right| \notag\\
\lesssim & \mathrm{e}^{-\left(\mu_k|\lambda|\right)^{\delta_1-\delta_2} t}+\left(\mu_k|\lambda|\right)^{\delta_2} t \mathrm{e}^{-\left(\mu_k|\lambda|\right)^{\delta_1-\delta_2} t} \lesssim \mathrm{e}^{-C t},
\end{align}
where $C$ is a positive constant. From \eqref{eq:4.4}, \eqref{ineq:4.13} and \eqref{ineq:4.14}, leading to
$$
\left|\partial_t \widehat{u}(t, \lambda)_{k, \ell}\right| \lesssim \mathrm{e}^{-C t} \left(\left|\widehat{u}_0(\lambda)_{k, \ell}\right|+ \left|\widehat{u}_1(\lambda)_{k, \ell}\right|\right)
$$
for any $\varepsilon \leq \mu_k|\lambda| \leq 2^{-\left(\frac{\delta_1}{2}-\delta_2\right)^{-1}}$. Therefore,
\begin{align} \label{ineq:4.15}
J_2 \lesssim & \sum_{k, \ell \in \mathbb{N}^n} \int_{\varepsilon \leq \mu_k|\lambda| \leq 2^{-\left(\frac{\delta_1}{2}-\delta_2\right)^{-1}}}\left|\partial_t \widehat{u}(t, \lambda)_{k, \ell}\right|^2|\lambda|^n \mathrm{d} \lambda \notag\\
\lesssim & \mathrm{e}^{-2C t} \sum_{k, \ell \in \mathbb{N}^n} \int_{\varepsilon \leq \mu_k|\lambda| \leq 2^{-\left(\frac{\delta_1}{2}-\delta_2\right)^{-1}}}\left(\left|\widehat{u}_0(\lambda)_{k, \ell}\right|^2+\left|\widehat{u}_1(\lambda)_{k, \ell}\right|^2\right)|\lambda|^n \mathrm{d} \lambda \notag\\
\lesssim & \mathrm{e}^{-2C t} \left(\left\|u_0\right\|_{L^2\left(\mathbf{H}_n\right)}^2+\left\|u_1\right\|_{L^2\left(\mathbf{H}_n\right)}^2\right),
\end{align}
where in the last step we applied Plancherel formula to $u_0$ and $u_1$. Next, we estimate
$$
J_3=c_n \sum_{k, \ell \in \mathbb{N}^n} \int_{\mu_k|\lambda|>2^{-\left(\frac{\delta_1}{2}-\delta_2\right)^{-1}}} \left|\partial_t \widehat{u}(t, \lambda)_{k, \ell}\right|^2|\lambda|^n \mathrm{d} \lambda.
$$
Observe that
\begin{equation} \label{ineq:4.16}
\left|\sin y \right| \leq y, \quad \forall y>0.
\end{equation}
Therefore, for $\mu_k|\lambda|>2^{-\left(\frac{\delta_1}{2}-\delta_2\right)^{-1}}$, we have
\begin{align} \label{ineq:4.17}
&\left|\partial_t \left(F(t, \lambda, k)+\tfrac{\left(\mu_k|\lambda|\right)^{\delta_2}}{2} G(t, \lambda, k)\right)\right| \lesssim \mathrm{e}^{-\frac{\left(\mu_k|\lambda|\right)^{\delta_2}}{2} t} \left(\mu_k|\lambda|\right)^{\delta_2} \sqrt{4\left(\mu_k|\lambda|\right)^{\delta_1-2\delta_2}-1} \notag\\
&\times \left|\sin \left(\tfrac{\left(\mu_k|\lambda|\right)^{\delta_2} \sqrt{4\left(\mu_k|\lambda|\right)^{\delta_1-2\delta_2}-1}}{2} t\right) \right|+\mathrm{e}^{-\frac{\left(\mu_k|\lambda|\right)^{\delta_2}}{2} t} \left(\mu_k|\lambda|\right)^{2\delta_2} \tfrac{1}{\sqrt{4\left(\mu_k|\lambda|\right)^{\delta_1-2\delta_2}-1}} \notag\\
&\times \left|\sin \left(\tfrac{\left(\mu_k|\lambda|\right)^{\delta_2} \sqrt{4\left(\mu_k|\lambda|\right)^{\delta_1-2\delta_2}-1}}{2} t\right)\right|\lesssim \left(\mu_k|\lambda|\right)^{\frac{\delta_1}{2}} \mathrm{e}^{-C t}+\left(\mu_k|\lambda|\right)^{\delta_2} \mathrm{e}^{-C t} \lesssim \left(\mu_k|\lambda|\right)^{\frac{\delta_1}{2}} \mathrm{e}^{-C t}
\end{align}
and
\begin{align} \label{ineq:4.18}
&\left|\partial_t G(t, \lambda, k)\right| \lesssim \left|\mathrm{e}^{-\frac{\left(\mu_k|\lambda|\right)^{\delta_2}}{2} t} \cos \left(\tfrac{\left(\mu_k|\lambda|\right)^{\delta_2} \sqrt{1-4\left(\mu_k|\lambda|\right)^{\delta_1-2\delta_2}}}{2} t\right) +\tfrac{2 \mathrm{e}^{-\frac{\left(\mu_k|\lambda|\right)^{\delta_2}}{2} t}}{\left(\mu_k|\lambda|\right)^{\delta_2} \sqrt{1-4\left(\mu_k|\lambda|\right)^{\delta_1-2\delta_2}}}\left(-\tfrac{\left(\mu_k|\lambda|\right)^{\delta_2}}{2}\right) \right. \notag\\
& \left. \times \sin \left(\tfrac{\left(\mu_k|\lambda|\right)^{\delta_2} \sqrt{1-4\left(\mu_k|\lambda|\right)^{\delta_1-2\delta_2}}}{2} t\right)\right| \lesssim \mathrm{e}^{-\frac{\left(\mu_k|\lambda|\right)^{\delta_2}}{2} t}+\left(\mu_k|\lambda|\right)^{\delta_2} t \mathrm{e}^{-\frac{\left(\mu_k|\lambda|\right)^{\delta_2}}{2} t} \lesssim \mathrm{e}^{-C t},
\end{align}
where $C$ is a positive constant. By \eqref{eq:4.4}, \eqref{ineq:4.17} and \eqref{ineq:4.18}, we see that
$$
\left|\partial_t \widehat{u}(t, \lambda)_{k, \ell}\right| \lesssim \mathrm{e}^{-C t} \left(\left(\mu_k|\lambda|\right)^\frac{\delta_1}{2}\left|\widehat{u}_0(\lambda)_{k, \ell}\right|+ \left|\widehat{u}_1(\lambda)_{k, \ell}\right|\right)
$$
for any $\mu_k|\lambda|>2^{-\left(\frac{\delta_1}{2}-\delta_2\right)^{-1}}$. Therefore,
\begin{align} \label{ineq:4.19}
J_3 \lesssim & \sum_{k, \ell \in \mathbb{N}^n} \int_{\varepsilon \leq \mu_k|\lambda| \leq 2^{-\left(\frac{\delta_1}{2}-\delta_2\right)^{-1}}}\left|\partial_t \widehat{u}(t, \lambda)_{k, \ell}\right|^2|\lambda|^n \mathrm{d} \lambda \notag\\
\lesssim & \mathrm{e}^{-2C t} \sum_{k, \ell \in \mathbb{N}^n} \int_{\varepsilon \leq \mu_k|\lambda| \leq 2^{-\left(\frac{\delta_1}{2}-\delta_2\right)^{-1}}}\left(\left(\mu_k|\lambda|\right)^{\delta_1}\left|\widehat{u}_0(\lambda)_{k, \ell}\right|^2+\left|\widehat{u}_1(\lambda)_{k, \ell}\right|^2\right)|\lambda|^n \mathrm{d} \lambda \notag\\
\lesssim & \mathrm{e}^{-2C t} \left(\left\|u_0\right\|_{\dot{H}^{\delta_1}\left(\mathbf{H}_n\right)}^2+\left\|u_1\right\|_{L^2\left(\mathbf{H}_n\right)}^2\right),
\end{align}
where in the last step we applied Plancherel formula to $u_0$ and $u_1$. So, substituting \eqref{ineq:4.12}, \eqref{ineq:4.15} and \eqref{ineq:4.19} into \eqref{eq:4.5}, we obtain \eqref{ineq:1.4}.

Estimating for $\|u(t, \cdot)\|_{\dot{H}^\alpha \left(\mathbf{H}_n\right)}$. By using Plancherel formula and $\left\{e_k\right\}_{k \in \mathbb{N}^n}$ as orthonormal basis of $L^2\left(\mathbb{R}^n\right)$, we have
\begin{align} \label{eq:4.20}
& \|u(t, \cdot)\|_{\dot{H}^\alpha \left(\mathbf{H}_n\right)}^2 =c_n \int_{\mathbb{R}^*}\|\left(-\sigma_{\Delta_{\mathrm{H}}}(\lambda)\right)^{\frac{\alpha}{2}}\widehat{u}(t, \lambda)\|_{\mathrm{HS}\left[L^2\left(\mathbb{R}^n\right)\right]}^2|\lambda|^n \mathrm{d} \lambda \notag\\
=& c_n \sum_{k, \ell \in \mathbb{N}^n} \int_{\mathbb{R}^*} \mu_k^\alpha |\lambda|^{\alpha +
n} \left(\widehat{u}(t, \lambda) e_k, e_{\ell}\right)_{L^2\left(\mathbb{R}^n\right)}^2 \mathrm{d} \lambda \notag\\
=&c_n \sum_{k, \ell \in \mathbb{N}^n}\left(\int_{0<\mu_k|\lambda|<\varepsilon}+\int_{\varepsilon \leq \mu_k|\lambda| \leq 2^{-\left(\frac{\delta_1}{2}-\delta_2\right)^{-1}}}+\int_{\mu_k|\lambda|>2^{-\left(\frac{\delta_1}{2}-\delta_2\right)^{-1}}} \right) \mu_k^\alpha |\lambda|^{\alpha +
n} \left|\widehat{u}(t, \lambda)_{k, \ell}\right|^2 \mathrm{d} \lambda \notag\\
:= & I_1+I_2+I_3,
\end{align}
where $0<\varepsilon < 2^{-\left(\frac{\delta_1}{2}-\delta_2\right)^{-1}}$. We estimate $I_1$ first. Choose $\varepsilon < 2^{-\left(\frac{\delta_1}{2}-\delta_2\right)^{-1}}$ small enough such that $1-4\left(\mu_k|\lambda|\right)^{\delta_1-2\delta_2}>\frac{1}{2}$ for $\mu_k|\lambda| < \varepsilon$. Then for $\mu_k|\lambda|<\varepsilon$, using \eqref{ineq:4.7}, \eqref{ineq:4.8} and $1-4\left(\mu_k|\lambda|\right)^{\delta_1-2\delta_2}>\frac{1}{2}$, we obtain
\begin{align} \label{ineq:4.21}
& \left|F(t, \lambda, k)+\tfrac{\left(\mu_k|\lambda|\right)^{\delta_2}}{2} G(t, \lambda, k)\right|
\lesssim \mathrm{e}^{-\frac{\left(\mu_k|\lambda|\right)^{\delta_2}}{2} t} \cosh \left(\tfrac{\left(\mu_k|\lambda|\right)^{\delta_2} \sqrt{1-4\left(\mu_k|\lambda|\right)^{\delta_1-2\delta_2}}}{2} t\right) \notag\\
&+ \tfrac{\mathrm{e}^{-\frac{\left(\mu_k|\lambda|\right)^{\delta_2}}{2} t}}{\sqrt{1-4\left(\mu_k|\lambda|\right)^{\delta_1-2\delta_2}}} \sinh \left(\tfrac{\left(\mu_k|\lambda|\right)^{\delta_2} \sqrt{1-4\left(\mu_k|\lambda|\right)^{\delta_1-2\delta_2}}}{2} t\right) \lesssim \mathrm{e}^{-C\left(\mu_k|\lambda|\right)^{\delta_1-\delta_2} t}
\end{align}
and
\begin{equation} \label{ineq:4.22}
\left|G(t, \lambda, k)\right| \lesssim \tfrac{\mathrm{e}^{-\frac{\left(\mu_k|\lambda|\right)^{\delta_2}}{2} t}}{\left(\mu_k|\lambda|\right)^{\delta_2}\sqrt{1-4\left(\mu_k|\lambda|\right)^{\delta_1-2\delta_2}}} \sinh \left(\tfrac{\left(\mu_k|\lambda|\right)^{\delta_2} \sqrt{1-4\left(\mu_k|\lambda|\right)^{\delta_1-2\delta_2}}}{2} t\right) \lesssim \tfrac{\mathrm{e}^{-C\left(\mu_k|\lambda|\right)^{\delta_1-\delta_2} t}}{\left(\mu_k|\lambda|\right)^{\delta_2}},
\end{equation}
where $C$ is a positive constant. By \eqref{eq:4.4}, \eqref{ineq:4.21} and \eqref{ineq:4.22}, we obtain
\begin{equation} \label{ineq:4.23}
\left|\widehat{u}(t, \lambda)_{k, \ell}\right| \lesssim \mathrm{e}^{-C\left(\mu_k|\lambda|\right)^{\delta_1-\delta_2} t}\left|\widehat{u}_0(\lambda)_{k, \ell}\right| + \tfrac{\mathrm{e}^{-C\left(\mu_k|\lambda|\right)^{\delta_1-\delta_2} t}}{\left(\mu_k|\lambda|\right)^{\delta_2}}\left|\widehat{u}_1(\lambda)_{k, \ell}\right|
\end{equation}
for any $\mu_k|\lambda|<\varepsilon$. We consider several cases

For $2 \delta_2 \leq \alpha \leq \delta_1$. From \eqref{ineq:4.23}, by \eqref{ineq:gamma_beta} and Theorem \ref{theorem:3.7}, it results
\begin{align} \label{ineq:4.24}
I_1= & c_n \sum_{k, \ell \in \mathbb{N}^n} \int_{0<\mu_k|\lambda|<\varepsilon}\left(\mu_k|\lambda|\right)^\alpha \left|\widehat{u}(t, \lambda)_{k, \ell}\right|^2|\lambda|^n \mathrm{d} \lambda \notag\\
\lesssim & c_n \sum_{k, \ell \in \mathbb{N}^n} \int_{0<\mu_k|\lambda|<\varepsilon} \left(\mu_k|\lambda|\right)^\alpha \mathrm{e}^{-2C\left(\mu_k|\lambda|\right)^{\delta_1-\delta_2} t}\left|\widehat{u}_0(\lambda)_{k, \ell}\right|^2|\lambda|^n \mathrm{d} \lambda \notag\\
& + c_n \sum_{k, \ell \in \mathbb{N}^n} \int_{0<\mu_k|\lambda|<\varepsilon} \left(\mu_k|\lambda|\right)^{\alpha-2\delta_2} \mathrm{e}^{-2C\left(\mu_k|\lambda|\right)^{\delta_1-\delta_2} t} \left|\widehat{u}_1(\lambda)_{k, \ell}\right|^2|\lambda|^n \mathrm{d} \lambda \notag\\
\lesssim & (1+t)^{-\frac{\alpha}{\delta_1-\delta_2}} c_n \sum_{k, \ell \in \mathbb{N}^n} \int_{0<\mu_k|\lambda|<\varepsilon} \mathrm{e}^{-C\left(\mu_k|\lambda|\right)^{\delta_1-\delta_2} t}\left|\widehat{u}_0(\lambda)_{k, \ell}\right|^2 |\lambda|^{
n} \mathrm{d} \lambda \notag\\
& + (1+t)^{-\frac{\alpha-2\delta_2}{\delta_1-\delta_2}} c_n \sum_{k, \ell \in \mathbb{N}^n} \int_{0<\mu_k|\lambda|<\varepsilon} \mathrm{e}^{-C\left(\mu_k|\lambda|\right)^{\delta_1-\delta_2} t} \left|\widehat{u}_1(\lambda)_{k, \ell}\right|^2 |\lambda|^n \mathrm{d} \lambda \notag\\
\lesssim & (1+t)^{-\frac{2 r_\mathrm{H}\left(u_0\right) +Q+2\alpha}{2\delta_1-2\delta_2}} \|u_0\|_{P, 0}^2 + (1+t)^{-\frac{2 r_\mathrm{H}\left(u_1\right) +Q+2\alpha-4\delta_2}{2\delta_1-2\delta_2}} \|u_1\|_{P, 0}^2. 
\end{align}

For $0 \leq \alpha<2 \delta_2$. In that case, thanks to our assumption $-\frac{Q}{2}<r_\mathrm{H}\left(u_1\right)-2\delta_2$, we have $\frac{Q}{2}+r_\mathrm{H}\left(u_1\right)-(2\delta_2-\alpha)>0$. Since $\alpha-2 \delta_2<0$, we cannot apply the technique from the previous case to estimate the integral with small frequencies $c_n \sum_{k, \ell \in \mathbb{N}^n} \int_{0<\mu_k|\lambda|<\varepsilon} \left(\mu_k|\lambda|\right)^{\alpha-2\delta_2} \mathrm{e}^{-2C\left(\mu_k|\lambda|\right)^{\delta_1-\delta_2} t} \left|\widehat{u}_1(\lambda)_{k, \ell}\right|^2|\lambda|^n \mathrm{d} \lambda$. However, this term can be estimated as follows, by applying the result stated in Theorem \ref{theorem:3.11}. Put $\widehat{v}_1(\lambda)_{k, \ell}=\left(\mu_k|\lambda|\right)^{-\left(\delta_2-\frac{\alpha}{2}\right)} \widehat{u}_1(\lambda)_{k, \ell}$. Since $\frac{Q}{2}+r_\mathrm{H}\left(u_1\right)-(2\delta_2-\alpha)>0$, we can apply Theorem \ref{theorem:3.11} to obtain that $v_1 \in L^2\left(\mathbf{H}_n\right)$ and $r_\mathrm{H}\left(v_1\right)=r_\mathrm{H}\left(u_1\right)-(2\delta_2-\alpha)$. Hence, by Theorem \ref{theorem:3.7}, we have
\begin{align*}
& c_n \sum_{k, \ell \in \mathbb{N}^n} \int_{0<\mu_k|\lambda|<\varepsilon} \left(\mu_k|\lambda|\right)^{\alpha-2\delta_2} \mathrm{e}^{-2C\left(\mu_k|\lambda|\right)^{\delta_1-\delta_2} t} \left|\widehat{u}_1(\lambda)_{k, \ell}\right|^2|\lambda|^n \mathrm{d} \lambda \\
=& c_n \sum_{k, \ell \in \mathbb{N}^n} \int_{0<\mu_k|\lambda|<\varepsilon} \mathrm{e}^{-2C\left(\mu_k|\lambda|\right)^{\delta_1-\delta_2} t} \left|\widehat{v}_1(\lambda)_{k, \ell}\right|^2|\lambda|^n \mathrm{d} \lambda \\
\lesssim & P_{r_\mathrm{H}\left(v_1\right)}(v_1)\left(C_3+t\right)^{-\frac{2 r_\mathrm{H}\left(v_1\right) +Q}{2\delta_1-2\delta_2}}+\left\|v_1\right\|_{L^2\left(\mathbf{H}_n\right)}^2\left(C_3+t\right)^{-m}
\end{align*}
for any $m>\frac{2 r_\mathrm{H}\left(v_1\right) +Q}{2\delta_1-2\delta_2}$ and for sufficiently large $C_3=$ const $>0$.
By fixing $m>\frac{2 r_\mathrm{H}\left(v_1\right) +Q}{2\delta_1-2\delta_2}$ and choosing $C_3$ large enough, we always can achieve that
$$
\left\|v_1\right\|_{L^2\left(\mathbf{H}_n\right)}^2\left(C_3+t\right)^{-m} \lesssim\left(P_{r_\mathrm{H}\left(u_1\right)}\left(u_1\right)+\left\|u_1\right\|_{L^2\left(\mathbf{H}_n\right)}^2\right)(1+t)^{-\frac{2 r_\mathrm{H}\left(v_1\right) +Q}{2\delta_1-2\delta_2}}.
$$
Therefore, recalling the definition of the norm $\|\cdot\|_{P, 0}$ we obtain
\begin{align}
&c_n \sum_{k, \ell \in \mathbb{N}^n} \int_{0<\mu_k|\lambda|<\varepsilon} \left(\mu_k|\lambda|\right)^{\alpha-2\delta_2} \mathrm{e}^{-2C\left(\mu_k|\lambda|\right)^{\delta_1-\delta_2} t} \left|\widehat{u}_1(\lambda)_{k, \ell}\right|^2|\lambda|^n \mathrm{d} \lambda \notag\\
\lesssim & \left\|u_1\right\|_{P, 0}^2(1+t)^{-\frac{2 r_\mathrm{H}\left(u_1\right) +Q+2\alpha-4\delta_2}{2\delta_1-2\delta_2}}.
\end{align}
Therefore, similar to the case when $2 \delta_2 \leq \alpha \leq \delta_1$, we also have
\begin{align} \label{ineq:4.25}
I_1= & c_n \sum_{k, \ell \in \mathbb{N}^n} \int_{0<\mu_k|\lambda|<\varepsilon}\left(\mu_k|\lambda|\right)^\alpha \left|\widehat{u}(t, \lambda)_{k, \ell}\right|^2|\lambda|^n \mathrm{d} \lambda \notag\\
\lesssim & (1+t)^{-\frac{2 r_\mathrm{H}\left(u_0\right) +Q+2\alpha}{2\delta_1-2\delta_2}} \|u_0\|_{P, 0}^2 + (1+t)^{-\frac{2 r_\mathrm{H}\left(u_1\right) +Q+2\alpha-4\delta_2}{2\delta_1-2\delta_2}} \|u_1\|_{P, 0}^2. 
\end{align}
We estimate now the other term $I_2$. For $\varepsilon \leq \mu_k|\lambda| \leq 2^{-\left(\frac{\delta_1}{2}-\delta_2\right)^{-1}}$, using $\frac{\sinh y}{y} \leq \cosh y \leq \mathrm{e}^y$ for $y>0$ and \eqref{ineq:4.6}, we see that
\begin{align} \label{ineq:4.26}
&\left|F(t, \lambda, k)+\tfrac{\left(\mu_k|\lambda|\right)^{\delta_2}}{2} G(t, \lambda, k)\right| \\
\lesssim & \mathrm{e}^{-\frac{\left(\mu_k|\lambda|\right)^{\delta_2}}{2} t} \cosh \left(\tfrac{\left(\mu_k|\lambda|\right)^{\delta_2} \sqrt{1-4\left(\mu_k|\lambda|\right)^{\delta_1-2\delta_2}}}{2} t\right) + \tfrac{\mathrm{e}^{-\frac{\left(\mu_k|\lambda|\right)^{\delta_2}}{2} t}}{\sqrt{1-4\left(\mu_k|\lambda|\right)^{\delta_1-2\delta_2}}} \sinh \left(\tfrac{\left(\mu_k|\lambda|\right)^{\delta_2} \sqrt{1-4\left(\mu_k|\lambda|\right)^{\delta_1-2\delta_2}}}{2} t\right) \notag\\
\lesssim & \mathrm{e}^{-\left(\mu_k|\lambda|\right)^{\delta_1-\delta_2} t} + \left(\mu_k|\lambda|\right)^{\delta_2} t \mathrm{e}^{-\left(\mu_k|\lambda|\right)^{\delta_1-\delta_2} t} \lesssim \mathrm{e}^{-C t}
\end{align}
and
\begin{align} \label{ineq:4.27}
\left(\mu_k|\lambda|\right)^\frac{\alpha}{2} \left|G(t, \lambda, k)\right| \lesssim & \tfrac{\mathrm{e}^{-\frac{\left(\mu_k|\lambda|\right)^{\delta_2}}{2} t}}{\left(\mu_k|\lambda|\right)^{\delta_2-\frac{\alpha}{2}}\sqrt{1-4\left(\mu_k|\lambda|\right)^{\delta_1-2\delta_2}}} \sinh \left(\tfrac{\left(\mu_k|\lambda|\right)^{\delta_2} \sqrt{1-4\left(\mu_k|\lambda|\right)^{\delta_1-2\delta_2}}}{2} t\right) \notag\\
\lesssim & \left(\mu_k|\lambda|\right)^\frac{\alpha}{2} t \mathrm{e}^{-\left(\mu_k|\lambda|\right)^{\delta_1-\delta_2} t} \lesssim \mathrm{e}^{-C t},
\end{align}
where $C$ is a positive constant. From \eqref{eq:4.4}, \eqref{ineq:4.26} and \eqref{ineq:4.27}, leading to
$$
\left|\widehat{u}(t, \lambda)_{k, \ell}\right| \lesssim \mathrm{e}^{-C t}\left(\left|\widehat{u}_0(\lambda)_{k, \ell}\right|+\left(\mu_k|\lambda|\right)^{-\frac{\alpha}{2}} \left|\widehat{u}_1(\lambda)_{k, \ell}\right|\right)
$$
for any $\varepsilon \leq \mu_k|\lambda| \leq 2^{-\left(\frac{\delta_1}{2}-\delta_2\right)^{-1}}$. Therefore,
\begin{align} \label{ineq:4.28}
I_2 \lesssim & \mathrm{e}^{-2C t} \sum_{k, \ell \in \mathbb{N}^n} \int_{\varepsilon \leq \mu_k|\lambda| \leq 2^{-\left(\frac{\delta_1}{2}-\delta_2\right)^{-1}}} \mu_k^\alpha |\lambda|^{\alpha +
n} \left(\left|\widehat{u}_0(\lambda)_{k, \ell}\right|^2+\left(\mu_k|\lambda|\right)^{-\alpha}\left|\widehat{u}_1(\lambda)_{k, \ell}\right|^2\right) \mathrm{d}
 \lambda \notag\\
\lesssim & \mathrm{e}^{-2C t} \int_{\mathbb{R}^*} \sum_{k, \ell \in \mathbb{N}^n} \mu_k^\alpha |\lambda|^{\alpha +
n} \left(\left|\widehat{u}_0(\lambda)_{k, \ell}\right|^2+\left(\mu_k|\lambda|\right)^{-\alpha}\left|\widehat{u}_1(\lambda)_{k, \ell}\right|^2\right) \mathrm{d}
 \lambda \notag\\
=&\mathrm{e}^{-2C t} \int_{\mathbb{R}^*}\left(\|\left(-\sigma_{\Delta_{\mathrm{H}}}(\lambda)\right)^{\frac{\alpha}{2}}\widehat{u}_0(\lambda)\|_{\mathrm{HS}\left[L^2\left(\mathbb{R}^n\right)\right]}^2+\left\|\widehat{u}_1(\lambda)\right\|_{\mathrm{HS}\left[L^2\left(\mathbb{R}^n\right)\right]}^2\right)|\lambda|^n \mathrm{d} \lambda \notag\\
\approx & \mathrm{e}^{-2C t}\left(\left\|u_0\right\|_{\dot{H}^\alpha \left(\mathbf{H}_n\right)}^2+\left\|u_1\right\|_{L^2\left(\mathbf{H}_n\right)}^2\right),
\end{align}
where in the last step we applied Plancherel formula to $u_0$ and $u_1$. Next, we estimate $I_3$. For $\mu_k|\lambda|>2^{-\left(\frac{\delta_1}{2}-\delta_2\right)^{-1}}$, using \eqref{ineq:4.16} and inequality $y < \mathrm{e}^y, \forall y > 0$, we see that
\begin{align} \label{ineq:4.29}
&\left|F(t, \lambda, k)+\tfrac{\left(\mu_k|\lambda|\right)^{\delta_2}}{2} G(t, \lambda, k)\right| \notag\\
\lesssim & \mathrm{e}^{-\frac{\left(\mu_k|\lambda|\right)^{\delta_2}}{2} t} \left|\cos \left(\tfrac{\left(\mu_k|\lambda|\right)^{\delta_2} \sqrt{4\left(\mu_k|\lambda|\right)^{\delta_1-2\delta_2}-1}}{2} t\right) \right| +\tfrac{\mathrm{e}^{-\frac{\left(\mu_k|\lambda|\right)^{\delta_2}}{2} t}}{\sqrt{4\left(\mu_k|\lambda|\right)^{\delta_1-2\delta_2}-1}} \left|\sin \left(\tfrac{\left(\mu_k|\lambda|\right)^{\delta_2} \sqrt{4\left(\mu_k|\lambda|\right)^{\delta_1-2\delta_2}-1}}{2} t\right) \right| \notag\\
\lesssim & \mathrm{e}^{-\frac{\left(\mu_k|\lambda|\right)^{\delta_2}}{2} t}+\left(\mu_k|\lambda|\right)^{\delta_2} t \mathrm{e}^{-\frac{\left(\mu_k|\lambda|\right)^{\delta_2}}{2} t}=\mathrm{e}^{-\frac{\left(\mu_k|\lambda|\right)^{\delta_2}}{2} t}+\left(\mu_k|\lambda|\right)^{\delta_2} t \mathrm{e}^{-\frac{\left(\mu_k|\lambda|\right)^{\delta_2}}{4} t} \mathrm{e}^{-\frac{\left(\mu_k|\lambda|\right)^{\delta_2}}{4} t} \notag\\
\lesssim & \mathrm{e}^{-\frac{\left(\mu_k|\lambda|\right)^{\delta_2}}{2} t}+\mathrm{e}^{-\frac{\left(\mu_k|\lambda|\right)^{\delta_2}}{4} t} \lesssim \mathrm{e}^{-C t}
\end{align}
and
\begin{align} \label{ineq:4.30}
\left(\mu_k|\lambda|\right)^\frac{\alpha}{2}\left|G(t, \lambda, k)\right| \lesssim & \left(\mu_k|\lambda|\right)^\frac{\alpha}{2}\left|\tfrac{\mathrm{e}^{-\frac{\left(\mu_k|\lambda|\right)^{\delta_2}}{2} t}}{\left(\mu_k|\lambda|\right)^{\delta_2} \sqrt{4\left(\mu_k|\lambda|\right)^{\delta_1-2\delta_2}-1}} \sin \left(\tfrac{\left(\mu_k|\lambda|\right)^{\delta_2} \sqrt{4\left(\mu_k|\lambda|\right)^{\delta_1-2\delta_2}-1}}{2} t\right)\right| \notag\\
\lesssim & \left(\mu_k|\lambda|\right)^\frac{\alpha}{2} t \mathrm{e}^{-\frac{\left(\mu_k|\lambda|\right)^{\delta_2}}{2} t} = \left(\mu_k|\lambda|\right)^\alpha t \mathrm{e}^{-\frac{\left(\mu_k|\lambda|\right)^{\delta_2}}{4} t} \mathrm{e}^{-\frac{\left(\mu_k|\lambda|\right)^{\delta_2}}{4} t} \notag\\
\lesssim & \left(\mu_k|\lambda|\right)^{\frac{\alpha}{2}-\delta_2}\mathrm{e}^{-\frac{\left(\mu_k|\lambda|\right)^{\delta_2}}{4} t} \lesssim \mathrm{e}^{-C t},
\end{align}
for some $C>0$, where $C$ and the unexpressed multiplicative constant hereafter are independent of the time variable and of the parameters $\lambda$ and $k, \ell$ as well. Then, by \eqref{eq:4.4}, \eqref{ineq:4.29} and \eqref{ineq:4.30}, we obtain
$$
\left|\widehat{u}(t, \lambda)_{k, \ell}\right| \lesssim \mathrm{e}^{-C t}\left(\left|\widehat{u}_0(\lambda)_{k, \ell}\right|+\left(\mu_k|\lambda|\right)^{-\frac{\alpha}{2}} \left|\widehat{u}_1(\lambda)_{k, \ell}\right|\right)
$$
for any $\mu_k|\lambda|>2^{-\left(\frac{\delta_1}{2}-\delta_2\right)^{-1}}$. Therefore,
\begin{align} \label{ineq:4.31}
I_3 \lesssim & \mathrm{e}^{-2C t} \sum_{k, \ell \in \mathbb{N}^n} \int_{\mu_k|\lambda|>2^{-\left(\frac{\delta_1}{2}-\delta_2\right)^{-1}}} \mu_k^\alpha |\lambda|^{\alpha +
n} \left(\left|\widehat{u}_0(\lambda)_{k, \ell}\right|^2+\left(\mu_k|\lambda|\right)^{-\alpha}\left|\widehat{u}_1(\lambda)_{k, \ell}\right|^2\right) \mathrm{d}
 \lambda \notag\\
\lesssim & \mathrm{e}^{-2C t} \int_{\mathbb{R}^*} \sum_{k, \ell \in \mathbb{N}^n} \mu_k^\alpha |\lambda|^{\alpha +
n} \left(\left|\widehat{u}_0(\lambda)_{k, \ell}\right|^2+\left(\mu_k|\lambda|\right)^{-\alpha}\left|\widehat{u}_1(\lambda)_{k, \ell}\right|^2\right) \mathrm{d}
 \lambda \notag\\
=&\mathrm{e}^{-2C t} \int_{\mathbb{R}^*}\left(\|\left(-\sigma_{\Delta_{\mathrm{H}}}(\lambda)\right)^{\frac{\alpha}{2}}\widehat{u}_0(\lambda)\|_{\mathrm{HS}\left[L^2\left(\mathbb{R}^n\right)\right]}^2+\left\|\widehat{u}_1(\lambda)\right\|_{\mathrm{HS}\left[L^2\left(\mathbb{R}^n\right)\right]}^2\right)|\lambda|^n \mathrm{d}
 \lambda \notag\\
\approx & \mathrm{e}^{-2C t}\left(\left\|u_0\right\|_{\dot{H}^\alpha \left(\mathbf{H}_n\right)}^2+\left\|u_1\right\|_{L^2\left(\mathbf{H}_n\right)}^2\right),
\end{align}
where in the last step we applied Plancherel formula to $u_0$ and $u_1$. So, substituting \eqref{ineq:4.24}, \eqref{ineq:4.25}, \eqref{ineq:4.28} and \eqref{ineq:4.31} into \eqref{eq:4.20}, we obtain \eqref{ineq:1.3}.
\begin{itemize}
\item \textbf{Case 2:} $\mathbf{\delta_2=0}$.
\end{itemize}

Estimating for $\left\|\partial_t u(t, \cdot)\right\|_{L^2\left(\mathbf{H}_n\right)}$. Applying the obvious inequalities \eqref{ineq:4.6}, we can now proceed analogously to the arguments used in the previous case with $\delta_2 \in \left(0, \frac{\delta_1}{2} \right)$ to estimate $\partial_t\left(F(t, \lambda, k)\right)+\frac{1}{2} G(t, \lambda, k)$ as follows:
\begin{align*}
\left|\partial_t\left(F(t, \lambda, k)+\tfrac{1}{2} G(t, \lambda, k)\right)\right| \lesssim & \mathrm{e}^{-\frac{t}{2}} \sinh \left(\tfrac{\sqrt{1-4\left(\mu_k|\lambda|\right)^{\delta_1}}}{2} t\right) \left| \sqrt{1-4\left(\mu_k|\lambda|\right)^{\delta_1}}-\tfrac{1}{\sqrt{1-4\left(\mu_k|\lambda|\right)^{\delta_1}}} \right| \\
\lesssim & \mathrm{e}^{-C\left(\mu_k|\lambda|\right)^{\delta_1} t}\left(\mu_k|\lambda|\right)^{\delta_1}
\end{align*}
for $\mu_k|\lambda|<\varepsilon$, where $\varepsilon \leq 2^{-\left(\frac{\delta_1}{2}\right)^{-1}}$ is small enough, such that $1-4\left(\mu_k|\lambda|\right)^{\delta_1}>\frac{1}{2}$ for $\mu_k|\lambda|<\varepsilon$ and $C$ is a positive constant. Regarding the estimate for $\left|\partial_t G(t, \lambda, k)\right|$, it is enough to observe that for small frequencies $\mu_k|\lambda|<\varepsilon$:
\begin{align*}
\left|\partial_t G(t, \lambda, k)\right| = & \left|\mathrm{e}^{-\frac{1}{2} t} \cosh \left(\tfrac{\sqrt{1-4\left(\mu_k|\lambda|\right)^{\delta_1}}}{2} t\right)-\tfrac{\mathrm{e}^{-\frac{1}{2} t}}{\sqrt{1-4\left(\mu_k|\lambda|\right)^{\delta_1}}} \sinh \left(\tfrac{\sqrt{1-4\left(\mu_k|\lambda|\right)^{\delta_1}}}{2} t\right)\right| \\
\lesssim & \left|\left(1-\tfrac{1}{\sqrt{1-4\left(\mu_k|\lambda|\right)^{\delta_1}}}\right) \mathrm{e}^{\frac{t}{2}\left(-1+\sqrt{1-4\left(\mu_k|\lambda|\right)^{\delta_1}}\right)} \right. \\
& \left. + \left(1+\tfrac{1}{\sqrt{1-4\left(\mu_k|\lambda|\right)^{\delta_1}}}\right) \mathrm{e}^{\frac{t}{2}\left(-1-\sqrt{1-4\left(\mu_k|\lambda|\right)^{\delta_1}}\right)}\right| \lesssim \left(\mu_k|\lambda|\right)^{\delta_1} \mathrm{e}^{-C\left(\mu_k|\lambda|\right)^{\delta_1} t}+\mathrm{e}^{-C t},
\end{align*}
where $C$ is a positive constant. Thus, in the same manner as in the case $\delta_2 \in \left(0, \frac{\delta_1}{2}\right)$, we can get
\begin{align*}
J_1= & c_n \sum_{k, \ell \in \mathbb{N}^n} \int_{0<\mu_k|\lambda|<\varepsilon}\left|\partial_t \widehat{u}(t, \lambda)_{k, \ell}\right|^2|\lambda|^n \mathrm{d} \lambda \\
\lesssim & (1+t)^{-\frac{2 r_\mathrm{H}\left(u_0\right) +Q+4\delta_1}{2\delta_1}} \|u_0\|_{P, 0}^2 + (1+t)^{-\frac{2 r_\mathrm{H}\left(u_1\right) +Q+4\delta_1}{2\delta_1}} \|u_1\|_{P, 0}^2. 
\end{align*}
For $\varepsilon \leq \mu_k|\lambda| \leq 2^{-\left(\frac{\delta_1}{2}-\delta_2\right)^{-1}}$ and $\mu_k|\lambda|>2^{-\left(\frac{\delta_1}{2}-\delta_2\right)^{-1}}$, we proceed in exactly the same manner as in the case $\delta_2 \in \left(0, \frac{\delta_1}{2}\right)$. Therefore, the estimate 
\eqref{ineq:1.4} for $\|\partial_t u(t, \cdot)\|_{L^2\left(\mathbf{H}_n\right)}$ can also be established for the case $\delta_2=0$.

Estimating for $\|u(t, \cdot)\|_{\dot{H}^\alpha \left(\mathbf{H}_n\right)}$. The estimate \eqref{ineq:1.3} for $\|u(t, \cdot)\|_{\dot{H}^\alpha \left(\mathbf{H}_n\right)}$, with $\alpha \in [0, \delta_1]$ is handled in a similar way as in the case $\delta_2 \in \left(0, \frac{\delta_1}{2}\right)$.
\begin{itemize}
\item \textbf{Case 3:} $\mathbf{\delta_2=\frac{\delta_1}{2}}$.
\end{itemize}

Estimating for $\left\|\partial_t u(t, \cdot)\right\|_{L^2\left(\mathbf{H}_n\right)}$. It is easy to see that
\begin{equation} \label{ineq:4.32}
\left|\partial_t\left(F(t, \lambda, k)+\tfrac{1}{2} G(t, \lambda, k)\right)\right| \lesssim \left(\mu_k|\lambda|\right)^\frac{\delta_1}{2} \mathrm{e}^{-C\left(\mu_k|\lambda|\right)^{\frac{\delta_1}{2}} t}+\mathrm{e}^{-C\left(\mu_k|\lambda|\right)^{\frac{\delta_1}{2}} t}
\end{equation}
and
\begin{equation} \label{ineq:4.33}
\left|\partial_t\left(G(t, \lambda, k)\right)\right| \lesssim \mathrm{e}^{-C\left(\mu_k|\lambda|\right)^{\frac{\delta_1}{2}} t} \end{equation}
for $\lambda \in \mathbb{R}^*$ and where $C$ is a positive constant. By \eqref{eq:4.4}, \eqref{ineq:4.32} and \eqref{ineq:4.33}, we obtain
\begin{equation} \label{ineq:4.34}
\left|\partial_t \widehat{u}(t, \lambda)_{k, \ell}\right| \lesssim \left(\left(\mu_k|\lambda|\right)^\frac{\delta_1}{2} \mathrm{e}^{-C\left(\mu_k|\lambda|\right)^{\frac{\delta_1}{2}} t}+\mathrm{e}^{-C\left(\mu_k|\lambda|\right)^{\frac{\delta_1}{2}} t}\right)\left|\widehat{u}_0(\lambda)_{k, \ell}\right| + \mathrm{e}^{-C\left(\mu_k|\lambda|\right)^{\frac{\delta_1}{2}} t}\left|\widehat{u}_1(\lambda)_{k, \ell}\right|.
\end{equation}
By using Plancherel formula and $\left\{e_k\right\}_{k \in \mathbb{N}^n}$ as orthonormal basis of $L^2\left(\mathbb{R}^n\right)$, we have
\begin{align*}
\left\|\partial_t u(t, \cdot)\right\|_{L^2\left(\mathbf{H}_n\right)}^2 = & c_n \int_{\mathbb{R}^*}\left\|\partial_t \widehat{u}(t, \lambda)\right\|_{\mathrm{HS}\left[L^2\left(\mathbb{R}^n\right)\right]}^2|\lambda|^n \mathrm{d} \lambda \\
=& c_n \sum_{k, \ell \in \mathbb{N}^n}\left(\int_{0<\mu_k|\lambda|<\varepsilon}+\int_{\mu_k|\lambda|\geq\varepsilon}\right)\left|\partial_t \widehat{u}(t, \lambda)_{k, \ell}\right|^2|\lambda|^n \mathrm{d} \lambda
:= M_1+M_2,
\end{align*}
where $0<\varepsilon<1$. We estimate $M_1$ first. From \eqref{ineq:4.34}, by \eqref{ineq:gamma_beta}, Theorem \ref{theorem:3.7}, and proceeding similarly to $J_1$ in the case $\delta_2 \in\left(0, \frac{\delta_1}{2}\right)$, we obtain
\begin{align*}
M_1= & c_n \sum_{k, \ell \in \mathbb{N}^n} \int_{0<\mu_k|\lambda|<\varepsilon}\left|\partial_t \widehat{u}(t, \lambda)_{k, \ell}\right|^2|\lambda|^n \mathrm{d} \lambda \\
\lesssim & (1+t)^{-\frac{2 r_\mathrm{H}\left(u_0\right) +Q+2\delta_1}{\delta_1}} \|u_0\|_{P, 0}^2 + (1+t)^{-\frac{2 r_\mathrm{H}\left(u_1\right) +Q}{\delta_1}} \|u_1\|_{P, 0}^2, 
\end{align*}
For $M_2$, by \eqref{ineq:4.34}, we proceed in exactly the same way as $J_3$ in the case $\delta_2 \in\left(0, \frac{\delta_1}{2}\right)$. Therefore, the estimate \eqref{ineq:1.4} for $\|\partial_t u(t, \cdot)\|_{L^2\left(\mathbf{H}_n\right)}$ can also be established for the case $\delta_2=\frac{\delta_1}{2}$.

Estimating for $\|u(t, \cdot)\|_{\dot{H}^\alpha \left(\mathbf{H}_n\right)}$. By using Plancherel formula and $\left\{e_k\right\}_{k \in \mathbb{N}^n}$ as orthonormal basis of $L^2\left(\mathbb{R}^n\right)$, we have
\begin{align} \label{eq:4.35}
& \|u(t, \cdot)\|_{\dot{H}^\alpha \left(\mathbf{H}_n\right)}^2 =c_n \int_{\mathbb{R}^*}\|\left(-\sigma_{\Delta_{\mathrm{H}}}(\lambda)\right)^{\frac{\alpha}{2}}\widehat{u}(t, \lambda)\|_{\mathrm{HS}\left[L^2\left(\mathbb{R}^n\right)\right]}^2|\lambda|^n \mathrm{d} \lambda \notag\\
=& c_n \sum_{k, \ell \in \mathbb{N}^n} \int_{\mathbb{R}^*} \mu_k^\alpha |\lambda|^{\alpha +
n} \left(\widehat{u}(t, \lambda) e_k, e_{\ell}\right)_{L^2\left(\mathbb{R}^n\right)}^2 \mathrm{d} \lambda \notag\\
=&c_n \sum_{k, \ell \in \mathbb{N}^n}\left(\int_{0<\mu_k|\lambda|<\varepsilon}+\int_{\mu_k|\lambda|\geq\varepsilon} \right) \mu_k^\alpha |\lambda|^{\alpha +
n} \left|\widehat{u}(t, \lambda)_{k, \ell}\right|^2 \mathrm{d} \lambda
:= N_1+N_2,
\end{align}
where $0<\varepsilon<1$. We estimate $N_1$ first. For $\mu_k|\lambda| < \varepsilon$, we have,
\begin{align} \label{ineq:4.36}
& \left|F(t, \lambda, k)+\tfrac{\left(\mu_k|\lambda|\right)^{\delta_2}}{2} G(t, \lambda, k)\right| \notag\\
\lesssim & \mathrm{e}^{-\frac{\left(\mu_k|\lambda|\right)^{\frac{\delta_1}{2}}}{2} t} \left|\cos \left(\tfrac{\left(\mu_k|\lambda|\right)^{\frac{\delta_1}{2}} \sqrt{3}}{2} t\right)\right| + \mathrm{e}^{-\frac{\left(\mu_k|\lambda|\right)^{\frac{\delta_1}{2}}}{2} t} \left|\sin \left(\tfrac{\left(\mu_k|\lambda|\right)^{\frac{\delta_1}{2}} \sqrt{3}}{2} t\right)\right| \lesssim \mathrm{e}^{-C\left(\mu_k|\lambda|\right)^\frac{\delta_1}{2} t}
\end{align}
and
\begin{equation} \label{ineq:4.37}
\left|G(t, \lambda, k)\right| \lesssim \tfrac{\mathrm{e}^{-\frac{\left(\mu_k|\lambda|\right)^{\frac{\delta_1}{2}}}{2} t}}{\left(\mu_k|\lambda|\right)^{\frac{\delta_1}{2}}} \left|\sin \left(\tfrac{\left(\mu_k|\lambda|\right)^{\frac{\delta_1}{2}} \sqrt{3}}{2} t\right)\right| \lesssim t \mathrm{e}^{-C\left(\mu_k|\lambda|\right)^{\frac{\delta_1}{2}} t},
\end{equation}
where $C$ is a positive constant and we used the inequalities $\left|\sin y \right|\leq 1, \left|\cos y \right|\leq 1, \forall y \in \mathbb{R}$ and \eqref{ineq:4.16}. By \eqref{eq:4.4}, \eqref{ineq:4.36} and \eqref{ineq:4.37}, we obtain
\begin{equation} \label{ineq:4.38}
\left|\widehat{u}(t, \lambda)_{k, \ell}\right| \lesssim \mathrm{e}^{-C\left(\mu_k|\lambda|\right)^\frac{\delta_1}{2} t}\left|\widehat{u}_0(\lambda)_{k, \ell}\right| + t \mathrm{e}^{-C\left(\mu_k|\lambda|\right)^\frac{\delta_1}{2} t}\left|\widehat{u}_1(\lambda)_{k, \ell}\right|
\end{equation}
for any $\mu_k|\lambda|<\varepsilon$. From \eqref{ineq:4.38}, by \eqref{ineq:gamma_beta} and Theorem \ref{theorem:3.7}, it results
\begin{align} \label{ineq:4.39}
N_1= & c_n \sum_{k, \ell \in \mathbb{N}^n} \int_{0<\mu_k|\lambda|<\varepsilon}\left(\mu_k|\lambda|\right)^\alpha \left|\widehat{u}(t, \lambda)_{k, \ell}\right|^2|\lambda|^n \mathrm{d} \lambda \notag\\
\lesssim & c_n \sum_{k, \ell \in \mathbb{N}^n} \int_{0<\mu_k|\lambda|<\varepsilon}\left(\mu_k|\lambda|\right)^\alpha \mathrm{e}^{-2C\left(\mu_k|\lambda|\right)^\frac{\delta_1}{2} t}\left|\widehat{u}_0(\lambda)_{k, \ell}\right|^2|\lambda|^n \mathrm{d} \lambda \notag\\
& + (1+t) c_n \sum_{k, \ell \in \mathbb{N}^n} \int_{0<\mu_k|\lambda|<\varepsilon}\left(\mu_k|\lambda|\right)^\alpha \mathrm{e}^{-2C\left(\mu_k|\lambda|\right)^\frac{\delta_1}{2} t}\left|\widehat{u}_1(\lambda)_{k, \ell}\right|^2|\lambda|^n \mathrm{d} \lambda \notag\\
\lesssim & (1+t)^{-\frac{2\alpha}{\delta_1}} c_n \sum_{k, \ell \in \mathbb{N}^n} \int_{0<\mu_k|\lambda|<\varepsilon} \mathrm{e}^{-C\left(\mu_k|\lambda|\right)^\frac{\delta_1}{2} t}\left|\widehat{u}_0(\lambda)_{k, \ell}\right|^2 |\lambda|^{
n} \mathrm{d} \lambda \notag\\
& + (1+t)^{1-\frac{2\alpha}{\delta_1}} c_n \sum_{k, \ell \in \mathbb{N}^n} \int_{0<\mu_k|\lambda|<\varepsilon} \mathrm{e}^{-C\left(\mu_k|\lambda|\right)^\frac{\delta_1}{2} t} \left|\widehat{u}_1(\lambda)_{k, \ell}\right|^2 |\lambda|^n \mathrm{d} \lambda \notag\\
\lesssim & (1+t)^{-\frac{2r_\mathrm{H}\left(u_0\right)+Q+2\alpha}{\delta_1}} \|u_0\|_{P, 0}^2 + (1+t)^{-\frac{2r_\mathrm{H}\left(u_1\right)+Q+2\alpha-\delta_1}{\delta_1}} \|u_1\|_{P, 0}^2. 
\end{align}
Next, we estimate $N_2$. For $\mu_k|\lambda|\geq\varepsilon$, we see that
\begin{align} \label{ineq:4.40}
& \left|F(t, \lambda, k)+\tfrac{\left(\mu_k|\lambda|\right)^{\delta_2}}{2} G(t, \lambda, k)\right| \notag\\
\lesssim & \mathrm{e}^{-\frac{\left(\mu_k|\lambda|\right)^{\frac{\delta_1}{2}}}{2} t} \left|\cos \left(\tfrac{\left(\mu_k|\lambda|\right)^{\frac{\delta_1}{2}} \sqrt{3}}{2} t\right)\right| + \mathrm{e}^{-\frac{\left(\mu_k|\lambda|\right)^{\frac{\delta_1}{2}}}{2} t} \left|\sin \left(\tfrac{\left(\mu_k|\lambda|\right)^{\frac{\delta_1}{2}} \sqrt{3}}{2} t\right)\right| \lesssim \mathrm{e}^{-\frac{\left(\mu_k|\lambda|\right)^{\frac{\delta_1}{2}}}{2} t} \lesssim \mathrm{e}^{-C t}
\end{align}
and
\begin{align} \label{ineq:4.41}
\left(\mu_k|\lambda|\right)^\frac{\alpha}{2}\left|G(t, \lambda, k)\right| \lesssim & \tfrac{\mathrm{e}^{-\frac{\left(\mu_k|\lambda|\right)^{\frac{\delta_1}{2}}}{2} t}}{\left(\mu_k|\lambda|\right)^{\frac{\delta_1-\alpha}{2}}} \left|\sin \left(\tfrac{\left(\mu_k|\lambda|\right)^{\frac{\delta_1}{2}} \sqrt{3}}{2} t\right)\right| \lesssim \mathrm{e}^{-\frac{\left(\mu_k|\lambda|\right)^{\frac{\delta_1}{2}}}{2} t} \lesssim \mathrm{e}^{-C t}, \quad \forall \alpha \in \left[0, \delta_1\right],
\end{align}
where $C$ is a positive constant and we used the inequalities $\left|\sin y \right|\leq 1, \left|\cos y \right|\leq 1, \forall y \in \mathbb{R}$. Then, by \eqref{eq:4.4}, \eqref{ineq:4.40} and \eqref{ineq:4.41}, we obtain
$$
\left|\widehat{u}(t, \lambda)_{k, \ell}\right| \lesssim \mathrm{e}^{-C t}\left(\left|\widehat{u}_0(\lambda)_{k, \ell}\right|+\left(\mu_k|\lambda|\right)^{-\frac{\alpha}{2}} \left|\widehat{u}_1(\lambda)_{k, \ell}\right|\right)
$$
for any $\mu_k|\lambda|\geq\varepsilon$. Therefore,
\begin{align} \label{ineq:4.42}
N_2 \lesssim & \mathrm{e}^{-2C t} \sum_{k, \ell \in \mathbb{N}^n} \int_{\mu_k|\lambda|\geq\varepsilon} \mu_k^\alpha |\lambda|^{\alpha +
n} \left(\left|\widehat{u}_0(\lambda)_{k, \ell}\right|^2+\left(\mu_k|\lambda|\right)^{-\alpha}\left|\widehat{u}_1(\lambda)_{k, \ell}\right|^2\right) \mathrm{d}
 \lambda \notag\\
\lesssim & \mathrm{e}^{-2C t} \int_{\mathbb{R}^*} \sum_{k, \ell \in \mathbb{N}^n} \mu_k^\alpha |\lambda|^{\alpha +
n} \left(\left|\widehat{u}_0(\lambda)_{k, \ell}\right|^2+\left(\mu_k|\lambda|\right)^{-\alpha}\left|\widehat{u}_1(\lambda)_{k, \ell}\right|^2\right) \mathrm{d}
 \lambda \notag\\
=&\mathrm{e}^{-2C t} \int_{\mathbb{R}^*}\left(\|\left(-\sigma_{\Delta_{\mathrm{H}}}(\lambda)\right)^{\frac{\alpha}{2}}\widehat{u}_0(\lambda)\|_{\mathrm{HS}\left[L^2\left(\mathbb{R}^n\right)\right]}^2+\left\|\widehat{u}_1(\lambda)\right\|_{\mathrm{HS}\left[L^2\left(\mathbb{R}^n\right)\right]}^2\right)|\lambda|^n \mathrm{d}
 \lambda \notag\\
\approx & \mathrm{e}^{-2C t}\left(\left\|u_0\right\|_{\dot{H}^\alpha \left(\mathbf{H}_n\right)}^2+\left\|u_1\right\|_{L^2\left(\mathbf{H}_n\right)}^2\right),
\end{align}
where in the last step we applied Plancherel formula to $u_0$ and $u_1$. So, substituting \eqref{ineq:4.39} and \eqref{ineq:4.42} into \eqref{eq:4.35}, we obtain \eqref{ineq:1.3}. The proof for Theorem \ref{theorem:1.2} is now complete.
\end{proof}
\begin{remark}
\rm{
In Theorem \ref{theorem:1.2}, if $\left(u_0, u_1\right) \in \left(H^{\delta_1}\left(\mathbf{H}_n\right) \cap L^1\left(\mathbf{H}_n\right)\right) \times\left(L^2\left(\mathbf{H}_n\right) \cap L^1\left(\mathbf{H}_n\right)\right)$, then $P_{r_\mathrm{H}}\left(u_0\right)_{+}, P_{r_\mathrm{H}}\left(u_1\right)_{+}<\infty$ for $r_\mathrm{H}\left(u_0\right) = \delta_1$ and $r_\mathrm{H}\left(u_1\right) = 0$ (see Proposition \ref{proposition:3.2}). As a consequence, equation \eqref{eq:1.1} admits a solution $u$ that complies with the following decay estimates for $\alpha \in \left[0, \delta_1\right]$:
\begin{align*}
\left\|u(t, \cdot)\right\|_{\dot{H}^\alpha \left(\mathbf{H}_n\right)} \lesssim & (1+t)^{-\frac{Q-4\delta_2+2\alpha}{4\delta_1-4\delta_2}}\left(\|u_0\|_{H^\alpha\left(\mathbf{H}_n\right) \cap L^1\left(\mathbf{H}_n\right)}+\|u_1\|_{L^2\left(\mathbf{H}_n\right) \cap L^1\left(\mathbf{H}_n\right)}\right), \\
\left\|\partial_t u(t, \cdot)\right\|_{L^2\left(\mathbf{H}_n\right)} \lesssim & (1+t)^{-\frac{Q-4\delta_2}{4\delta_1-4\delta_2}-1}\left(\|u_0\|_{H^{\delta_1}\left(\mathbf{H}_n\right) \cap L^1\left(\mathbf{H}_n\right)}+\|u_1\|_{L^2\left(\mathbf{H}_n\right) \cap L^1\left(\mathbf{H}_n\right)}\right).
\end{align*}
Thus, our results precisely recover the linear decay rate bounds for the $L^2$-norm of the solution $u$ as described in \cite[Theorem 1.1]{Palmieri2020}, in the case when $\delta_1=1$ and $\delta_2=0$.
}
\end{remark}
\begin{remark}
\rm{
In Theorem \ref{theorem:1.2}, if $\left(u_0, u_1\right) \in\left(H^{\delta_1}\left(\mathbf{H}_n\right) \cap \dot{H}^{-\gamma}\left(\mathbf{H}_n\right)\right) \times\left(L^2\left(\mathbf{H}_n\right) \cap \dot{H}^{-\gamma}\left(\mathbf{H}_n\right)\right)$, with $\gamma \geq 0$, then $P_{r_\mathrm{H}}\left(u_0\right)_{+}, P_{r_\mathrm{H}}\left(u_1\right)_{+}<\infty$ for $r_\mathrm{H}\left(u_0\right) = \gamma-\frac{Q}{2}+\delta_1$ and $r_\mathrm{H}\left(u_1\right) = \gamma-\frac{Q}{2}$ (see Proposition \ref{proposition:3.3}). As a result, a solution $u$ to \eqref{eq:1.1} exists, which satisfies the decay estimates for $\alpha \in \left[0, \delta_1\right]$ as follows:
\begin{align*}
\left\|u(t, \cdot)\right\|_{\dot{H}^\alpha \left(\mathbf{H}_n\right)} \lesssim & (1+t)^{-\frac{\gamma-2\delta_2+\alpha}{2\delta_1-2\delta_2}}\left(\|u_0\|_{H^\alpha\left(\mathbf{H}_n\right) \cap \dot{H}^{-\gamma}\left(\mathbf{H}_n\right)}+\|u_1\|_{L^2\left(\mathbf{H}_n\right) \cap \dot{H}^{-\gamma}\left(\mathbf{H}_n\right)}\right),, \\
\left\|\partial_t u(t,\cdot)\right\|_{L^2\left(\mathbf{H}_n\right)} \lesssim & (1+t)^{-\frac{\gamma-2\delta_2}{2\delta_1-2\delta_2}-1}\left(\|u_0\|_{H^{\delta_1}\left(\mathbf{H}_n\right) \cap \dot{H}^{-\gamma}\left(\mathbf{H}_n\right)}+\|u_1\|_{L^2\left(\mathbf{H}_n\right) \cap \dot{H}^{-\gamma}\left(\mathbf{H}_n\right)}\right).
\end{align*}
Thus, we have successfully recovered the estimate for $\left\|u(t, \cdot)\right\|_{\dot{H}^\alpha \left(\mathbf{H}_n\right)}$ established by Dasgupta, Kumar, Mondal, and Ruzhansky in \cite[Theorem 1.1]{Dasgupta2024}, in the case when $\delta_1=1$ and $\delta_2=0$. Additionally, we have provided an estimate for the decay rate (in time) of $\left\|\partial_t u(t, \cdot)\right\|_{L^2\left(\mathbf{H}_n\right)}$.
}
\end{remark}
\begin{remark}
\rm{
In Theorem \ref{theorem:1.2}, if $\left(u_0, u_1\right) \in \left(H^{\delta_1}\left(\mathbf{H}_n\right) \cap L^m\left(\mathbf{H}_n\right)\right) \times \left(L^2\left(\mathbf{H}_n\right) \cap L^m\left(\mathbf{H}_n\right)\right)$, with $m \in (1,2]$, then $P_{r_\mathrm{H}}\left(u_0\right)_{+}, P_{r_\mathrm{H}}\left(u_1\right)_{+}<\infty$ for $r_\mathrm{H}\left(u_0\right) = -Q\left(1-\frac{1}{m}\right)+\delta_1$ and $r_\mathrm{H}\left(u_1\right) = -Q\left(1-\frac{1}{m}\right)$ (see Proposition \ref{proposition:3.4}). Consequently, there exists a solution $u$ to \eqref{eq:1.1} that satisfies the following decay estimates for $\alpha \in \left[0, \delta_1\right]$:
\begin{align*}
\left\|u(t, \cdot)\right\|_{\dot{H}^\alpha \left(\mathbf{H}_n\right)} \lesssim & (1+t)^{-\frac{Q}{2\delta_1-2\delta_2}\left(\frac{1}{m}-\frac{1}{2}\right)-\frac{\alpha-2\delta_2}{2\delta_1-2\delta_2}}\left(\|u_0\|_{H^\alpha\left(\mathbf{H}_n\right) \cap L^m\left(\mathbf{H}_n\right)}+\|u_1\|_{L^2\left(\mathbf{H}_n\right) \cap L^m\left(\mathbf{H}_n\right)}\right), \\
\left\|\partial_t u(t, \cdot)\right\|_{L^2\left(\mathbf{H}_n\right)} \lesssim & (1+t)^{-\frac{Q}{2\delta_1-2\delta_2}\left(\frac{1}{m}-\frac{1}{2}\right)+\frac{\delta_2}{\delta_1-\delta_2}-1}\left(\|u_0\|_{H^{\delta_1}\left(\mathbf{H}_n\right) \cap L^m\left(\mathbf{H}_n\right)}+\|u_1\|_{L^2\left(\mathbf{H}_n\right) \cap L^m\left(\mathbf{H}_n\right)}\right).
\end{align*}
}
\end{remark}
\begin{remark}
\rm{
In Theorem \ref{theorem:1.2}, if $\left(u_0, u_1\right) \in H_{\sigma \psi(t, \cdot)}^1\left(\mathbf{H}_n\right) \times L_{\sigma \psi(t,)}^2\left(\mathbf{H}_n\right)$, with $
\psi(t, \eta) := \frac{|x|^2+|y|^2+4|\tau|}{8(1+t)}$ for any $\sigma>0, t \geq 0$ and $\eta=(x, y, \tau) \in \mathbf{H}_n$, then $P_{r_\mathrm{H}}\left(u_0\right)_{+}, P_{r_\mathrm{H}}\left(u_1\right)_{+}<\infty$ for $r_\mathrm{H}\left(u_0\right) = \delta_1$ and $r_\mathrm{H}\left(u_1\right) = 0$ (see Proposition \ref{proposition:3.5}). Under these circumstances, there exists a solution $u$ to \eqref{eq:1.1} that satisfies the following decay estimates for $\alpha \in \left[0, \delta_1\right]$:
\begin{align*}
\left\|u(t, \cdot)\right\|_{\dot{H}^\alpha \left(\mathbf{H}_n\right)} \lesssim & (1+t)^{-\frac{Q-4\delta_2+2\alpha}{4\delta_1-4\delta_2}}\left(\|u_0\|_{H_{\sigma \psi(t, \cdot)}^\alpha\left(\mathbf{H}_n\right)}+\|u_1\|_{L_{\sigma \psi(t,)}^2\left(\mathbf{H}_n\right)}\right), \\
\left\|\partial_t u(t, \cdot)\right\|_{L^2\left(\mathbf{H}_n\right)} \lesssim & (1+t)^{-\frac{Q-4\delta_2}{4\delta_1-4\delta_2}-1}\left(\|u_0\|_{H_{\sigma \psi(t, \cdot)}^1\left(\mathbf{H}_n\right)}+\|u_1\|_{L_{\sigma \psi(t,)}^2\left(\mathbf{H}_n\right)}\right).
\end{align*}
}
\end{remark}
\begin{remark}
\rm{
In Theorem \ref{theorem:1.2}, if $\left(u_0, u_1\right) \in\left(H^{\delta_1}\left(\mathbf{H}_n\right) \cap \mathcal{Y}^q\left(\mathbf{H}_n\right)\right) \times\left(L^2\left(\mathbf{H}_n\right) \cap \mathcal{Y}^q\left(\mathbf{H}_n\right)\right)$, with $q \leq \frac{Q}{2}$, then $P_{r_\mathrm{H}}\left(u_0\right)_{+}, P_{r_\mathrm{H}}\left(u_1\right)_{+}<\infty$ for $r_\mathrm{H}\left(u_0\right) = -q+\delta_1$ and $r_\mathrm{H}\left(u_1\right) = -q$ (see Proposition \ref{proposition:3.6}). Under these conditions, the problem \eqref{eq:1.1} admits a solution $u$ that satisfies the following decay estimates for $\alpha \in \left[0, \delta_1\right]$:
\begin{align*}
\left\|u(t, \cdot)\right\|_{\dot{H}^\alpha \left(\mathbf{H}_n\right)} \lesssim & (1+t)^{-\frac{Q-2q-4\delta_2+2\alpha}{4\delta_1-4\delta_2}}\left(\|u_0\|_{H^\alpha\left(\mathbf{H}_n\right) \cap \mathcal{Y}^q\left(\mathbf{H}_n\right)}+\|u_1\|_{L^2\left(\mathbf{H}_n\right) \cap \mathcal{Y}^q\left(\mathbf{H}_n\right)}\right), \\
\left\|\partial_t u(t, \cdot)\right\|_{L^2\left(\mathbf{H}_n\right)} \lesssim & (1+t)^{-\frac{Q-2q-4\delta_2}{4\delta_1-4\delta_2}-1}\left(\|u_0\|_{H^{\delta_1}\left(\mathbf{H}_n\right) \cap \mathcal{Y}^q\left(\mathbf{H}_n\right)}+\|u_1\|_{L^2\left(\mathbf{H}_n\right) \cap \mathcal{Y}^q\left(\mathbf{H}_n\right)}\right).
\end{align*}
}
\end{remark}
\begin{remark}
\rm{
In Theorem \ref{theorem:1.2}, if $\left(u_0, u_1\right) \in\left(H^{\delta_1}\left(\mathbf{H}_n\right) \cap \dot{H}_m^{-\gamma}\left(\mathbf{H}_n\right)\right) \times\left(L^2\left(\mathbf{H}_n\right) \cap \dot{H}_m^{-\gamma}\left(\mathbf{H}_n\right)\right)$, with $m \in (1, 2], \gamma \geq 0$, then $P_{r_\mathrm{H}}\left(u_0\right)_{+}, P_{r_\mathrm{H}}\left(u_1\right)_{+}<\infty$ for $r_\mathrm{H}\left(u_0\right) = \gamma-\frac{Q(m-1)}{m}+\delta_1$ and $r_\mathrm{H}\left(u_1\right) = \gamma-\frac{Q(m-1)}{m}$ (see Proposition \ref{proposition:3.7}). Under these conditions, the problem \eqref{eq:1.1} admits a solution $u$ that satisfies the following decay estimates for $\alpha \in \left[0, \delta_1\right]$:
\begin{align*}
\left\|u(t, \cdot)\right\|_{\dot{H}^\alpha \left(\mathbf{H}_n\right)}
\lesssim &(1+t)^{-\frac{Q}{2\delta_1-2\delta_2}\left(\frac{1}{m}-\frac{1}{2}\right)-\frac{\gamma-2\delta_2+\alpha}{2\delta_1-2\delta_2}}\left(\|u_0\|_{H^\alpha\left(\mathbf{H}_n\right) \cap \dot{H}_m^{-\gamma}\left(\mathbf{H}_n\right)}+\|u_1\|_{L^2\left(\mathbf{H}_n\right) \cap \dot{H}_m^{-\gamma}\left(\mathbf{H}_n\right)}\right), \\
\left\|\partial_t u(t, \cdot)\right\|_{L^2\left(\mathbf{H}_n\right)} \lesssim & (1+t)^{-\frac{Q}{2\delta_1-2\delta_2}\left(\frac{1}{m}-\frac{1}{2}\right)-\frac{\gamma-2\delta_2}{2\delta_1-2\delta_2}-1}\left(\|u_0\|_{H^{\delta_1}\left(\mathbf{H}_n\right) \cap \dot{H}_m^{-\gamma}\left(\mathbf{H}_n\right)}+\|u_1\|_{L^2\left(\mathbf{H}_n\right) \cap \dot{H}_m^{-\gamma}\left(\mathbf{H}_n\right)}\right).
\end{align*}
}
\end{remark}
Next, we consider the following linear Cauchy problem with $\delta_1 \in \left[0, \frac{\delta_2}{2}\right]$:
\begin{equation} \label{eq:lemma:5.1}
\begin{cases}
\partial_t^2 v(t, \eta)+\left(-\Delta_{\mathrm{H}}\right)^{\delta_1} v(t, \eta)+\left(-\Delta_{\mathrm{H}}\right)^{\delta_2} \partial_t v(t, \eta)=0, & (t, \eta) \in[s, \infty) \times \mathbf{H}_n, \, s \geq 0, \, n \in \mathbb{N}^*, \\ v(s, \eta)=0, \quad v_t(s, \eta)=g(s, \eta), & \eta \in \mathbf{H}_n, \, s \geq 0, \, n \in \mathbb{N}^*.
\end{cases}
\end{equation}
The following result is applied in the proofs of Theorems \ref{theorem:1.3} and \ref{theorem:1.4}:
\begin{proposition} \label{proposition:7.2}
Let $Q=2n+2$ be the homogeneous dimension on the Heisenberg group and $g=g(s, \eta) \in L^2\left(\mathbf{H}_n\right) \cap L^\omega \left(\mathbf{H}_n\right)$, with $\omega \in [1, 2]$. Then the Sobolev solution $v=v(t, \eta)$ to \eqref{eq:lemma:5.1} satisfies the following estimates for all $\alpha \in \left[0, \delta_1\right]$:
\begin{align*}
\left\|v(t, \cdot)\right\|_{\dot{H}^\alpha \left(\mathbf{H}_n\right)} \lesssim & (1+t-s)^{-\frac{Q}{2\delta_1-2\delta_2}\left(\frac{1}{\omega}-\frac{1}{2}\right)-\frac{\alpha}{2\delta_1-2\delta_2}+\frac{\delta_2}{\delta_1-\delta_2}}\left(\|g(s, \cdot)\|_{L^2\left(\mathbf{H}_n\right)}+\|g(s, \cdot)\|_{L^\omega\left(\mathbf{H}_n\right)}\right), \\
\left\|\partial_t v(t, \cdot)\right\|_{L^2\left(\mathbf{H}_n\right)} \lesssim & (1+t-s)^{-\frac{Q}{2\delta_1-2\delta_2}\left(\frac{1}{\omega}-\frac{1}{2}\right)+\frac{\delta_2}{\delta_1-\delta_2}-1}\left(\|g(s, \cdot)\|_{L^2\left(\mathbf{H}_n\right)}+\|g(s, \cdot)\|_{L^\omega\left(\mathbf{H}_n\right)}\right).
\end{align*}
\end{proposition}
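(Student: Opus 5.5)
We reduce Proposition \ref{proposition:7.2} to Theorem \ref{theorem:1.2} and the decay-character computations of Section \ref{section_3.4}. The problem \eqref{eq:lemma:5.1} is autonomous, so $w(\tau,\eta):=v(s+\tau,\eta)$ solves the linear problem \eqref{eq:1.1} with data $(u_0,u_1)=(0,g(s,\cdot))$ and time variable $\tau=t-s\ge 0$. The intended regime is the one of Theorem \ref{theorem:1.2}, namely $0\le \delta_2\le \delta_1/2$, so all three cases treated there apply. Since $u_0=0$, only the $u_1$-terms survive in \eqref{ineq:1.3}–\eqref{ineq:1.4}, and it suffices to find an admissible $r_\mathrm{H}(g)$ with $P_{r_\mathrm{H}}(g)_+\lesssim \|g\|_{L^\omega}^2$.

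\textbf{Step 1 (decay indicator of $g$).} For $\omega\in(1,2]$ we use Proposition \ref{proposition:3.4}: Hardy–Littlewood–Sobolev (Lemma \ref{Hardy_Littlewood}) gives $\|g\|_{\dot H^{-\gamma}}\lesssim\|g\|_{L^\omega}$ with $\gamma=Q(\frac1\omega-\frac12)$. Proposition \ref{proposition:3.3} then yields $P_{r_\mathrm{H}}(g)_+\lesssim\|g\|_{L^\omega}^2$ for $r_\mathrm{H}(g)=-Q(1-\frac1\omega)$. For $\omega=1$ we use Proposition \ref{proposition:3.2}, which gives $r_\mathrm{H}(g)=0=-Q(1-\frac11)$ with bound $\|g\|_{L^1}^2$. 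In both cases $2r_\mathrm{H}(g)+Q=2Q(\frac1\omega-\frac12)$, and consequently $\|g\|_{P,0}\lesssim \|g\|_{L^2}+\|g\|_{L^\omega}$.

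\textbf{Step 2 (plug in).} Inserting $2r_\mathrm{H}(g)+Q=2Q(\frac1\omega-\frac12)$ into \eqref{ineq:1.3}, the exponent becomes
$$-\tfrac{2Q(\frac1\omega-\frac12)+2\alpha-4\delta_2}{4\delta_1-4\delta_2}=-\tfrac{Q}{2\delta_1-2\delta_2}\left(\tfrac1\omega-\tfrac12\right)-\tfrac{\alpha}{2\delta_1-2\delta_2}+\tfrac{\delta_2}{\delta_1-\delta_2}.$$
This matches the first claimed bound after replacing $1+\tau$ by $1+t-s$. Similarly, \eqref{ineq:1.4} gives the exponent
$$-\tfrac{Q}{2\delta_1-2\delta_2}\left(\tfrac1\omega-\tfrac12\right)-1+\tfrac{\delta_2}{\delta_1-\delta_2},$$
which is the second claimed bound.

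\textbf{Main obstacle.} Theorem \ref{theorem:1.2} requires $-\frac Q2\le r_\mathrm{H}(u_1)-2\delta_2$, and its proof uses the strict form. The strict form is needed for Theorem \ref{theorem:3.11} in the low-frequency term when $\alpha<2\delta_2$, and for absorbing the $\mathrm{e}^{-C(\mu_k|\lambda|)^{\delta_2}t}$ term in $J_1$. This condition reads $Q(\frac1\omega-\frac12)\ge 2\delta_2$, which fails for $\omega$ near $2$ when $\delta_2>0$.

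In that range we would instead estimate the small-frequency integral directly. First we use $\left(\mu_k|\lambda|\right)^{\alpha-2\delta_2}\mathrm{e}^{-2C(\mu_k|\lambda|)^{\delta_1-\delta_2}\tau}$ together with the Hausdorff–Young/Hölder splitting from the proof of Proposition \ref{proposition:3.7}. This produces a factor $\big(\sum_k\int_{\mu_k|\lambda|<\varepsilon}(\mu_k|\lambda|)^{(\alpha-2\delta_2)q'}\mathrm{e}^{-c(\mu_k|\lambda|)^{\delta_1-\delta_2}\tau}|\lambda|^n d\lambda\big)^{1/q'}$. After the scaling $\lambda\mapsto\tau^{-1/(\delta_1-\delta_2)}\lambda$, that factor yields the same power of $1+\tau$, provided the integral converges near $0$. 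Convergence amounts to the strict positivity condition above, so we expect this to be the delicate point. If it fails, one accepts the statement under the implicit hypothesis that $Q(\frac1\omega-\frac12)>2\delta_2-\alpha$, which is how the proposition is applied later. The mid- and high-frequency pieces give $\mathrm{e}^{-C\tau}\|g\|_{L^2}$, exactly as in $I_2,I_3,J_2,J_3$.
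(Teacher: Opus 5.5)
\textbf{Same route as the paper, and your flagged obstacle is real: in fact the statement is false there.}

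Your Steps 1--2 are exactly the paper's proof. The paper calls the argument ``entirely analogous'' to Theorem \ref{theorem:1.2}, applies the resulting rates to the data $(0,g)$, reads off $r_\mathrm{H}(g)=-Q(1-\frac1\omega)$ from Propositions \ref{proposition:3.2} and \ref{proposition:3.4}, and does your exponent arithmetic. (Its proof even writes $\omega\in[1,2)$ while the statement has $[1,2]$.) It never checks the hypothesis $-\frac Q2\le r_\mathrm{H}(u_1)-2\delta_2$, i.e.\ $Q(\frac1\omega-\frac12)\ge 2\delta_2$. So the obstacle you isolate is a gap in the paper's argument too, not something you overlooked.

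\textbf{Why your repair cannot succeed.} For $0<\delta_2<\frac{\delta_1}{2}$ the estimates are false in that range.
\begin{itemize}
\item Take $\omega=2$, $\alpha=0$. Then $v(t)$ is given by the multiplier $G(t-s,\xi)$, $\xi=\mu_k|\lambda|\in(0,\infty)$, with $G''+\xi^{\delta_2}G'+\xi^{\delta_1}G=0$, $G(0)=0$, $G'(0)=1$. The explicit formula gives $G(\tau,\xi)\to\tau$ and $\partial_\tau G(\tau,\xi)\to 1$ as $\xi\to0^+$.
\item Hence the $L^2\to L^2$ norms of $g\mapsto v(t)$ and $g\mapsto\partial_t v(t)$ are $\gtrsim\tau$ and $\gtrsim 1$. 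The claimed bounds are $(1+\tau)^{\frac{\delta_2}{\delta_1-\delta_2}}$ and $(1+\tau)^{\frac{\delta_2}{\delta_1-\delta_2}-1}$, with $\frac{\delta_2}{\delta_1-\delta_2}<1$, so both fail for large $\tau$.
\item More generally, test with $g=h(R^{-1}\odot\cdot)$, where $h$ is a fixed Schwartz function with $\widehat h(\lambda)_{k,\ell}=0$ unless $\mu_k|\lambda|\in[\frac12,2]$, and choose $R^{2\delta_2}=\tau$. Then on the spectral support of $g$ one has $|G|\gtrsim\tau$ and $|\partial_\tau G|\gtrsim 1$. Also $\|g\|_{L^2}\sim R^{Q/2}$ and $\|g\|_{L^\omega}\sim R^{Q/\omega}$.
\item With $\gamma=Q(\frac1\omega-\frac12)$, the ratio of the left side to the right side of the $\dot H^\alpha$ bound is $\gtrsim\tau^{(\delta_2-\frac{\gamma+\alpha}{2})(\frac1{\delta_2}-\frac1{\delta_1-\delta_2})}$. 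This is unbounded whenever $\gamma+\alpha<2\delta_2$. For $\partial_t v$ the same holds with $\alpha=0$.
\end{itemize}
So no Hausdorff--Young refinement can help; the proposition must be restricted. It can only hold for $Q(\frac1\omega-\frac12)+\alpha\ge 2\delta_2$, and for the $\partial_tv$ bound only for $Q(\frac1\omega-\frac12)\ge 2\delta_2$. In the strict version of that range, your reduction to Theorem \ref{theorem:1.2} goes through.

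\textbf{Two further cautions.}
\begin{itemize}
\item This restriction is \emph{not} how the result is used later, contrary to your remark. The proofs of Theorems \ref{theorem:1.3} and \ref{theorem:1.4} invoke exactly the $\omega=2$ ($L^2$--$L^2$) case on $[\frac t2,t]$, including $\alpha<2\delta_2$, $\alpha=0$, and $\partial_t u^{\mathrm{non}}$.
\item To get constants uniform in $g$, which the Duhamel argument needs, do not go through the limsup $P_{r_\mathrm{H}}(g)_+$. Theorem \ref{theorem:3.7} yields constants depending on a datum-dependent radius. Instead use the bound from the proof of Proposition \ref{proposition:3.3}, which holds for every $\rho_\mathrm{H}>0$ with constant $\|g\|_{\dot H^{-\gamma}}^2$.
\end{itemize}
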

\begin{proof}[Proof of Proposition \ref{proposition:7.2}]
The proof of the above theorem is entirely analogous to that of Theorem \ref{theorem:1.2}. As a result, the decay rates for equation \eqref{eq:lemma:5.1} are obtained as follows:
\begin{align}
\label{decay_rate_v} \left\|v(t, \cdot)\right\|_{\dot{H}^\alpha \left(\mathbf{H}_n\right)} \lesssim & (1+t-s)^{-\frac{2 \min \left\{ r_\mathrm{H}\left(u_0\right), r_\mathrm{H}\left(u_1\right)-2\delta_2 \right\} +Q+2\alpha}{4\delta_1-4\delta_2}} \|g(s, \cdot)\|_{P, 0}, \\
\label{decay_rate_v_t} \left\|v_t(t, \cdot)\right\|_{L^2\left(\mathbf{H}_n\right)} \lesssim & (1+t-s)^{-\frac{2 \min \left\{ r_\mathrm{H}\left(u_0\right), r_\mathrm{H}\left(u_1\right)-2\delta_2 \right\} +Q+4\delta_1-4\delta_2}{4\delta_1-4\delta_2}} \|g(s, \cdot)\|_{P, 0}.
\end{align}
With $g(s, \eta) \in L^2\left(\mathbf{H}_n\right) \cap L^\omega \left(\mathbf{H}_n\right), \omega \in [1, 2)$, by using \eqref{decay_rate_v} and \eqref{decay_rate_v_t}, together with Examples \ref{proposition:3.2} and \ref{proposition:3.4}, we obtain the following estimates:
\begin{align*}
\left\|v(t, \cdot)\right\|_{\dot{H}^\alpha \left(\mathbf{H}_n\right)} \lesssim & (1+t-s)^{-\frac{Q}{2\delta_1-2\delta_2}\left(\frac{1}{\omega}-\frac{1}{2}\right)-\frac{\alpha}{2\delta_1-2\delta_2}+\frac{\delta_2}{\delta_1-\delta_2}}\left(\|g(s, \cdot)\|_{L^2\left(\mathbf{H}_n\right)}+\|g(s, \cdot)\|_{L^\omega\left(\mathbf{H}_n\right)}\right), \\
\left\|\partial_t v(t, \cdot)\right\|_{L^2\left(\mathbf{H}_n\right)} \lesssim & (1+t-s)^{-\frac{Q}{2\delta_1-2\delta_2}\left(\frac{1}{\omega}-\frac{1}{2}\right)+\frac{\delta_2}{\delta_1-\delta_2}-1}\left(\|g(s, \cdot)\|_{L^2\left(\mathbf{H}_n\right)}+\|g(s, \cdot)\|_{L^\omega\left(\mathbf{H}_n\right)}\right).
\end{align*}
This completes the proof.
\end{proof}

\section{global (in time) existence of solutions} \label{section_5}
We obtain the next results on the global (in time) existence of small data solutions.
\begin{theorem} \label{theorem:1.3}
Let $Q=2n+2$ be the homogeneous dimension on the Heisenberg group. Assume that $\left(u_0, u_1\right) \in D^{\delta_1, 0}_\mathrm{H} := H^{\delta_1}\left(\mathbf{H}_n\right) \times L^2\left(\mathbf{H}_n\right)$ such that $P_{r_\mathrm{H}}\left(u_0\right)_{+}, P_{r_\mathrm{H}}\left(u_1\right)_{+}<\infty$ for some $r_\mathrm{H}\left(u_0\right), r_\mathrm{H}\left(u_1\right)$ satisfying $-\frac{Q}{2} \leq \min \left\{ r_\mathrm{H}\left(u_0\right), r_\mathrm{H}\left(u_1\right)-2\delta_2 \right\} \leq -2\delta_2$. Suppose that $p$ satisfies the following conditions:
\begin{equation} \label{p_*}
p>p_{\mathrm{Fuji}}\left(\tfrac{Q-2 \omega \delta_2}{\omega \delta_1}\right):= 1+\tfrac{2 \omega \delta_1}{Q-2 \omega \delta_2}, \quad \text{ with } \omega:=\tfrac{Q}{Q+\min \left\{ r_\mathrm{H}^*\left(u_0\right), r_\mathrm{H}^*\left(u_1\right)-2\delta_2 \right\}+2\delta_2}
\end{equation}
and
\begin{equation} \label{condition_omega_p}
p \in \left[\tfrac{2}{\omega}, \infty\right) \text { if } Q \leq 2 \alpha, \text{ or } p \in\left[\tfrac{2}{\omega}, \tfrac{Q}{Q-2 \alpha}\right] \text { if } 2 \alpha <Q \leq \tfrac{4 \alpha}{2-\omega}, \text{ with } \alpha \in (0, \delta_1].
\end{equation}
Then there exists a constant $\varepsilon>0$ such that if the initial data $\left(u_0, u_1\right) \in D^{\delta_1, 0}_\mathrm{H} := H^{\delta_1}\left(\mathbf{H}_n\right) \times L^2\left(\mathbf{H}_n\right)$ with the norm $\left\|\left(u_0, u_1\right)\right\|_{D^{\delta_1, 0}_\mathrm{H}} := \left\|u_0\right\|_{P, \delta_1}+\left\|u_1\right\|_{P, 0}<\varepsilon$, the problem \eqref{eq:1.0} admits a unique global (in time) solution $u \in \mathcal{C}\left([0, \infty), H^{\delta_1}\left(\mathbf{H}_n\right) \right) \cap \mathcal{C}^1 \left([0, \infty), L^2\left(\mathbf{H}_n\right) \right)$. Moreover, we have the following decay estimates for the solution and its derivative
\begin{align} \label{ineq:theorem:1.3.1}
\|u(t, \cdot)\|_{L^2\left(\mathbf{H}_n\right)} \lesssim & \left\|\left(u_0, u_1\right)\right\|_{D^{\delta_1, 0}_\mathrm{H}} (1+t)^{-\frac{2 \min \left\{ r_\mathrm{H}\left(u_0\right), r_\mathrm{H}\left(u_1\right)-2\delta_2 \right\} +Q}{4\delta_1-4\delta_2}}, \\
\label{ineq:theorem:1.3.2} \left\| u(t, \cdot)\right\|_{\dot{H}^\alpha \left(\mathbf{H}_n\right)}\lesssim & \left\|\left(u_0, u_1\right)\right\|_{D^{\delta_1, 0}_\mathrm{H}} (1+t)^{-\frac{2 \min \left\{ r_\mathrm{H}\left(u_0\right), r_\mathrm{H}\left(u_1\right)-2\delta_2 \right\} +Q+2\alpha}{4\delta_1-4\delta_2}}, \\
\label{ineq:theorem:1.3.3} \left\|\partial_t u(t, \cdot)\right\|_{L^2\left(\mathbf{H}_n\right)} \lesssim & \left\|\left(u_0, u_1\right)\right\|_{D^{\delta_1, 0}_\mathrm{H}} \log(\mathrm{e}+t) (1+t)^{-\frac{2 \min \left\{ r_\mathrm{H}\left(u_0\right), r_\mathrm{H}\left(u_1\right)-2\delta_2 \right\} +Q+4\delta_1-4\delta_2}{4\delta_1-4\delta_2}}.
\end{align}
\end{theorem}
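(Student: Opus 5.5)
The plan is a standard contraction argument in a time-weighted energy space built from the linear estimates. Set $r:=\min\{r_\mathrm{H}(u_0),r_\mathrm{H}(u_1)-2\delta_2\}$ and define, for $T>0$,
$$X(T):=\mathcal{C}([0,T],H^{\delta_1}(\mathbf{H}_n))\cap\mathcal{C}^1([0,T],L^2(\mathbf{H}_n)),$$
with the norm
\begin{align*}
\|u\|_{X(T)}:=\sup_{0\le t\le T}\Big(&(1+t)^{\frac{2r+Q}{4\delta_1-4\delta_2}}\|u(t)\|_{L^2}+(1+t)^{\frac{2r+Q+2\delta_1}{4\delta_1-4\delta_2}}\|u(t)\|_{\dot H^{\delta_1}}\\
&+\log(\mathrm{e}+t)^{-1}(1+t)^{\frac{2r+Q+4\delta_1-4\delta_2}{4\delta_1-4\delta_2}}\|\partial_tu(t)\|_{L^2}\Big).
\end{align*}
The intermediate $\dot H^\alpha$ weights follow by interpolation. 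Let $K(t,s)$ be the propagator of \eqref{eq:lemma:5.1}, and let $u^{\rm lin}$ be the solution of \eqref{eq:1.1}. I would define
$$Nu(t):=u^{\rm lin}(t)+\int_0^tK(t,s)|u(s)|^p\,\mathrm{d}s.$$
Theorem \ref{theorem:1.2} immediately gives $\|u^{\rm lin}\|_{X(T)}\lesssim\|(u_0,u_1)\|_{D^{\delta_1,0}_\mathrm{H}}$, uniformly in $T$. It then remains to show
$$\|Nu\|_{X(T)}\lesssim\|(u_0,u_1)\|_{D^{\delta_1,0}_\mathrm{H}}+\|u\|_{X(T)}^p$$
together with the corresponding Lipschitz bound $\|Nu-Nv\|_{X(T)}\lesssim\|u-v\|_{X(T)}(\|u\|_{X(T)}^{p-1}+\|v\|_{X(T)}^{p-1})$. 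Banach's fixed point theorem for small data then gives a unique global solution, and the decay estimates \eqref{ineq:theorem:1.3.1}--\eqref{ineq:theorem:1.3.3} are just the statement $\|u\|_{X(\infty)}\lesssim\varepsilon$.

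For the Duhamel term, apply Proposition \ref{proposition:7.2} with $g=|u(s)|^p$ and the exponent $\omega$ from \eqref{p_*}. Note that $\frac{Q}{2}(\frac1\omega-\frac12)-\delta_2=\frac{2r+Q}{4}$ by the choice of $\omega$, so the kernel rate with $L^2\cap L^\omega$ data matches exactly the rate $r$ carried by the linear part. This forces $\omega\in[1,2]$, and that is precisely what the hypothesis $-\frac Q2\le r\le-2\delta_2$ guarantees. We need
$$\||u(s)|^p\|_{L^\omega}=\|u(s)\|_{L^{\omega p}}^p,\qquad \||u(s)|^p\|_{L^2}=\|u(s)\|_{L^{2p}}^p.$$
Both are controlled by the Gagliardo--Nirenberg inequality (Lemma \ref{Gagliardo}, with $r=2$ and $s=\alpha$), interpolating between $L^2$ and $\dot H^\alpha$. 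Condition \eqref{condition_omega_p} is exactly the admissibility requirement $2\le\omega p\le 2p\le\frac{2Q}{Q-2\alpha}$. The result is
$$\|u(s)\|_{L^q}^p\lesssim(1+s)^{-p\frac{2r+Q}{4\delta_1-4\delta_2}-\frac{pQ}{2\delta_1-2\delta_2}(\frac12-\frac1q)}\|u\|_{X(T)}^p.$$
The worse of the two decay rates is the one for $q=\omega p$.

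The main obstacle is the time integral
$$\int_0^t(1+t-s)^{-a}(1+s)^{-b}\,\mathrm{d}s,$$
where $a=\frac{2r+Q+2\alpha}{4\delta_1-4\delta_2}$ for the $\dot H^\alpha$ norm (with $\alpha=\delta_1$ the analogue of $a$ is $a+\frac12\cdot\frac{2\delta_1}{2\delta_1-2\delta_2}$, etc.) and $b=\frac{pQ}{\omega(2\delta_1-2\delta_2)}-\frac{Q}{2\delta_1-2\delta_2}\cdot\frac{2\delta_2\cdot}{\,}\!$ (after substitution). Written out, $b$ simplifies to
$$b=\frac{p(Q-2\omega\delta_2)}{2\omega(\delta_1-\delta_2)}+\frac{\delta_2}{\delta_1-\delta_2}\cdot(\ldots).$$
The key claim to verify is that $b>1$ if and only if $p>p_{\mathrm{Fuji}}\big(\frac{Q-2\omega\delta_2}{\omega\delta_1}\big)$; I expect this equivalence to come out after rearranging. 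I would split the integral at $s=t/2$. On $[0,t/2]$ the factor $(1+t-s)^{-a}\approx(1+t)^{-a}$ and $\int(1+s)^{-b}$ converges because $b>1$. On $[t/2,t]$ I would use $(1+s)^{-b}\approx(1+t)^{-b}$ with $b>a$, which should follow from \eqref{p_*}. For $\partial_tu$ the kernel exponent is $a'=a+1$ in the corresponding sense, but in the borderline cases the kernel exponent equals $1$. That produces the $\log(\mathrm{e}+t)$ factor, which is exactly why it appears in \eqref{ineq:theorem:1.3.3}; I expect this bookkeeping to be the most delicate step. The Lipschitz estimate follows the same way from $||u|^p-|v|^p|\lesssim|u-v|(|u|^{p-1}+|v|^{p-1})$, combined with Hölder and the same Gagliardo--Nirenberg bounds.
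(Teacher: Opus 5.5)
Your linear part, your choice of $\omega$, your Gagliardo--Nirenberg bounds and your treatment of $[0,t/2]$ coincide with the paper's. Your key claim is also correct. With $r=\min\{r_\mathrm{H}(u_0),r_\mathrm{H}(u_1)-2\delta_2\}$, the decay exponent of $\|u(s)\|_{L^{\omega p}}^p$ is $b=\frac{(Q+r)(p-1)-2\delta_2}{2(\delta_1-\delta_2)}$, and $b>1$ is exactly $(Q+r)(p-1)>2\delta_1$, i.e.\ $p>p_{\mathrm{Fuji}}$.

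The gap is on $[t/2,t]$, where you keep the $(L^2\cap L^\omega)$--$L^2$ kernel $(1+t-s)^{-a}$ together with the $L^{\omega p}$ rate. This gives $(1+t)^{-b}\int_{t/2}^t(1+t-s)^{-a}\,\mathrm{d}s$. That is fine when $a<1$, but when $a>1$ it requires $b\ge a$, and this does not follow from \eqref{p_*}.

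The problem is sharpest for the $\partial_tu$ component. There the kernel exponent is $a'=\frac{2r+Q}{4(\delta_1-\delta_2)}+1\ge1$ always. The condition $b\ge a'$ is equivalent to $(Q+r)(p-1)\ge2\delta_1+r+\frac{Q}{2}$, which is strictly stronger than the Fujita condition whenever $r>-\frac{Q}{2}$.

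Concretely, take $Q=4$, $\delta_1=\alpha=1$, $\delta_2=0$, $r=-\frac34$, $p=1.65$. This satisfies every hypothesis: $\omega=\frac{16}{13}$, $p_{\mathrm{Fuji}}=\frac{21}{13}$, $\frac{2}{\omega}=\frac{13}{8}\le p\le 2=\frac{Q}{Q-2\alpha}$, and $Q\le\frac{4\alpha}{2-\omega}=\frac{26}{5}$. Yet $b\approx1.056<a=1.125<a'=1.625$. So neither the $\dot H^{\alpha}$ part nor the $\partial_tu$ part of $\|Nu\|_{X(T)}$ closes with your bookkeeping.

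The missing idea is to use a different pair of estimates on $[t/2,t]$, not just to split there; this is what the paper does. On $[t/2,t]$ the kernel need not decay fast, so the paper applies the $L^2$--$L^2$ estimate of Proposition \ref{proposition:7.2}. Its kernel exponent is $k=\frac{\alpha-2\delta_2}{2(\delta_1-\delta_2)}<1$ for $\dot H^\alpha$ and $k=\frac{\delta_1-2\delta_2}{\delta_1-\delta_2}\le 1$ for $\partial_t$. This is paired with the faster bound $\|u(s)\|_{L^{2p}}^p\lesssim(1+s)^{-\frac{(Q+r)p}{2(\delta_1-\delta_2)}+\frac{Q}{4(\delta_1-\delta_2)}}\|u\|_{X(T)}^p$, which you discarded as the better of the two rates.

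Then $\int_{t/2}^t(1+t-s)^{-k}\,\mathrm{d}s\lesssim(1+t)^{1-k}$, with a factor $\log(\mathrm{e}+t)$ when $k=1$. The resulting power matches the target rate precisely when $(Q+r)(p-1)\ge2\delta_1$, i.e.\ under \eqref{p_*}. This is also where the $\log(\mathrm{e}+t)$ in \eqref{ineq:theorem:1.3.3} really comes from: $k=1$ for $\partial_t$ when $\delta_2=0$. It does not come from a borderline value of your $(L^2\cap L^\omega)$ kernel exponent, which equals $1$ only when $r=-\frac{Q}{2}$.
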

Note that if $\min \left\{ r_\mathrm{H}\left(u_0\right), r_\mathrm{H}\left(u_1\right)-2\delta_2 \right\} = -2\delta_2$ in Theorem \ref{theorem:1.3}, then $\omega=1$, which implies that the exponent $p_{\mathrm{Fuji}}\left(\frac{Q-2 \omega \delta_2}{\omega \delta_1}\right):=1+\frac{2 \omega \delta_1}{Q-2 \omega \delta_2}$ does not satisfy the condition \eqref{condition_omega_p}. 

An example where $\min \left\{ r_\mathrm{H}\left(u_0\right), r_\mathrm{H}\left(u_1\right)-2\delta_2 \right\} = -2\delta_2$ can be observed when the initial data $(u_0, u_1) \in \left(H^{\delta_1}\left(\mathbf{H}_n\right) \cap L^1\left(\mathbf{H}_n\right)\right) \times \left(L^2\left(\mathbf{H}_n\right) \cap L^1\left(\mathbf{H}_n\right)\right)$ and let $r_\mathrm{H}\left(u_0\right) = \delta_1, r_\mathrm{H}\left(u_1\right) = 0$ such that $P_{r_\mathrm{H}}\left(u_0\right)_{+}, P_{r_\mathrm{H}}\left(u_1\right)_{+}<\infty$ , as discussed in Proposition \ref{proposition:3.2}.

However, if $\min \left\{ r_\mathrm{H}\left(u_0\right), r_\mathrm{H}\left(u_1\right)-2\delta_2 \right\} < -2\delta_2$, we do not yet have sufficient grounds to determine whether the exponent $p_{\mathrm{Fuji}}\left(\frac{Q-2 \omega \delta_2}{\omega \delta_1}\right):=1+\frac{2 \omega \delta_1}{Q-2 \omega \delta_2}$ satisfies the condition of Theorem \ref{theorem:1.3}. This suggests that, for $p_{\mathrm{Fuji}}\left(\frac{Q-2 \omega \delta_2}{\omega \delta_1}\right):=1+\frac{2 \omega \delta_1}{Q-2 \omega \delta_2}$, when $\min \left\{ r_\mathrm{H}\left(u_0\right), r_\mathrm{H}\left(u_1\right)-2\delta_2 \right\} = -2\delta_2$, the Cauchy problem \eqref{eq:1.0} may not have a global solution (in time), while for $\min \left\{ r_\mathrm{H}\left(u_0\right), r_\mathrm{H}\left(u_1\right)-2\delta_2 \right\} < -2\delta_2$, the Cauchy problem \eqref{eq:1.0} might have a global (in time) solution.
\begin{theorem} \label{theorem:1.4}
Assume that $u_0, u_1$ satisfy the hypotheses in Theorem \ref{theorem:1.3} together with the condition $-\frac{Q}{2} \leq \min \left\{ r_\mathrm{H}\left(u_0\right), r_\mathrm{H}\left(u_1\right)-2\delta_2 \right\} < -2\delta_2$. Suppose that $p$ satisfies the following condition:
\begin{equation} \label{p_*_1}
p:=p_{\mathrm{Fuji}}\left(\tfrac{Q-2 \omega \delta_2}{\omega \delta_1}\right):=1+\tfrac{2 \omega \delta_1}{Q-2 \omega \delta_2}, \quad \text{ with } \omega:=\tfrac{Q}{Q+\min \left\{ r_\mathrm{H}\left(u_0\right), r_\mathrm{H}\left(u_1\right)-2\delta_2 \right\}+2\delta_2}
\end{equation}
and
\begin{equation} \label{condition_omega_p_crit}
p \in \left(\tfrac{2}{\omega}, \infty\right) \text { if } Q \leq 2 \alpha, \text{ or } p \in\left(\tfrac{2}{\omega}, \tfrac{Q}{Q-2 \alpha}\right] \text { if } 2 \alpha <Q \leq \tfrac{4 \alpha}{2-\omega}, \text{ with } \alpha \in (0, \delta_1].
\end{equation}
Then there exists a constant $\varepsilon>0$ such that if the initial data $(u_0, u_1) \in D^{\delta_1, 0}_\mathrm{H}:=H^{\delta_1}\left(\mathbf{H}_n\right) \times L^2\left(\mathbf{H}_n\right)$ with the norm $\left\|(u_0, u_1)\right\|_{D^{\delta_1, 0}_\mathrm{H}}:=\left\|u_0\right\|_{P, \delta_1}+\left\|u_1\right\|_{P, 0}<\varepsilon$, the problem \eqref{eq:1.0} admits a unique global (in time) solution $u \in \mathcal{C}\left([0, \infty), H^{\delta_1}\left(\mathbf{H}_n\right) \right) \cap \mathcal{C}^1 \left([0, \infty), L^2\left(\mathbf{H}_n\right) \right)$. Moreover, we have the decay estimates for the solution and its time derivative as in \eqref{ineq:theorem:1.3.1}, \eqref{ineq:theorem:1.3.2} and \eqref{ineq:theorem:1.3.3}.
\end{theorem}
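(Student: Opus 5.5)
We argue by the standard Banach fixed point scheme in a weighted-in-time energy space, reusing the machinery behind Theorem \ref{theorem:1.3}. Write $r:=\min\{r_\mathrm{H}(u_0),r_\mathrm{H}(u_1)-2\delta_2\}$, so that $\omega=\frac{Q}{Q+r+2\delta_2}>1$ because $r<-2\delta_2$. Let $K(t,s)$ denote the solution operator of \eqref{eq:lemma:5.1}. Then Duhamel's principle writes a solution of \eqref{eq:1.0} as
$$u=u^{\mathrm{lin}}+\int_0^t K(t,s)\,|u(s,\cdot)|^p\,\mathrm{d}s,$$
where $u^{\mathrm{lin}}$ solves \eqref{eq:1.1}.

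We work in the space $X(T)$ of functions in $\mathcal{C}([0,T],H^{\delta_1})\cap\mathcal{C}^1([0,T],L^2)$. Its norm is the supremum over $t$ of the three rescaled quantities appearing in \eqref{ineq:theorem:1.3.1}--\eqref{ineq:theorem:1.3.3}: the $L^2$ norm weighted by $(1+t)^{\frac{2r+Q}{4\delta_1-4\delta_2}}$, the $\dot H^{\alpha}$ norm with its rate, and $\|\partial_t u\|_{L^2}$ weighted by $(1+t)^{\frac{2r+Q+4\delta_1-4\delta_2}{4\delta_1-4\delta_2}}/\log(\mathrm{e}+t)$. By Theorem \ref{theorem:1.2}, $\|u^{\mathrm{lin}}\|_{X(T)}\lesssim\|(u_0,u_1)\|_{D^{\delta_1,0}_\mathrm{H}}$. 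It then remains to prove the two estimates
$$\|Nu\|_{X(T)}\lesssim\|u\|_{X(T)}^p,\qquad \|Nu-Nv\|_{X(T)}\lesssim\|u-v\|_{X(T)}\big(\|u\|_{X(T)}^{p-1}+\|v\|_{X(T)}^{p-1}\big),$$
uniformly in $T$, where $N$ is the Duhamel term. Smallness of the data then gives a contraction and global existence.

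For the nonlinear estimate, split $[0,t]$ into $[0,t/2]$ and $[t/2,t]$. On $[0,t/2]$ we apply Proposition \ref{proposition:7.2} with $L^\omega\cap L^2$ data, using $\omega$ as defined in \eqref{p_*_1}. On $[t/2,t]$ we apply it with $L^2$ data only, i.e.\ exponent $2$ in place of $\omega$. The nonlinearity is handled by
$$\||u|^p\|_{L^\omega}=\|u\|_{L^{\omega p}}^p,\qquad \||u|^p\|_{L^2}=\|u\|_{L^{2p}}^p.$$
Each of these is bounded through Lemma \ref{Gagliardo} by interpolating between $\|u\|_{L^2}$ and $\|u\|_{\dot H^\alpha}$. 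Condition \eqref{condition_omega_p_crit} is exactly what makes $\omega p,2p\in[2,\frac{2Q}{Q-2\alpha}]$ admissible. Substituting the decay rates from $X(T)$, the integrand on $[0,t/2]$ behaves like $(1+s)^{-\kappa}$, where
$$\kappa=p\,\frac{2r+Q}{4\delta_1-4\delta_2}+\frac{Q}{2\delta_1-2\delta_2}\Big(1-\frac1{\omega}\Big)\cdot(\text{GN correction}).$$
A direct computation shows that $p=p_{\mathrm{Fuji}}$ is equivalent to $\kappa=1$. Hence the $s$-integral over $[0,t/2]$ produces a factor $\log(\mathrm{e}+t)$, while the kernel contributes $(1+t)^{-\text{linear rate}}$.

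\textbf{Main obstacle.} The delicate point is absorbing this logarithm in the $L^2$ and $\dot H^\alpha$ components, since the norm carries a log weight only in the $\partial_t u$ component. Here the strict inequality $r<-2\delta_2$, equivalently $\omega>1$, is used. Because Proposition \ref{proposition:7.2} is applied with data in $L^{\omega}$ and $\omega>1$ is strictly larger than the threshold value $1$, the kernel decay on $[0,t/2]$ gains a positive power $(1+t)^{-\epsilon}$ relative to the linear rate dictated by $r$. We would choose $\omega'$ slightly below $\omega$ but still greater than $1$, and use $L^{\omega'}$ for the data term, accepting the resulting loss. This leaves a small surplus power that dominates $\log(\mathrm{e}+t)$, and the surplus is exactly what vanishes when $\omega=1$, consistent with the blow-up expected in that case.

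The same comparison shows that a genuine log loss survives only in the $\partial_t u$ estimate. There the kernel rate of Proposition \ref{proposition:7.2} coincides with the target rate, which explains the $\log(\mathrm{e}+t)$ in \eqref{ineq:theorem:1.3.3}. On $[t/2,t]$ the factor $(1+s)^{-p\cdot(\cdots)}$ is comparable to $(1+t)^{-p\cdot(\cdots)}$, and integrating the kernel gives at most $(1+t)$. Since $p>2/\omega$ strictly, the resulting power is no worse than the target.

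The Lipschitz estimate follows the same lines after applying
$$\big||u|^p-|v|^p\big|\lesssim|u-v|\big(|u|^{p-1}+|v|^{p-1}\big),$$
followed by Hölder's inequality and Lemma \ref{Gagliardo}. Uniqueness and the stated regularity come from the contraction, and the decay estimates are read off from $\|u\|_{X(\infty)}\lesssim\|(u_0,u_1)\|_{D^{\delta_1,0}_\mathrm{H}}$.
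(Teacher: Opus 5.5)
Your scheme is the paper's: the same space $X_\mathrm{H}(T)$, the same splitting of the Duhamel integral, Proposition \ref{proposition:7.2} combined with Lemma \ref{Gagliardo}, and the same decisive device of measuring $|u|^p$ in $L^{\omega'}$ with $\omega'<\omega$ on $[0,t/2]$. This is the paper's $\omega_1$, chosen with $2/\omega<2/\omega_1<p$. The gap is in your explanation of why this device removes the logarithm, and that explanation is the entire content of the critical case.

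First, $\omega>1$ gives no gain by itself. With $L^\omega$ data the kernel rate of Proposition \ref{proposition:7.2} equals the target rate exactly. Indeed $Q/\omega=Q+r+2\delta_2$ with your $r$, so
\[
\tfrac{Q}{2\delta_1-2\delta_2}\left(\tfrac{1}{\omega}-\tfrac12\right)+\tfrac{\alpha}{2\delta_1-2\delta_2}-\tfrac{\delta_2}{\delta_1-\delta_2}=\tfrac{2r+Q+2\alpha}{4\delta_1-4\delta_2},
\]
and the same identity holds for $\partial_t v$.

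Second, after passing to $\omega'<\omega$ there is no surplus left over to beat a logarithm. The kernel gains $(1+t)^{-\epsilon}$ with $\epsilon=\tfrac{Q}{2\delta_1-2\delta_2}\bigl(\tfrac{1}{\omega'}-\tfrac{1}{\omega}\bigr)$. But at $p=p_{\mathrm{Fuji}}$ Gagliardo--Nirenberg gives only $\|u(s,\cdot)\|_{L^{\omega'p}}^p\lesssim(1+s)^{-1+\epsilon}\|u\|_{X_\mathrm{H}(T)}^p$, so the integrand loses exactly the same $\epsilon$. The correct mechanism is an exact cancellation: $\int_0^{t/2}(1+s)^{-1+\epsilon}\,\mathrm{d}s\le\epsilon^{-1}(1+t)^{\epsilon}$, and the product is precisely the target rate, with constant $\epsilon^{-1}$ and no logarithm.

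This also shows what the two strict hypotheses are for. Lemma \ref{Gagliardo} needs $\omega'p\ge 2$, so an admissible $\omega'\in[\max\{1,2/p\},\omega)$ exists exactly when $\omega>1$ and $p>2/\omega$. The paper's own stated reason, that the $s$-exponent becomes $<-1$, is also wrong for $\omega_1<\omega$, since it equals $-1+\epsilon$. Its chain of inequalities is rescued only by this same cancellation.

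Your treatment of $[t/2,t]$ and of the $\partial_t u$ logarithm is also misattributed. On $[t/2,t]$ there is no slack, and $p>2/\omega$ plays no role there. At $p=p_{\mathrm{Fuji}}$ one has $\||u(s,\cdot)|^p\|_{L^2}\lesssim(1+t)^{-\frac{2r+Q+4\delta_1}{4\delta_1-4\delta_2}}$ on that interval. You must integrate the actual kernel, $\int_{t/2}^t(1+t-s)^{-\frac{\alpha-2\delta_2}{2\delta_1-2\delta_2}}\,\mathrm{d}s\simeq(1+t)^{1-\frac{\alpha-2\delta_2}{2\delta_1-2\delta_2}}$, to land exactly on the target. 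The crude bound ``at most $(1+t)$'' loses a power when $\alpha>2\delta_2$, and it is false when $\alpha<2\delta_2$, where this kernel grows.

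Finally, the $L^\omega$ kernel rate matches the target in all three components. So the $\omega'$ device removes the $[0,t/2]$ logarithm in the $\partial_t u$ component as well. A logarithm there can only come from $[t/2,t]$: for $\delta_2=0$ the $L^2$--$L^2$ kernel for $\partial_t v$ is $(1+t-s)^{-1}$. The norm of $X_\mathrm{H}(T)$ tolerates that logarithm, so this does not affect the result.
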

\begin{remark}
\rm{
In Theorems \ref{theorem:1.3} and \ref{theorem:1.4} above, we have presented a method that allows one to compute the exact value of $p_{\mathrm{Fuji}}\left(\frac{Q-2 \omega \delta_2}{\omega \delta_1}\right):=1+\frac{2 \omega \delta_1}{Q-2 \omega \delta_2}$ when the initial data are restricted to specific function spaces. If in Theorems \ref{theorem:1.3} and \ref{theorem:1.4}, we do not use the decay rates of solutions with respect to $r_\mathrm{H}\left(u_0\right)$ and $r_\mathrm{H}\left(u_1\right)$ as in Theorem \ref{theorem:1.2}, but instead use the decay rates with respect to $r_\mathrm{H}^*\left(u_0\right)$ and $r_\mathrm{H}^*\left(u_1\right)$, then we obtain the critical exponent $p_{\mathrm{Fuji}}\left(\frac{Q-2 \omega \delta_2}{\omega \delta_1}\right):=1+\frac{2 \omega \delta_1}{Q-2 \omega \delta_2}$ with $\omega:=\frac{Q}{Q+\min \left\{ r_\mathrm{H}^*\left(u_0\right), r_\mathrm{H}^*\left(u_1\right)-2\delta_2 \right\}+2\delta_2}$. Note that in this case, $\omega$ depends on $r_\mathrm{H}^*\left(u_0\right)$ and $r_\mathrm{H}^*\left(u_1\right)$. Consequently, computing the exact value of the exponent $p_{\mathrm{Fuji}}\left(\frac{Q-2 \omega \delta_2}{\omega \delta_1}\right):=1+\frac{2 \omega \delta_1}{Q-2 \omega \delta_2}$ becomes very difficult when the initial data are restricted to different function spaces, since only lower bounds for $r_\mathrm{H}^*\left(u_0\right)$ and $r_\mathrm{H}^*\left(u_1\right)$ are available (see Section \ref{section_3.4}).
} 
\end{remark}
\subsection*{Outlines of the proofs of Theorems \ref{theorem:1.3} and \ref{theorem:1.4}}
Since $E_0(t, 0, \eta)$ and $E_1(t, 0, \eta)$, represent the fundamental solutions of \eqref{eq:1.1}, the function $u^{\operatorname{lin}} := u_0(\eta)*_{(\eta)}E_0(t, 0, \eta)+u_1(\eta)*_{(\eta)}E_1(t, 0, \eta)$ solves problem
\eqref{eq:1.1}. Given $T>0$, we introduce the operator $N: u \in X_\mathrm{H}(T) \rightarrow N u=N u(t, \eta) := u^{\mathrm{lin}}(t, \eta)+u^{\mathrm{non}}(t, \eta)$, where $X_\mathrm{H}(T)$ denotes the evolution space to be defined and $u^{\mathrm{non}}(t, \eta)$ is defined through the integral operator: $u^{\mathrm{non}}(t, \eta) := \int_0^t |u(s, \eta)|^p *_{(\eta)} E_1(t, s, \eta) \mathrm{d} s$. The global (in time) solution to the semilinear problem \eqref{eq:1.0} is identified as a fixed point of the operator $N$. To establish the existence of such a fixed point, we will verify that the mapping $N$ satisfies the following estimates:
\begin{align} \label{ineq:5.1}
\|N u\|_{X_\mathrm{H}(T)} \lesssim & \left\|\left(u_0, u_1\right)\right\|_{D^{\delta_1, 0}_\mathrm{H}}+\|u\|_{X_\mathrm{H}(T)}^p, \\
\label{ineq:5.2}
\|N u-N v\|_{X_\mathrm{H}(T)} \lesssim & \|u-v\|_{X_\mathrm{H}(T)}\left(\|u\|_{X_\mathrm{H}(T)}^{p-1}+\|v\|_{X_\mathrm{H}(T)}^{p-1}\right),
\end{align}
where $D^{\delta_1, 0}_\mathrm{H}$  is specified in the statements of Theorems \ref{theorem:1.3} and \ref{theorem:1.4}. Assuming $\left\|\left(u_0, u_1\right)\right\|_{D^{\delta_1, 0}_\mathrm{H}}:=\varepsilon$ is sufficiently small, estimates \eqref{ineq:5.1} and \eqref{ineq:5.2} yield, via a standard argument, the existence of a unique local (in time) solution for large data and a unique global (in time) solution for small data in $X_\mathrm{H}(T)$.
\begin{proof}[Proof of Theorem \ref{theorem:1.3}]
We define the solutions space $X_\mathrm{H}(T)$ by
$$
X_\mathrm{H}(T) := \mathcal{C}\left([0, T], H^{\delta_1}\left(\mathbf{H}_n\right)\right) \cap \mathcal{C}^1 \left([0, T], L^2\left(\mathbf{H}_n\right) \right),
$$
with its corresponding norm
\begin{align*}
\|u\|_{X_\mathrm{H}(T)} := \sup _{0 \leq t \leq T}&\left[(1+t)^{\frac{2 \min \left\{ r_\mathrm{H}\left(u_0\right), r_\mathrm{H}\left(u_1\right)-2\delta_2 \right\} +Q}{4\delta_1-4\delta_2}} \|u(t, \cdot)\|_{L^2\left(\mathbf{H}_n\right)} \right.\\
&\left.
+(1+t)^{\frac{2 \min \left\{ r_\mathrm{H}\left(u_0\right), r_\mathrm{H}\left(u_1\right)-2\delta_2 \right\} +Q+2\alpha}{4\delta_1-4\delta_2}} \left\| u(t, \cdot)\right\|_{\dot{H}^\alpha \left(\mathbf{H}_n\right)} \right.\\
&\left.
+\log(\mathrm{e}+t)^{-1}(1+t)^{\frac{2 \min \left\{ r_\mathrm{H}\left(u_0\right), r_\mathrm{H}\left(u_1\right)-2\delta_2 \right\} +Q+4\delta_1-4\delta_2}{4\delta_1-4\delta_2}} \|\partial_t u(t, \cdot)\|_{L^2\left(\mathbf{H}_n\right)} \right].
\end{align*}
With $\omega:=\frac{Q}{Q+\min \left\{ r_\mathrm{H}\left(u_0\right), r_\mathrm{H}\left(u_1\right)-2\delta_2 \right\}+2\delta_2}$, it is easy to see that $1 \leq \omega < 2$. The application of Gagliardo-Nirenberg inequality with interpolation exponents $\theta_1(2 p) := \frac{Q(p-1)}{2 \alpha p}, \, \theta_2(\omega p) := \frac{Q(\omega p-2)}{2 \omega \alpha p}$ from Lemma \ref{Gagliardo} and the definition of the evolution space leads to
\begin{align} \label{ineq:5.3}
\left\| \left| u(\tau, \cdot)\right|^p\right\|_{L^2\left(\mathbf{H}_n\right)}=\left\| u(\tau, \cdot)\right\|_{L^{2 p}\left(\mathbf{H}_n\right)}^p \lesssim & (1+\tau)^{-\frac{Q +\min \left\{r_\mathrm{H}\left(u_0\right), r_\mathrm{H}\left(u_1\right)-2\delta_2 \right\}}{2\delta_1-2\delta_2}p+\frac{Q}{4\delta_1-4\delta_2}}\|u\|_{X_\mathrm{H}(T)}^p, \\
\label{ineq:5.4}
\left\| \left| u(\tau, \cdot)\right|^p\right\|_{L^\omega\left(\mathbf{H}_n\right) \cap L^2\left(\mathbf{H}_n\right)}=&\left\| u(\tau, \cdot)\right\|_{L^{\omega p}\left(\mathbf{H}_n\right)}^p + \left\| u(\tau, \cdot)\right\|_{L^{2 p}\left(\mathbf{H}_n\right)}^p \notag \\
\lesssim &(1+\tau)^{-\frac{Q +\min \left\{r_\mathrm{H}\left(u_0\right), r_\mathrm{H}\left(u_1\right)-2\delta_2 \right\}}{2\delta_1-2\delta_2}p+\frac{Q}{2 \omega\left(\delta_1-\delta_2\right)}}\|u\|_{X_\mathrm{H}(T)}^p,
\end{align}
provided that $p \in\left[\frac{2}{\omega}, \infty\right) \text { if } Q \leq 2 \alpha, \text { or } p \in\left[\frac{2}{\omega}, \frac{Q}{Q-2 \alpha}\right] \text { if } 2\alpha < Q \leq \frac{4 \alpha}{2-\omega}$. First, we will prove the inequality \eqref{ineq:5.1}. The estimates for solutions to \eqref{eq:1.1}, from Theorem \ref{theorem:1.2}, imply
\begin{align*}
\left\|u^{\mathrm{lin}}(t, \cdot)\right\|_{L^2\left(\mathbf{H}_n\right)} \lesssim &(1+t)^{-\frac{2 \min \left\{ r_\mathrm{H}\left(u_0\right), r_\mathrm{H}\left(u_1\right)-2\delta_2 \right\} +Q}{4\delta_1-4\delta_2}}\left\|\left(u_0, u_1\right)\right\|_{D^{\delta_1, 0}_\mathrm{H}}, \\
\left\|u^{\mathrm{lin}}(t, \cdot)\right\|_{\dot{H}^\alpha \left(\mathbf{H}_n\right)} \lesssim &(1+t)^{-\frac{2 \min \left\{ r_\mathrm{H}\left(u_0\right), r_\mathrm{H}\left(u_1\right)-2\delta_2 \right\} +Q+2\alpha}{4\delta_1-4\delta_2}}\left\|\left(u_0, u_1\right)\right\|_{D^{\delta_1, 0}_\mathrm{H}}, \\
\left\|\partial_t u^{\mathrm{lin}}(t,\cdot)\right\|_{L^2\left(\mathbf{H}_n\right)} \lesssim & \log(\mathrm{e}+t) (1+t)^{-\frac{2 \min \left\{ r_\mathrm{H}\left(u_0\right), r_\mathrm{H}\left(u_1\right)-2\delta_2 \right\} +Q+4\delta_1-4\delta_2}{4\delta_1-4\delta_2}} \left\|\left(u_0, u_1\right)\right\|_{D^{\delta_1, 0}_\mathrm{H}}.
\end{align*}
This means that the linear part fulfills
\begin{align*}
&(1+t)^{\frac{2 \min \left\{ r_\mathrm{H}\left(u_0\right), r_\mathrm{H}\left(u_1\right)-2\delta_2 \right\} +Q}{4\delta_1-4\delta_2}}\left\|u^{\mathrm{lin}}(t, \cdot)\right\|_{L^2\left(\mathbf{H}_n\right)} \\
&+(1+t)^{\frac{2 \min \left\{ r_\mathrm{H}\left(u_0\right), r_\mathrm{H}\left(u_1\right)-2\delta_2 \right\} +Q+2\alpha}{4\delta_1-4\delta_2}}\left\|u^{\mathrm{lin}}(t, \cdot)\right\|_{\dot{H}^\alpha \left(\mathbf{H}_n\right)} \\
&+ \log(\mathrm{e}+t)^{-1}(1+t)^{\frac{2 \min \left\{ r_\mathrm{H}\left(u_0\right), r_\mathrm{H}\left(u_1\right)-2\delta_2 \right\} +Q+4\delta_1-4\delta_2}{4\delta_1-4\delta_2}} \left\|\partial_t u^{\mathrm{lin}}(t,\cdot)\right\|_{L^2\left(\mathbf{H}_n\right)} \lesssim \left\|\left(u_0, u_1\right)\right\|_{D^{\delta_1, 0}_\mathrm{H}}.
\end{align*}
From the above we see that $u^{\mathrm{lin}} \in X_\mathrm{H}(T)$. It remains to prove that $\left\|u^{\mathrm{non}}\right\|_{X_\mathrm{H}(T)} \lesssim\|u\|_{X_\mathrm{H}(T)}^p$. We may estimate the $\dot{H}^\alpha \left(\mathbf{H}_n\right)$ norm of $u^{\mathrm{non}}(t, \cdot)$ by applying $\left(L^2\left(\mathbf{H}_n\right) \cap L^\omega\left(\mathbf{H}_n\right)\right)-L^2\left(\mathbf{H}_n\right)$ estimates in $\left[0, \frac{t}{2}\right]$ and $L^2\left(\mathbf{H}_n\right)-L^2\left(\mathbf{H}_n\right)$ estimates in $\left[\frac{t}{2}, t\right]$ from Proposition \ref{proposition:7.2} as follows:
\begin{align*}
\left\|u^{\mathrm{non}}(t, \cdot)\right\|_{\dot{H}^\alpha \left(\mathbf{H}_n\right)} \leq & \int_0^{\frac{t}{2}} (1+t-s)^{-\frac{Q}{2\delta_1-2\delta_2}\left(\frac{1}{\omega}-\frac{1}{2}\right)-\frac{\alpha}{2\delta_1-2\delta_2}+\frac{\delta_2}{\delta_1-\delta_2}}\left\| \left| u(s, \cdot)\right|^p\right\|_{L^{\omega}\left(\mathbf{H}_n\right) \cap L^2\left(\mathbf{H}_n\right)} \mathrm{d} s \\
&+ \int_{\frac{t}{2}}^t (1+t-s)^{-\frac{\alpha}{2\delta_1-2\delta_2}+\frac{\delta_2}{\delta_1-\delta_2}}\left\| \left| u(s, \cdot)\right|^p\right\|_{L^2\left(\mathbf{H}_n\right)} \mathrm{d} s.
\end{align*}
On the one hand, the first integral in the right hand part of the last inequality can be estimated as
\begin{align*}
& \int_0^{\frac{t}{2}} (1+t-s)^{-\frac{Q}{2\delta_1-2\delta_2}\left(\frac{1}{\omega}-\frac{1}{2}\right)-\frac{\alpha}{2\delta_1-2\delta_2}+\frac{\delta_2}{\delta_1-\delta_2}}\left\| \left| u(s, \cdot)\right|^p\right\|_{L^{\omega}\left(\mathbf{H}_n\right) \cap L^2\left(\mathbf{H}_n\right)} \mathrm{d} s \\
\lesssim & \|u\|_{X_\mathrm{H}(T)}^p(1+t)^{-\frac{Q}{2\delta_1-2\delta_2}\left(\frac{1}{\omega}-\frac{1}{2}\right)-\frac{\alpha}{2\delta_1-2\delta_2}+\frac{\delta_2}{\delta_1-\delta_2}} \int_0^{\frac{t}{2}} (1+s)^{-\frac{Q +\min \left\{r_\mathrm{H}\left(u_0\right), r_\mathrm{H}\left(u_1\right)-2\delta_2 \right\}}{2\delta_1-2\delta_2}p+\frac{Q}{2 \omega\left(\delta_1-\delta_2\right)}} \mathrm{d} s \\
\lesssim & \|u\|_{X_\mathrm{H}(T)}^p(1+t)^{-\frac{Q}{2\delta_1-2\delta_2}\left(\frac{1}{\omega}-\frac{1}{2}\right)-\frac{\alpha}{2\delta_1-2\delta_2}+\frac{\delta_2}{\delta_1-\delta_2}} = \|u\|_{X_\mathrm{H}(T)}^p(1+t)^{-\frac{2 \min \left\{ r_\mathrm{H}\left(u_0\right), r_\mathrm{H}\left(u_1\right)-2\delta_2 \right\} +Q+2\alpha}{4\delta_1-4\delta_2}},
\end{align*}
by \eqref{ineq:5.4}, $\|\cdot\|_{X_\mathrm{H}(\tau)} \lesssim\|\cdot\|_{X_\mathrm{H}(T)}$ for $0 \leq \tau \leq T$ and the definition of $\omega$. Since $p>p_{\mathrm{Fuji}}\left(\frac{Q-2 \omega \delta_2}{\omega \delta_1}\right)$, it follows that $-\frac{Q +\min \left\{r_\mathrm{H}\left(u_0\right), r_\mathrm{H}\left(u_1\right)-2\delta_2 \right\}}{2\delta_1-2\delta_2}p+\frac{Q}{2 \omega\left(\delta_1-\delta_2\right)} < -1$. On the other hand, for the second integral, using inequality \eqref{ineq:5.3}, $p>p_{\mathrm{Fuji}}\left(\frac{Q-2 \omega \delta_2}{\omega \delta_1}\right)$ and the definition of $\omega$, we have
\begin{align*}
& \int_{\frac{t}{2}}^t (1+t-s)^{-\frac{\alpha}{2\delta_1-2\delta_2}+\frac{\delta_2}{\delta_1-\delta_2}}\left\| \left| u(s, \cdot)\right|^p\right\|_{L^2\left(\mathbf{H}_n\right)} \mathrm{d} s \\
\lesssim & \|u\|_{X_\mathrm{H}(T)}^p(1+t)^{-\frac{Q +\min \left\{r_\mathrm{H}\left(u_0\right), r_\mathrm{H}\left(u_1\right)-2\delta_2 \right\}}{2\delta_1-2\delta_2}p+\frac{Q}{4\delta_1-4\delta_2}} \int_{\frac{t}{2}}^t (1+t-s)^{-\frac{\alpha}{2\delta_1-2\delta_2}+\frac{\delta_2}{\delta_1-\delta_2}} \mathrm{d} s \\ \lesssim & \|u\|_{X_\mathrm{H}(T)}^p(1+t)^{-\frac{Q +\min \left\{r_\mathrm{H}\left(u_0\right), r_\mathrm{H}\left(u_1\right)-2\delta_2 \right\}}{2\delta_1-2\delta_2}p+\frac{Q}{4\delta_1-4\delta_2}} (1+t)^{1-\frac{\alpha}{2\delta_1-2\delta_2}+\frac{\delta_2}{\delta_1-\delta_2}} \\
\lesssim & \|u\|_{X_\mathrm{H}(T)}^p(1+t)^{-\frac{2 \min \left\{ r_\mathrm{H}\left(u_0\right), r_\mathrm{H}\left(u_1\right)-2\delta_2 \right\} +Q+2\alpha}{4\delta_1-4\delta_2}}.
\end{align*}
Therefore, $\left\|u^{\mathrm{non}}(t, \cdot)\right\|_{\dot{H}^\alpha \left(\mathbf{H}_n\right)} \lesssim (1+t)^{-\frac{2 \min \left\{ r_\mathrm{H}\left(u_0\right), r_\mathrm{H}\left(u_1\right)-2\delta_2 \right\} +Q+2\alpha}{4\delta_1-4\delta_2}} \|u\|_{X_\mathrm{H}(T)}^p$. In the same way, we can derive
\begin{align*}
\left\|u^{\mathrm{non}}(t, \cdot)\right\|_{L^2\left(\mathbf{H}_n\right)} \lesssim & (1+t)^{-\frac{2 \min \left\{ r_\mathrm{H}\left(u_0\right), r_\mathrm{H}\left(u_1\right)-2\delta_2 \right\} +Q}{4\delta_1-4\delta_2}} \|u\|_{X_\mathrm{H}(T)}^p , \\
\left\| \partial_t u^{\mathrm{non}}(t, \cdot)\right\|_{L^2\left(\mathbf{H}_n\right)} \lesssim & \log(\mathrm{e}+t) (1+t)^{-\frac{2 \min \left\{ r_\mathrm{H}\left(u_0\right), r_\mathrm{H}\left(u_1\right)-2\delta_2 \right\} +Q+4\delta_1-4\delta_2}{4\delta_1-4\delta_2}}\|u\|_{X_\mathrm{H}(T)}^p.
\end{align*}
From the definition of the norm $X_\mathrm{H}(T)$ the inequality \eqref{ineq:5.1} is verified. Let us prove \eqref{ineq:5.2}. We see that
$$
\|N u-N v\|_{X_\mathrm{H}(T)}=\left\|\int_0^t \left( \left | u(s, \eta)\right|^p-\left| v(s, \eta)\right|^p\right) *_{(\eta)} E_1(t, s, \eta) \mathrm{d} s\right\|_{X_\mathrm{H}(T)} .
$$
Thanks to the estimates for the solutions from Proposition \ref{proposition:7.2}, we can estimate
\begin{align} \label{ineq:5.5}
& \left\|\left( \left| u(s, \eta)\right|^p-\left| v(s, \eta)\right|^p\right) *_{(\eta)} E_1(t, s, \eta) \right\|_{\dot{H}^\alpha \left(\mathbf{H}_n\right)} \notag\\
\lesssim &
\begin{cases}
(1+t-s)^{-\frac{Q}{2\delta_1-2\delta_2}\left(\frac{1}{\omega}-\frac{1}{2}\right)-\frac{\alpha-2\delta_2}{2\delta_1-2\delta_2}} \left\| \left| u(s, \eta)\right|^p-\left| v(s, \eta)\right|^p\right\|_{L^\omega\left(\mathbf{H}_n\right) \cap L^2\left(\mathbf{H}_n\right)} & \text { if } s \in \left[0, \frac{t}{2}\right] \\
(1+t-s)^{-\frac{\alpha-2\delta_2}{2\delta_1-2\delta_2}} \left\| \left| u(s, \eta)\right|^p-\left| v(s, \eta)\right|^p\right\|_{L^2\left(\mathbf{H}_n\right)} & \text { if } s \in \left[\frac{t}{2}, t\right].
\end{cases}
\end{align}
Since
$$
\left| \left| u(s, \eta)\right|^p-\left| v(s, \eta)\right|^p \right| \lesssim \left| u(s, \eta)- v(s, \eta)\right| \left(\left| u(s, \eta)|^{p-1}+| v(s, \eta)\right|^{p-1}\right),
$$
by H$\ddot{\text{o}}$lder's inequality, we obtain
\begin{align*}
\left\| \left| u(s, \cdot)\right|^p-\left| v(s, \cdot)\right|^p\right\|_{L^\omega\left(\mathbf{H}_n\right)} 
\lesssim & \left\| u(s, \cdot)- v(s, \cdot)\right\|_{L^{\omega p}\left(\mathbf{H}_n\right)}\left( \left\| u(s, \cdot)\right\|_{L^{\omega p}\left(\mathbf{H}_n\right)}^{p-1}+\left\| v(s, \cdot)\right\|_{L^{\omega p}\left(\mathbf{H}_n\right)}^{p-1}\right), \\
\left\| \left| u(s, \cdot)\right|^p-\left| v(s, \cdot)\right|^p\right\|_{L^2\left(\mathbf{H}_n\right)} 
\lesssim & \left\| u(s, \cdot)- v(s, \cdot)\right\|_{L^{2 p}\left(\mathbf{H}_n\right)}\left( \left\| u(s, \cdot)\right\|_{L^{2 p}\left(\mathbf{H}_n\right)}^{p-1}+\left\| v(s, \cdot)\right\|_{L^{2 p}\left(\mathbf{H}_n\right)}^{p-1}\right).
\end{align*}
Just as in proving \eqref{ineq:5.1}, we apply the fractional Gagliardo-Nirenberg inequality
from Proposition \ref{Gagliardo} to the terms $\left \| u(s, \cdot)- v(s, \cdot)\right\|_{L^h\left(\mathbf{H}_n\right)}, \left\| u(s, \cdot)\right\|_{L^h\left(\mathbf{H}_n\right)}$ and $\left\| v(s, \cdot)\right\|_{L^h\left(\mathbf{H}_n\right)}$,
with $h=\omega p$ and $h=2 p$. Plugging these estimates into \eqref{ineq:5.5}, similarly to estimating $\left\|u^{\mathrm{non}}(t, \cdot)\right\|_{\dot{H}^\alpha \left(\mathbf{H}_n\right)}$, we derive \eqref{ineq:5.2}. The proof of Theorem\ref{theorem:1.3} is completed.
\end{proof}

\begin{proof}[Proof of Theorem \ref{theorem:1.4}]
In this theorem, we define the solution space $X_\mathrm{H}(T)$ and its corresponding norm $\|u\|_{X_\mathrm{H}(T)}$ as in Theorem \ref{theorem:1.3}. Given that $\omega:=\frac{Q}{Q+\min \left\{ r_\mathrm{H}\left(u_0\right), r_\mathrm{H}\left(u_1\right)-2\delta_2 \right\}+2\delta_2}$, it is straightforward to conclude that  $1 < \omega < 2$. Since $\frac{2}{\omega}<p:=p_{\mathrm{Fuji}}\left(\frac{Q-2 \omega \delta_2}{\omega \delta_1}\right):=1+\frac{2 \omega \delta_1}{Q-2 \omega \delta_2}$, there exists $\omega_1 \in [1, 2]$ such that $\frac{2}{\omega}<\frac{2}{\omega_1} <p$. By applying the Gagliardo-Nirenberg inequality with interpolation exponents $\theta_1(2 p):=\frac{Q(p-1)}{2 \alpha p}, \theta_2(\omega_1 p):=\frac{Q(\omega_1 p-2)}{2 \omega_1 \alpha p}$ from Lemma \ref{Gagliardo} and the definition of the evolution space, we obtain the following inequalities:
\begin{align} \label{ineq:5.6}
\left\| \left| u(\tau, \cdot)\right|^p\right\|_{L^2\left(\mathbf{H}_n\right)}=\left\| u(\tau, \cdot)\right\|_{L^{2 p}\left(\mathbf{H}_n\right)}^p \lesssim & (1+\tau)^{-\frac{Q +\min \left\{r_\mathrm{H}\left(u_0\right), r_\mathrm{H}\left(u_1\right)-2\delta_2 \right\}}{2\delta_1-2\delta_2}p+\frac{Q}{4\delta_1-4\delta_2}}\|u\|_{X_\mathrm{H}(T)}^p, \\
\label{ineq:5.7}
\left\| \left| u(\tau, \cdot)\right|^p\right\|_{L^{\omega_1}\left(\mathbf{H}_n\right) \cap L^2\left(\mathbf{H}_n\right)}=&\left\| u(\tau, \cdot)\right\|_{L^{\omega_1 p}\left(\mathbf{H}_n\right)}^p + \left\| u(\tau, \cdot)\right\|_{L^{2 p}\left(\mathbf{H}_n\right)}^p \notag \\
\lesssim&(1+\tau)^{-\frac{Q +\min \left\{r_\mathrm{H}\left(u_0\right), r_\mathrm{H}\left(u_1\right)-2\delta_2 \right\}}{2\delta_1-2\delta_2}p+\frac{Q}{2 \omega_1\left(\delta_1-\delta_2\right)}}\|u\|_{X_\mathrm{H}(T)}^p,
\end{align}
under the condition that 
$p \in \left(\frac{2}{\omega}, \infty\right)$ for $Q \leq 2 \alpha$, or $ p \in\left(\tfrac{2}{\omega}, \frac{Q}{Q-2 \alpha}\right]$ when $2 \alpha < Q \leq \tfrac{4 \alpha}{2-\omega}$. Proceeding in exactly the same way as in Theorem \ref{theorem:1.3}, we also obtain $u^{\mathrm{lin}} \in X_\mathrm{H}(T)$. The next step is to establish that $\left\|u^{\mathrm{non}}\right\|_{X_\mathrm{H}(T)} \lesssim\|u\|_{X_\mathrm{H}(T)}^p$. We may estimate the $\dot{H}^\alpha \left(\mathbf{H}_n\right)$ norm of $u^{\mathrm{non}}(t, \cdot)$ by applying $\left(L^2\left(\mathbf{H}_n\right) \cap L^{\omega_1}\left(\mathbf{H}_n\right)\right)-L^2\left(\mathbf{H}_n\right)$ estimates in $\left[0, \frac{t}{2}\right]$ and $L^2\left(\mathbf{H}_n\right)-L^2\left(\mathbf{H}_n\right)$ estimates in $\left[\frac{t}{2}, t\right]$ from Proposition \ref{proposition:7.2} as follows:
\begin{align*}
\left\|u^{\mathrm{non}}(t, \cdot)\right\|_{\dot{H}^\alpha \left(\mathbf{H}_n\right)} \leq & \int_0^{\frac{t}{2}} (1+t-s)^{-\frac{Q}{2\delta_1-2\delta_2}\left(\frac{1}{\omega_1}-\frac{1}{2}\right)-\frac{\alpha}{2\delta_1-2\delta_2}+\frac{\delta_2}{\delta_1-\delta_2}}\left\| \left| u(s, \cdot)\right|^p\right\|_{L^{\omega_1}\left(\mathbf{H}_n\right) \cap L^2\left(\mathbf{H}_n\right)} d s \\
&+ \int_{\frac{t}{2}}^t (1+t-s)^{-\frac{\alpha}{2\delta_1-2\delta_2}+\frac{\delta_2}{\delta_1-\delta_2}}\left\| \left| u(s, \cdot)\right|^p\right\|_{L^2\left(\mathbf{H}_n\right)} d s .
\end{align*}
For the first integral, we can estimate it as follows:
\begin{align*}
& \int_0^{\frac{t}{2}} (1+t-s)^{-\frac{Q}{2\delta_1-2\delta_2}\left(\frac{1}{\omega_1}-\frac{1}{2}\right)-\frac{\alpha}{2\delta_1-2\delta_2}+\frac{\delta_2}{\delta_1-\delta_2}}\left\| \left| u(s, \cdot)\right|^p\right\|_{L^{\omega_1}\left(\mathbf{H}_n\right) \cap L^2\left(\mathbf{H}_n\right)} d s \\
\lesssim & \|u\|_{X_\mathrm{H}(T)}^p(1+t)^{-\frac{Q}{2\delta_1-2\delta_2}\left(\frac{1}{\omega_1}-\frac{1}{2}\right)-\frac{\alpha}{2\delta_1-2\delta_2}+\frac{\delta_2}{\delta_1-\delta_2}} \int_0^{\frac{t}{2}} (1+s)^{-\frac{Q +\min \left\{r_\mathrm{H}\left(u_0\right), r_\mathrm{H}\left(u_1\right)-2\delta_2 \right\}}{2\delta_1-2\delta_2}p+\frac{Q}{2 \omega_1\left(\delta_1-\delta_2\right)}} d s \\
\lesssim & \|u\|_{X_\mathrm{H}(T)}^p (1+t)^{-\frac{Q}{2\delta_1-2\delta_2}\left(\frac{1}{\omega_1}-\frac{1}{2}\right)-\frac{\alpha}{2\delta_1-2\delta_2}+\frac{\delta_2}{\delta_1-\delta_2}} \lesssim \|u\|_{X_\mathrm{H}(T)}^p (1+t)^{-\frac{Q}{2\delta_1-2\delta_2}\left(\frac{1}{\omega}-\frac{1}{2}\right)-\frac{\alpha}{2\delta_1-2\delta_2}+\frac{\delta_2}{\delta_1-\delta_2}} \\
= & \|u\|_{X_\mathrm{H}(T)}^p(1+t)^{-\frac{2 \min \left\{ r_\mathrm{H}\left(u_0\right), r_\mathrm{H}\left(u_1\right)-2\delta_2 \right\} +Q+2\alpha}{4\delta_1-4\delta_2}},
\end{align*}
where we used \eqref{ineq:5.7}, $\|\cdot\|_{X_\mathrm{H}(\tau)} \lesssim\|\cdot\|_{X_\mathrm{H}(T)}$ for $0 \leq \tau \leq T$ and the definition of $\omega$. Since $p:=p_{\mathrm{Fuji}}\left(\frac{Q-2 \omega \delta_2}{\omega \delta_1}\right):=1+\frac{2 \omega \delta_1}{Q-2 \omega \delta_2}>1+\frac{2 \omega_1 \delta_1}{Q-2 \omega \delta_2}$, it follows that $-\frac{Q +\min \left\{r_\mathrm{H}\left(u_0\right), r_\mathrm{H}\left(u_1\right)-2\delta_2 \right\}}{2\delta_1-2\delta_2}p+\frac{Q}{2 \omega_1\left(\delta_1-\delta_2\right)} < -1$. For the second integral, using \eqref{ineq:5.6}, $p:=p_{\mathrm{Fuji}}\left(\frac{Q-2 \omega \delta_2}{\omega \delta_1}\right)$ and the definition of $\omega$, we obtain
\begin{align*}
& \int_{\frac{t}{2}}^t (1+t-s)^{-\frac{\alpha}{2\delta_1-2\delta_2}+\frac{\delta_2}{\delta_1-\delta_2}}\left\| \left| u(s, \cdot)\right|^p\right\|_{L^2\left(\mathbf{H}_n\right)} d s \\
\lesssim & \|u\|_{X_\mathrm{H}(T)}^p(1+t)^{-\frac{Q +\min \left\{r_\mathrm{H}\left(u_0\right), r_\mathrm{H}\left(u_1\right)-2\delta_2 \right\}}{2\delta_1-2\delta_2}p+\frac{Q}{4\delta_1-4\delta_2}} \int_{\frac{t}{2}}^t (1+t-s)^{-\frac{\alpha}{2\delta_1-2\delta_2}+\frac{\delta_2}{\delta_1-\delta_2}} d s \\ \lesssim & \|u\|_{X_\mathrm{H}(T)}^p(1+t)^{-\frac{Q +\min \left\{r_\mathrm{H}\left(u_0\right), r_\mathrm{H}\left(u_1\right)-2\delta_2 \right\}}{2\delta_1-2\delta_2}p+\frac{Q}{4\delta_1-4\delta_2}} (1+t)^{1-\frac{\alpha}{2\delta_1-2\delta_2}+\frac{\delta_2}{\delta_1-\delta_2}} \\
\lesssim & \|u\|_{X_\mathrm{H}(T)}^p(1+t)^{-\frac{2 \min \left\{ r_\mathrm{H}\left(u_0\right), r_\mathrm{H}\left(u_1\right)-2\delta_2 \right\} +Q+2\alpha}{4\delta_1-4\delta_2}}.
\end{align*}
Thus, we conclude that $\left\|u^{\mathrm{non}}(t, \cdot)\right\|_{\dot{H}^\alpha \left(\mathbf{H}_n\right)} \lesssim (1+t)^{-\frac{2 \min \left\{ r_\mathrm{H}\left(u_0\right), r_\mathrm{H}\left(u_1\right)-2\delta_2 \right\} +Q+2\alpha}{4\delta_1-4\delta_2}} \|u\|_{X_\mathrm{H}(T)}^p$. In a similar manner, we obtain
\begin{align*}
\left\|u^{\mathrm{non}}(t, \cdot)\right\|_{L^2\left(\mathbf{H}_n\right)} \lesssim & (1+t)^{-\frac{2 \min \left\{ r_\mathrm{H}\left(u_0\right), r_\mathrm{H}\left(u_1\right)-2\delta_2 \right\} +Q}{4\delta_1-4\delta_2}} \|u\|_{X_\mathrm{H}(T)}^p , \\
\left\| \partial_t u^{\mathrm{non}}(t, \cdot)\right\|_{L^2\left(\mathbf{H}_n\right)} \lesssim & \log(\mathrm{e}+t) (1+t)^{-\frac{2 \min \left\{ r_\mathrm{H}\left(u_0\right), r_\mathrm{H}\left(u_1\right)-2\delta_2 \right\} +Q+4\delta_1-4\delta_2}{4\delta_1-4\delta_2}}\|u\|_{X_\mathrm{H}(T)}^p.
\end{align*}
Using the definition of the norm $X_\mathrm{H}(T)$, the inequality \eqref{ineq:5.1} is thus verified. The remaining steps are carried out exactly as in Theorem \ref{theorem:1.3}.
\end{proof}
\begin{remark}
\rm{
According to Theorems \ref{theorem:1.3} and \ref{theorem:1.4}, if $\left(u_0, u_1\right) \in H_{\sigma \psi(t, \cdot)}^1\left(\mathbf{H}_n\right) \times L_{\sigma \psi(t,)}^2\left(\mathbf{H}_n\right)$, with $\sigma>0, t \geq 0,
\psi(t, \eta) := \frac{|x|^2+|y|^2+4|\tau|}{8(1+t)}, \eta=(x, y, \tau) \in \mathbf{H}_n$, then $P_{r_\mathrm{H}}\left(u_0\right)_{+}, P_{r_\mathrm{H}}\left(u_1\right)_{+}<\infty$ for $r_\mathrm{H}\left(u_0\right) = \delta_1, r_\mathrm{H}\left(u_1\right) = 0$ (see Proposition \ref{proposition:3.5}). From \eqref{p_*} and \eqref{p_*_1}, it can be concluded that the problem \eqref{eq:1.0} allows for a unique global solution (in time) when $p > p_{\mathrm{Fuji}}\left(\frac{Q-2\delta_2}{\delta_1}\right):=1+\frac{2 \delta_1}{Q-2 \delta_2}$ (the equality case does not belong to the solution-existence region, because in this case $\min \left\{ r_\mathrm{H}\left(u_0\right), r_\mathrm{H}\left(u_1\right)-2\delta_2 \right\} = -2\delta_2$, which leads to a violation of condition \eqref{condition_omega_p_crit}). This is precisely the result regarding the critical exponent $p_{\mathrm{Fuji}}(Q):=1+\frac{2}{Q}$ as presented in \cite[Theorem 2.2]{Georgiev2020}, in the case when $\delta_1=1$ and $\delta_2=0$.
}
\end{remark}
\begin{remark}
\rm{
In Theorems \ref{theorem:1.3} and \ref{theorem:1.4}, if $\left(u_0, u_1\right) \in\left(H^{\delta_1}\left(\mathbf{H}_n\right) \cap \dot{H}^{-\gamma}\left(\mathbf{H}_n\right)\right) \times\left(L^2\left(\mathbf{H}_n\right) \cap \dot{H}^{-\gamma}\left(\mathbf{H}_n\right)\right)$, with $\gamma \in\left[0, \frac{Q}{2}\right)$, then $P_{r_\mathrm{H}}\left(u_0\right)_{+}, P_{r_\mathrm{H}}\left(u_1\right)_{+}<\infty$ for $r_\mathrm{H}\left(u_0\right) = \gamma-\frac{Q}{2}+\delta_1, r_\mathrm{H}\left(u_1\right) = \gamma-\frac{Q}{2}$ (see Proposition \ref{proposition:3.3}). From \eqref{p_*} and \eqref{p_*_1}, the problem \eqref{eq:1.0} admits a unique global (in time) solution when $p \geq p_{\mathrm{Fuji}}\left(\frac{Q+2\gamma-4 \delta_2}{2 \delta_1}\right):=1+\frac{4 \delta_1}{Q+2\gamma-4 \delta_2}$. This exactly corresponds to the result on the condition $p \geq p_{\mathrm{Fuji}}\left(\frac{Q+2 \gamma}{2}\right):=1+\frac{4}{Q+2\gamma}$ established in \cite[Theorem 1.2]{Dasgupta2024} and \cite[Theorem 2]{DAbbicco2025}, in the case when $\delta_1=1$ and $\delta_2=0$.
}
\end{remark}
\begin{remark}
\rm{
In Theorems \ref{theorem:1.3} and \ref{theorem:1.4}, if $\left(u_0, u_1\right) \in \left(H^{\delta_1}\left(\mathbf{H}_n\right) \cap L^1\left(\mathbf{H}_n\right)\right) \times \left(L^2\left(\mathbf{H}_n\right) \cap L^1\left(\mathbf{H}_n\right)\right)$, or $\left(u_0, u_1\right) \in\left(H^{\delta_1}\left(\mathbf{H}_n\right) \cap \dot{H}^{-\frac{Q}{2}}\left(\mathbf{H}_n\right)\right) \times\left(L^2\left(\mathbf{H}_n\right) \cap \dot{H}^{-\frac{Q}{2}}\left(\mathbf{H}_n\right)\right)$, or $\left(u_0, u_1\right) \in\left(H^{\delta_1}\left(\mathbf{H}_n\right) \cap \mathcal{Y}^0\left(\mathbf{H}_n\right)\right) \times\left(L^2\left(\mathbf{H}_n\right) \cap \mathcal{Y}^0\left(\mathbf{H}_n\right)\right)$, then $P_{r_\mathrm{H}}\left(u_0\right)_{+}, P_{r_\mathrm{H}}\left(u_1\right)_{+}<\infty$ for $r_\mathrm{H}\left(u_0\right) = \delta_1, r_\mathrm{H}\left(u_1\right) = 0$ (see Propositions \ref{proposition:3.2}, \ref{proposition:3.3} and \ref{proposition:3.6}). From \eqref{p_*} and \eqref{p_*_1}, the problem \eqref{eq:1.0} admits a unique global (in time) solution when $p > p_{\mathrm{Fuji}}\left(\frac{Q-2\delta_2}{\delta_1}\right):=1+\frac{2 \delta_1}{Q-2 \delta_2}$ (the equality case does not lie in the solution-existence region, because in these cases $\min \left\{ r_\mathrm{H}\left(u_0\right), r_\mathrm{H}\left(u_1\right)-2\delta_2 \right\} = -2\delta_2$, which leads to a violation of condition \eqref{condition_omega_p_crit}).
}
\end{remark}
\begin{remark}
\rm{
In Theorems \ref{theorem:1.3} and \ref{theorem:1.4}, if $\left(u_0, u_1\right) \in \left(H^{\delta_1}\left(\mathbf{H}_n\right) \cap L^m\left(\mathbf{H}_n\right)\right) \times \left(L^2\left(\mathbf{H}_n\right) \cap L^m\left(\mathbf{H}_n\right)\right)$, with $m \in (1,2]$, then $P_{r_\mathrm{H}}\left(u_0\right)_{+}, P_{r_\mathrm{H}}\left(u_1\right)_{+}<\infty$ for $r_\mathrm{H}\left(u_0\right) = -Q\left(1-\frac{1}{m}\right)+\delta_1, r_\mathrm{H}\left(u_1\right) = -Q\left(1-\frac{1}{m}\right)$ (see Proposition \ref{proposition:3.4}). From \eqref{p_*} and \eqref{p_*_1}, the problem \eqref{eq:1.0} admits a unique global solution (in time) when $p \geq p_{\mathrm{Fuji}}\left(\frac{Q-2 m \delta_2}{m \delta_1}\right):=1+\frac{2 m \delta_1}{Q-2 m \delta_2}$.
}
\end{remark}
\begin{remark}
\rm{
In Theorems \ref{theorem:1.3} and \ref{theorem:1.4}, if $\left(u_0, u_1\right) \in\left(H^{\delta_1}\left(\mathbf{H}_n\right) \cap \mathcal{Y}^q\left(\mathbf{H}_n\right)\right) \times\left(L^2\left(\mathbf{H}_n\right) \cap \mathcal{Y}^q\left(\mathbf{H}_n\right)\right)$, with $q\in \left(0,\frac{Q}{2} \right]$, then $P_{r_\mathrm{H}}\left(u_0\right)_{+}, P_{r_\mathrm{H}}\left(u_1\right)_{+}<\infty$ for $r_\mathrm{H}\left(u_0\right) = -q+\delta_1, r_\mathrm{H}\left(u_1\right) = -q$ (see Proposition \ref{proposition:3.6}). From \eqref{p_*} and \eqref{p_*_1}, the problem \eqref{eq:1.0} has a unique global (in time) solution when $p \geq p_{\mathrm{Fuji}}\left(\frac{Q-q-2\delta_2}{\delta_1}\right):=1 + \frac{2\delta_1}{Q-q-2\delta_2}$.
}
\end{remark}
\begin{remark} \label{remark_theorem_1.4.2}
\rm{
In Theorems \ref{theorem:1.3} and \ref{theorem:1.4}, if $\left(u_0, u_1\right) \in\left(H^{\delta_1}\left(\mathbf{H}_n\right) \cap \dot{H}_m^{-\gamma}\left(\mathbf{H}_n\right)\right) \times\left(L^2\left(\mathbf{H}_n\right) \cap \dot{H}_m^{-\gamma}\left(\mathbf{H}_n\right)\right)$, with $m \in (1, 2], \gamma \in\left[0, Q-\frac{Q}{m}\right)$, then $P_{r_\mathrm{H}}\left(u_0\right)_{+}, P_{r_\mathrm{H}}\left(u_1\right)_{+}<\infty$ for $r_\mathrm{H}\left(u_0\right) = \gamma-\frac{Q(m-1)}{m}+\delta_1, r_\mathrm{H}\left(u_1\right) = \gamma-\frac{Q(m-1)}{m}$ (see Proposition \ref{proposition:3.7}). From \eqref{p_*} and \eqref{p_*_1}, the problem \eqref{eq:1.0} has a unique global (in time) solution when $p \geq p_{\mathrm{Fuji}}\left(\frac{Q+m\gamma-2 m \delta_2}{m \delta_1}\right):=1 + \frac{2 m \delta_1}{Q+m\gamma-2 m \delta_2}$.
}
\end{remark}

\section{blow-up results} \label{section_6}
The next main result is concerned with indicating the sharpness of the exponent $p$ to \eqref{eq:1.0} in the case of $\left(u_0, u_1\right) \in\dot{H}_m^{-\gamma}\left(\mathbf{H}_n\right) \times\dot{H}_m^{-\gamma}\left(\mathbf{H}_n\right)$, with $m \in (1, 2]$ and $\gamma \in\left[0, Q-\frac{Q}{m}\right)$. For the sake of brevity, in the blow-up result below we restrict our attention to the case $\delta_1 \in (0, 1]$ and $\delta_2 \in \left[0, \frac{\delta_1}{2}\right]$.
\begin{theorem} \label{theorem:1.5}
(Blow-up). Let $\delta_1 \in (0, 1], \delta_2 \in \left[0, \frac{\delta_1}{2}\right]$ and $Q=2n+2$ be the homogeneous dimension on the Heisenberg group. Assume that the initial data $u_0=0$ and $u_1 \in \dot{H}_m^{-\gamma}\left(\mathbf{H}_n\right)$, with $m \in (1, 2], \gamma \in\left[0, Q-\frac{Q}{m}\right)$ satisfying the following relation:
\begin{equation}
\label{condition_u0u1}
u_1(\eta) \gtrsim\langle \eta\rangle_{\mathbf{H}_n}^{-Q\left(\frac{1}{m}+\frac{\gamma}{Q}\right)} \log (\mathrm{e}+|\eta|_{\mathbf{H}_n})^{-1} \quad \text{ for all } \eta \in \mathbf{H}_n,
\end{equation}
where $|\cdot|_{\mathbf{H}_n}$ be any homogeneous norm on the Heisenberg group $\mathbf{H}_n$, while we denote $\left(1+|\eta|_{\mathbf{H}_n}^2\right)^{\frac{1}{2}}$ by the Japanese bracket $\langle \eta\rangle_{\mathbf{H}_n}$ for $\eta \in \mathbf{H}_n$. Moreover, we suppose that the following conditions hold:
\begin{equation} \label{p_critical}
1 < p < p_{\mathrm{Fuji}}\left(\tfrac{Q+m\gamma-2 m \delta_2}{m \delta_1}\right):=1+\tfrac{2 m \delta_1}{Q+m\gamma-2 m \delta_2}.
\end{equation}
Then, there is no global (in time) weak solution to the Cauchy problem \eqref{eq:1.0}.    
\end{theorem}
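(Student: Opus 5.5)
I will argue by contradiction with a test function method adapted to the nonlocal operators $(-\Delta_{\mathrm{H}})^{\delta_1}$ and $(-\Delta_{\mathrm{H}})^{\delta_2}$. Assume a global weak solution $u$ exists, meaning that for every admissible test function $\phi(t,\eta)$
\begin{equation*}
\int_0^\infty\!\!\int_{\mathbf{H}_n}|u|^p\phi\,\mathrm{d}\eta\,\mathrm{d}t+\int_{\mathbf{H}_n}u_1\phi(0,\cdot)\,\mathrm{d}\eta
=\int_0^\infty\!\!\int_{\mathbf{H}_n}u\Big(\partial_t^2\phi+(-\Delta_{\mathrm{H}})^{\delta_1}\phi-(-\Delta_{\mathrm{H}})^{\delta_2}\partial_t\phi\Big)\mathrm{d}\eta\,\mathrm{d}t
\end{equation*}
(with $u_0=0$, and using $\partial_t\phi(0,\cdot)=0$).

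\textbf{Choice of test function.} I take $\phi_R(t,\eta)=\eta_R(t)\,\varphi_R(\eta)$. The time factor is $\eta_R(t)=\eta_0(t/R^{2(\delta_1-\delta_2)})^{\ell}$ with a large power $\ell$, where $\eta_0$ is a smooth cutoff equal to $1$ on $[0,1/2]$ and $0$ on $[1,\infty)$. For the spatial factor, a compactly supported cutoff is not suitable, since $(-\Delta_{\mathrm{H}})^{\delta}$ of it only decays like $\langle\eta\rangle_{\mathbf{H}_n}^{-Q-2\delta}$ and gives no pointwise sign control. I therefore use $\varphi_R(\eta)=\langle R^{-1}\odot\eta\rangle_{\mathbf{H}_n}^{-Q-2\delta_1}$, a dilated power-type weight. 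The key new lemma, and the main obstacle, is the pointwise bound
\[|(-\Delta_{\mathrm{H}})^{\delta}\langle\eta\rangle_{\mathbf{H}_n}^{-q}|\lesssim\langle\eta\rangle_{\mathbf{H}_n}^{-q}\]
for $\delta\in(0,1]$ and $q\le Q+2\delta$.

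To prove this lemma:
\begin{itemize}
\item For $\delta=1$, compute directly with the vector fields $X_j,Y_j$, using the homogeneity of $|\cdot|_{\mathbf{H}_n}$.
\item For $\delta\in(0,1)$, use the singular-integral representation
\[(-\Delta_{\mathrm{H}})^{\delta}f(\eta)=c\,\mathrm{P.V.}\!\int\frac{f(\eta)-f(\zeta)}{|\zeta^{-1}\circ\eta|_{\mathbf{H}_n}^{Q+2\delta}}\,\mathrm{d}\zeta.\]
\item Split the integral into the region $|\zeta^{-1}\circ\eta|\le\langle\eta\rangle/2$, handled with the stratified Taylor formula of Proposition 2.1 (degree $1$ or $2$), and its complement, handled with the decay of $f$.
\end{itemize}
This is where the assumption $\delta_1\le1$ is used.

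By the dilation structure $\delta_\gamma$, the lemma gives
\[|(-\Delta_{\mathrm{H}})^{\delta_i}\varphi_R|\lesssim R^{-2\delta_i}\varphi_R.\]
Together with $|\partial_t^j\eta_R|\lesssim R^{-2j(\delta_1-\delta_2)}\eta_R^{1-j/\ell}$, every term on the right-hand side is bounded by $R^{-2\delta_1}\,u\,\eta_R^{1-2/\ell}\varphi_R$ times a constant; the scaling $T=R^{2(\delta_1-\delta_2)}$ is chosen precisely to balance these terms.

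\textbf{Estimate by H\"older and Young.} Applying H\"older's inequality with weight $\phi_R$ and Young's inequality absorbs the nonlinear term:
\[\int u_1\varphi_R\,\mathrm{d}\eta\lesssim R^{-\frac{2\delta_1p}{p-1}}\int_0^{T}\!\!\int_{\mathbf{H}_n}\varphi_R\,\mathrm{d}\eta\,\mathrm{d}t\lesssim R^{-\frac{2\delta_1p}{p-1}+2(\delta_1-\delta_2)+Q}.\]
The $\eta$-integral of $\varphi_R$ converges because $Q+2\delta_1>Q$.

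\textbf{Lower bound from the data.} Assumption \eqref{condition_u0u1} gives
\[\int u_1\varphi_R\gtrsim\int_{|\eta|\le R}\langle\eta\rangle^{-Q/m-\gamma}\log(\mathrm{e}+|\eta|)^{-1}\,\mathrm{d}\eta\gtrsim R^{Q-Q/m-\gamma}(\log R)^{-1}.\]
The exponent $Q-Q/m-\gamma$ is positive since $\gamma<Q-Q/m$.

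\textbf{Conclusion.} Comparing the two bounds yields
\[R^{Q-\frac{Q}{m}-\gamma}(\log R)^{-1}\lesssim R^{Q+2\delta_1-2\delta_2-\frac{2\delta_1p}{p-1}}.\]
This forces $\frac{2\delta_1 p}{p-1}-2\delta_1+2\delta_2\le\frac Qm+\gamma$, that is, $\frac{2\delta_1}{p-1}\le\frac{Q+m\gamma-2m\delta_2}{m}$, which is equivalent to $p\ge p_{\mathrm{Fuji}}\big(\frac{Q+m\gamma-2m\delta_2}{m\delta_1}\big)$. Under \eqref{p_critical} the inequality is strict the other way, so letting $R\to\infty$ gives a contradiction; the logarithmic loss is harmless because the gap in the exponents is strictly positive.

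Two technical points remain:
\begin{itemize}
\item Justify $\varphi_R$ as an admissible test function (it is not compactly supported), via approximation and the integrability $u\in L^p_{\mathrm{loc}}$ together with $u\varphi_R\in L^1$.
\item Handle the case $\delta_2=0$, where the $\delta_2$-term is simply $\partial_t\phi$.
\end{itemize}
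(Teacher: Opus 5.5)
Your scheme matches the paper's: a separated test function with time scale $R^{2(\delta_1-\delta_2)}$, a dilated power weight in space, pointwise bounds for $(-\Delta_{\mathrm{H}})^{s}\langle\eta\rangle_{\mathbf{H}_n}^{-q}$ together with the scaling of Lemma~\ref{lemma:6.4}, H\"older--Young, and the comparison of $R^{Q-\frac{Q}{m}-\gamma}(\log R)^{-1}$ with $R^{Q+2\delta_1-2\delta_2-2\delta_1 p'}$. Your exponent bookkeeping is correct. The gap is the choice $\varphi=\langle\eta\rangle_{\mathbf{H}_n}^{-Q-2\delta_1}$ when $\delta_2>0$.

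You assert $|(-\Delta_{\mathrm{H}})^{\delta_2}\varphi_R|\lesssim R^{-2\delta_2}\varphi_R$. Your own lemma requires $q\le Q+2\delta$, and with $\delta=\delta_2$, $q=Q+2\delta_1$ this fails, because $\delta_2\le\delta_1/2<\delta_1$. The bound is in fact false. For $|\eta|_{\mathbf{H}_n}$ large, the singular integral is dominated by $\zeta$ with $|\zeta|_{\mathbf{H}_n}\ll|\eta|_{\mathbf{H}_n}$, so $(-\Delta_{\mathrm{H}})^{\delta_2}\varphi(\eta)$ behaves like $-c\,\|\varphi\|_{L^1(\mathbf{H}_n)}|\eta|_{\mathbf{H}_n}^{-Q-2\delta_2}$. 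That is much larger than $\varphi(\eta)$ --- exactly the phenomenon you cite to rule out compactly supported cutoffs.

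As a result, the H\"older remainder $\int_{\mathbf{H}_n}\varphi^{-p'/p}|(-\Delta_{\mathrm{H}})^{\delta_2}\varphi|^{p'}\,\mathrm{d}\eta$ has integrand of order $\langle\eta\rangle_{\mathbf{H}_n}^{-(Q+2\delta_1)+2p'(\delta_1-\delta_2)}$. This is integrable only when $p>\delta_1/\delta_2\geq 2$. So for every $p\in(1,2]$ it diverges, and the term $\int\!\!\int u\,\partial_t\eta_R\,(-\Delta_{\mathrm{H}})^{\delta_2}\varphi_R$ cannot be absorbed.

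The repair is to let the lower-order operator dictate the exponent, as the paper does with $\varphi=\langle\eta\rangle_{\mathbf{H}_n}^{-Q-2\min\{\delta_1,\delta_2\}}$. For $\delta_2>0$, take $\varphi=\langle\eta\rangle_{\mathbf{H}_n}^{-Q-2\delta_2}$. Then:
\begin{itemize}
\item Lemma~\ref{lemma:6.2} (if $\delta_1=1$) or Lemma~\ref{lemma:6.3} (if $\delta_1<1$, using $Q+2\delta_2>Q$) gives $|(-\Delta_{\mathrm{H}})^{\delta_1}\varphi|\lesssim\langle\eta\rangle_{\mathbf{H}_n}^{-Q-2\delta_1}\le\varphi$.
\item Lemma~\ref{lemma:6.3} gives $|(-\Delta_{\mathrm{H}})^{\delta_2}\varphi|\lesssim\varphi$.
\item $\varphi$ is still integrable, and your final exponent comparison is unchanged.
\end{itemize}
Your weight $\langle\eta\rangle_{\mathbf{H}_n}^{-Q-2\delta_1}$ is correct precisely in the case $\delta_2=0$ that you flagged as special, since then no operator of order $\delta_2$ acts in space. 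There it is actually the right choice: $\langle\eta\rangle_{\mathbf{H}_n}^{-Q-2\min\{\delta_1,\delta_2\}}$ would degenerate to the non-integrable $\langle\eta\rangle_{\mathbf{H}_n}^{-Q}$.
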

\begin{remark}
\rm{
We will show that the set of all initial data $u_0=0, u_1 \in \dot{H}_m^{-\gamma}\left(\mathbf{H}_n\right)$, with $m \in (1, 2], \gamma \in\left[0, Q-\frac{Q}{m}\right)$, under the assumption stated in \eqref{condition_u0u1}, is non-empty. To this end, we define the set $\mathbb{D}_{Q, m, \gamma}$ as
$$
\mathbb{D}_{Q, m, \gamma}:=\left\{\left(u_0, u_1\right): u_0(\eta)=0, \, u_1(\eta) \gtrsim \langle \eta\rangle_{\mathbf{H}_n}^{-Q\left(\frac{1}{m}+\frac{\gamma}{Q}\right)}\left(\log \left(\mathrm{e}+|\eta|_{\mathbf{H}_n}\right)\right)^{-1}\right\} .
$$
Let $\frac{m Q}{Q+m \gamma}>1$. We choose $u_1(\eta)=C\langle \eta\rangle_{\mathbf{H}_n}^{-Q\left(\frac{1}{m}+\frac{\gamma}{Q}\right)}\left(\log \left(\mathrm{e}+|\eta|_{\mathbf{H}_n}\right)\right)^{-1}$. Then we get
\begin{align*}
\int_{\mathbb{R}^n} u_1(\eta)^{\frac{m Q}{Q+m \gamma}} \mathrm{d} \eta =&C^{\frac{m Q}{Q+m \gamma}} \int_{\mathbb{R}^n}\langle \eta\rangle_{\mathbf{H}_n}^{-Q}\left(\log \left(\mathrm{e}+|\eta|_{\mathbf{H}_n}\right)\right)^{-\frac{m Q}{Q+m \gamma}} \mathrm{d} \eta \\
\lesssim & \int_0^{\infty}\langle r\rangle^{-1}(\log (\mathrm{e}+r))^{-\frac{m Q}{Q+m \gamma}} \mathrm{d} r<\infty.
\end{align*}
Hence $u_1 \in L^{\frac{m Q}{Q+m \gamma}}\left(\mathbf{H}_n\right)$ for $\frac{m Q}{Q+m \gamma}>1$. By Lemma \ref{Hardy_Littlewood}, it follows that $L^{\frac{m Q}{Q+m \gamma}}\left(\mathbf{H}_n\right) \subset \dot{H}_m^{-\gamma}\left(\mathbf{H}_n\right)$ since $\frac{Q+m \gamma}{m Q}-\frac{1}{m}=\frac{\gamma}{Q}$ with $\gamma \in\left[0, Q-\frac{Q}{m}\right)$. Therefore, $\left(u_0, u_1\right) \in \mathbb{D}_{Q, m, \gamma} \cap\left(\dot{H}_m^{-\gamma}\left(\mathbf{H}_n\right) \times \dot{H}_m^{-\gamma}\left(\mathbf{H}_n\right)\right) \neq \emptyset$ for $\gamma \in\left[0, Q-\frac{Q}{m}\right)$.
}
\end{remark}
\begin{remark}
\rm{
It is clear that, if let $\left(u_0, u_1\right) \in\left(H^{\delta_1}\left(\mathbf{H}_n\right) \cap \dot{H}_m^{-\gamma}\left(\mathbf{H}_n\right)\right) \times\left(L^2\left(\mathbf{H}_n\right) \cap \dot{H}_m^{-\gamma}\left(\mathbf{H}_n\right)\right)$, with $m \in (1, 2], \gamma \in\left[0, Q-\frac{Q}{m}\right)$ in Theorems \ref{theorem:1.3} and \ref{theorem:1.4}, the blow-up result in Theorem \ref{theorem:1.5} confirms that the exponent $p_{\mathrm{Fuji}}\left(\frac{Q+m\gamma-2 m \delta_2}{m \delta_1}\right):= 1+\frac{2 m \delta_1}{Q+m\gamma-2 m \delta_2}$ is indeed critical (see Remark \ref{remark_theorem_1.4.2}).
}
\end{remark}
Next, we collect some preliminary knowledge about a modified test function method needed in our proofs.
\begin{definition} \cite{Palatucci2022}.
\label{definition:6.1}
Let $s \in(0,1)$. Let $X$ be a suitable set of functions defined on $\mathbf{H}_n$. Then, the fractional Laplacian $(-\Delta_{\mathrm{H}})^s$ in $\mathbf{H}_n$ is a non-local operator given by 
$$
(-\Delta_{\mathrm{H}})^s: v \in X \rightarrow(-\Delta_{\mathrm{H}})^s v(\eta) := C_{n, s} \mathrm{p.v.} \int_{\mathbf{H}_n} \tfrac{v(\eta)-v(\zeta)}{\left|\zeta^{-1} \circ \eta\right|_{\mathbf{H}_n}^{Q+2 s}} \mathrm{d} \zeta
$$
as long as the right-hand side exists, where $\mathrm{p.v.}$ stands for Cauchy's principal value, $\Gamma$ denotes the Gamma function and $C_{n, s} := 2^{n-1+3 s} \pi^{-n-1} \Gamma\left(\frac{n+s+1}{2}\right)^2$ is a normalization constant.
\end{definition}
\begin{lemma} \label{lemma:6.2}
Let $\langle \eta\rangle_{\mathbf{H}_n} := \left(1+|\eta|_{\mathbf{H}_n}^2\right)^{\frac{1}{2}}$ for $\eta \in \mathbf{H}_n$ and $q>0$. Then, the following estimate holds:
$$
\left|\Delta_{\mathrm{H}} \langle \eta\rangle_{\mathbf{H}_n}^{-q}\right| \lesssim\langle \eta\rangle_{\mathbf{H}_n}^{-q-2}.
$$
\end{lemma}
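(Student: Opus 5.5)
We will prove the estimate for the Kor\'anyi norm $|\eta|_{\mathbf{H}_n}=\left(\left(|x|^2+|y|^2\right)^2+\tau^2\right)^{\frac14}$ fixed in Section \ref{section_2.1}. The same argument works for any homogeneous norm that is smooth away from the origin, because only homogeneity is used. The plan is a chain-rule computation combined with homogeneity. Set $g(\eta):=|\eta|_{\mathbf{H}_n}^4=\left(|x|^2+|y|^2\right)^2+\tau^2$, which is a polynomial, and write
$$
\langle \eta\rangle_{\mathbf{H}_n}^{-q}=\varphi(g(\eta)),\qquad \varphi(s):=\left(1+s^{\frac12}\right)^{-\frac q2},\quad s\geq 0 .
$$
Since $\Delta_{\mathrm{H}}=\sum_j(X_j^2+Y_j^2)$ is a sum of squares of first-order vector fields, for $\eta\neq 0$ we have
$$
\Delta_{\mathrm{H}}\left(\varphi\circ g\right)=\varphi'(g)\,\Delta_{\mathrm{H}} g+\varphi''(g)\,|\nabla_{\mathrm{H}} g|^2 .
$$

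Next we bound the two ingredients by homogeneity. The fields $X_j,Y_j$ are homogeneous of degree $1$ with respect to the dilations $\delta_\gamma$, and $g$ is homogeneous of degree $4$. Hence $X_jg,Y_jg$ are homogeneous polynomials of degree $3$ and $\Delta_{\mathrm{H}} g$ is a homogeneous polynomial of degree $2$. By continuity on the unit Kor\'anyi sphere we obtain
$$
|\nabla_{\mathrm{H}} g|\lesssim |\eta|_{\mathbf{H}_n}^3,\qquad |\Delta_{\mathrm{H}} g|\lesssim |\eta|_{\mathbf{H}_n}^2 .
$$
One can also compute these explicitly, for instance $\Delta_{\mathrm{H}} g=c_n(|x|^2+|y|^2)$. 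For $\varphi$ we get
$$
\varphi'(s)=-\tfrac q4\, s^{-\frac12}\left(1+s^{\frac12}\right)^{-\frac q2-1},\qquad
|\varphi''(s)|\lesssim s^{-\frac32}\left(1+s^{\frac12}\right)^{-\frac q2-1}+s^{-1}\left(1+s^{\frac12}\right)^{-\frac q2-2}.
$$

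We then insert $s=g=|\eta|_{\mathbf{H}_n}^4$ and write $N:=|\eta|_{\mathbf{H}_n}$. The first term is bounded by
$$
N^{-2}(1+N^2)^{-\frac q2-1}N^2=(1+N^2)^{-\frac q2-1}.
$$
The second term is bounded by
$$
\left(N^{-6}(1+N^2)^{-\frac q2-1}+N^{-4}(1+N^2)^{-\frac q2-2}\right)N^6\lesssim (1+N^2)^{-\frac q2-1}.
$$
In the last step we use $N^2\leq 1+N^2$. Altogether $|\Delta_{\mathrm{H}}\langle\eta\rangle_{\mathbf{H}_n}^{-q}|\lesssim\langle\eta\rangle_{\mathbf{H}_n}^{-q-2}$ for $\eta\neq0$.

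The main point requiring care is the origin. There $\varphi$ is not smooth in $s$ because of $s^{1/2}$, so $\langle\eta\rangle_{\mathbf{H}_n}^{-q}$ is only Lipschitz-type in the $\tau$ direction. The computation above shows that the singular powers $s^{-1/2}$ and $s^{-3/2}$ are exactly compensated by the vanishing orders of $\Delta_{\mathrm{H}} g$ and $|\nabla_{\mathrm{H}} g|^2$, so the bound is uniform near $0$. We then interpret $\Delta_{\mathrm{H}}\langle\eta\rangle_{\mathbf{H}_n}^{-q}$ as a function defined almost everywhere, or in the distributional sense. This requires checking that no singular part is concentrated at the single point $\eta=0$. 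It follows from a cut-off argument on $B_\varepsilon(0)$: the horizontal gradient of $\langle\eta\rangle_{\mathbf{H}_n}^{-q}$ is bounded, and $|B_\varepsilon(0)|\sim\varepsilon^Q$ with $Q\geq 4$, so the boundary contributions vanish as $\varepsilon\to0$. For the test-function method of Section \ref{section_6} only this a.e./distributional bound is needed.
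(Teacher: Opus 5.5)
Your proof is correct, but it takes a different route from the paper.

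The paper sets $r=(|x|^2+|y|^2)^{1/2}$ and $\rho=|\eta|_{\mathbf{H}_n}$, and computes the Euclidean derivatives of a function $\psi(\rho)$. Since the mixed terms $x_j\partial^2_{y_j\tau}-y_j\partial^2_{x_j\tau}$ vanish on such functions, it assembles these into an explicit radial formula $\Delta_{\mathrm{H}}\psi(\rho)=\frac{r^2}{\rho^2}\left(\psi''+\frac{c}{\rho}\psi'\right)$. It then substitutes $\psi=(1+\rho^2)^{-q/2}$ and uses $r\le\rho$.

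You never need an exact formula. You compose the polynomial $g=|\eta|_{\mathbf{H}_n}^4$ with $\varphi(s)=(1+s^{1/2})^{-q/2}$, apply the sum-of-squares chain rule, and control $\nabla_{\mathrm{H}}g$ and $\Delta_{\mathrm{H}}g$ only through their homogeneity degrees.

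The paper's route gives more information, namely an explicit radial operator, but it depends on exact constants. A formula of exactly that shape requires $\tau$ to be normalized compatibly with $X_j,Y_j$:
\begin{itemize}
\item for the fields of Section \ref{section_2.1}, it holds with $c=Q-1$ when $\rho=(r^4+16\tau^2)^{1/4}$;
\item for $\rho=(r^4+\tau^2)^{1/4}$, the coefficients of $\psi''$ and $\psi'$ are only bounded multiples of $r^2/\rho^2$ and $r^2/\rho^3$, which still suffices for the lemma.
\end{itemize}
Your argument uses only such size bounds, so it is unaffected by this. As you note, it also works for any homogeneous norm smooth away from the origin.

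You also handle a point the paper passes over. The function $\langle\eta\rangle_{\mathbf{H}_n}^{-q}$ is not smooth at $\eta=0$; on the $\tau$-axis it equals $(1+|\tau|)^{-q/2}$. So the pointwise bound holds on $\mathbf{H}_n\setminus\{0\}$ and must be upgraded to a distributional statement for the weak formulation in Section \ref{section_6}.

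Your sketch of that step is fine. The quickest version takes a cutoff $\theta\in\mathcal{C}_0^\infty(\mathbf{H}_n)$ equal to $1$ near $0$, rescaled as $\theta(\delta_{1/\varepsilon}\eta)$. The region it touches contributes $O(\varepsilon^{Q-2})$, using only the boundedness of $\langle\eta\rangle_{\mathbf{H}_n}^{-q}$. Hence the distributional $\Delta_{\mathrm{H}}\langle\eta\rangle_{\mathbf{H}_n}^{-q}$ is exactly the bounded function you computed.
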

\begin{proof}
For a point $\eta=(x, y, \tau) \in \mathbf{H}_n$, let $r =\left(|x|^2+|y|^2\right)^\frac{1}{2}$ and $\rho=|\eta|_{\mathbf{H}_n}=\left(r^4+\tau^2\right)^\frac{1}{4}$. Let $\psi$ be a radial function, that is, $\psi$ depends only on $\rho$. Direct computation shows that
$$
\tfrac{\partial \psi}{\partial x_j} =\tfrac{\partial \psi}{\partial \rho} \tfrac{\partial \rho}{\partial x_j}=\rho^{-3} r^2 x_j \tfrac{\partial \psi}{\partial \rho} \quad \text{ and } \quad \tfrac{\partial \psi}{\partial y_j}=\tfrac{\partial \psi}{\partial \rho} \tfrac{\partial \rho}{\partial y_j}=\rho^{-3} r^2 y_j \tfrac{\partial \psi}{\partial \rho}.
$$
Then we deduce that
\begin{align*}
\tfrac{\partial^2 \psi}{\partial x_j^2}=&\tfrac{r^4 x_j^2}{\rho^6} \tfrac{\partial^2 \psi}{\partial \rho^2}+\tfrac{\rho^4 r^2+2 \rho^4 x_j^2-3 r^4 x_j^2}{\rho^7} \tfrac{\partial \psi}{\partial \rho}, \quad
\tfrac{\partial^2 \psi}{\partial y_j^2}=\tfrac{r^4 y_j^2}{\rho^6} \tfrac{\partial^2 \psi}{\partial \rho^2}+\tfrac{\rho^4 r^2+2 \rho^4 y_j^2-3 r^4 y_j^2}{\rho^7} \tfrac{\partial \psi}{\partial \rho}, \\
\tfrac{\partial^2 \psi}{\partial \tau^2}=&\tfrac{\tau^2}{4 \rho^6} \tfrac{\partial^2 \psi}{\partial \rho^2}+\tfrac{2 \rho^4-3 \tau^2}{4 \rho^7} \tfrac{\partial \psi}{\partial \rho}.
\end{align*}
Hence, by using that $\frac{1}{\rho^6}\left(\sum_{j=1}^n\left(r^4 x_j^2+r^4 y_j^2\right)+r^2 \tau^2\right)=\frac{r^2}{\rho^2}$ and
\begin{align*}
&\tfrac{1}{\rho^7} \left[(2 n+1) \rho^4 r^2+\left(2 \rho^4-3 r^4+\left(2 \rho^4-3 \tau^2\right)\right)\sum_{j=1}^n \left(x_j^2+y_j^2 \right) \right] \\
=&\tfrac{1}{\rho^7}\left[(2 n+1) \rho^4 r^2+2 \rho^4 r^2-3 r^6+r^2\left(2 \rho^4-3 \tau^2\right)\right]=\tfrac{Q r^2}{\rho^3},
\end{align*}
we conclude that $\Delta_{\mathrm{H}} \psi(\rho)=\frac{r^2}{\rho^2}\left(\frac{\mathrm{d}^2 \psi(\rho)}{\mathrm{d} \rho^2}+\frac{Q}{\rho} \frac{\mathrm{d} \psi(\rho)}{\mathrm{d} \rho}\right)$. By letting $\psi(\rho)=\left(1+\rho^2\right)^{-\frac{q}{2}}$, we get
$$
\Delta_{\mathrm{H}} \left(1+\rho^2\right)^{-\frac{q}{2}}=\tfrac{r^2}{\rho^2} \left(C_1\left(1+\rho^2\right)^{-\frac{q}{2}-1}+C_2 \rho^2 \left(1+\rho^2\right)^{-\frac{q}{2}-2}\right),
$$
where $C_i=C_i\left(q, Q\right)$ are some suitable constants for any $i=1, 2$. Therefore, based on $r^2 \lesssim \rho^2$, we derive the following conclusion: $\left|\Delta_{\mathrm{H}} \left(1+\rho^2\right)^{-\frac{q}{2}}\right| \lesssim \left(1+\rho^2\right)^{-\frac{q}{2}-1}$. This completes the proof.
\end{proof}
\begin{lemma} \label{lemma:6.3}
Let $\langle \eta\rangle_{\mathbf{H}_n} := \left(1+|\eta|_{\mathbf{H}_n}^2\right)^{\frac{1}{2}}$ for all $\eta \in \mathbf{H}_n, s \in(0,1)$ and $q>0$. Then, the following estimates hold for all $\eta \in \mathbf{H}_n$:
$$
\left|(-\Delta_{\mathrm{H}})^s\langle \eta\rangle_{\mathbf{H}_n}^{-q}\right| \lesssim \begin{cases}\langle \eta\rangle_{\mathbf{H}_n}^{-q-2 s} & \text { if } 0<q<Q, \\ \langle \eta\rangle_{\mathbf{H}_n}^{-Q-2 s} \log (e+|\eta|_{\mathbf{H}_n}) & \text { if } q=Q, \\ \langle \eta\rangle_{\mathbf{H}_n}^{-Q-2 s} & \text { if } q>Q.\end{cases}
$$
\end{lemma}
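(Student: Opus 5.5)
The estimate is pointwise in $\eta$, so I will work directly with the singular-integral representation of Definition \ref{definition:6.1}. Set $\phi(\eta):=\langle \eta\rangle_{\mathbf{H}_n}^{-q}$. Using left invariance of Haar measure and the symmetry of the Korányi norm, I will write the principal value in symmetrized form,
$$
(-\Delta_{\mathrm{H}})^s\phi(\eta)=\tfrac{C_{n,s}}{2}\int_{\mathbf{H}_n}\tfrac{2\phi(\eta)-\phi(\eta\circ\zeta)-\phi(\eta\circ\zeta^{-1})}{|\zeta|_{\mathbf{H}_n}^{Q+2s}}\,\mathrm{d}\zeta .
$$
For $|\eta|_{\mathbf{H}_n}\le 2$, boundedness follows from two facts: the second-order Taylor expansion (Proposition \ref{proposition:2.1}) controls the numerator by $|\zeta|^2_{\mathbf{H}_n}$ near $0$, and $\phi$ is bounded, so $\int_{|\zeta|\ge1}|\zeta|^{-Q-2s}<\infty$. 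This gives the claim there, since $\langle\eta\rangle\sim1$.

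For $R:=|\eta|_{\mathbf{H}_n}\ge2$, I split $\mathbf{H}_n$ into three regions: $D_1=\{|\zeta|<R/2\}$, $D_2=\{|\zeta|\ge R/2,\ |\eta\circ\zeta|<R/2\}$, and the remainder $D_3$. These are to be treated in the nonsymmetrized form where convenient.

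On $D_1$, the quasi-triangle inequality gives $|\eta\circ\zeta^{\pm1}|\sim R$. The symmetric second difference is bounded by $|\zeta|^2$ times the supremum of the horizontal Hessian of $\phi$ on the ball of radius $\sim R$. Computing as in Lemma \ref{lemma:6.2}, derivatives $X_iX_j$ of a radial function produce factors with $r^2/\rho^2\lesssim1$, so this supremum is $\lesssim R^{-q-2}$. Hence the contribution of $D_1$ is
$$
\lesssim R^{-q-2}\int_{|\zeta|<R/2}|\zeta|^{2-Q-2s}\,\mathrm{d}\zeta\sim R^{-q-2s}.
$$
On $D_3$, both $\phi(\eta)$ and $\phi(\eta\circ\zeta)$ are $\lesssim R^{-q}$, and $\int_{|\zeta|\ge R/2}|\zeta|^{-Q-2s}\sim R^{-2s}$, so this piece is again $\lesssim R^{-q-2s}$. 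The $\phi(\eta)$-term on $D_2$ is handled the same way.

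The main obstacle is the term on $D_2$ involving $\phi(\eta\circ\zeta)$; this is where the trichotomy in $q$ arises. Here $|\zeta|\sim R$, so after the change of variables $\xi=\eta\circ\zeta$ this term is
$$
\lesssim R^{-Q-2s}\int_{|\xi|<R/2}\langle\xi\rangle^{-q}\,\mathrm{d}\xi .
$$
Using polar coordinates for the homogeneous norm, $\mathrm{d}\xi\sim\rho^{Q-1}\mathrm{d}\rho$, the integral is $\sim R^{Q-q}$ if $q<Q$, $\sim\log R$ if $q=Q$, and $O(1)$ if $q>Q$. This yields respectively $R^{-q-2s}$, $R^{-Q-2s}\log(\mathrm{e}+R)$ and $R^{-Q-2s}$, which dominate the bounds from the other regions in each case.

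Two points require care. First, the quasi-triangle constant $\Lambda$ means the radii $R/2$ should be replaced by $R/(2\Lambda)$ so that $|\eta\circ\zeta|\sim R$ on $D_1$. Second, the Hessian bound must be justified along the path used in the Taylor expansion, which stays inside $\{|\cdot|\sim R\}$.
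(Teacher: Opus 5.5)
Your proposal is correct and follows essentially the same route as the paper: the symmetrized second-difference representation, a horizontal-Hessian Taylor bound on $\{|\zeta|\lesssim|\eta|\}$, crude size bounds on the far region, and the trichotomy in $q$ coming from $\int_{|\xi|\lesssim|\eta|}\langle\xi\rangle^{-q}\,\mathrm{d}\xi$. The only difference is cosmetic. The paper splits the far region by $|\zeta|$ alone: an outer region $|\zeta|\ge 2|\eta|$, handled by monotonicity of $\psi$, and the annulus $\tfrac12|\eta|\le|\zeta|\le 2|\eta|$. You split it instead according to whether $|\eta\circ\zeta|<R/2$, which isolates the same contribution.
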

\begin{proof}
Denoting by $\psi=\psi(\eta) := \langle \eta\rangle_{\mathbf{H}_n}^{-q}$ we write $(-\Delta_{\mathrm{H}})^s\langle \eta\rangle_{\mathbf{H}_n}^{-q}=$ $(-\Delta_{\mathrm{H}})^s(\psi)(\eta)$. According to Definition \ref{definition:6.1} of fractional Laplacian as a singular integral operator, we have
$$
(-\Delta_{\mathrm{H}})^s(\psi)(\eta) := C_{n, s} \mathrm{p.v.} \int_{\mathbf{H}_n} \tfrac{\psi(\eta)-\psi(\zeta)}{\left|\zeta^{-1} \circ \eta\right|_{\mathbf{H}_n}^{Q+2 s}} \mathrm{d} \zeta.
$$
A standard change of variables leads to
\begin{align*}
&(-\Delta_{\mathrm{H}})^s(\psi)(\eta)= -\tfrac{C_{n, s}}{2} \mathrm{p.v.} \int_{\mathbf{H}_n} \tfrac{\psi(\eta \circ \zeta)+\psi(\eta \circ \zeta^{-1})-2 \psi(\eta)}{|\zeta|_{\mathbf{H}_n}^{Q+2 s}} \mathrm{d} \zeta \\
= & -\tfrac{C_{n, s}}{2} \lim _{\varepsilon \rightarrow 0^+} \int_{\varepsilon \leq|\zeta|_{\mathbf{H}_n} \leq 1} \tfrac{\psi(\eta \circ \zeta)+\psi(\eta \circ \zeta^{-1})-2 \psi(\eta)}{|\zeta|_{\mathbf{H}_n}^{Q+2 s}} \mathrm{d} \zeta - \tfrac{C_{n, s}}{2} \int_{|\zeta|_{\mathbf{H}_n} \geq 1} \tfrac{\psi(\eta \circ \zeta)+\psi(\eta \circ \zeta^{-1})-2 \psi(\eta)}{|\zeta|_{\mathbf{H}_n}^{Q+2 s}} \mathrm{d} \zeta .
\end{align*}
To deal with the first integral, after using Proposition \ref{proposition:2.1} for any $\zeta=\left(z, \tau \right) \in \mathbf{H}_n$, we arrive at
\begin{equation} \label{eq:6.3.1}
\psi\left(\eta \circ \zeta^{-1}\right)=P_2(\psi, \eta)\left(\eta \circ \zeta^{-1}\right)+\mathrm{o}\left(|\zeta|_{\mathbf{H}_n}^3\right) \text { as } |\zeta|_{\mathbf{H}_n} \rightarrow 0,
\end{equation}
where $P_2(\psi, \eta)$ is the Taylor polynomial of $\mathbf{H}_n$-degree 2 associated to $\psi$ and centered at $\eta$ presented in Section \ref{section_2.1}. Also, by the very definition of Taylor polynomial, it follows
$$
P_2(\psi, \eta)\left(\eta \circ \zeta^{-1}\right) =P_2(\psi(\eta \circ \cdot), 0)\left(\zeta^{-1}\right) =\psi(\eta)-\left(\nabla_{\mathbf{H}_n} \psi(\eta), \partial_\tau \psi(\eta)\right) \cdot \zeta+\tfrac{1}{2}\left\langle z, D_{\mathbf{H}_n}^{2, *} \psi(\eta) \cdot z\right\rangle.
$$
Thus, equality \eqref{eq:6.3.1} yields
$$
\psi\left(\eta \circ \zeta^{-1}\right)= \psi(\eta)-\left(\nabla_{\mathbf{H}_n} \psi(\eta), \partial_\tau \psi(\eta)\right) \cdot \zeta+\tfrac{1}{2}\left\langle z, D_{\mathbf{H}_n}^{2, *} \psi(\eta) \cdot z\right\rangle +\mathrm{o}\left(|\zeta|_{\mathbf{H}_n}^3\right) \text { as }|\zeta|_{\mathbf{H}_n} \rightarrow 0.
$$
Similarly, we also obtain
$$
\psi\left(\eta \circ \zeta\right)= \psi(\eta)+\left(\nabla_{\mathbf{H}_n} \psi(\eta), \partial_\tau \psi(\eta)\right) \cdot \zeta+\tfrac{1}{2}\left\langle z, D_{\mathbf{H}_n}^{2, *} \psi(\eta) \cdot z\right\rangle +\mathrm{o}\left(|\zeta|_{\mathbf{H}_n}^3\right) \text { as }|\zeta|_{\mathbf{H}_n} \rightarrow 0.
$$
Therefore, we deduce that $\frac{\left|\psi(\eta \circ \zeta)+\psi(\eta \circ \zeta^{-1})-2 \psi(\eta)\right|}{|\zeta|_{\mathbf{H}_n}^{Q+2 s}} \lesssim \frac{\left\|D_{\mathbf{H}_n}^{2, *} \psi(\eta)\right\|_{L^{\infty}\left(\mathbf{H}_n\right)}}{|\zeta|_{\mathbf{H}_n}^{Q+2 s-2}}$. Thanks to the above estimate and $s \in(0,1)$, we may remove the principal value of the integral at the origin to conclude
$$
(-\Delta_{\mathrm{H}})^s(\psi)(\eta)=-\tfrac{C_{n, s}}{2} \int_{\mathbf{H}_n} \tfrac{\psi(\eta \circ \zeta)+\psi(\eta \circ \zeta^{-1})-2 \psi(\eta)}{|\zeta|_{\mathbf{H}_n}^{Q+2 s}} \mathrm{d} \zeta.
$$
To prove the desired estimates, we shall divide our considerations into two subcases. In the first subcase $\{\eta:|\eta|_{\mathbf{H}_n} \leq 1\}$, we can proceed as follows:
\begin{align*}
\left|(-\Delta_{\mathrm{H}})^s(\psi)(\eta)\right| \lesssim & \int_{|\zeta|_{\mathbf{H}_n} \leq 1} \tfrac{|\psi(\eta \circ \zeta)+\psi(\eta \circ \zeta^{-1})-2 \psi(\eta)|}{|\zeta|_{\mathbf{H}_n}^{Q+2 s}} \mathrm{d} \zeta+\int_{|\zeta|_{\mathbf{H}_n} \geq 1} \tfrac{|\psi(\eta \circ \zeta)+\psi(\eta \circ \zeta^{-1})-2 \psi(\eta)|}{|\zeta|_{\mathbf{H}_n}^{Q+2 s}} \mathrm{d} \zeta \\
\lesssim & \left\|D_{\mathbf{H}_n}^{2, *} \psi(\eta)\right\|_{L^{\infty}\left(\mathbf{H}_n\right)} \int_{|\zeta|_{\mathbf{H}_n} \leq 1} \tfrac{1}{|\zeta|_{\mathbf{H}_n}^{Q+2 s-2}} \mathrm{d} \zeta+\|\psi\|_{L^{\infty}\left(\mathbf{H}_n\right)} \int_{|\zeta|_{\mathbf{H}_n} \geq 1} \tfrac{1}{|\zeta|_{\mathbf{H}_n}^{Q+2 s}} \mathrm{d} \zeta.
\end{align*}
Due to the boundedness of the above two integrals, it follows immediately
\begin{equation} \label{ineq:6.3.1.1}
\left|(-\Delta_{\mathrm{H}})^s(\psi)(\eta)\right| \lesssim 1 \text { for all } |\eta|_{\mathbf{H}_n} \leq 1.
\end{equation}
In order to deal with the second subcase $\{\eta \in \mathbf{H}_n: |\eta|_{\mathbf{H}_n} \geq 1\}$, we can re-write
\begin{align} \label{eq:6.3.2}
(-\Delta_{\mathrm{H}})^s(\psi)(\eta)= & -\tfrac{C_{n, s}}{2} \int_{|\zeta|_{\mathbf{H}_n} \geq 2|\eta|_{\mathbf{H}_n}} \tfrac{\psi(\eta \circ \zeta)+\psi(\eta \circ \zeta^{-1})-2 \psi(\eta)}{|\zeta|_{\mathbf{H}_n}^{Q+2 s}} \mathrm{d} \zeta \notag\\
& -\tfrac{C_{n, s}}{2} \int_{\frac{1}{2}|\eta|_{\mathbf{H}_n} \leq|\zeta|_{\mathbf{H}_n} \leq 2|\eta|_{\mathbf{H}_n}} \tfrac{\psi(\eta \circ \zeta)+\psi(\eta \circ \zeta^{-1})-2 \psi(\eta)}{|\zeta|_{\mathbf{H}_n}^{Q+2 s}} \mathrm{d} \zeta \notag\\
& -\tfrac{C_{n, s}}{2} \int_{|\zeta|_{\mathbf{H}_n} \leq \frac{1}{2}|\eta|_{\mathbf{H}_n}} \tfrac{\psi(\eta \circ \zeta)+\psi(\eta \circ \zeta^{-1})-2 \psi(\eta)}{|\zeta|_{\mathbf{H}_n}^{Q+2 s}} \mathrm{d} \zeta .
\end{align}
For the first integral, we notice that the relations $|\eta \circ \zeta|_{\mathbf{H}_n} \geq|\zeta|_{\mathbf{H}_n}-|\eta|_{\mathbf{H}_n} \geq|\eta|_{\mathbf{H}_n}$ and $|\eta \circ \zeta^{-1}|_{\mathbf{H}_n} \geq|\zeta|_{\mathbf{H}_n}-|\eta|_{\mathbf{H}_n} \geq|\eta|_{\mathbf{H}_n}$ hold for $|\zeta|_{\mathbf{H}_n} \geq 2|\eta|_{\mathbf{H}_n}$. Since $\psi$ is a decreasing function in the variable $|\eta|_{\mathbf{H}_n}$ and $|\eta|_{\mathbf{H}_n} \approx\langle \eta\rangle_{\mathbf{H}_n}$ for all $|\eta|_{\mathbf{H}_n} \geq 1$, we obtain the following estimate:
\begin{align} \label{ineq:6.3.3}
&\left|\int_{|\zeta|_{\mathbf{H}_n} \geq 2|\eta|_{\mathbf{H}_n}} \tfrac{\psi(\eta \circ \zeta)+\psi(\eta \circ \zeta^{-1})-2 \psi(\eta)}{|\zeta|_{\mathbf{H}_n}^{Q+2 s}} \mathrm{d} \zeta\right| \leq 4|\psi(\eta)| \int_{|\zeta|_{\mathbf{H}_n} \geq 2|\eta|_{\mathbf{H}_n}} \tfrac{1}{|\zeta|_{\mathbf{H}_n}^{Q+2 s}} \mathrm{d} \zeta \notag\\
\lesssim & \langle \eta\rangle^{-q} \int_{|\zeta|_{\mathbf{H}_n} \geq 2|\eta|_{\mathbf{H}_n}} \tfrac{1}{|\zeta|_{\mathbf{H}_n}^{Q+2 s}} \mathrm{d} |\zeta|_{\mathbf{H}_n} \lesssim \langle \eta\rangle_{\mathbf{H}_n}^{-q}|\eta|_{\mathbf{H}_n}^{-2 s} \lesssim\langle \eta\rangle_{\mathbf{H}_n}^{-q-2 s},
\end{align}
It is clear that $|\zeta|_{\mathbf{H}_n} \approx|\eta|_{\mathbf{H}_n}$ in the second integral domain. Moreover, it follows
\begin{align*}
& \left\{\zeta: \tfrac{1}{2}|\eta|_{\mathbf{H}_n} \leq|\zeta|_{\mathbf{H}_n} \leq 2|\eta|_{\mathbf{H}_n}\right\} \subset\{\zeta:|\eta \circ \zeta|_{\mathbf{H}_n} \leq 3|\eta|_{\mathbf{H}_n}\}, \\
& \left\{\zeta: \tfrac{1}{2}|\eta|_{\mathbf{H}_n} \leq|\zeta|_{\mathbf{H}_n} \leq 2|\eta|_{\mathbf{H}_n}\right\} \subset\{\zeta:|\eta \circ \zeta^{-1}|_{\mathbf{H}_n} \leq 3|\eta|_{\mathbf{H}_n}\} .
\end{align*}
For this reason, we arrive at
\begin{align} \label{ineq:6.3.4}
& \left|\int_{\frac{1}{2}|\eta|_{\mathbf{H}_n} \leq|\zeta|_{\mathbf{H}_n} \leq 2|\eta|_{\mathbf{H}_n}} \tfrac{\psi(\eta \circ \zeta)+\psi(\eta \circ \zeta^{-1})-2 \psi(\eta)}{|\zeta|_{\mathbf{H}_n}^{Q+2 s}} \mathrm{d} \zeta\right| \lesssim |\eta|_{\mathbf{H}_n}^{-Q-2 s}\int_{|\eta \circ \zeta|_{\mathbf{H}_n} \leq 3|\eta|_{\mathbf{H}_n}} \psi(\eta \circ \zeta) \mathrm{d} \zeta \\
&+|\eta|_{\mathbf{H}_n}^{-Q-2 s}\int_{|\eta \circ \zeta^{-1}|_{\mathbf{H}_n} \leq 3|\eta|_{\mathbf{H}_n}} \psi(\eta \circ \zeta^{-1}) \mathrm{d} \zeta +\psi(\eta) |\eta|_{\mathbf{H}_n}^{-Q-2 s} \int_{\frac{1}{2}|\eta|_{\mathbf{H}_n} \leq|\zeta|_{\mathbf{H}_n} \leq 2|\eta|_{\mathbf{H}_n}} 1 \mathrm{d} \zeta \notag\\
\lesssim & |\eta|_{\mathbf{H}_n}^{-Q-2 s}\left(\int_{|\eta \circ \zeta|_{\mathbf{H}_n} \leq 3|\eta|_{\mathbf{H}_n}} \psi(\eta \circ \zeta) \mathrm{d} \zeta+\langle \eta\rangle_{\mathbf{H}_n}^{-q}|\eta|_{\mathbf{H}_n}^Q\right),
\end{align}
where we used the relation $\int_{|\eta \circ \zeta|_{\mathbf{H}_n} \leq 3|\eta|_{\mathbf{H}_n}} \psi(\eta \circ \zeta) \mathrm{d} \zeta=\int_{|\eta \circ \zeta^{-1}|_{\mathbf{H}_n} \leq 3|\eta|_{\mathbf{H}_n}} \psi(\eta \circ \zeta^{-1}) \mathrm{d} \zeta$. By the change of variables $r=|\eta \circ \zeta|_{\mathbf{H}_n}$, we apply the inequality $1+r^2 \geq \frac{(1+r)^2}{2}$ to get
\begin{align} \label{ineq:6.3.5}
\int_{|\eta \circ \zeta|_{\mathbf{H}_n} \leq 3|\eta|_{\mathbf{H}_n}} \psi(\eta \circ \zeta) \mathrm{d} \zeta \lesssim & \int_{r \leq 3|\eta|_{\mathbf{H}_n}}\left(1+r^2\right)^{-\frac{q}{2}} r^{Q-1} \mathrm{d} r \lesssim \int_{r \leq 3|\eta|_{\mathbf{H}_n}}(1+r)^{Q-q-1} \mathrm{d} r \notag\\
\lesssim &
\begin{cases}
(1+3|\eta|_{\mathbf{H}_n})^{Q-q} & \text { if } 0<q<Q, \\
\log (e+3|\eta|_{\mathbf{H}_n}) & \text { if } q=Q, \\
1 & \text { if } q>Q.
\end{cases}
\end{align}
By using the fact that $|\eta|_{\mathbf{H}_n} \approx\langle \eta\rangle_{\mathbf{H}_n}$ for all $|\eta|_{\mathbf{H}_n} \geq 1$, we obtain from \eqref{ineq:6.3.4} and \eqref{ineq:6.3.5} that
\begin{align} \label{ineq:6.3.6}
\left|\int_{\frac{1}{2}|\eta|_{\mathbf{H}_n} \leq|\zeta|_{\mathbf{H}_n} \leq 2|\eta|_{\mathbf{H}_n}} \tfrac{\psi(\eta \circ \zeta)+\psi(\eta \circ \zeta^{-1})-2 \psi(\eta)}{|\zeta|_{\mathbf{H}_n}^{Q+2 s}} \mathrm{d} \zeta\right| \lesssim \begin{cases}\langle \eta\rangle_{\mathbf{H}_n}^{-q-2 s} & \text { if } 0<q<Q, \\ \langle \eta\rangle_{\mathbf{H}_n}^{-Q-2 s} \log (e+3|\eta|_{\mathbf{H}_n}) & \text { if } q=Q, \\ \langle \eta\rangle_{\mathbf{H}_n}^{-Q-2 s} & \text { if } q>Q .\end{cases}
\end{align}
For the third integral in \eqref{eq:6.3.2}, using again the second order Taylor expansion for $\psi$ we obtain
\begin{align} \label{ineq:6.3.7}
&\left|\int_{|\zeta|_{\mathbf{H}_n} \leq \frac{1}{2}|\eta|_{\mathbf{H}_n}} \tfrac{\psi(\eta \circ \zeta)+\psi(\eta \circ \zeta^{-1})-2 \psi(\eta)}{|\zeta|_{\mathbf{H}_n}^{Q+2 s}} \mathrm{d} \zeta\right| \notag\\
\leq & \int_{|\zeta|_{\mathbf{H}_n} \leq \frac{1}{2}|\eta|_{\mathbf{H}_n}} \tfrac{|\psi(\eta \circ \zeta)+\psi(\eta \circ \zeta^{-1})-2 \psi(\eta)|}{|\zeta|_{\mathbf{H}_n}^{Q+2 s}} \mathrm{d} \zeta \lesssim \int_{|\zeta|_{\mathbf{H}_n} \leq \frac{1}{2}|\eta|_{\mathbf{H}_n}} \tfrac{\max _{\theta \in[0,1]}\left|D_{\mathbf{H}_n}^{2, *} \psi \left(\eta \circ \left(\theta \odot \zeta \right)^{\pm 1}\right)\right|}{|\zeta|_{\mathbf{H}_n}^{Q+2 s-2}} \mathrm{d} \zeta \notag\\
\lesssim & \int_{|\zeta|_{\mathbf{H}_n} \leq \frac{1}{2}|\eta|_{\mathbf{H}_n}} \tfrac{\max _{\theta \in[0,1]}\left\langle \eta \circ \left(\theta \odot \zeta\right)^{\pm 1}\right\rangle_{\mathbf{H}_n}^{-q-2}}{|\zeta|_{\mathbf{H}_n}^{Q+2 s-2}} \mathrm{d} \zeta \lesssim\langle \eta\rangle_{\mathbf{H}_n}^{-q-2} \int_{|\zeta|_{\mathbf{H}_n} \leq \frac{1}{2}|\eta|_{\mathbf{H}_n}}|\zeta|_{\mathbf{H}_n}^{1-2 s} \mathrm{d} |\zeta|_{\mathbf{H}_n} \lesssim\langle \eta\rangle_{\mathbf{H}_n}^{-q-2 s}.
\end{align}
Here, we used the relations $\left|\eta \circ \left(\theta \odot \zeta\right)^{\pm 1}\right|_{\mathbf{H}_n} \geq|\eta|_{\mathbf{H}_n}-\theta|\zeta|_{\mathbf{H}_n} \geq|\eta|_{\mathbf{H}_n}-\frac{1}{2}|\eta|_{\mathbf{H}_n}=\frac{1}{2}|\eta|_{\mathbf{H}_n}$ and $\left|D_{\mathbf{H}_n}^{2, *} \psi \left(\eta \circ \left(\theta \odot \zeta \right)^{\pm 1}\right)\right| \lesssim \left\langle \eta \circ \left(\theta \odot \zeta\right)^{\pm 1}\right\rangle_{\mathbf{H}_n}^{-q-2}$. From \eqref{eq:6.3.2}, \eqref{ineq:6.3.3}, \eqref{ineq:6.3.6} and \eqref{ineq:6.3.7}, we arrive at the following estimates for all $|\eta|_{\mathbf{H}_n} \geq 1$:
\begin{equation} \label{ineq:6.3.8}
\left|(-\Delta_{\mathrm{H}})^s(\psi)(\eta)\right| \lesssim
\begin{cases}
\langle \eta\rangle_{\mathbf{H}_n}^{-q-2 s} & \text { if } 0<q<Q, \\
\langle \eta\rangle_{\mathbf{H}_n}^{-Q-2 s} \log (e+3|\eta|_{\mathbf{H}_n}) & \text { if } q=Q, \\
\langle \eta\rangle_{\mathbf{H}_n}^{-Q-2 s} & \text { if } q>Q.
\end{cases}
\end{equation}
Summarizing, combining \eqref{ineq:6.3.1.1} and \eqref{ineq:6.3.8}, the proof of Lemma \ref{lemma:6.3} is completed.
\end{proof}
\begin{lemma} \label{lemma:6.4}
Let $s \in(0,1)$. Let $\psi$ be a smooth function on $\mathbf{H}_n$ such that all second-order derivatives in the directions of $X_j, Y_j$ are bounded, i.e., $X_j X_k \psi, X_j Y_k \psi, Y_j Y_k \psi \in L^{\infty}\left(\mathbf{H}_n\right) \text{ for all } j, k=1, \ldots, n$. For any $R>0$, let $\psi_R$ be a function defined by $\psi_R(\eta) := \psi\left(R^{-1} \odot \eta\right)$ for all $\eta \in \mathbf{H}_n$, where $R^{-1} \odot \eta := \delta_{R^{-1}}(\eta)$, that is: $R^{-1} \odot \eta=\left(R^{-1} x, R^{-1} y, R^{-2} t\right)$. Then, $(-\Delta_{\mathrm{H}})^s\left(\psi_R\right)$ satisfies the following scaling properties for all $\eta \in \mathbf{H}_n$: $
(-\Delta_{\mathrm{H}})^s\left(\psi_R\right)(\eta)=R^{-2 s}\left((-\Delta_{\mathrm{H}})^s \psi\right)\left(R^{-1} \odot \eta\right)$.
\end{lemma}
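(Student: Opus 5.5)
The plan is to work directly with the singular-integral representation of Definition \ref{definition:6.1} and to exploit two structural facts: the dilations $\delta_R$ are group automorphisms of $\mathbf{H}_n$, and the homogeneous norm scales linearly under them. First I would record three elementary identities for $R>0$ and $\eta,\zeta\in\mathbf{H}_n$. The first is $R^{-1}\odot(\eta\circ\zeta)=(R^{-1}\odot\eta)\circ(R^{-1}\odot\zeta)$ and $(R^{-1}\odot\zeta)^{-1}=R^{-1}\odot\zeta^{-1}$; both are checked directly from the multiplication law, since the $\tau$-component picks up the factor $R^{-2}$, exactly matching the product $R^{-1}\cdot R^{-1}$ in the term $\frac12(x\cdot y'-x'\cdot y)$. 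The second is $|R^{-1}\odot\zeta|_{\mathbf{H}_n}=R^{-1}|\zeta|_{\mathbf{H}_n}$, by Definition \ref{definition:2.1}. The third is that the Haar (Lebesgue) measure on $\mathbb{R}^{2n+1}$ transforms as $\mathrm{d}(R\odot\zeta')=R^{Q}\,\mathrm{d}\zeta'$, because the Jacobian of $\delta_R$ is $R^{2n}\cdot R^2=R^Q$.

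Next I would write
$$
(-\Delta_{\mathrm{H}})^s(\psi_R)(\eta)=C_{n,s}\lim_{\varepsilon\to0^+}\int_{|\zeta^{-1}\circ\eta|_{\mathbf{H}_n}\ge\varepsilon}\tfrac{\psi(R^{-1}\odot\eta)-\psi(R^{-1}\odot\zeta)}{|\zeta^{-1}\circ\eta|_{\mathbf{H}_n}^{Q+2s}}\,\mathrm{d}\zeta
$$
and substitute $\zeta=R\odot\zeta'$. Setting $\eta'=R^{-1}\odot\eta$, the automorphism property gives $\zeta^{-1}\circ\eta=R\odot(\zeta'^{-1}\circ\eta')$, so the kernel becomes $R^{-Q-2s}|\zeta'^{-1}\circ\eta'|_{\mathbf{H}_n}^{-Q-2s}$. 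The measure contributes $R^{Q}$, and the truncation region becomes $|\zeta'^{-1}\circ\eta'|_{\mathbf{H}_n}\ge\varepsilon/R$. Since $\varepsilon/R\to0^+$ exactly when $\varepsilon\to0^+$, the principal value is preserved, and we get $R^{-2s}\big((-\Delta_{\mathrm{H}})^s\psi\big)(\eta')$. That is the claim.

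The only genuine point to address is that both principal values exist, so that the manipulation is not formal. This is where the hypothesis on the second horizontal derivatives enters. Following the argument in the proof of Lemma \ref{lemma:6.3}, I would symmetrize to the second-difference form and use Proposition \ref{proposition:2.1} with the boundedness of $X_jX_k\psi$, $X_jY_k\psi$, $Y_jY_k\psi$. This gives the bound $\lesssim |\zeta|_{\mathbf{H}_n}^{2-Q-2s}$ near the origin, and boundedness of $\psi$ (or at least integrability against $|\zeta|^{-Q-2s}$ at infinity) handles large $|\zeta|$. The same bounds hold for $\psi_R$, whose second horizontal derivatives equal $R^{-2}$ times those of $\psi$. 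I expect this justification, rather than the change of variables itself, to be the main obstacle, though it reduces to estimates already carried out in Lemma \ref{lemma:6.3}.
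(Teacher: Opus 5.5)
Your proposal is correct and follows the same route as the paper. Both arguments make the change of variables $\zeta = R\odot\zeta'$, using that the dilations are group automorphisms, that the homogeneous norm scales linearly, and that the Jacobian is $R^{Q}$.

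The only difference is the order of steps. The paper first removes the principal value by passing to the symmetric second-difference form, which is where it uses the bounded second horizontal derivatives, and then rescales. You instead rescale the truncated integrals directly, with $\varepsilon\mapsto\varepsilon/R$. Your version shows the identity holds whenever either side exists, so the hypothesis is needed only to guarantee existence.
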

\begin{proof}
Thanks to the assumption $X_j X_k \psi, X_j Y_k \psi, Y_j Y_k \psi \in L^{\infty}\left(\mathbf{H}_n\right) \text{ for all } j, k=1, \ldots, n.$, following the proof of Lemma \ref{lemma:6.3} we may remove the principal value of the integral at the origin to conclude
\begin{align*}
(-\Delta_{\mathrm{H}})^s \left(\psi_R\right)(\eta) =&\tfrac{C_{n, s}}{2} \int_{\mathbf{H}_n} \tfrac{\psi_R(\eta \circ \zeta)+\psi_R(\eta \circ \zeta^{-1})-2 \psi_R(\eta)}{|\zeta|_{\mathbf{H}_n}^{Q+2 s}} \mathrm{d} \zeta \\
=&-\tfrac{C_{n, s}}{2} R^{-2 s} \int_{\mathbf{H}_n} \tfrac{\psi\left(R^{-1} \odot \eta+R^{-1} \odot \zeta\right)+\psi\left(R^{-1} \odot \eta-R^{-1} \cdot \zeta\right)-2 \psi \odot \left(R^{-1} \eta\right)}{\left|R^{-1} \odot \zeta\right|_{\mathbf{H}_n}^{Q+2 s}} d\left(R^{-1} \odot \zeta\right) \\
=&R^{-2 s}\left((-\Delta_{\mathrm{H}})^s \psi\right)\left(R^{-1} \odot \eta\right),
\end{align*}
where we have used Definition \ref{definition:2.1}. This completes the proof.
\end{proof}
Before proceeding to prove Theorem \ref{theorem:1.5}, we briefly revisit the definition of a weak solution to \eqref{eq:1.0}.
\begin{definition}
Let $p>1$. A weak solution to the Cauchy problem \eqref{eq:1.0} in $[0, T) \times \mathbf{H}_n$ is defined as a function $u \in L_{\mathrm{loc}}^p\left([0, T), L^p\left(\mathbf{H}_n,\langle \eta \rangle_{\mathbf{H}_n}^{-q} d \eta \right)\right)$ that fulfills the condition
\begin{align*}
& \int_0^T \int_{\mathbf{H}_n}|u(t, \eta)|^p \phi(t, \eta) \mathrm{d} \eta \mathrm{d} t+\int_{\mathbf{H}_n}\left(\left(-\Delta_{\mathrm{H}}\right)^{\delta_2}u_0(\eta)+u_1(\eta)\right) \phi(0, \eta) \mathrm{d} \eta-\int_{\mathbf{H}_n} u_0(\eta) \partial_t \phi(0, \eta) \mathrm{d} \eta \\
=&\int_0^T \int_{\mathbf{H}_n} u(t, \eta)\left(\partial_t^2 \phi(t, \eta)+\left(-\Delta_{\mathrm{H}}\right)^{\delta_1} \phi(t, \eta)-\left(-\Delta_{\mathrm{H}}\right)^{\delta_2}\partial_t \phi(t, \eta)\right) \mathrm{d} \eta \mathrm{d} t
\end{align*}
hold, for any test function $\phi(t, \eta)$ of the form $\phi(t, \eta)=\chi(t) \varphi(\eta)$, with $\chi \in \mathcal{C}_0^{\infty}\left([0, T)\right)$ ($\chi \equiv 1$ in a neighborhood of $0$) and $\varphi(\eta) \in \mathcal{C}_q^{\infty}\left(\mathbf{H}_n\right)$. If $T=\infty$, the function $u$ is called a global (in time) weak solution to \eqref{eq:1.0}; otherwise, it is referred to as a local (in time) weak solution.
\end{definition}
Next, we focus on proving Theorem \ref{theorem:1.5}.
\begin{proof}[Proof of Theorem \ref{theorem:1.5}] 
We divide the proof of Theorem \ref{theorem:1.5} into several cases. 
\subsection{The case that the parameter $\delta_1=1$ and the parameter $\delta_2$ is fractional from $(0, 1)$} \label{section_7.1.1}
First, we introduce the functions $\varphi=\varphi\left(|\eta|_{\mathbf{H}_n}\right) := \langle \eta\rangle_{\mathbf{H}_n}^{-Q-2 \delta_2}$ and $\chi=\chi(t)$ with the following properties:
\begin{align} \label{ineq:7.2}
\bullet \quad & \chi \in \mathcal{C}_0^{\infty}([0, \infty)) \text{ and } \chi(t)= \begin{cases}1 & \text { if } 0 \leq t \leq \frac{1}{2}, \\ \text { decreasing } & \text { if } \frac{1}{2} \leq t \leq 1, \\ 0 & \text { if } t \geq 1, \end{cases} \notag\\
\bullet \quad & \chi^{-\frac{p^{\prime}}{p}}(t)\left(\left|\chi^{\prime}(t)\right|^{p^{\prime}}+\left|\chi^{\prime \prime}(t)\right|^{p^{\prime}}\right) \leq C \text{ for any } t \in \left[\tfrac{1}{2},1 \right],
\end{align}
where $p^{\prime}$ is the conjugate of $p>1$ and $C$ is a suitable positive constant.
Let $R$ be a large parameter in $[0, \infty)$. We define the test function $\phi_R(t, \eta)=\chi_R(t) \varphi_R(\eta)$, where $\chi_R(t) := \chi\left(R^{-2+2 \delta_2} t\right)$ and $\varphi_R(\eta) := \varphi\left(R^{-1} \odot \eta\right)$. We consider $\mathcal{I}_R := \int_0^{\infty} \int_{\mathbf{H}_n}|u(t, \eta)|^p \phi_R(t, \eta) \mathrm{d} \eta \mathrm{d} t=\int_0^{R^{2-2 \delta_2}} \int_{\mathbf{H}_n}|u(t, \eta)|^p \phi_R(t, \eta) \mathrm{d} \eta \mathrm{d} t$ and $\mathcal{I}_{R, t} := \int_{\frac{R^{2-2 \delta_2}}{2}}^{R^{2-2 \delta_2}} \int_{\mathbf{H}_n}|u(t, \eta)|^p \phi_R(t, \eta) \mathrm{d} \eta \mathrm{d} t$. Suppose that $u=u(t, \eta)$ is a global (in time) weak solution to \eqref{eq:1.0}, then
\begin{align} \label{ineq:testfunction}
0 \leq \mathcal{I}_R= & -\int_{\mathbf{H}_n} u_1(\eta) \varphi_R(\eta) \mathrm{d} \eta +\int_{\frac{R^{2-2 \delta_2}}{2}}^{R^{2-2 \delta_2}} \int_{\mathbf{H}_n} u(t, \eta) \partial_t^2 \chi_R(t) \varphi_R(\eta) \mathrm{d} \eta \mathrm{d} t \notag\\
& +\int_0^{\infty} \int_{\mathbf{H}_n} u(t, \eta) \chi_R(t)\left(-\Delta_{\mathrm{H}}\right) \varphi_R(\eta) \mathrm{d} \eta \mathrm{d} t -\int_{\frac{R^{2-2 \delta_2}}{2}}^{R^{2-2 \delta_2}} \int_{\mathbf{H}_n} u(t, \eta)\partial_t \chi_R(t)\left(-\Delta_{\mathrm{H}}\right)^{\delta_2} \varphi_R(\eta) \mathrm{d} \eta \mathrm{d} t \notag\\
:= &-\int_{\mathbf{H}_n} u_1(\eta) \varphi_R(\eta) \mathrm{d} \eta+\mathcal{I}_1+\mathcal{I}_2-\mathcal{I}_3.
\end{align}
Applying H$\ddot{\text{o}}$lder's inequality with $\frac{1}{p}+\frac{1}{p^{\prime}}=1$ we may estimate $\mathcal{I}_1$ as follows:
$$
\left|\mathcal{I}_1\right| \lesssim \mathcal{I}_{R, t}^{\frac{1}{p}}\left(\int_{\frac{R^{2-2 \delta_2}}{2}}^{R^{2-2 \delta_2}} \int_{\mathbf{H}_n} \chi_R^{-\frac{p^{\prime}}{p}}(t)\left|\partial_t^2 \chi_R(t)\right|^{p^{\prime}} \varphi_R(\eta) \mathrm{d} \eta \mathrm{d} t\right)^{\frac{1}{p^{\prime}}} .
$$
By the change of variables $\tilde{t} := R^{-2+2 \delta_2} t$ and $\tilde{\eta} := R^{-1} \odot \eta$, a straight-forward calculation gives
\begin{equation} \label{ineq:J_1}
\left|\mathcal{I}_1\right| \leq C \mathcal{I}_{R, t}^{\frac{1}{p}} R^{-4+4\delta_2+\frac{Q+2-2\delta_2}{p^{\prime}}}\left(\int_{\mathbf{H}_n}\langle \tilde{\eta}\rangle_{\mathbf{H}_n}^{-Q-2 \delta_2} \mathrm{d} \tilde{\eta}\right)^{\frac{1}{p^{\prime}}}.
\end{equation}
Here we used $\partial_t^2 \chi_R(t)=R^{-4+4\delta_2} \chi^{\prime \prime}\left(\tilde{t}\right)$ and the inequality \eqref{ineq:7.2}. Now let us turn to estimate $\mathcal{I}_2$ and $\mathcal{I}_3$. Applying H$\ddot{\text{o}}$lder's inequality again as we estimated $\mathcal{I}_1$ leads to
$$
\left|\mathcal{I}_2\right| \leq \mathcal{I}_R^{\frac{1}{p}}\left(\int_0^{R^{2-2\delta_2}} \int_{\mathbf{H}_n} \chi_R(t) \varphi_R^{-\frac{p^{\prime}}{p}}(\eta)\left|\left(-\Delta_{\mathrm{H}}\right) \varphi_R(\eta)\right|^{p^{\prime}} \mathrm{d} \eta \mathrm{d} t\right)^{\frac{1}{p^{\prime}}}.
$$
In order to control the above integral, we rely on Lemmas \ref{lemma:6.2}, \ref{lemma:6.3} and \ref{lemma:6.4}. Namely, by the change of
variables $\tilde{t} := R^{-2+2\delta_2} t$ and $\tilde{\eta} := R^{-1} \odot \eta$, and using the inequality \eqref{ineq:7.2}, we arrive at
\begin{align*}
\left|\mathcal{I}_2\right| \lesssim & \mathcal{I}_R^{\frac{1}{p}} R^{-2+\frac{Q+2-2\delta_2}{p^{\prime}}}\left(\int_{0}^{1} \int_{\mathbf{H}_n} \chi\left(\tilde{t}\right) \varphi^{-\frac{p^{\prime}}{p}}\left(\tilde{\eta}\right)\left|\left(-\Delta_{\mathrm{H}}\right)\varphi\left(\tilde{\eta}\right)\right|^{p^{\prime}} \mathrm{d} \tilde{\eta} \mathrm{d} \tilde{t} \right)^{\frac{1}{p^{\prime}}} \\
\lesssim & \mathcal{I}_R^{\frac{1}{p}} R^{-2+\frac{Q+2-2\delta_2}{p^{\prime}}}\left(\int_{\mathbf{H}_n} \varphi^{-\frac{p^{\prime}}{p}}\left(\tilde{\eta}\right)\left|\left(-\Delta_{\mathrm{H}}\right)\varphi\left(\tilde{\eta}\right)\right|^{p^{\prime}} \mathrm{d} \tilde{\eta}\right)^{\frac{1}{p^{\prime}}}.
\end{align*}
where we note that $\left(-\Delta_{\mathrm{H}}\right) \varphi_R(\eta)=R^{-2}\left(-\Delta_{\mathrm{H}}\right) \varphi\left(\tilde{\eta}\right)$, since Lemma \ref{lemma:6.4}. Using Lemma \ref{lemma:6.2} implies the following estimate:
\begin{equation} \label{ineq:J_2}
\left|\mathcal{I}_2\right| \leq C \mathcal{I}_R^{\frac{1}{p}} R^{-2+\frac{Q+2-2\delta_2}{p^{\prime}}}\left(\int_{\mathbf{H}_n}\langle \tilde{\eta}\rangle_{\mathbf{H}_n}^{-Q-2 \delta_2-2 p^{\prime}} \mathrm{d} \tilde{\eta}\right)^{\frac{1}{p^{\prime}}} .
\end{equation}
Applying H$\ddot{\text{o}}$lder's inequality again as in estimating $\mathcal{I}_1$ leads to
$$
\left|\mathcal{I}_3\right| \leq \mathcal{I}_{R, t}^{\frac{1}{p}}\left(\int_{\frac{R^{2-2\delta_2}}{2}}^{R^{2-2\delta_2}} \int_{\mathbf{H}_n} \chi_R^{-\frac{p^{\prime}}{p}}(t)\left|\partial_t \chi_R(t)\right|^{p^{\prime}} \varphi_R^{-\frac{p^{\prime}}{p}}(\eta)\left|\left(-\Delta_{\mathrm{H}}\right)^{\delta_2} \varphi_R(\eta)\right|^{p^{\prime}} \mathrm{d} \eta \mathrm{d} t\right)^{\frac{1}{p^{\prime}}}.
$$
Next, again by the change of variables $\tilde{t} := R^{-2+2\delta_2} t$ and $\tilde{\eta} := R^{-1} \odot \eta$ and using Lemma \ref{lemma:6.4}, we estimate $\mathcal{I}_3$ as follows:
\begin{align*}
\left|\mathcal{I}_3\right| \lesssim & \mathcal{I}_{R, t}^{\frac{1}{p}} R^{-2+\frac{Q+2-2\delta_2}{p^{\prime}}} \left(\int_{\frac{1}{2}}^{1} \int_{\mathbf{H}_n} \chi^{-\frac{p^{\prime}}{p}}\left(\tilde{t}\right)\left|\chi^{\prime}\left(\tilde{t}\right)\right|^{p^{\prime}} \varphi^{-\frac{p^{\prime}}{p}}\left(\tilde{\eta}\right)\left|\left(-\Delta_{\mathrm{H}}\right)^{\delta_2}\varphi\left(\tilde{\eta}\right)\right|^{p^{\prime}} \mathrm{d} \tilde{\eta} \mathrm{d} \tilde{t}\right)^{\frac{1}{p^{\prime}}} \\
\lesssim & \mathcal{I}_{R, t}^{\frac{1}{p}} R^{-2+\frac{Q+2-2\delta_2}{p^{\prime}}} \left(\int_{\mathbf{H}_n} \varphi^{-\frac{p^{\prime}}{p}}\left(\tilde{\eta}\right)\left|\left(-\Delta_{\mathrm{H}}\right)^{\delta_2}\varphi\left(\tilde{\eta}\right)\right|^{p^{\prime}} \mathrm{d} \tilde{\eta}\right)^{\frac{1}{p^{\prime}}}.
\end{align*}
Here we used $\partial_t \chi_R(t)=R^{-2+2\delta_2} \chi^{\prime}\left(\tilde{t}\right)$ and the inequality \eqref{ineq:7.2}. To deal with the last integral, we apply Lemma \ref{lemma:6.3} with $q=Q+2 \delta_2$ and $\gamma=\delta_2$, that is, $m=0$ and $s=\delta_2$, to get
\begin{equation} \label{ineq:J_3}
\left|\mathcal{I}_3\right| \leq C \mathcal{I}_{R, t}^{\frac{1}{p}} R^{-2+\frac{Q+2-2\delta_2}{p^{\prime}}} \left(\int_{\mathbf{H}_n}\langle \tilde{\eta}\rangle_{\mathbf{H}_n}^{-Q-2 \delta_2} \mathrm{d} \tilde{\eta}\right)^{\frac{1}{p^{\prime}}}.
\end{equation}
Next, under our assumption \eqref{condition_u0u1}, for sufficiently large $R$, we obtain
\begin{equation} \label{ineq:8.10}
\int_{\mathbf{H}^n} u_1(\eta) \varphi_R(\eta) \mathrm{d} \eta \geq C_0 \int_{|\eta|_{\mathbf{H}_n} \leq R}\langle \eta\rangle_{\mathbf{H}_n}^{-Q\left(\frac{1}{m}+\frac{\gamma}{Q}\right)} \log (\mathrm{e}+|\eta|_{\mathbf{H}_n})^{-1} \mathrm{d} \eta \geq C_0 R^{Q-\frac{Q}{m}-\gamma}(\log R)^{-1}.
\end{equation}
Combining the estimates from \eqref{ineq:testfunction}, \eqref{ineq:J_1}, \eqref{ineq:J_2}, \eqref{ineq:J_3} and \eqref{ineq:8.10}, we may arrive at for all $R>R_0$
\begin{align*}
\mathcal{I}_R+C_0 R^{Q-\frac{Q}{m}-\gamma}(\log R)^{-1} \leq & C \mathcal{I}_{R, t}^{\frac{1}{p}} R^{-4+4\delta_2+\frac{Q+2-2\delta_2}{p^{\prime}}} + C \mathcal{I}_R^{\frac{1}{p}} R^{-2+\frac{Q+2-2\delta_2}{p^{\prime}}} + C \mathcal{I}_{R, t}^{\frac{1}{p}} R^{-2+\frac{Q+2-2\delta_2}{p^{\prime}}}.
\end{align*}
Therefore, $\mathcal{I}_R+C_0 R^{Q-\frac{Q}{m}-\gamma}(\log R)^{-1} \leq C \mathcal{I}_R^{\frac{1}{p}} R^{-2+\frac{Q+2-2\delta_2}{p^{\prime}}} \text{ for all } R>R_0$. Now, the contradiction arises from the inequalities:
$$
0 \leq\left(1-\tfrac{1}{p}\right) \mathcal{I}_R \leq \tfrac{C R^{Q+2-2\delta_2-2 p^{\prime}}}{p^{\prime}}-C_0 R^{Q-\frac{Q}{m}-\gamma}(\log R)^{-1}<0 \quad \text{ for } R \gg 1,
$$
as long as $R^{Q+2-2\delta_2-2 p^{\prime}}<\frac{C_0 p^{\prime}}{C} R^{Q-\frac{Q}{m}-\gamma}(\log R)^{-1}$, that is always valid if $Q+2-2\delta_2-2 p^{\prime}<Q-\frac{Q}{m}-\gamma$ or equivalently, $p<p_{\mathrm{Fuji}}\left(\frac{Q+m\gamma-2 m \delta_2}{m}\right):=1+\frac{2 m}{Q+m\gamma-2 m \delta_2}$. This completes the proof of Theorem \ref{theorem:1.5} when parameter $\delta_1=1$ and the parameter $\delta_2$ is fractional from $(0, 1)$.

\subsection{The parameter $\delta_1$ is fractional from $(0, 1)$ and the parameter $\delta_2$ is fractional from $(0,1)$}
We apply the arguments analogous to the proof of Section \ref{section_7.1.1}. First, we denote $\delta^* := \min \left\{\delta_1, \delta_2\right\}$. It is obvious that $\delta^*$ is fractional from $(0,1)$. Let us introduce test functions $\chi=\chi(t)$ as in Section \ref{section_7.1.1} and $\varphi=\varphi(\eta) := \langle \eta\rangle_{\mathbf{H}_n}^{-Q-2 \delta^*}$. Then, by repeating the arguments in Section \ref{section_7.1.1} and applying Lemma \ref{lemma:6.3} instead of Lemma \ref{lemma:6.2}, we are able to conclude the statement that needs to be proved.

\subsection{The parameter $\delta_1=1$ and the parameter $\delta_2=0$}
The proof of this case can be found in \cite{Georgiev2020}. Let us define two bump functions $\alpha \in \mathcal{C}_0^{\infty}\left(\mathbb{R}^n\right)$ and $\beta \in \mathcal{C}_0^{\infty}(\mathbb{R})$ such that
$$
\alpha=1 \text { on } B_n\left(\tfrac{1}{2}\right) \text { with } \operatorname{supp} \alpha \subset B_n(1) \quad \text{ and } \quad \beta=1 \text { on }\left[-\tfrac{1}{4}, \tfrac{1}{4}\right] \text { with } \operatorname{supp} \beta \subset(-1,1).
$$
For a parameter $R>1$, we define the test function $\phi_R \in \mathcal{C}_0^{\infty}\left([0, \infty) \times \mathbb{R}^{2 n+1}\right)$ with variables separated as follows:
$$
\phi_R(t, x, y, \tau) := \beta\left(\tfrac{t}{R^2}\right) \alpha\left(\tfrac{x}{R}\right) \alpha\left(\tfrac{y}{R}\right) \beta\left(\tfrac{\tau}{R^2}\right), \quad(t, x, y, \tau) \in[0, \infty) \times \mathbb{R}^{2 n+1}.
$$
The remaining steps are carried out as in \cite{Georgiev2020}, from which the desired result follows. 
\end{proof}
\begin{remark}
\rm{
According to Theorem \ref{theorem:1.5}, if $\delta_1=1, \delta_2=0$ and $m=2$, then there is no global (in time) Sobolev solution $u \in \mathcal{C}\left([0, \infty), L^2\left(\mathbf{H}_n\right) \right)$ to the Cauchy problem \eqref{eq:1.0} when $1<p < p_{\mathrm{Fuji}}\left(\frac{Q+2\gamma}{2}\right):=1+\frac{4}{Q+2\gamma}$. We also observe that $p_{\mathrm{Fuji}}\left(\frac{Q+2\gamma}{2}\right):= 1+\frac{4}{Q+2\gamma}$ here coincides with the critical exponent in the case $\delta_1=1, \delta_2=0$ and $m=2$, as identified by Dasgupta, Kumar, Mondal and Ruzhansky in \cite[Theorem 1.5]{Dasgupta2024}.
}
\end{remark}

\end{document}